\newtheorem{thrm}{Theorem}[section]
\newtheorem{prop}[thrm]{Proposition}
\newtheorem{coro}[thrm]{Corollary}
\newtheorem{lemm}[thrm]{Lemma}
\newtheorem{maintheorem}{Theorem}
\theoremstyle{definition}
\newtheorem{defn}[thrm]{Definition}
\newtheorem{exam}[thrm]{Example}
\newtheorem{rema}[thrm]{Remark}
\renewcommand{\bar}[1]{\overline{#1}}
\renewcommand{\hat}[1]{\widehat{#1}}
\renewcommand{\tilde}[1]{\widetilde{#1}}
\renewcommand{\epsilon}{\varepsilon}
\newcommand{\Z}{\mathbb{Z}}
\newcommand{\R}{\mathbb{R}}
\newcommand{\C}{\mathbb{C}}
\newcommand{\G}{\mathbb{G}}
\newcommand{\K}{\mathbb{K}}
\renewcommand{\H}{\mathbb{H}}
\newcommand{\eps}{\varepsilon}
\renewcommand{\phi}{\varphi}
\newcommand{\map}[3]{#1\colon#2 \longrightarrow #3}
\newcommand{\act}[3]{#1\colon#2 \curvearrowright #3}
\newcommand{\abs}[1]{\lvert #1 \rvert}
\newcommand{\nor}[1]{\lVert #1 \rVert}
\newcommand{\ip}[1]{\left\langle #1 \right\rangle}
\newcommand{\cl}[1]{\mathcal{#1}}
\newcommand{\mbf}[1]{\mathbf{#1}}
\newcommand{\ind}[1]{\mathrm{Ind}\,#1}
\newcommand{\obj}[1]{\mathrm{Obj}\,#1}
\newcommand{\tensor}{\otimes}
\newcommand{\fin}{\mathrm{f}}
\renewcommand{\star}{$\ast$}
\newcommand{\cstar}{C*}
\newcommand{\tend}{\textendash}
\let\mod\relax
\DeclareMathOperator{\id}{\mathrm{id}}
\DeclareMathOperator{\tr}{\mathrm{Tr}}
\DeclareMathOperator{\Index}{\mathrm{Index}}
\DeclareMathOperator{\cspan}{\bar{\mathrm{span}}}
\DeclareMathOperator{\aspan}{\mathrm{span}}
\DeclareMathOperator{\hilb}{\mathrm{Hilb}}
\DeclareMathOperator{\rep}{\mathrm{Rep}}
\DeclareMathOperator{\mod}{\mathrm{Mod}}
\DeclareMathOperator{\corr}{\mathrm{Corr}}
\DeclareMathOperator{\rfin}{\mathrm{rf}}
\DeclareMathOperator{\irr}{\mathrm{Irr}}
\DeclareMathOperator{\End}{\mathrm{End}}
\DeclareMathOperator{\picard}{\mathrm{Pic}}
\DeclareMathOperator{\auto}{\mathrm{Aut}}
\DeclareMathOperator{\ev}{\mathrm{Ev}}
\title{Equivariant covering spaces of quantum homogeneous spaces}
\author{Mao Hoshino}
\address{Department of Mathematical Sciences, The University of Tokyo\\
Komaba 3-8-1, Tokyo \mbox{153-8914}, Japan}
\email{mhoshino@ms.u-tokyo.ac.jp}
\subjclass[2020]{Primary~46L67, Secondary~46L08, 17B37}
\keywords{operator algebra, quantum group, tensor category}
\begin{document}

\begin{abstract}
We develop a fundamental theory of compact quantum group
equivariant finite extensions of \cstar-algebras. In particular we 
focus on the case of quantum homogeneous spaces and give a 
Tannaka-Krein type result for equivariant correspondences.
As its application, we show that every Jones' value appears as
the index of an equivariant conditional expectation.
In the latter half of this paper, we give an imprimitivity 
theorem in some cases:
for general compact quantum groups under a finiteness conditions, and
for the Drinfeld-Jimbo deformation $G_q$ of 
a simply-connected compact Lie group $G$.
As an application, we give a complete classification of finite index
discrete quantum subgroups of $\hat{G_q}$.
\end{abstract}

\maketitle

\section{Introduction}

The notion of a \cstar-tensor category has played an important role
in recent developments of theories of operator algebras. It firstly
appeared in the subfactor theory, as a tool of classification 
of subfactors: Actually every inclusion of factors gives a \cstar-tensor
category as an invariant, which is complete in the amenable case ({\cite[Remark 7.2.1]{Po94}}).
Moreover we also can use it to understand V. F. Jones' celebrated work 
on the range of indices of subfactors ({\cite[Theorem 4.3.1]{Jo83}}).

\cstar-tensor categories also appear in the theory of compact 
quantum groups, as their representation categories. Moreover
the notion of a module category over a \cstar-tensor category is
also useful. By using this, the classical Tannaka-Krein duality is 
generalized not only to a compact quantum group itself, 
but also to its action on \cstar-algebra ({\cite[Theorem 6.4]{DY13}}). 
De Commer and Yamashita use this duality theorem to obtain 
a one-to-one correspondence between quantum homogeneous spaces 
of $SU_q(2)$ and concrete combinatorial datum (\cite[Theorem 2.4]{DY15}). 

The purpose of this paper is to connect these \cstar-tensor categorical
approaches and study an inclusion of \cstar-algebras with actions of
a compact quantum group. In this paper, we call it as an \emph{equivariant
finite quantum covering spaces}, since it can be regarded as a genuine
finite covering space in the case of commutative \cstar-algebras. 
We also call
its minimal indices as its \emph{covering degree}. As in the 
non-equivariant case, an equivariant finite quantum covering space can be
understood as a Q-system in a suitable \cstar-tensor category,
namely the category of equivariant correspondences. The following
theorem on this category, which is a generalization of
the observation due to De Commer and Yamashita ({\cite[Theorem 7.1]{DY13}}),
is fundamental throughout this paper.

\begin{maintheorem}[Theorem \ref{thrm:fundamental theorem}]
For quantum homogeneous spaces $A$ and $B$ of a compact quantum group $\G$,
we have the following canonical equivalence of \cstar-categories:
\[
 \G\text{-}\corr^{\rfin}_{A,B} \cong [\G\text{-}\mod^{\fin}_A,\G\text{-}\mod^{\fin}_B]^{\rep^{\fin}\G}.
\]
\end{maintheorem}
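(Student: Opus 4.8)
The plan is to exhibit a pair of mutually quasi-inverse \cstar-functors $\Phi$ and $\Psi$ realising the asserted equivalence. For $\Phi$, send an equivariant correspondence $X$ from $A$ to $B$ to
\[
 \Phi(X)\colon \G\text{-}\mod^{\fin}_A \longrightarrow \G\text{-}\mod^{\fin}_B,\qquad Y\longmapsto Y\tensor_A X,\quad f\longmapsto f\tensor_A\id_X,
\]
where $Y\tensor_A X$ is the internal tensor product of Hilbert modules, carrying the diagonal $\G$-action and the left $B$-action of $X$. Since the objects $H_U\tensor A$ ($U\in\rep^{\fin}\G$) generate $\G\text{-}\mod^{\fin}_A$ under taking direct summands, and since right-finiteness of $X$ means precisely that $X\cong A\tensor_A X$ is, as an equivariant right Hilbert $B$-module, an object of $\G\text{-}\mod^{\fin}_B$, the module $Y\tensor_A X$ is a direct summand of a finite sum of objects $H_U\tensor X$ and hence again lies in $\G\text{-}\mod^{\fin}_B$; so $\Phi(X)$ is well defined. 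The canonical associativity isomorphisms $(H_U\tensor Y)\tensor_A X\cong H_U\tensor(Y\tensor_A X)$ promote $\Phi(X)$ to a $\rep^{\fin}\G$-module functor for the module structure $U\cdot Y=H_U\tensor Y$, and $X\mapsto\Phi(X)$ is plainly a \cstar-functor. It is faithful on morphism spaces because the component of $\Phi(f)$ at $A$ is $f$ itself under $A\tensor_A X\cong X$.

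For $\Psi$, start from a $\rep^{\fin}\G$-module functor $(T,\tau)$ with structure isomorphisms $\tau_{U,Y}\colon T(H_U\tensor Y)\cong H_U\tensor T(Y)$, and set $\Psi(T):=T(A)$, an object of $\G\text{-}\mod^{\fin}_B$, hence a right-finite equivariant Hilbert $B$-module. The left $A$-module structure is supplied by the intrinsic reconstruction of $A$ from the module category $\G\text{-}\mod^{\fin}_A$ with its distinguished object $A$: the spectral subspaces of $A$ are recovered from the spaces $\mathrm{Hom}_{\G\text{-}\mod^{\fin}_A}(A,H_\pi\tensor A)$ ($\pi\in\irr\G$), and the product, involution and $\G$-coaction of $A$ are expressed through composition in $\G\text{-}\mod^{\fin}_A$ and the tensor structure of $\rep^{\fin}\G$. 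Sending $\phi\in\mathrm{Hom}_{\G\text{-}\mod^{\fin}_A}(A,H_\pi\tensor A)$ to $\tau_{\pi,A}\circ T(\phi)\in\mathrm{Hom}_{\G\text{-}\mod^{\fin}_B}(T(A),H_\pi\tensor T(A))$ is compatible with all of this structure — this is exactly what the associativity and unit constraints of $(T,\tau)$ say — so it transports the reconstruction and produces a $\G$-equivariant $\ast$-homomorphism $A\to\cl B(T(A))$, making $\Psi(T)$ an object of $\G\text{-}\corr^{\rfin}_{A,B}$. On morphisms, $\Psi$ takes a module natural transformation to its component at $A$, which is left $A$-linear by naturality against the morphisms $A\to H_\pi\tensor A$.

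It then remains to check that $\Phi$ and $\Psi$ are quasi-inverse. For $\Psi\Phi$, the canonical isomorphism $A\tensor_A X\cong X$ identifies $\Psi(\Phi(X))$ with $X$ as equivariant Hilbert $B$-modules, and unwinding the construction shows it carries the reconstructed left $A$-action back to the original one, naturally in $X$. For $\Phi\Psi$, for each $U$ the isomorphism
\[
 \tau_{U,A}\colon T(H_U\tensor A)\cong H_U\tensor T(A)\cong(H_U\tensor A)\tensor_A T(A)=\Phi(\Psi(T))(H_U\tensor A)
\]
is compatible with the $\rep^{\fin}\G$-module structures, and since the objects $H_U\tensor A$ generate $\G\text{-}\mod^{\fin}_A$ under direct summands while both $T$ and $\Phi(\Psi(T))$ are module functors, this extends uniquely to a module natural isomorphism $T\cong\Phi(\Psi(T))$; the same generation argument upgrades $\Phi$ to a full functor. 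Hence $\Phi$ is an equivalence of \cstar-categories, canonical by construction.

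The step I expect to be the main obstacle is the construction of the left $A$-action on $T(A)$ in the second paragraph: proving that the rule $\phi\mapsto\tau_{\pi,A}\circ T(\phi)$ assembles into an honest $\G$-equivariant $\ast$-homomorphism rather than merely a linear map. This relies on having the full $\G$-$C^*$-algebra structure of $A$ — multiplication, involution and coaction — reconstructed from the $\rep^{\fin}\G$-module category $\G\text{-}\mod^{\fin}_A$ in the De Commer--Yamashita spirit as developed earlier in the paper, together with a careful tracking of the coherence data of $(T,\tau)$ through that reconstruction; the hypothesis that $A$ generates $\G\text{-}\mod^{\fin}_A$ as a module category, built into the notion of a quantum homogeneous space, is precisely what converts the pointwise comparisons above into functorial ones.
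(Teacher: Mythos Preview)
Your outline is correct and matches the paper's approach: the forward functor is $M\mapsto -\tensor_A M$, the backward one evaluates at $A$ and equips $F(A)$ with a left $A$-action built from the spectral decomposition, and one checks these are quasi-inverse. Two points of comparison are worth recording.

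First, the obstacle you single out is exactly the one the paper addresses, but the resolution is more concrete than an appeal to the De Commer--Yamashita reconstruction machinery. The paper writes the left action of $s\tensor v\in\cl{A}_\pi=\cl{L}_A^{\G}(\cl{H}_\pi\tensor A,A)\tensor\cl{H}_\pi$ on $\eta\in F(A)$ directly as $(s\tensor v)\eta=F(s)f_{\pi,A}^*(v\tensor\eta)$, then checks by hand the two identities $\ip{\eta,(s\tensor v)\eta'}_B=\ip{(s\tensor v)^*\eta,\eta'}_B$ and $\beta_{F(A)}((s\tensor v)\eta)=\alpha(s\tensor v)\beta_{F(A)}(\eta)$. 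These give a $\G$-equivariant completely positive map $\cl{A}\to\cl{L}_B(F(A))$, and the extension to a genuine $\ast$-homomorphism $A\to\cl{L}_B(F(A))$ is then supplied by Proposition~\ref{prop:automatic continuity} (automatic continuity of equivariant c.p.\ maps defined on the algebraic core). So the bridge from ``linear map on spectral subspaces'' to ``bounded $\ast$-homomorphism'' is not abstract reconstruction but this continuity result; you should invoke it explicitly.

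Second, your argument for $\Phi\Psi\cong\id$ via the generators $H_U\tensor A$ is a little thin: having isomorphisms at a generating family that are compatible with the $\rep^{\fin}\G$-module structure does not by itself force naturality with respect to \emph{all} morphisms in $\G\text{-}\mod^{\fin}_A$. The paper instead writes down, for each $E$, an explicit map $\cl{E}\tensor F(A)\to F(E)$ sending $(T\tensor v)\tensor\xi$ to $F(T)f_{\pi,A}^*(v\tensor\xi)$, computes the inner product to see it descends to an isometry $E\tensor_A F(A)\to F(E)$, and proves surjectivity using that $F$ preserves range projections. This gives the natural isomorphism directly and avoids any extension-from-generators issue.
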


The first application of this result is the following existence theorem.

\begin{maintheorem}[Theorem \ref{thrm:Jones value}]
For any $d \in \{4\cos^2 (\pi/n)\mid n \geq 3\}\cup[4,\infty)$,
we have a compact quantum group $\G$ and a finite quantum $\G$-covering 
space $A \subset B$ with its covering degree $d$.
\end{maintheorem}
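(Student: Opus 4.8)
The plan is to use Theorem~\ref{thrm:fundamental theorem} to reduce the statement to the construction of Q-systems of prescribed dimension in the correspondence categories of a few concrete quantum homogeneous spaces. A finite quantum $\G$-covering space $A \subset B$ with $A$ a quantum homogeneous space is the same datum as a Q-system in the \cstar-tensor category $\G\text{-}\corr^{\rfin}_{A,A}$, and its covering degree is the index of the minimal $\G$-equivariant conditional expectation, which equals the categorical dimension $d(Q)$ of the minimal Q-system $Q$. Thus it suffices, for each admissible $d$, to exhibit a compact quantum group $\G$, a quantum homogeneous space $A$, and a connected Q-system $Q$ in $\G\text{-}\corr^{\rfin}_{A,A} \cong [\G\text{-}\mod^{\fin}_A,\G\text{-}\mod^{\fin}_A]^{\rep^{\fin}\G}$ with $d(Q) = d$. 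I would treat the cases $d \geq 4$ and $d = 4\cos^2(\pi/n)$ separately.

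For $d \in [4,\infty)$, take $\G = SU_q(2)$ for the unique $q \in (0,1]$ with $(q+q^{-1})^2 = d$, and $A = \C$, so that $\G\text{-}\corr^{\rfin}_{\C,\C} \cong \rep^{\fin}SU_q(2)$. Let $u$ be the fundamental corepresentation; it is irreducible and self-conjugate with $d(u) = q+q^{-1}$, so the canonical Q-system on $\bar u \tensor u$ is connected because $\dim\mathrm{Hom}(\mathbf 1,\bar u\tensor u) = \dim\mathrm{End}(u) = 1$. The associated extension is $\C \subset M_2(\C)$, where $M_2(\C)$ carries the adjoint action of $SU_q(2)$ through $u$; this action is ergodic, its invariant state has density matrix proportional to $\rho = \mathrm{diag}(q,q^{-1})$, and the corresponding equivariant conditional expectation has index $\mathrm{Tr}(\rho)\mathrm{Tr}(\rho^{-1}) = (q+q^{-1})^2 = d$. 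Hence this inclusion has covering degree exactly $d$.

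For $d = 4\cos^2(\pi/n)$ with $n \in \{3,4,6\}$, i.e.\ $d \in \{1,2,3\}$, use a cyclic quantum subgroup: with $\G = SU_q(2)$ for any $q \in (0,1)$ and $\K = \Z/d\Z$ sitting inside the maximal torus $\T \subset SU_q(2)$, the inclusion $A = C(SU_q(2)/\K) \subset B = C(SU_q(2))$ with the left translation actions is a $\G$-equivariant finite covering of degree $\lvert\K\rvert = d$. The remaining, non-integer values $4\cos^2(\pi/n)$ (for $n = 5,7,8,\dots$, accumulating at $4$) cannot be reached this way, since quantum subgroups always contribute integer covering degrees. For these I would fix $q \in (0,1)$ and use De Commer--Yamashita's classification of quantum homogeneous spaces of $SU_q(2)$ (\cite{DY15}) to select one, $A$, whose module category over $\rep SU_q(2)$ is tailored so that the infinite \cstar-tensor category $\G\text{-}\corr^{\rfin}_{A,A}$ contains a connected Q-system $Q$ with $d(Q) = 4\cos^2(\pi/n)$; equivalently, one realizes an equivariant model of the $A_{n-1}$ Jones subfactor as an inclusion of quantum homogeneous spaces of a suitably deformed $SU_q(2)$.

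The genuinely hard part is this last step, the non-integer Jones values below $4$. The natural home of the $A_{n-1}$ subfactor is the Temperley--Lieb--Jones fusion category at a root of unity, which admits no unitary fiber functor and hence is not $\rep\G$ for any compact quantum group $\G$; moreover a compact quantum group has no object of categorical dimension in $(1,2)$, so the required small Q-system cannot live in $\rep\G$ itself. One must instead exploit that $\rep SU_q(2)$ is an infinite \cstar-tensor category when $q$ is not a root of unity, so that its module categories and the associated correspondence categories $\G\text{-}\corr^{\rfin}_{A,A}$ have room for objects and Q-systems of dimension below $2$ that are invisible inside $\rep SU_q(2)$. Making this precise requires (i) pinning down the right quantum homogeneous space $A$ and the right Q-system from the combinatorial data of \cite{DY15}, and (ii) checking that the minimal $\G$-equivariant conditional expectation of the resulting inclusion has index exactly $4\cos^2(\pi/n)$, i.e.\ identifying the minimal Q-system in $\G\text{-}\corr^{\rfin}_{A,A}$ and computing its categorical dimension; I expect (ii) to be the delicate point.
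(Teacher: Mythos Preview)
Your treatment of $d \ge 4$ matches the paper exactly: the adjoint action of $SU_q(2)$ on $M_2(\C)$ via the fundamental representation gives covering degree $(q+q^{-1})^2$, and this is precisely Example~\ref{exam:adjoint action}. Your separate handling of $d\in\{1,2,3\}$ via cyclic subgroups is correct but unnecessary, since the paper treats all $d<4$ uniformly.

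The gap is in the non-integer case $d=4\cos^2(\pi/n)$. You correctly diagnose that the Q-system cannot live in $\rep^{\fin}\G$ and must come from some $\G\text{-}\corr^{\rfin}_A$ for a nontrivial $A$, but you stop at ``use \cite{DY15} to find the right $A$'' and explicitly flag steps (i) and (ii) as unfinished. The paper supplies exactly the missing mechanism, and it is cleaner than a search through the \cite{DY15} classification. Take any fusion category $\cl{C}$ containing an object $X$ with $d(X)=\sqrt{d}$ (the Temperley--Lieb--Jones category at the appropriate level works). Replace $\cl{C}$ by the subcategory generated by $X$ and $\bar X$. By the universal property of $\rep^{\fin}SU_q(2)$ there is a \cstar-tensor functor $\Phi\colon\rep^{\fin}SU_q(2)\to\cl{C}$ sending $\pi_{1/2}$ to $X\oplus\bar X$; this makes $\cl{C}$ into a connected $\rep^{\fin}SU_q(2)$-module category, hence by \cite[Theorem~6.4]{DY13} it is equivalent to $SU_q(2)\text{-}\mod^{\fin}_A$ for some quantum homogeneous space $A$. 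Now the right multiplication of $\cl{C}$ on itself gives a \cstar-tensor functor $\cl{C}^{\mathrm{op}}\to[\cl{C},\cl{C}]_{\mathrm{b}}^{\rep^{\fin}SU_q(2)}\cong SU_q(2)\text{-}\corr^{\rfin}_A$.

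Your anticipated delicate point (ii) is in fact trivial: because $\cl{C}$ is a fusion category, every \cstar-tensor functor out of it is automatically dimension-preserving, so the image $Y$ of $X$ has $d(Y)=\sqrt{d}$ in $SU_q(2)\text{-}\corr^{\rfin}_A$, and the Q-system $\bar Y\tensor Y$ has dimension exactly $d$. No index computation is needed. The idea you were missing is that one does not hunt for $A$ among known examples; one \emph{builds} $A$ by declaring its module category to be the fusion category $\cl{C}$ itself, whereupon the Q-system is simply the internal tensor structure of $\cl{C}$ transported into the correspondence category.
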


In Section 4, we focus on the $\G$-actions induced from the maximal Kac 
quantum subgroup $\K$ of $\G$. Instead of treating with them directly, 
we work on $\rep^{\fin}\G$-module categories and module functors 
between them. This enables us to translate the imprimitivity theorem 
to the comparison theorem of $\rep^{\fin}\G$-module functors 
and $\rep^{\fin}\K$-module functors. Then we develop a general theory 
of module categories admitting a \emph{module trace}, concluding
the following theorem for actions of compact quantum groups.

\begin{maintheorem}[Theorem \ref{thrm:main1}, Theorem \ref{thrm:picard}, Theorem \ref{thrm:auto}]
Let $A$ be a quantum homogeneous space of $\K$ and $\tilde{A}$ be 
its induced $\G$-\cstar algebra. 
\begin{enumerate}
 \item If $A$ has a tracial state and $\irr\K\text{-}\mod^{\fin}_A$ is 
finite, the induction functor gives an equivalence of 
\cstar-tensor categories: 
$\K\text{-}\corr^{\rfin}_A\cong \G\text{-}\corr^{\rfin}_{\tilde{A}}$. 
Moreover both of them are rigid.
 \item If $A$ is induced from a quantum homogeneous space of a cocommutative
quantum subgroup of $\K$, the induction functor gives group isomorphisms
$\picard_{\K}(A)\cong \picard_{\G}(\tilde{A})$ and $\auto_{\K}(A)\cong \auto_{\G}(\tilde{A})$.
\end{enumerate}
\end{maintheorem}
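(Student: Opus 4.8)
The plan is to transport everything through Theorem~\ref{thrm:fundamental theorem}, so that correspondences become module endofunctors, and then to prove the imprimitivity statement at that level, with a module trace supplying rigidity. Write $\cl M=\K\text{-}\mod^{\fin}_A$. As a preliminary I would record two facts about induction, both comparatively soft: that $\tilde A=\mathrm{Ind}_{\K}^{\G}A$ is again a quantum homogeneous space of $\G$, and that there is a canonical equivalence of module categories $\G\text{-}\mod^{\fin}_{\tilde A}\cong\cl M$ under which the $\rep^{\fin}\G$-action on the left corresponds to the $\rep^{\fin}\K$-action on $\cl M$ precomposed with the restriction tensor functor $\rep^{\fin}\G\to\rep^{\fin}\K$ (this is the module-category form of the classical imprimitivity theorem for induced actions). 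Combined with Theorem~\ref{thrm:fundamental theorem} applied to both $(\K,A)$ and $(\G,\tilde A)$, this identifies the induction functor on correspondences with the comparison functor
\[
 [\cl M,\cl M]^{\rep^{\fin}\K}\longrightarrow[\cl M,\cl M]^{\rep^{\fin}\G}
\]
that reads a $\rep^{\fin}\K$-module endofunctor of $\cl M$ as a $\rep^{\fin}\G$-module one (the $\rep^{\fin}\G$-structure coming from restriction). For (i) it then suffices to show this comparison functor is an equivalence of \cstar-tensor categories.

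Second, I would install a module trace. A tracial state on $A$ yields, through the standard correspondence between tracial weights and solutions of the conjugate equations for finite-index equivariant correspondences, a module trace on $\cl M$; here it is essential that $\K$ is Kac, so that $\rep^{\fin}\K$ is unimodular and the resulting traces are genuinely $\rep^{\fin}\K$-equivariant. Under the identification $\G\text{-}\mod^{\fin}_{\tilde A}\cong\cl M$ the same data gives a module trace on $\G\text{-}\mod^{\fin}_{\tilde A}$. Running the general theory of module categories admitting a module trace developed in Section~4, the existence of a module trace forces every module endofunctor in sight to admit a two-sided adjoint, equivalently (via Theorem~\ref{thrm:fundamental theorem}) forces every finite-right-index equivariant correspondence to be finite-left-index as well; this gives rigidity of both $\K\text{-}\corr^{\rfin}_A$ and $\G\text{-}\corr^{\rfin}_{\tilde A}$, independently of the equivalence in (i).

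The equivalence splits into full faithfulness and essential surjectivity, and I expect essential surjectivity to be the main obstacle, since it carries the genuine imprimitivity content. Full faithfulness amounts to showing that a $\rep^{\fin}\G$-module natural transformation between two functors that are already $\rep^{\fin}\K$-module functors automatically respects the $\rep^{\fin}\K$-structure; the key point is that every simple object of $\rep^{\fin}\K$ occurs as a subobject of the restriction of some object of $\rep^{\fin}\G$, and the module trace allows one to project a morphism onto its $\rep^{\fin}\K$-equivariant part, with the finiteness of $\irr\cl M$ ensuring this projection is a finite, convergent expression. For essential surjectivity one must promote a given $\rep^{\fin}\G$-module structure on a functor $F$ (lying in the appropriate finite class) to a $\rep^{\fin}\K$-module structure, i.e.\ construct coherent isomorphisms $F(Y\cdot m)\cong Y\cdot F(m)$ for all $Y\in\rep^{\fin}\K$; one produces these by splitting the structure isomorphisms $F(\mathrm{Res}(X)\cdot m)\cong\mathrm{Res}(X)\cdot F(m)$ along the decomposition of $\mathrm{Res}(X)$ into simples, uses the module trace to make the splitting canonical, and checks coherence through the finitely many relations that the finiteness of $\irr\cl M$ makes available. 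Monoidality of the comparison functor --- its compatibility with composition of endofunctors, equivalently with relative tensor products of correspondences --- is then a routine diagram chase, completing (i).

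For (ii) the hypothesis is genuinely different, so it is not a formal consequence of (i). Writing $A=\mathrm{Ind}_{\H}^{\K}B$ with $\H\subset\K$ cocommutative, $\H$ is Kac, so $B$ --- and hence $A$ and $\tilde A=\mathrm{Ind}_{\H}^{\G}B$, by transitivity of induction --- carries a tracial state coming from the Haar state, which furnishes module traces on $\cl M$ and on $\G\text{-}\mod^{\fin}_{\tilde A}$ exactly as above; what is absent is the finiteness of $\irr\cl M$. The resolution is that $\picard$ and $\auto$ detect only the invertible part of the correspondence category --- invertible equivariant correspondences and equivariant automorphisms --- and an invertible correspondence is automatically ``rank one'' and of finite index, so the only place in the proof of (i) where finiteness was used (the infinite averages) degenerates to a single term. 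Hence the comparison functor restricted to invertible objects is again an equivalence: full faithfulness gives injectivity of $\picard_{\K}(A)\to\picard_{\G}(\tilde A)$ and of $\auto_{\K}(A)\to\auto_{\G}(\tilde A)$, and the rank-one imprimitivity argument gives surjectivity. That these bijections are group isomorphisms follows from the monoidality established in (i) together with the transitivity $\mathrm{Ind}_{\K}^{\G}\circ\mathrm{Ind}_{\H}^{\K}=\mathrm{Ind}_{\H}^{\G}$; the automorphism statement is then read off from the Picard statement via the exact sequence relating the equivariant automorphisms of a quantum homogeneous space to its equivariant Picard group. The bookkeeping of these identifications is the part most likely to be laborious, but it introduces no idea beyond those used for (i).
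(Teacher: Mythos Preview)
Your high-level strategy matches the paper exactly: reduce via Theorem~\ref{thrm:fundamental theorem} and Theorem~\ref{thrm:comparison} to showing that the forgetful functor $[\cl M,\cl M]^{\rep^{\fin}\K}\to[\cl M,\cl M]^{\rep^{\fin}\G}$ is an equivalence, and supply a module trace on $\cl M=\K\text{-}\mod^{\fin}_A$ from the tracial state. Full faithfulness is indeed easy. The gap is in your essential surjectivity argument.

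To define $\tilde f_V$ for $V\in\rep^{\fin}\K$ by restricting $f_U$ along an isometry $V\hookrightarrow U|_{\K}$, you must know that $f_U$ intertwines $\Phi(T)$ for every $\K$-endomorphism $T$ of $U|_{\K}$, not merely for $\G$-endomorphisms of $U$; otherwise the result depends on the embedding and coherence fails. Your phrase ``the module trace makes the splitting canonical'' does not address this: the obstruction is not a normalisation issue but the existence of genuinely new $\K$-morphisms. The paper resolves this in two steps you do not mention. First, using the Plancherel weight and a Perron--Frobenius argument on the finite set $\irr\cl M$ (Theorem~\ref{thrm:finite descent}), it shows that any $(F,f)\in[\cl M,\cl M]^{\rep^{\fin}\G}$ is rigid with a \emph{standard} conjugate, whence Proposition~\ref{prop:abstract descent} forces $f_\pi$ to commute with $\Phi(a_{\Theta,\pi})$, where $a_\Theta$ is the modular natural transformation of the restriction $\rep^{\fin}\G\to\rep^{\fin}\K$. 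Second, a Tannakian lemma shows that $\mathrm{Hom}_{\K}(\pi|_{\K},\rho|_{\K})$ is spanned by composites of $\G$-intertwiners with powers of $a_\Theta$; this is exactly where the hypothesis that $\K$ is the \emph{maximal} Kac subgroup enters, and your proposal never uses it. Together these give the needed commutation of $f_\pi$ with all $\Phi(T)$ for $\K$-intertwiners $T$, after which your splitting argument goes through.

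For (ii), the mechanism is not that invertibility makes an average ``degenerate to a single term''. Cocommutativity of $\hat\Gamma$ means every simple in $\rep^{\fin}\hat\Gamma$ is one-dimensional; since $\cl M=\hat\Gamma\text{-}\mod^{\fin}_{A_0}$ is connected and tracial, all its simples then have the same dimension, so $d_F\equiv 1$ for any autoequivalence $F$. Proposition~\ref{prop:locally scaling} (the converse direction, valid for equivalences) then replaces the finiteness argument and again yields commutation of $f_\pi$ with $\Phi(a_{\Theta,\pi})$; the same Tannakian lemma finishes. The automorphism statement is deduced from the Picard one via the observation that an invertible correspondence is isomorphic to some $A_\theta$ precisely when it has a nonzero invariant vector, a property preserved and reflected by induction.
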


The statement (i) contains the proceeding research \cite{CKS,So05} on 
the quantum Bohr compactification as a special case.

In Section 5, we deal with the Drinfeld-Jimbo deformation.
By using the well-developed
representation theory of $C(G_q)$ and $C(T\backslash G_q)$,
we show the following theorems.

\begin{maintheorem}[Corollary \ref{coro:descent for G_q}]
Let $A,B$ be quantum homogeneous spaces of $T$ and 
$\tilde{A},\tilde{B}$ be its induced $G_q$-\cstar algebras. Then 
the induction functor gives an equivalence of \cstar-categories:
$T\text{-}\corr^{\rfin}_{A,B}\cong G_q\text{-}\corr^{\rfin}_{\tilde{A},\tilde{B}}$
\end{maintheorem}
\begin{maintheorem}[Corollary \ref{coro:automatically induced}]
Let $\tilde{A}$ be a quantum homogeneous space
containing $C(T\backslash G_q)$ and admitting a tracial state.
Then it is of the form $\ind_T^{G_q} A$, where $A$ is 
a quantum homogeneous space of $T$. 
\end{maintheorem}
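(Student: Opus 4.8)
The plan is to reduce this to the descent equivalence of Corollary~\ref{coro:descent for G_q}, applied with $A=B=\mathbb{C}$. Write $P:=C(T\backslash G_q)=\ind_T^{G_q}\mathbb{C}$; that corollary then furnishes an equivalence of \cstar-categories $\Phi\colon T\text{-}\corr^{\rfin}_{\mathbb{C}}\longrightarrow G_q\text{-}\corr^{\rfin}_{P}$ implemented by induction. Both categories are \cstar-tensor categories under composition of correspondences, and induction is compatible with this composition (the relative tensor product over $P$ of two induced correspondences is induced from the relative tensor product over $\mathbb{C}$), so $\Phi$ upgrades to a \cstar-tensor equivalence; in particular it preserves and reflects \cstar-Frobenius algebra objects, i.e.\ the algebraic data encoding equivariant extensions, allowing ind-objects where the extension is of infinite index. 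One then checks that $\Phi$ sends the algebra object attached to a quantum homogeneous space of $T$ to the algebra object attached to its $G_q$-induction. It therefore suffices to exhibit the inclusion $P\subset\tilde{A}$ as such a \cstar-Frobenius algebra object of $G_q\text{-}\corr^{\rfin}_{P}$: transporting it through $\Phi^{-1}$ yields a quantum homogeneous space $A$ of $T$, and then $\ind_T^{G_q}A\cong\Phi(A)\cong\tilde{A}$, the isomorphism intertwining the two copies of $P$ because it is an isomorphism of objects sitting under $P$.

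The heart of the argument, and the step I expect to be the main obstacle, is producing that algebra-object structure — equivalently, a $G_q$-equivariant conditional expectation $E\colon\tilde{A}\to P$, after which one forms the \cstar-correspondence by completing $\tilde{A}$ in the $P$-valued inner product $\langle x,y\rangle=E(x^\ast y)$ and uses the multiplication and unit of $\tilde{A}$ for the Frobenius structure, checking finally that the resulting object lies in the subcategory $G_q\text{-}\corr^{\rfin}_{P}$ rather than merely in the category of all equivariant correspondences. The naive recipe of averaging the given tracial state over $G_q$ does \emph{not} yield an equivariant trace, since the Haar state of $C(G_q)$ fails to be tracial when $q\neq1$; so the tracial hypothesis must be exploited differently — for instance by passing to the GNS completion of $\tilde{A}$ with respect to its unique $G_q$-invariant state, producing $E$ at the von Neumann algebra level (via a center-valued trace, or via the known module-categorical structure of $G_q\text{-}\mod^{\fin}_{C(T\backslash G_q)}$ over $\rep^{\fin}G_q$ coming from the fundamental theorem) and then descending $E$ to the \cstar-level. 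Alternatively one could attempt to realize $A$ geometrically, as the fibre of $\tilde{A}$ over a distinguished classical point of the quantum flag manifold, but this meets the same difficulty of checking that the fibre is a \cstar-algebra carrying the correct induced structure. Either way this is precisely where both the tracial state and the fine structure of $C(T\backslash G_q)$ developed earlier in Section~5 are needed.

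Granting this, the remaining steps are formal. Applying $\Phi^{-1}$ to the algebra object gives a \cstar-Frobenius algebra object in $T\text{-}\corr^{\rfin}_{\mathbb{C}}$; unwinding definitions, such an object is exactly a $T$-\cstar algebra $A$ equipped with a faithful $T$-invariant trace, and $A$ is ergodic because $\Phi$ carries the unit component of the algebra object, namely $A^{T}$, to $\tilde{A}^{G_q}=\mathbb{C}$, so $A$ is a quantum homogeneous space of $T$. Finally, since $\Phi$ is the monoidal induction functor it takes this $A$ to the algebra object of $\ind_T^{G_q}A$, whence $\ind_T^{G_q}A\cong\Phi(A)\cong\tilde{A}$ compatibly with the inclusions of $C(T\backslash G_q)$, which is the assertion.
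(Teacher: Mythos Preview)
Your approach has two genuine gaps, and the paper's proof in fact proceeds along an entirely different, more direct route.

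First, the construction of the $G_q$-equivariant conditional expectation $E\colon\tilde{A}\to P$ is not carried out. You yourself identify this as ``the heart of the argument'' and ``the main obstacle,'' but your suggestions (center-valued traces, module-categorical structure, a fibre construction) remain at the level of slogans. For a general quantum homogeneous space $\tilde{A}\supset C(T\backslash G_q)$ there is no automatic reason such an $E$ should exist; this is not the coideal situation where the Haar-state-preserving expectation is available for free.

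Second, and more seriously, even granting the expectation, you have no control over whether the resulting correspondence $\tilde{A}_E$ is \emph{right-finitely generated} over $P$. The statement does not assume $P\subset\tilde{A}$ is of finite index, so there is no reason for $\tilde{A}_E$ to lie in $G_q\text{-}\corr^{\rfin}_P$, and Corollary~\ref{coro:descent for G_q} is stated only for $\corr^{\rfin}$. Your parenthetical ``allowing ind-objects where the extension is of infinite index'' is not a fix: the equivalence of Corollary~\ref{coro:descent for G_q} has not been established for ind-objects, and extending it is nontrivial precisely because the machinery of Theorem~\ref{thrm:fundamental theorem} relies on finite generation.

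The paper sidesteps both issues by never invoking the correspondence equivalence at all. It constructs $A$ \emph{directly} as the quotient $\tilde{A}/J$, where $J$ is the ideal of elements killed by every tracial state. The tracial hypothesis enters to make $J$ proper, and the representation theory of $C(T\backslash G_q)$ (Proposition~5.1 and Corollary~\ref{coro:tracial descent}) shows that any trace on an induced algebra factors through the counit fibre, which is what makes $A$ a $T$-\cstar-algebra. The map $(q\otimes\id)\circ\tilde{\alpha}\colon\tilde{A}\to\ind_T^{G_q}A$ is then shown to be an isomorphism: injectivity from ergodicity, and surjectivity via Theorem~\ref{thrm:hom descent} (applied to the inclusion $C(T\backslash G_q)\subset\tilde{A}$) together with the explicit projection $P_A$ of Lemma~\ref{lemm:expectation onto ind}. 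No conditional expectation onto $P$ and no finite-generation assumption is needed.
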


At the last of this paper, we use the result in Section 4
to give a complete classification result of
finite index discrete quantum subgroups of $\hat{G_q}$.

\begin{maintheorem}[Theorem \ref{thrm:classification of subgroup}]
Let $P$ (resp. Q) be the weight (resp. root) lattice of $G$.
There is a canonical one-to-one correspondece between finite index
discrete quantum subgroups of $\hat{G_q}$ and 
subgroups of $P/Q$.
\end{maintheorem}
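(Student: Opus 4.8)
The starting point is the standard correspondence between a discrete quantum group and its dual: a discrete quantum subgroup $\hat{\H}\le\hat{G_q}$ is the same datum as a quotient compact quantum group $G_q\twoheadrightarrow\H$, i.e.\ a Hopf-$*$-subalgebra $\mathcal{O}(\H)\subseteq\mathcal{O}(G_q)$, equivalently a full replete \cstar-tensor subcategory $\rep^{\fin}\H\subseteq\rep^{\fin}G_q$; and $\hat{\H}$ has finite index exactly when $[C(G_q):C(\H)]<\infty$, that is, when the kernel of $G_q\twoheadrightarrow\H$ is a finite quantum subgroup. So the content of the theorem is that the finite-index quotients of $G_q$ are precisely the central quotients $G_q/N$ with $N$ a subgroup of the finite center $Z(G)$, once one recalls the canonical isomorphism $Z(G)\cong\hat{P/Q}$ and the annihilator duality between subgroups of $\hat{P/Q}$ and subgroups of $P/Q$. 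Under these identifications a subgroup $S\le P/Q$ corresponds to the discrete quantum subgroup $\hat{G_q/Z_S}$, where $Z_S\le Z(G)$ annihilates the preimage of $S$ in $P$; equivalently, to the full replete subcategory $\bigoplus_{g\in S}(\rep^{\fin}G_q)_g$ with respect to the universal $P/Q$-grading of $\rep^{\fin}G_q$.

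The plan is to extract this out of the descent results of Sections 4 and 5 for the maximal Kac quantum subgroup of $G_q$, which is the maximal torus $T$. The key step is to show that a finite-index quotient $\H$ of $G_q$ has $C(\H)\supseteq C(T\backslash G_q)$. Note first that $C(T\backslash G_q)$ is spanned by the zonal matrix coefficients $u^{\lambda}_{\xi,\eta}$ with $\xi$ of weight $0$; existence of such a $\xi$ forces $\lambda\in Q$, hence trivial central character, so $C(T\backslash G_q)\subseteq C(G_q)_0=C(G_{q,\mathrm{ad}})$, and therefore $C(\H)\supseteq C(T\backslash G_q)$ is equivalent to $\rep^{\fin}\H\supseteq(\rep^{\fin}G_q)_0$, i.e.\ to $N\le Z(G)$. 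To see that finiteness of the index forces this, I would argue that a finite-index full replete \cstar-tensor subcategory of $\rep^{\fin}G_q$ cannot omit a whole simple factor of $\rep^{\fin}G_q$ — doing so makes $[C(G_q):C(\H)]$ infinite — and then invoke the rigidity of the Drinfeld-Jimbo deformation (via the monoidal equivalence $\rep^{\fin}G_q\cong\rep^{\fin}G$ together with the classical list of closed normal subgroups of $G$, or directly via the induction theory of Section 4, which rules out exotic finite normal quantum subgroups) to conclude that the subcategory contains the adjoint subcategory $(\rep^{\fin}G_q)_0=\rep^{\fin}G_{q,\mathrm{ad}}$.

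With $C(T\backslash G_q)\subseteq C(\H)$ established, and noting that $C(\H)$ carries a tracial state (the counit, since $\H$ is coamenable), Corollary \ref{coro:automatically induced} then yields $C(\H)=\mathrm{Ind}_T^{G_q}A$ for a quantum homogeneous space $A$ of $T$. By Corollary \ref{coro:descent for G_q} and Theorem \ref{thrm:main1} the inclusion $C(\H)\subseteq C(G_q)=\mathrm{Ind}_T^{G_q}C(T)$, together with the coalgebra structure exhibiting $C(\H)$ as a quotient quantum group, descends to an inclusion $A\subseteq C(T)$. Since $T$ is a classical torus with $X^{*}(T)=P$, one has $A=C(T/T_0)$ for a closed subgroup $T_0\le T$, equivalently a sublattice $L=X^{*}(T/T_0)\subseteq P$; the descended data identify this inclusion with the restriction to the torus of $\mathcal{O}(G_q)\to\mathcal{O}(\H)$, and since (by the previous paragraph) the adjoint representation descends to $\H$, the roots survive in the image, so $Q\subseteq L$, i.e.\ $T_0\le Z(G)$ and $\H=G_q/T_0$. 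Conversely every sublattice $L$ with $Q\subseteq L\subseteq P$ arises from the central quotient $G_q/L^{\perp}$, which has finite index. Thus finite-index discrete quantum subgroups of $\hat{G_q}$ are in canonical bijection with the sublattices $L$ satisfying $Q\subseteq L\subseteq P$, and $L\mapsto L/Q$ completes the bijection with subgroups of $P/Q$.

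The main obstacle is the reduction carried out in the second paragraph: that finiteness of the index already puts us inside the hypotheses of Corollary \ref{coro:automatically induced}, equivalently that $G_q$ has no finite normal quantum subgroup outside its center. This is the only place where the particular structure of $\rep^{\fin}G_q$ and the rigidity supplied by Section 4 genuinely enter; everything else is either the general descent machinery already in hand or elementary lattice combinatorics. A subsidiary point to check is that Corollary \ref{coro:descent for G_q}, stated for \cstar-categorical data, can be upgraded here to transport the Hopf-algebraic structure as well, so that the objects being classified over $T$ are quotient quantum groups rather than mere covering spaces.
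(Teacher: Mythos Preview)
Your overall strategy --- pass to the maximal Kac subgroup $T$, recover a sublattice of $P$, and check it contains $Q$ --- matches the paper's. But the mechanism you propose for the descent is different, and it is precisely at the point you flag as the ``main obstacle'' that your argument is incomplete.

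You try to reach the hypotheses of Corollary~\ref{coro:automatically induced} by first proving $C(T\backslash G_q)\subset C(\H)$, and you sketch this via an informal appeal to ``rigidity'' of $\rep^{\fin}G_q$ or to the classical list of normal subgroups of $G$. Neither of these is made precise, and the remark about ``omitting a whole simple factor'' does not address the actual question (for simple $G$ there is only one factor, yet one still has to rule out exotic finite-index tensor subcategories). So the gap you identify is real and remains open in your proposal.

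The paper closes this gap by a different and more general route. It proves, just before Theorem~\ref{thrm:classification of subgroup}, the following lemma valid for \emph{any} compact quantum group $\G$ with maximal Kac quantum subgroup $\K$: if a right coideal $A\subset C(\G)$ has finite index, then $q^{-1}(q(\cl{A}))=\cl{A}$, where $q\colon\cl{O}(\G)\to\cl{O}(\K)$ is the canonical surjection. The proof of this lemma does not use the $G_q$-specific results (Corollaries~\ref{coro:descent for G_q}, \ref{coro:automatically induced}) at all; instead it invokes the abstract module-category machinery of Section~4 --- Theorem~\ref{thrm:finite descent} (rigidity under a finiteness hypothesis) and Theorem~\ref{thrm:main1} --- applied to the basic construction of $A\subset C(\G)$. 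Once one has $q^{-1}(q(C(\H)))=C(\H)$, the image $q(C(\H))=C(H)$ determines $C(\H)$ completely, and highest-weight theory forces $\hat{H}\supset Q$. There is no need to show $C(T\backslash G_q)\subset C(\H)$ beforehand (though it follows a posteriori, since $q(C(T\backslash G_q))=\C\subset C(H)$).

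This also dissolves your subsidiary concern: because the paper's lemma works directly with the right coideal $C(\H)\subset C(\G)$ at the algebraic level, there is nothing to ``transport'' --- the Hopf-algebraic structure is never left behind. So the correct fix is not to push harder on Corollaries~\ref{coro:descent for G_q} and~\ref{coro:automatically induced}, but to replace that whole line of argument with the general coideal lemma proved via Section~4.
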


\section{Preliminaries}

In this paper, the symbol $\textendash\tensor\textendash$ denotes
the spatial tensor product of C*-algebras, the algebraic tensor
product of $\C$-vector spaces and the external tensor product of 
Hilbert C*-modules.

For a Hilbert space $\cl{H}$, its inner product $\ip{\tend,\tend}$ is 
$\C$-linear with respect to the second argument. An element $\xi \in \cl{H}$
is considered as an operator from $\C$ to $\cl{H}$. If $\cl{H}$ is 
finite dimensional, the unnormalized trace on $B(\cl{H})$ is 
denoted by $\tr$.

\subsection{Compact quantum groups and their dual}
In this subsection, we give a brief review on compact quantum groups.
See \cite{NT13} for detailed discussions.

A \emph{compact quantum group} is a pair $\G = (A,\Delta)$
of a unital C*-algebra $A$ and \star-homomorphism 
$\map{\Delta}{A}{A\otimes A}$ which satisfies the following conditions: 
\begin{itemize}
 \item the coassociativity $(\Delta\tensor\id)\Delta = (\id\tensor\Delta)\Delta$, 
 \item the cancellation property $\bar{(A\otimes \C)\Delta(A)} = \bar{(\C\tensor A)\Delta(A)} = A\tensor A$.
\end{itemize}
In this case $A$ is denoted by $C(\G)$. A \emph{Haar state} on $\G$ is a
state $h$ on $C(\G)$ satisfying the bi-invariance property 
$(h\tensor\id)\Delta(x) = (\id\tensor h)\Delta(x) = h(x)1_{\G}$. Such a 
state always exists and is unique. It is
said that $\G$ is \emph{reduced} when $h$ is faithful. 
We only consider reduced compact quantum groups unless otherwise noted.

A \emph{unitary representation} of $\G$ is a pair 
$\pi = (\cl{H}_{\pi},U_{\pi})$ of a Hilbert space $\cl{H}_{\pi}$ and 
a unitary $U_{\pi} \in M(\cl{K}(\cl{H}_{\pi}) \otimes C(\G))$ 
which satisfies $(\id\otimes\Delta)(U_{\pi}) = U_{\pi,12}U_{\pi,13}$. 
The category of unitary representations of $\G$ is denoted by $\rep \G$, 
and its full subcategory of the finite dimensinal ones is denoted by 
$\rep^{\fin} \G$.

Let $\pi$ be a finite dimensional unitary representation of $\G$ and 
$\xi,\,\eta$ be elements of $\cl{H}_{\pi}$. Then 
$(\xi^*\otimes 1)U_{\pi}(\eta\tensor 1)$ defines an element of 
$C(\G)$, called a \emph{matrix coefficient} of $\pi$. The set of all 
matrix coefficients of all finite dimensional unitary representations is 
called the \emph{algebraic core} of $\G$ and denoted by $\cl{O}(\G)$. 
We can make $\cl{O}(\G)$ into a Hopf \star-algebra by using the product and 
the coproduct of $C(\G)$. Its counit and antipode are denoted by 
$\eps$ and $S$, respectively. In general $S$ satisfies $S(S(x)^*)^* = x$ 
for any $x \in \cl{O}(\G)$, but does not 
$S^2 = \id$. It is said that $\G$ is \emph{of Kac type} when it holds.

Let $\H$ be another compact quantum group. A \emph{homomorphism} 
from $\H$ to $\G$ is a  homomorphism of Hopf \star-algebras from
$\cl{O}(\G)$ to $\cl{O}(\H)$. One should note that
$\phi$ need not be defined on $C(\G)$. On the other hand 
we can still define $(\phi\tensor \id)\Delta$ as a \star-homomorphism from
$C(\G)$ to $C(\H)\tensor C(\G)$. More generally, we have the following 
lemma, which is a quantum analogue of the Fell's absorption principle
for $\hat{\G}$:

\begin{lemm} \label{lemm:absorption}
 Let $A$ be a \cstar-algebra and $\map{\phi}{\cl{O}(\G)}{A}$ be a 
\star-homomorphism. Then $(\phi\tensor\id)\Delta$ extends to a 
\star-homomorphism from $C(\G)$ to $A\tensor C(\G)$.
\end{lemm}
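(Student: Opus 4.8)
The plan is to prove that $(\phi\tensor\id)\Delta$, a priori defined only on the Hopf \star-algebra $\cl{O}(\G)$ with values in $A\tensor\cl{O}(\G)$, is continuous for the norm of $C(\G)$, and then to extend it by density. Since $\G$ is reduced, the GNS representation of the Haar state realizes $C(\G)$ faithfully on $L^2(\G)$; fix also a faithful representation $A\subseteq B(\cl{H}_A)$. As $\tensor$ is the spatial (minimal) tensor product, $A\tensor C(\G)$ is faithfully represented on $\cl{H}_A\tensor L^2(\G)$, so it suffices to show that the \star-representation $\rho:=(\phi\tensor\lambda)\Delta$ of $\cl{O}(\G)$ on $\cl{H}_A\tensor L^2(\G)$ satisfies $\nor{\rho(a)}\leq\nor{a}_{C(\G)}$ for every $a\in\cl{O}(\G)$, where $\lambda$ is the above representation of $C(\G)$ and, by abuse, $\phi$ also denotes the composition of the given map with $A\hookrightarrow B(\cl{H}_A)$.

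The heart of the argument is a quantum version of Fell's absorption principle: $\rho$ is unitarily equivalent to the representation $a\mapsto 1_{\cl{H}_A}\tensor\lambda(a)$. For a finite-dimensional unitary representation $\pi$ of $\G$, the element $V_{\pi}:=(\id\tensor\phi)(U_\pi)\in B(\cl{H}_\pi)\tensor B(\cl{H}_A)$ is unitary because $\phi$ is a \star-homomorphism; writing $U^{\lambda}_{\pi}:=(\id\tensor\lambda)(U_\pi)\in B(\cl{H}_\pi)\tensor B(L^2(\G))$ and using $(\id\tensor\Delta)(U_\pi)=U_{\pi,12}U_{\pi,13}$, one computes, for $a=(\xi^*\tensor1)U_\pi(\eta\tensor1)$ with $\xi,\eta\in\cl{H}_\pi$, the identities in $B(\cl{H}_A\tensor L^2(\G))$
\[
 \rho(a)=(\xi^*\tensor1\tensor1)V_{\pi,12}U^{\lambda}_{\pi,13}(\eta\tensor1\tensor1),\qquad 1_{\cl{H}_A}\tensor\lambda(a)=(\xi^*\tensor1\tensor1)U^{\lambda}_{\pi,13}(\eta\tensor1\tensor1),
\]
the right-hand sides being compressions of elements of $B(\cl{H}_\pi)\tensor B(\cl{H}_A)\tensor B(L^2(\G))$. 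Thus, using the Peter--Weyl decomposition $L^2(\G)\cong\bigoplus_{\pi\in\irr\G}\cl{H}_\pi\tensor\bar{\cl{H}}_\pi$, one assembles the $V_{\pi}$ into a single unitary $X\in B(\cl{H}_A\tensor L^2(\G))$ with $X(1_{\cl{H}_A}\tensor\lambda(a))X^*=\rho(a)$ for all $a\in\cl{O}(\G)$; this $X$ is the $\phi$-twisted analogue of the multiplicative unitary of $\G$ (equivalently, $\phi$ extends to a representation of the universal \cstar-algebra $C_u(\G)$ --- matrix coefficients map to entries of unitary matrices --- and $X$ is obtained by applying this extension to a leg of the multiplicative unitary). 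When $\G$ and $A$ are commutative and $\phi$ comes from a homomorphism $H\to G$, the map of the lemma is pullback along $(h,g)\mapsto\phi(h)g$ and $X$ is the unitary of the homeomorphism $(h,g)\mapsto(h,\phi(h)g)$ of $H\times G$, so the estimate is the obvious fact that $f\mapsto\bigl((h,g)\mapsto f(\phi(h)g)\bigr)$ is isometric on $C(G)$. In particular $\nor{\rho(a)}=\nor{\lambda(a)}=\nor{a}_{C(\G)}$.

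With the estimate in hand, $(\phi\tensor\id)\Delta$ is isometric, in particular bounded, from $\cl{O}(\G)$ equipped with the norm of $C(\G)$ into $A\tensor C(\G)$; as $\cl{O}(\G)$ is dense in $C(\G)$ it extends uniquely to a bounded linear map $C(\G)\to A\tensor C(\G)$, which is a \star-homomorphism because multiplication and involution are norm-continuous. I expect the only non-formal point to be the construction of the intertwining unitary $X$: the bookkeeping of tensor legs and the passage to multiplier algebras require care, but the underlying absorption phenomenon is classical and, once $\phi$ is extended to $C_u(\G)$, can be read off from the theory of the multiplicative unitary (cf.\ \cite{NT13}).
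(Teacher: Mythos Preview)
Your proof is correct and follows essentially the same strategy as the paper: both establish the Fell-type absorption identity $X(1\tensor\lambda(a))X^*=(\phi\tensor\lambda)\Delta(a)$ for a unitary $X$ on $\cl{H}_A\tensor L^2(\G)$, and then conclude by faithfulness of $\lambda$. The only difference is in the construction of $X$: the paper writes it down in one line via the GNS cyclic vector, $V(\xi\tensor\Lambda(x))=(\phi\tensor\pi)(\Delta(x))(\xi\tensor\Lambda(1))$, which avoids the Peter--Weyl bookkeeping and the detour through $C_u(\G)$ that you sketch.
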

\begin{proof}
Let $(\pi,L^2(\G))$ be the GNS representation of $C(\G)$ with respect to
the Haar state of $\G$. Since $\G$ is reduced, this representation is 
faithful.

It suffices to show in the case of $A = B(\cl{H})$ for a Hilbert space
$\cl{H}$. Let $V$ be an operator on $\cl{H}\tensor L^2(\G)$ satisfying 
\[
 V(\xi\tensor \Lambda(x)) 
= (\phi\tensor\pi)(\Delta(x))(\xi\tensor \Lambda(1)).
\]
Then $V$ is unitary and satisfies 
$V(1\tensor \pi(x))V^* = (\phi\tensor\pi)(\Delta(x))$. 
Then the statement follows from the faithfulness of $\pi$.
\end{proof}

If we are given a Hopf \star-algebra $\cl{A}$ generated by 
matrix coefficients of its finite dimensional unitary representations, 
we can construct a compact quantum group whose algebraic core
coincides with $\cl{A}$. This fact allows us to construct the 
maximal Kac quantum subgroup.

\begin{defn}
The \emph{maximal Kac quantum subgroup} of $\G$ is the compact quantum group
corresponding to a Hopf \star-algebra obtained as the quotient of 
$\cl{O}(\G)$ divided by a two-sided ideal generated with 
$S^2(x) - x$ for all $x \in \cl{O}(\G)$.
\end{defn}

\begin{rema}
Actually $\K$ is of Kac type and has the following universal property: 
Let $\H$ be a compact quantum group of Kac type. Then any homorphism from
$\H$ to $\G$ factors through $\K$. This follows from that
any homomorphism of Hopf \star-algebra preserves antipodes. 

\end{rema}
\begin{rema} \label{rema:canonical Kac}
Let $\G_u$ be the universal form of $\G$ and $\tilde{\K}$ be its 
canonical Kac subgroup, introduced in {\cite[Appendix]{So05}}. 
Then we have a canonical \star-homomorphism
$\map{\tilde{q}}{\cl{O}(\G)}{\cl{O}(\tilde{\K})}$. This map satisfies
$\tilde{q}\circ S^2 = \tilde{q}$, hence $\tilde{q}$ 
factors through $\cl{O}(\K)$. On the other hand, the universal property of 
$\G_u$ implies that a canonical map $\map{q}{\cl{O}(\G)}{\cl{O}(\K)}$
extends to \star-homomorphism from $C_u(\G)$ to $C_r(\K)$. Since the Haar
state of $\K$ is faithful on $C_r(\K)$, this map factors through
$C(\tilde{\K})$ by its construction. This implies that $q$ factors through
$\cl{O}(\tilde{\K})$ and gives an isomorphism from 
$\cl{O}(\tilde{\K})$ to $\cl{O}(\K)$.
\end{rema}



\begin{exam}
Let $G$ be a semisimple compact Lie group and $q \in (-1,1)$. Then there
is a compact quantum group $G_q$ called the \emph{Drinfeld-Jimbo
$q$-deformation} of $G$, see {\cite[Definition 2.4.5.]{NT13}} for
a precise definition.
The representation theory of $G_q$ is quite 
similar to that of $G$, which means that they have the common set of 
equivalence classes of irreducible representations and a common fusion rule.
But actually $\rep^{\fin}{G_q}$ is not equivalent to $\rep^{\fin}{G}$
as a C*-tensor category.

It is shown that the maximal Kac quantum subgroup of $G_q$ coincides with 
the maximal torus $T$ of $G$ ({\cite[Lemma 4.10.]{To07}}).
\end{exam}

We end this subsection with the notion of \emph{discrete quantum group}.
Its definition is given by the Pontrjagin dual of a compact quantum group,
hence the reduced \cstar-algebra of discrete quantum group $\Gamma$ is 
nothing but $C(\G)$ when $\Gamma$ is the Pontrjagin dual of $\G$.
In this case $\Gamma$ is denoted by $\hat{\G}$. 

For the notion of closed quantum subgroup, see \cite{DKSS}.
There are several approaches, but all of them are equivalent in 
the discrete case ({\cite[Theorem 6.2]{DKSS}}). As a result, we can 
take the following definition:
 a \emph{closed quantum subgroup} 
of $\G$ is a pair of discrete quantum group $\hat{\H}$ 
and a coproduct-preserving unital inclusion $C(\H) \subset C(\G)$. 
In this case a unitary representation of $\H$ gives a unitary repsentation
of $\G$ with the same intetwiner space.
Combining this with the Woronowicz's Tannaka-Krein duality 
({\cite[Theorem 2.3.2]{NT13}}), we have
a one-to-one correspondence between discrete quantum subgroups of 
$\hat{\G}$ and \cstar-tensor full subcategories of $\rep^{\fin}\G$.

\subsection{Actions of compact quantum groups}

For detailed discussions and references, see \cite{DC16}.
Let $\G$ be a compact quantum group. A (reduced) \emph{$\G$-C*-algebra} or
\emph{$\G$-action} on a C*-algebra is a pair $(A,\alpha)$ of a C*-algebra 
$A$ and faithful \star-homomorphism $\map{\alpha}{A}{A\otimes C(\G)}$ 
with the following conditions: 
\begin{itemize}
 \item $(\id\tensor\Delta)\alpha = (\alpha\tensor\id)\alpha$, 
 \item $\cspan{(\C\tensor C(\G))\alpha(A)} = A\tensor C(\G)$. 
\end{itemize}
Its fixed point subalgebra $\{x \in A\mid \alpha(x) = x\otimes 1\}$ is 
denoted by $A^{\G}$ or $A^{\alpha}$. We also define 
the algebraic core of $A$ as 
$\cl{A} = \{x \in A\mid \alpha(x) \in A\otimes \cl{O}(\G)\}$.
Then we have a right coaction of the Hopf \star-algebra $\cl{O}(\G)$ on 
$\cl{A}$ given by the restriction of $\alpha$ to $\cl{A}$.

We say that $\G$-\cstar-algebra $A$ is 
a \emph{quantum homogeneous space} of $\G$ if it is unital and 
$A^{\G} = \C1_A$.

\begin{exam}
The C*-algebra $C(\G)$ has a canonical $\G$-action defined by $\Delta$. 
Moreover, every $\G$-invariant C*-subalgebra of $C(\G)$ also
has a $\G$-action and actually is a quantum homogeneous space of $\G$.
We call it a \emph{right coideal} of $\G$.
\end{exam}

\begin{exam}
Let $\H$ be a compact quantum subgroup of $\G$ and 
$\map{q}{\cl{O}(\G)}{\cl{O}(\H)}$ be the canonical map. Then 
$(q\tensor\id)\Delta$ extends to a \star-homomorphism $\ell_{\H}$ on 
$C(\G)$ by Lemma \ref{lemm:absorption}. Since $\ell_{\H}$ is 
$\G$-equivariant with respect to the canonical $\G$-action on $C(\G)$, 
we have a right coideal 
$C(\H\backslash\G) = \{x \in C(\G)\mid \ell_{\H}(x) = 1\otimes x\}$.

More generally, for a $\H$-\cstar-algebra, 
the \emph{induced $\G$-\cstar-algebta} of $B$ 
is defined as a pair of the following:
\begin{itemize}
 \item $\ind_{\H}^{\G} A = \{x \in A\otimes C(\G)\mid (\alpha\otimes \id)(x) = (\id\otimes\ell_{\H})(x)\}$,
 \item $\tilde{\alpha} = $ the restriction of $\id\tensor\Delta$.
\end{itemize}
For example, the induced $\G$-\cstar-algebra of the trivial 
$\H$-C*-algebra $\C$ is isomorphic to $C(\H\backslash\G)$.
\end{exam}

If $A$ and $B$ are $\G$-\cstar-algebras with the algebraic cores $\cl{A}$ 
and $\cl{B}$ respectively, we say that a linear map 
$\map{\phi}{\cl{A}}{B}$ is $\G$-equivariant when 
$(\phi\tensor\id)\alpha = \beta\phi$ holds. Similarly the equivariance for
$\map{\psi}{A}{B}$ is also defined when 
$\map{\psi\tensor\id}{A\tensor C(\G)}{B\tensor C(\G)}$ is defined. 

The following generalization of Lemma \ref{lemm:absorption} is useful
to get a $\G$-equivariant c.p.~map. If $\cl{A}$ is \star-algebra
and $B$ is \cstar-algebra, we say that a linear map 
$\map{\phi}{\cl{A}}{B}$ is completely positive if $(\phi\tensor\id)(X^*X)$
is positive in $B\tensor M_n(\C)$ for any $X \in \cl{A}\tensor M_n(\C)$.

\begin{prop} \label{prop:automatic continuity}
Let $A$ and $B$ be $\G$-\cstar-algebras with the algebraic cores
$\cl{A}$ and $\cl{B}$ respectively. If $\map{\phi}{\cl{A}}{B}$ is 
$\G$-equivariant and completely positive, 
it extends to a $\G$-equivariant c.p.~map from $A$ to $B$.
\end{prop}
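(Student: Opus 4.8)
The plan is to pass to a Stinespring-type dilation of $\phi$ and then, in the spirit of Lemma~\ref{lemm:absorption}, to use the $\G$-equivariance to force the dilating representation to be bounded for the \cstar-norm of $A$; continuity of $\phi$ follows at once, and the remaining assertions are then formal. First I would reduce: we may assume $A$ and $B$ are unital (the case relevant here) and, after choosing a faithful representation $B\subseteq B(\cl L)$, may moreover arrange — by the usual regular-representation construction for $\G$-\cstar-algebras — that $\beta$ is implemented by a unitary corepresentation $\cl W\in M(\cl K(\cl L)\tensor C(\G))$, i.e.\ $\beta(b)=\cl W(b\tensor 1)\cl W^*$. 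It then suffices to prove the single estimate $\nor{\phi(x)}\le\nor{\phi(1_A)}\,\nor{x}$ for $x\in\cl A$: once $\phi$ is bounded it extends continuously to a map $\bar\phi\colon A\to B(\cl L)$ with $\bar\phi(A)\subseteq B$ (as $B$ is closed and contains $\phi(\cl A)$); the extension is completely positive because its amplifications $\bar\phi\tensor\id_{M_n}$ are norm-continuous and agree with $\phi\tensor\id_{M_n}$ on the dense \star-subalgebra $\cl A\tensor M_n(\C)$, where the defining positivity holds; and it is $\G$-equivariant because $(\bar\phi\tensor\id)\alpha$ and $\beta\bar\phi$ are continuous and agree on $\cl A$.

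For the estimate I would run the GNS/Stinespring construction. The notion of complete positivity in the statement is exactly what makes $\ip{a\tensor\xi,\ b\tensor\eta}:=\ip{\xi,\phi(a^*b)\eta}$ a positive semidefinite sesquilinear form on $\cl A\tensor\cl L$; let $\cl H$ be its Hilbert space completion, $\map{V}{\cl L}{\cl H}$ the canonical bounded map $\xi\mapsto[1_A\tensor\xi]$ (so $\nor{V}^2=\nor{\phi(1_A)}$ and $V^*[a\tensor\xi]=\phi(a)\xi$), and $\rho$ the associated \star-representation of $\cl A$ by a priori unbounded operators, $\rho(a)[b\tensor\xi]=[ab\tensor\xi]$, which is well defined since the null space of the form is $\rho$-invariant. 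Thus $\phi(x)=V^*\rho(x)V$ for $x\in\cl A$, and everything reduces to showing that $\rho$ is bounded with $\nor{\rho(a)}\le\nor{a}$.

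Here is where $\G$-equivariance is used. Exploiting $\cl W$ and the identity $(\phi\tensor\id)\alpha=\beta\phi$, I would construct a unitary corepresentation $\cl U\in M(\cl K(\cl H)\tensor C(\G))$ on $\cl H$ implementing the expected coaction on the dilation: $V$ becomes $\G$-equivariant and $(\rho\tensor\id)(\alpha(a))=\cl U(\rho(a)\tensor 1)\cl U^*$ for $a\in\cl A$, all defined and checked on the dense subspace $\cl A\tensor\cl L\tensor\cl O(\G)$ (concretely $\cl U$ is built from the Sweedler components of $\alpha(a)$ and of the corepresentation $\cl W$, generalising the formula for the unitary in the proof of Lemma~\ref{lemm:absorption}). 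Since $\cl U$ is a corepresentation of the \emph{compact} quantum group $\G$, the coaction it implements on the \cstar-algebra generated by $\rho(\cl A)$ is reduced — its Haar-averaging conditional expectation is faithful. The Fell-absorption mechanism underlying Lemma~\ref{lemm:absorption}, in which faithfulness of the reduced GNS representation of $C(\G)$ forces the norm identity, then applies to $\rho$ and gives $\nor{\rho(a)}\le\nor{a}$; hence $\nor{\phi(x)}=\nor{V^*\rho(x)V}\le\nor{V}^2\nor{x}=\nor{\phi(1_A)}\nor{x}$, and the reductions above finish the proof.

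The step I expect to be the main obstacle is precisely this last one: constructing $\cl U$ and extracting the bound $\nor{\rho(a)}\le\nor{a}$ from it. This is the completely-positive analogue of the absorption principle in Lemma~\ref{lemm:absorption}, and the verification that $\cl U$ is well defined, isometric and coassociative is exactly where the compatibility $(\phi\tensor\id)\alpha=\beta\phi$ is genuinely needed — just as the corresponding operator in Lemma~\ref{lemm:absorption} is unitary only because the map there is a \star-homomorphism. Equivariance is essential: without it a completely positive map on $\cl A$ need not be bounded for the \cstar-norm of $A$, since $\cl A$ may carry a strictly larger \cstar-norm, and the role of the coaction is to replace it by the reduced one.
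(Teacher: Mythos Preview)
Your strategy is the same as the paper's at its core: build a Stinespring/GNS dilation of $\phi$, produce a unitary implementing the coaction on the dilation, and then use that the resulting \cstar-norm on $\cl A$ must coincide with the reduced one. The execution, however, differs in two notable ways.

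First, the paper does not assume unitality. It works with the Hilbert $B$-module $E$ completed from $\cl A\tensor B$ with $\ip{a\tensor b,a'\tensor b'}_B=b^*\phi(a^*a')b'$ rather than a Hilbert space, and it never writes $\phi(x)=V^*\rho(x)V$ with a bounded $V$ coming from $1_A$. Instead it first bounds $L_a$ for $a\in A^{\G}$ using Lemma~\ref{lemm:fixed points are non-degenerate} and a Cohen--Hewitt factorization, then for general $a\in\cl A$ via the ``row-unitary'' trick $a=v_1$ with $\sum_i v_i^*v_i\in A^{\G}$. After the implementing unitary gives $\nor{L_a}=\nor{(L\tensor\id)\alpha(a)}$, it invokes \cite[Proposition~4.4]{DY13} (your Fell-absorption step) to conclude this norm equals $\nor a$; but since there is no unit, the passage from $\nor{L_a}\le\nor a$ to continuity of $\phi$ still requires a further argument: one gets $\nor{\phi(a^*xa)}\le\nor x\,\nor{\phi(a^*a)}$, and then a second Cohen--Hewitt factorization (on the $A^{\G}$-bimodule $c_0\text{-}\bigoplus A_{F_n}$) converts this into continuity on null sequences. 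Your unitality assumption short-circuits this whole endgame, which is a genuine simplification when it applies --- and indeed the applications in the paper are to quantum homogeneous spaces, which are unital.

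Second, your ``main obstacle'' is exactly the paper's unitary $V\colon E\tensor_\beta(B\tensor C(\G))\to E\tensor C(\G)$, constructed explicitly and checked isometric via $(\phi\tensor\id)\alpha=\beta\phi$; so your diagnosis of where the equivariance is used is correct. In short: your outline is sound for the unital case and morally the same argument, while the paper's version is more general and pays for it with two invocations of Cohen--Hewitt and a more delicate bootstrapping of the boundedness of the dilating representation.
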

For a proof of this proposition, see Subsection \ref{subsection:A1}.

Next we move on the notion of modules over $\G$-\cstar-algebras.

\begin{defn}
Let $A$ be a $\G$-C*-algebra. A \emph{$\G$-equivariant Hilbert $A$-module} 
is a pair $(E,\alpha_E)$ of a Hilbert $A$-module $E$ and a linear 
map $\map{\alpha_E}{E}{E\otimes C(\G)}$ with the following conditions:
\begin{itemize}
 \item $\alpha(\ip{\xi,\eta}_A) = \ip{\alpha_E(\xi),\alpha_E(\eta)}_{A\tensor C(\G)}$ for any $\xi,\,\eta \in E$,
 \item $\cspan{\alpha_E(E)(\C\tensor C(\G))} = E\otimes C(\G)$.
\end{itemize}
Here $E\otimes C(\G)$ means the external tensor product of the Hilbert 
$A$-module $E$ and the Hilbert $C(\G)$-module $C(\G)$. 
\end{defn}

If $A$ is a quantum homogeneous space and $E$ is finitely generated, 
$E$ must be projective ({\cite[Theorem 6.21.]{DC16}}). 

The category of $\G$-equivariant Hilbert $A$-module is denoted 
by $\G\text{-}\mod_A$, and its full subcategory of finitely 
generated ones is denoted by $\G\text{-}\mod^{\fin}_A$. We use
the symbol $\cl{L}_A(\tend,\tend)$ for the spaces of 
adjointable right $A$-module maps. We also use 
$\cl{L}_A^{\G}(\tend,\tend)$ for the spaces of $\G$-equvariant ones.

For $E \in \G\text{-}\mod^{\fin}_A$, we have a $\G$-action 
$\act{\tilde{\alpha}}{\G}{\cl{K}_A(E)}$ on the algebra of compact $A$-module
maps. Moreover we can extend this action to a \star-homomorphism $\map{\tilde{\alpha}}{\cl{L}_A(E)}{\cl{L}_{A\tensor C(\G)}(E\tensor C(\G))}$,
and this satisfies $(\tilde{\alpha}\tensor\id)\tilde{\alpha} = (\id\tensor \Delta)\tilde{\alpha}$. For any $T \in \cl{L}_A(E)$, the operator $\tilde{\alpha}(T)$ is the only operator on $E\tensor C(\G)$ satisfying 
$\tilde{\alpha}(T)\alpha_E(\xi) = \alpha_E(T\xi)$.

We can also define the algebraic core and the induction of equivariant 
Hilbert C*-modules as follows:
\begin{itemize}
 \item $\cl{E} = \{\xi \in E\mid \alpha_E(\xi) \in E\tensor \cl{O}(\G)\}$,
 \item $\ind_{\H}^{\G}F = \{x \in F\tensor C(\G)\mid (\alpha_F\otimes \id)(x) = (\id\tensor\ell_{\H})(x)\}$.
\end{itemize}
It can be easily seen that $\cl{E}$ has a right $\cl{A}$-action and 
a right coaction of $\cl{O}(\G)$.

\subsection{C*-tensor categories and their module categories}
For detailed descriptions, see \cite{NT13}.
A \emph{\cstar-category} is a $\C$-linear category with a norm on 
each Hom space and an anti-linear involution 
$\map{\ast}{\cl{C}(X,Y)}{\cl{C}(Y,X)}$ satisfying 
the \cstar-identity: 
$\nor{T^*T} = \nor{T}^2$. In this paper, we also assume that 
\cstar-categories are closed under taking direct sums and 
subobjects.

Let $\cl{C}$ and $\cl{D}$ be \cstar-categories. A functor 
$\map{F}{\cl{C}}{\cl{D}}$ is called a \emph{\cstar-functor} if it preserves
the adjoints, i.e. $F(T^*) = F(T)^*$ for any morphism $T$. 
For a natural transformation between \cstar-functors, we can define its
adjoint by taking the adjoint of each components. We use the symbol
$[\cl{C},\cl{D}]_{\mathrm{b}}$ to denotes the category of 
\cstar-functors from $\cl{C}$ to $\cl{D}$ and bounded natural 
transformations. Here the boundedness of a natural transformation is 
defined by the uniform norm-boudedness of its components. Obviously 
$[\cl{C},\cl{D}]_{\mathrm{b}}$ also has a structure of C*-category.

A \emph{strict \cstar-multitensor category} is a triple $(\cl{C},\tensor,\mbf{1})$ of \cstar-category 
$\cl{C}$, a \cstar-bifunctor 
$\map{\tend\tensor\tend}{\cl{C}\times\cl{C}}{\cl{C}}$
and an object $\mbf{1}$ called the unit object which satisfies 
$U\tensor (V\tensor W) = (U\tensor V)\tensor W$ and
$\mbf{1}\tensor U = U = U\tensor\mbf{1}$ for any objects 
$U,V,W \in \cl{C}$. If $\cl{C}(\mbf{1},\mbf{1}) = \C\id_{\mbf{1}}$ holds, 
$\cl{C}$ is said to be a \emph{strict \cstar-tensor category}. 
Let $X$ be an object of $\cl{C}$. We say that $X$ is \emph{rigid} 
if there is a quadruple $(X,\bar{X},R,\bar{R})$
with an object $\bar{X}$ of $\cl{C}$, 
$R \in \cl{C}(\mbf{1},\bar{X}\tensor X)$ and 
$\bar{R} \in \cl{C}(\mbf{1},X\tensor\bar{X})$ which satisfies the 
\emph{conjugate equations}
\[
(\id_X\tensor R^*)(\bar{R}\tensor \id_X) = \id_X,\quad 
(\id_{\bar{X}}\tensor \bar{R}^*)(R\tensor \id_{\bar{X}}) = \id_{\bar{X}}.
\]
This quadruple is called a \emph{solution of the conjugate equations}.
The full subcategory of $\cl{C}$ consisting of all rigid objets of $\cl{C}$
is denoted by $\cl{C}^{\fin}$, and $\cl{C}$ is said to be a \emph{rigid}
when $\cl{C} = \cl{C}^{\fin}$, i.e. all objects of $\cl{C}$ are rigid.
In such a \cstar-tensor category, each Hom space is finite dimensional
({\cite[Proposition 2.2.8]{NT13}}).

Let $X$ be a rigid object of a strict rigid \cstar-tensor category $\cl{C}$. Then we can minimize the value 
$\nor{R}\nor{\bar{R}}$ for a solution of 
conjugate equation. This minimum value is called the categorical dimension 
of $X$, denoted by $d(X)$.  A solution $(X,\bar{X},R,\bar{R})$ is said to 
be \emph{standard} when $\nor{R}^2 = \nor{\bar{R}}^2 = d(X)$. 
We remark here that any two standard solutions with fixed $X$ are 
unitary equivalent to each other, which means that 
we can get all standard solutions by considering 
$(X, \bar{X}', (u\otimes \id)R, (\id\tensor u)\bar{R})$ with a 
fixed standard solution $(X,\bar{X},R,\bar{R})$ and an arbitrary unitary 
$u \in \cl{C}(\bar{X},\bar{X}')$. This fact enables us to define the 
categorical trace $\map{\tr_X}{\cl{C}(X,X)}{\C}$ given by
the formula $\tr_X(T)\id_{\mbf{1}} = R^*(\id\tensor T)R$ using a standard
solution. This is independent of the choice of standard solutions and 
equal to $\bar{R}^*(T\tensor\id)\bar{R}$. In a similar way, 
we can also define the partial traces 
$\map{\tr_X\tensor \id}{\cl{C}(X\tensor Y,X\tensor Z)}{\cl{C}(Y,Z)}$
and $\map{\id\tensor\tr_X}{\cl{C}(Y\tensor X,Z\tensor X)}{\cl{C}(Y,Z)}$.
The categorical traces are actually tracial in the following sense: For 
any morphisms $S,T \in \cl{C}(X,Y)$ we have $\tr_X(S^*T) = \tr_Y(TS^*)$.

Let $\cl{C,\,D}$ be strict C*-multitensor categories. A C*-tensor
functor from $\cl{C}$ to $\cl{D}$ is a pair $(\Theta,\theta)$ of 
a \cstar-functor 
$\map{\Theta}{\cl{C}}{\cl{D}}$ and a unitary natural transformation 
$\map{\theta}{\Theta(\tend)\tensor \Theta(\tend)}{\Theta(\tend\tensor\tend)}$ with
the conditions 
\begin{itemize}
 \item $\Theta(\mbf{1}_{\cl{C}}) = \mbf{1}_{\cl{D}}$,
 \item The following diagram commutes for $U,V,W \in \cl{C}$:
\[
 \xymatrix@C=40pt
{
\Theta(U)\otimes \Theta(V)\otimes \Theta(W) \ar[r]^-{\id\tensor \theta_{V,W}} \ar[d]_-{\theta_{U,V}\tensor \id} & \Theta(U) \tensor \Theta(V\tensor W) \ar[d]^-{\theta_{U,V\tensor W}}\\
\Theta(U\otimes V)\otimes \Theta(W) \ar[r]_-{\theta_{U\tensor V,W}}& \Theta(U\tensor V\tensor W).
}
\]
\end{itemize}
If $(\Theta',\theta')$ is also a C*-tensor functor from $\cl{C}$ to $\cl{D}$, then 
a natural transformation $\map{\eta}{F}{G}$ is said to be a monoidal
if it satisfies 
$\theta'_{U,V}\circ (\eta_U\tensor\eta_V) = \eta_{U\tensor V}\circ \theta_{U,V}$
for $U,V \in \cl{C}$.

\begin{defn}[{\cite[Definition 2.14.]{DY13}}]
Let $\cl{C}$ be a strict rigid \cstar-tensor category. 
A \emph{$\cl{C}$-module category} is a triple $(\cl{M},\tensor,a)$ 
of a C*-category $\cl{M}$, a C*-bifunctor 
$\map{\tend\tensor\tend}{\cl{C}\times\cl{M}}{\cl{M}}$ and a unitary 
natural transformation 
$\map{a}{\tend\tensor(\tend\tensor\tend)}{(\tend\tensor\tend)\tensor\tend}$ 
with the following conditions.
\begin{itemize}
 \item $\mbf{1}\tensor X = X$ for any $X \in \cl{M}$,
 \item The following diagram commutes for any $U,\,V,\,W \in \cl{C}$ and $X \in \cl{M}$:
\[
 \xymatrix@C=40pt
{
U\tensor (V\tensor (W\tensor X)) \ar[r]^-{a_{U,V,W\tensor X}} \ar[d]_-{\id\tensor a_{V,W,X}} & (U\tensor V)\tensor(W\tensor X) \ar[d]^-{a_{U\tensor V, W,X}} \\
U\tensor ((V\tensor W)\tensor X) \ar[r]_-{a_{U,V\tensor W,X}} & (U\tensor V\tensor W)\tensor X.
}
\]
\end{itemize}
\end{defn}
We remark that a $\cl{C}$-module structure on a fixed C*-category $\cl{M}$
gives rise to a C*-tensor functor from $\cl{C}$ to 
$[\cl{M},\cl{M}]_{\mathrm{b}}$ and vice versa.
It is said that $\cl{M}$ is semisimple if $\cl{M}(X,Y)$ is finite 
dimensional for any $X,\,Y \in \cl{M}$. We assume that all 
$\cl{C}$-module categories in this paper are semisimple. 
 A $\cl{C}$-module category $\cl{M}$ is said to be connected
if, for any $X,Y \in \cl{M}$, there is $U \in \cl{C}$ and 
an isometry from $X$ to $U\tensor Y$.
\begin{exam}
Let $\G$ be a compact quantum group. Then its representaiton category 
$\rep \G$ is a \cstar-tensor category and $\rep^{\fin}\G$ is a 
rigid \cstar-tensor category.

Take a quantum homogeneous space $A$ of $\G$. 
Then $\G\text{-}\mod_A^{\fin}$ has a structure of 
$\rep^{\fin}\G$-module category: For $\pi \in \rep^{\fin}\G$ and $E \in \G\text{-}\mod^{\fin}_A$, $\cl{H}_{\pi}\tensor E$ can be made into a 
$\G$-equivariant Hilbert $A$-module with a $\G$-action 
$v\tensor\xi \longmapsto U_{\pi,13}(v\tensor \alpha_E(\xi))$.
This $\rep^{\G}$-module category is connected and semisimple ({\cite[Proposition 3.11]{DY13}}).
\end{exam}

If $\cl{N}$ is another $\cl{C}$-module category, 
a \emph{$\cl{C}$-module functor}
 from $\cl{M}$ to $\cl{N}$ is a pair of a C*-functor 
$\map{F}{\cl{M}}{\cl{N}}$ and a unitary natural transformation 
$\map{f}{F(\tend\tensor\tend)}{\tend\tensor F(\tend)}$ with 
the commutativity of the following diagram for any $U,\,V \in \cl{C}$ and 
$X \in \cl{M}$:
\[
 \xymatrix
{
F(U\tensor (V\tensor X)) \ar[r]^-{f_{U,V\tensor X}} \ar[d]_-{F(a_{U,V,X})} & U\tensor F(V\tensor X) \ar[r]^-{\id\tensor f_{V,X}} & U\tensor (V\tensor F(X)) \ar[d]^-{a_{U,V,F(X)}}\\
F((U\tensor V) \tensor X) \ar[rr]_-{f_{U\tensor V,X}}&& (U\tensor V) \tensor F(X).
}
\]
The category of $\cl{C}$-module functors from $\cl{M}$ to $\cl{N}$ is 
denoted by $[\cl{M},\cl{N}]^{\cl{C}}_{\mathrm{b}}$, here a morphism from 
$(F,f)$ to $(G,g)$ is given by a bounded natural transformation 
$\map{\eta}{F}{G}$ making the following diagram commutative for any 
$U \in \cl{C}$ and $X \in \cl{M}$:
\[
 \xymatrix
{
F(U\tensor X) \ar[r]^-{f_{U,X}} \ar[d]_-{\eta_{U\tensor X}} & U\tensor F(X) \ar[d]^-{\id\tensor \eta_X} \\
G(U\tensor X) \ar[r]_-{g_{U,X}}& U\tensor G(X).
}
\]  

If we regard a $\cl{C}$-module structure on $\cl{M}$ as a C*-tensor functor
$\map{(\Phi,\phi)}{\cl{C}}{[\cl{M},\cl{M}]_{\mathrm{b}}}$, then 
$[\cl{M},\cl{M}]^{\cl{C}}_{\mathrm{b}}$ can be thought as something like 
a Drinfeld center. Actually, if we are given a $\cl{C}$-module functor
$(F,f)$ from $\cl{M}$ to $\cl{M}$, then $f$ defines a unitary natural
transformation $c$ from $F\circ \Phi(\tend)$ to $\Phi(\tend)\circ F$ 
satisfying the braiding equation
$c_{U \tensor V} = (\phi_{U,V}\tensor\id_F)(\id\tensor c_V)(c_U\tensor\id)(\id\tensor\phi_{U,V}*)$. The converse also holds, and then a morphism of 
$\cl{C}$-module functor can be considered as a morphism of unitary 
half-braiding.

\subsection{Index of conditional expectation}

In this subsection, we collect definitions and facts on indices of 
conditional expectations, introduced in \cite{Wa90}.

Let $B$ be a unital C*-algebra and $A$ be a unital C*-subalgebra of $B$ with
a conditional expectation $\map{E}{B}{A}$. Then we say that $E$ is 
\emph{with finite index} if it admits a quasi-basis i.e. a finite family
$(u_i)_{i = 1}^n \subset A$ satisfying 
\[
 a = \sum_{i = 1}^n u_iE(u_i^*a)
\]
for any $a\in A$. In this case we can define the \emph{index} 
of $E$ by the formula
$\Index{E} = \sum_{i = 1}^n u_iu_i^*$. This is independent of the choice of 
a quasi-basis, and $\Index{E}$ is an element of $Z(A)^{\times}$.
 On the other hand, we also have a more classical notion of index based on
the Pimsner-Popa inequality {\cite[Proposition 2.1.]{PP86}}. 
The following value is called the \emph{probabilistic index} of $E$ ({\cite[Definition 1.1.1]{Po95}}):
\[
 \Index^p E = \min\{c > 0\mid cE - \id_A\text{ is positive.}\}. 
\]
We also define the \emph{scalar index} of $E$ by replacing the positivity 
by the complete positivity:
\[
 \Index^s E = \min\{c > 0\mid cE - \id_A\text{ is completely positive.}\}. 
\]
 Obviously we have $\Index^p E \le \Index^s E \le \infty$.
Moreover the following proposition holds.

\begin{prop}[{\cite[Theorem 1.]{FK00}}] \label{prop:FK00}
Let $\map{E}{B}{A}$ be a conditional expectation. Then 
the probabilistic index of $E$ is finite if and only if the 
scalar index of $E$ is finite.
\end{prop}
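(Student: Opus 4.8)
One implication is immediate, since $\Index^p E\le\Index^s E$ by definition: if the scalar index is finite so is the probabilistic one. The content is the converse. I would first make the role of complete positivity explicit. Writing $E^{(n)}=E\otimes\id_{M_n}\colon M_n(B)\to M_n(A)$ for the amplified conditional expectation, the map $cE-\id_B$ is completely positive precisely when $cE^{(n)}-\id_{M_n(B)}$ is positive for every $n$, i.e.\ when $E^{(n)}$ obeys the Pimsner--Popa inequality with constant $c^{-1}$; hence $\Index^s E=\sup_{n}\Index^p(E^{(n)})$. So everything reduces to bounding $\Index^p(E^{(n)})$ uniformly in $n$, knowing only that $\Index^p E=\Index^p(E^{(1)})<\infty$.

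The obstruction to simply iterating the $n=1$ estimate is that, unlike $\Index^s$, the probabilistic index is not visibly stable under amplification. My plan is therefore to recast the hypothesis $\Index^p E<\infty$ as a statement that is manifestly amplification-stable. Since the Pimsner--Popa inequality $E(x^{*}x)\ge\lambda x^{*}x$ (with $\lambda=(\Index^p E)^{-1}>0$) forces $E$ to be faithful, one can form the right Hilbert $A$-module $\cl E=L^{2}(B,E)$, the completion of $B$ under $\ip{x,y}=E(x^{*}y)$, with its cyclic vector $\xi_{0}=\Lambda(1)\in\cl L_{A}(A,\cl E)$ (an isometry whose range projection is the Jones projection $e$) and its faithful left action $\map{\lambda_{B}}{B}{\cl L_{A}(\cl E)}$; here $E(x)=\xi_{0}^{*}\lambda_{B}(x)\xi_{0}$ and $e\lambda_{B}(x)e=e\lambda_{B}(E(x))e$, and $E^{(n)}$ carries exactly the same picture with $(\cl L_{A}(\cl E),\lambda_{B}(B),e)$ replaced by its $n\times n$ matrix amplification. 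Thus it is enough to reformulate finiteness of $\Index^p E$ purely in terms of that triple. To do so I would pass to the bidual: $\map{E^{**}}{B^{**}}{A^{**}}$ is a normal conditional expectation, the Pimsner--Popa inequality survives the passage by normality and weak-$*$ density, so $\Index^p E^{**}=\Index^p E<\infty$ and in particular $E^{**}$ is faithful. Now the von Neumann algebraic index theory applies (Pimsner--Popa for finite factors, together with its extension to a general von Neumann algebra with a normal faithful conditional expectation): finiteness of $\Index^p E^{**}$ forces the index element to be a bounded positive invertible element $\mu$ of $Z(A^{**})$, which is equivalent to the complete positivity of $\nor{\mu}\,E^{**}-\id_{B^{**}}$. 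Since this completely positive map sends $B$ into $B$, its restriction $\nor{\mu}\,E-\id_{B}$ is completely positive, whence $\Index^s E\le\nor{\mu}<\infty$.

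The main obstacle is the step just invoked. The probabilistic index really can jump under amplification --- for the tracial state $E$ on $M_{2}$ one has $\Index^p E=2$ but $\Index^s E=4$ --- so the uniform bound is not formal; the work lies in controlling the invariant that does \emph{not} blow up, namely the index element, equivalently the behaviour of the $A$-action on the complement $(1-e)\cl E$ of the Jones projection. It is the von Neumann algebraic finite-index machinery that provides this control; a self-contained \cstar-algebraic treatment avoiding biduals is possible but essentially reproves that machinery by hand, and in either guise this is the content of \cite{FK00}.
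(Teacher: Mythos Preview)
The paper does not give its own proof of this proposition; it is quoted verbatim from \cite[Theorem 1]{FK00} and used as a black box. So there is nothing in the paper to compare your argument against.

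As for the proposal itself: your outline is sound in structure---the easy direction is immediate, and for the hard direction you correctly rewrite $\Index^s E=\sup_n\Index^p(E^{(n)})$ and propose to pass to the bidual to access von Neumann algebraic index theory. But you are candid that the decisive step, namely that finiteness of $\Index^p E^{**}$ forces a bounded positive invertible index element, is precisely where the work lies, and you ultimately defer it to ``the von Neumann algebraic finite-index machinery'' and to \cite{FK00} itself. That is circular as a proof of \cite[Theorem 1]{FK00}, though entirely appropriate as a sketch of why the result is plausible and how it fits into the surrounding theory. In short, your proposal is a roadmap rather than a self-contained argument, which matches what the paper does (cite the result) rather than improving on it.

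One small slip: the index element lives in the center of the larger algebra, so in $Z(B^{**})$ rather than $Z(A^{**})$.
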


We use the symbol $B_E$ for a Hilbert $A$-module obtained by 
taking the completion of $A$ with respect to a $B$-valued inner 
product $\ip{x,y}_B = E(x^*y)$. The following generalization of 
{\cite[Proposition 2.1.5.]{Wa90}} may be well-known 
for experts (c.f. {\cite[Th\'eor\`eme 3.5.]{BDH}}).
For its proof, see Subsection \ref{subsection:A2}.

\begin{prop} \label{prop:finiteness}
Let $\map{E}{B}{A}$ be a conditional expectation. 
Then $E$ is with finite index if and only if 
it satisfies both of the following two conditions:
\begin{enumerate}
 \item $\Index^p E < \infty$. 
 \item The Hilbert $A$-module $B_E$ can be decomposed into a direct sum of 
finitely generated projective Hilbert $A$-modules. 
\end{enumerate}
In this case we have $\Index^s E = \nor{\Index{E}}$.
\end{prop}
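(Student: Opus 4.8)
The plan is to prove the two implications of the equivalence separately, and then compute $\Index^s E$ in the finite-index case. For the forward direction, assume $E$ admits a quasi-basis $(u_i)_{i=1}^n \subset B$ with $a = \sum_i u_i E(u_i^* a)$ for all $a \in B$. The Pimsner--Popa inequality gives $\Index^p E < \infty$ essentially for free, since the quasi-basis yields a bound $E(x^*x) \geq c^{-1} x^* x$ (indeed the existence of a quasi-basis is exactly a finite Pimsner--Popa constant); alternatively one reduces to this via the classical theory in \cite{Wa90}. For condition (ii), I would use the quasi-basis to build an explicit finite generating set of $B_E$ as a Hilbert $A$-module: the images $\hat{u_i}$ of the $u_i$ in $B_E$ satisfy $\xi = \sum_i \hat{u_i} \langle \hat{u_i}, \xi\rangle_B$ wait --- more carefully, one checks $\sum_i \theta_{\hat u_i, \hat u_i} = \mathrm{id}_{B_E}$ in $\cl{L}_A(B_E)$ using the quasi-basis identity, so the "Kasparov stabilization"-type argument shows $B_E$ is finitely generated projective (it is a direct summand of $A^n$). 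In particular $B_E$ itself, not merely a direct sum decomposition of it, is finitely generated projective.

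For the reverse direction, assume $\Index^p E < \infty$ and $B_E = \bigoplus_{k} p_k A^{n_k}$ (finite sum) with each summand finitely generated projective. First, finiteness of the probabilistic index upgrades to finiteness of the scalar index by Proposition \ref{prop:FK00}; this is the key input that lets me work with completely positive maps. Writing $c = \Index^s E$, the map $cE - \id_B$ is completely positive, and I would use this to equip $B$ with a second Hilbert $A$-module structure and compare it with $B_E$. The standard device (as in \cite[Proposition 2.1.5]{Wa90}) is: from $cE - \id_B \geq_{\mathrm{cp}} 0$ one obtains that the $A$-valued inner product $\langle x, y\rangle_B = E(x^*y)$ dominates a genuine module structure; combined with the finitely-generated-projective hypothesis on $B_E$, a partition-of-unity / frame argument on $B_E$ produces a finite family in $B$ that one then shows is a quasi-basis. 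Concretely, pick a finite frame $(v_j)$ for the module $B_E$ (which exists precisely because $B_E$ is a finite direct sum of f.g.\ projectives), so that $x = \sum_j v_j \langle v_j, x\rangle_B = \sum_j v_j E(v_j^* x)$ for $x$ in a dense submodule; the finite-index bound from $\Index^p E < \infty$ then lets one push this identity from the dense submodule to all of $B$ (or rather, to all of $A = B^{\G}$ --- but here the statement is about the non-equivariant $E\colon B \to A$, so to all of $B$) and verify it is an honest quasi-basis in $A$. Care is needed because frame elements of $B_E$ a priori live in the completion, not in $B$; finiteness of $\Index^p E$ is exactly what guarantees the algebraic identity $a = \sum u_i E(u_i^* a)$ holds on the nose rather than merely after completion.

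For the final formula $\Index^s E = \nor{\Index E}$: the inequality $\Index^s E \leq \nor{\Index E}$ follows by testing $cE - \id$ on the quasi-basis --- one has the Kadison--Schwarz-type estimate $\sum_i u_i E(u_i^* x^* x u_i) u_i^* \geq \dots$; more cleanly, $\Index E \cdot E - \id_B$ is completely positive because $\langle \hat x, \hat x\rangle$ computations in $B_E$ give $x^* x \leq \nor{\Index E}\, E(x^* x)$ after using $\sum \theta_{\hat u_i,\hat u_i}=\mathrm{id}$, which is a complete-positivity statement. For the reverse inequality $\Index^s E \geq \nor{\Index E}$, one evaluates the completely positive map $cE - \id$ against the specific positive element built from the quasi-basis (essentially plugging $X = (u_1,\dots,u_n)$ into the definition of complete positivity) to force $c \geq \nor{\Index E}$.

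I expect the main obstacle to be the reverse direction, specifically transferring a frame of the \emph{completed} module $B_E$ back to an honest quasi-basis with entries in $B$ (equivalently in $A$, after applying $E$), and verifying the quasi-basis identity holds without passing to a completion. This is where both hypotheses must be used simultaneously: the decomposition into f.g.\ projectives supplies the finitely many generators, while $\Index^p E < \infty$ (promoted to $\Index^s E < \infty$) supplies the norm control that keeps the reconstruction sum convergent and the identity exact on all of $B$. The bookkeeping around which inner product (the $E$-twisted one versus the ambient one on $B$) is in play at each step is the delicate part; everything else is a routine adaptation of \cite[Proposition 2.1.5]{Wa90} and \cite[Th\'eor\`eme 3.5]{BDH}.
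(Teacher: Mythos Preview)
Your forward direction and the computation of $\Index^s E = \nor{\Index E}$ are essentially the paper's argument. The gap is in the reverse direction.

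You write ``assume $B_E = \bigoplus_k p_k A^{n_k}$ (finite sum)'' and then pick a finite frame. But hypothesis (ii) allows an \emph{arbitrary} direct sum, and in the application you care about (Theorem~\ref{thrm:characterization}) the decomposition comes from the isotypic decomposition of an equivariant module, which is typically infinite. With an infinite decomposition there is no finite frame for $B_E$, so your construction of the quasi-basis never starts.

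A second, smaller point: your worry that ``frame elements of $B_E$ a priori live in the completion, not in $B$'' dissolves once you observe that $\Index^p E < \infty$ forces the \cstar-norm and the $E$-norm on $B$ to be equivalent, so $B_E = B$ as vector spaces. This is the paper's first move in the converse direction, and it is exactly what makes algebraic identities in $B_E$ hold on the nose in $B$.

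Here is how the paper handles the possibly infinite decomposition. Using $\Index^s E = c < \infty$ (via Proposition~\ref{prop:FK00}), one shows the multiplication map $T\colon B_E\tensor_A B \to B$ is a bounded $B$-module map with $\nor{T}^2 \le c$. The direct-sum hypothesis on $B_E$ is used to represent $T$ by an element $\eta \in B_E\tensor_A B$ via $T\xi = \ip{\eta,\xi}_B$. Approximating $\eta$ by finite sums $\sum_{i=1}^n x_i\tensor y_i$ produces finite-rank maps $S_n(z) = \sum_i y_i^*E(x_i^*z)$ on $B_E$ converging in operator norm to $\id_B$; once some $S_k$ is invertible one has $(u_i),(v_i)\subset B$ with $x = \sum u_i E(v_i^*x)$, and Watatani's \cite[Lemma~2.1.6]{Wa90} upgrades this to a genuine quasi-basis. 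This route also gives the remaining inequality directly: $T^*(1_B) = \sum v_i\tensor v_i^*$ and $TT^*(1_B) = \Index E$, so $\nor{\Index E} \le \nor{T}^2 \le c$. Your test-vector idea for this inequality (plug the quasi-basis into the complete positivity of $cE-\id$) also works, but you should spell it out: with $a_i = u_i^*$, $b_i = u_i$ one gets $c\,\Index E \ge (\Index E)^2$, hence $c \ge \nor{\Index E}$.
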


\section{Equivariant finite quantum covering spaces}
\subsection{Definition and characterizations}

Let $\G$ be a compact quantum group.
If $A \subset B$ is an inclusion of 
$\G$-C*-algebras, we say that $\map{E}{B}{A}$ is a 
\emph{$\G$-expectation} when $E$ is a $\G$-equivariant conditional 
expectation.

\begin{defn}
Let $A \subset B$ be a unital inclusion of unital $\G$-C*-algebras. 
We say that $A \subset B$ is a \emph{finite quantum $\G$-covering space}
over $A$ if it admits a $\G$-expectation $\map{E}{B}{A}$ with finite index. 
\end{defn}

In this paper, we only treat with finite quantum $\G$-covering spaces
over quantum homogeneous spaces. In such cases, we can replace 
the finiteness of the index by the finiteness of the probabilistic index. 

Let $A$ be a quantum homogeneous space of $\G$.

\begin{thrm} \label{thrm:characterization}
Let $A \subset B$ be
a unital inclusion of unital $\G$-\cstar-algebras with a 
$\G$-expectation $E$. Then the following conditions are equivalent:
\begin{enumerate}
 \item The index of $E$ is finite.
 \item The probabilistic index of $E$ is finite.
\end{enumerate}
Moreover, under these conditions, we can take a quasi-basis of $E$ in 
the algebraic core $\cl{B}$ of $B$.
\end{thrm}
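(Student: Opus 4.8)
The plan is to prove the chain $(\text{ii}) \Rightarrow (\text{i})$ (the converse is immediate, since a quasi-basis trivially witnesses $\Index^p E \le \nor{\Index E} < \infty$) by combining the general criterion in Proposition~\ref{prop:finiteness} with the structure theory of equivariant Hilbert modules over quantum homogeneous spaces. So assume $\Index^p E < \infty$. By Proposition~\ref{prop:finiteness}, to conclude that $E$ has finite index it suffices to show that the Hilbert $A$-module $B_E$ decomposes as a finite direct sum of finitely generated projective Hilbert $A$-modules. The key observation is that $B_E$ is naturally a $\G$-equivariant Hilbert $A$-module: the coaction $\beta$ on $B$ is compatible with $E$ (since $E$ is $\G$-equivariant, $(\beta \tensor \id)E = (E \tensor \id)\beta$ on the level where both sides make sense), so $\beta$ descends to a coaction $\alpha_{B_E}$ on the completion $B_E$, with the inner-product compatibility following from $\ip{x,y}_B = E(x^*y)$ and equivariance of $E$. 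One has to check the density condition $\cspan{\alpha_{B_E}(B_E)(\C \tensor C(\G))} = B_E \tensor C(\G)$, which follows from the corresponding condition for $\beta$ on $B$.

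Next I would invoke the semisimplicity of the $\rep^{\fin}\G$-module category structure on $\G$-equivariant Hilbert $A$-modules. Since $A$ is a quantum homogeneous space, every finitely generated equivariant Hilbert $A$-module is projective (cited as \cite[Theorem 6.21]{DC16}), and the isotypic decomposition of the coaction lets us write the algebraic core $\cl{B_E}$ as an algebraic direct sum $\bigoplus_{\pi \in \irr\G} (B_E)_\pi$ of spectral subspaces, each of which is a finitely generated equivariant $A$-submodule (finitely generated because $A^{\G} = \C 1_A$, so each isotypic component is a finite multiple of a fixed generating module; this is exactly the argument underlying the module-category picture in the Example after \cite[Proposition 3.11]{DY13}). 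The point is then to show that only \emph{finitely many} $\pi$ occur. Here is where $\Index^p E < \infty$ enters: the Pimsner-Popa bound forces $B_E$ to embed, as a Hilbert $A$-module, into a finitely generated one — concretely, $\Index^p E < \infty$ gives $\Index^s E < \infty$ by Proposition~\ref{prop:FK00}, hence $c\,E - \id_A$ is completely positive for some $c$, which produces a completely positive $\G$-equivariant splitting exhibiting $B_E$ as an orthogonally complemented submodule of $A^{\oplus n}$ (or of $E \oplus (\text{free module})$); an equivariant submodule of a finitely generated equivariant module over a quantum homogeneous space is again finitely generated, by the same noetherian-type argument. Therefore $B_E$ itself is finitely generated projective, a fortiori a finite direct sum of such, and Proposition~\ref{prop:finiteness} yields $(\text{i})$ together with $\Index^s E = \nor{\Index E}$.

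Finally, for the last assertion — that the quasi-basis may be chosen in the algebraic core $\cl{B}$ — I would run the standard construction of a quasi-basis from a finite generating set of $B_E$ as a projective $A$-module, but starting from generators lying in $\cl{B}$. Since $B_E$ is the completion of $A \subset B$ and its algebraic core $\cl{B_E}$ is dense and equals (the image of) the algebraic core $\cl{B}$ of $B$ under the $\G$-equivariant inclusion, one can choose the frame $(u_i)$ inside $\cl{B}$: concretely, pick the isotypic decomposition $B_E = \bigoplus_{k} (B_E)_{\pi_k}$ over the finitely many occurring $\pi_k$, choose for each $k$ an equivariant frame of the finitely generated projective module $(B_E)_{\pi_k}$ consisting of elements of $\cl{B}$ (possible since the equivariant frame can be built from matrix coefficients acting on generators in $\cl{B}$), and assemble. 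Then the defining identity $a = \sum_i u_i E(u_i^* a)$ holds for $a$ in the algebraic core by construction and extends to all of $B$ by continuity.

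The main obstacle I anticipate is the finiteness of the spectrum — i.e. showing that only finitely many irreducibles $\pi$ appear in $B_E$ — and more precisely turning the scalar (completely positive) Pimsner-Popa inequality into an honest equivariant embedding of $B_E$ as a complemented submodule of a finitely generated equivariant module. The non-equivariant version of "$\Index^p E < \infty$ plus [decomposability] $\Leftrightarrow$ finite index" is already Proposition~\ref{prop:finiteness}; the real work here is checking that all the modules and maps in sight can be arranged $\G$-equivariantly, for which the automatic-continuity result Proposition~\ref{prop:automatic continuity} (extending equivariant c.p.\ maps from the algebraic core) is the essential tool, and that equivariant submodules of finitely generated equivariant modules over a quantum homogeneous space remain finitely generated.
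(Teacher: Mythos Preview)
Your overall strategy is right --- make $B_E$ into a $\G$-equivariant Hilbert $A$-module and then invoke Proposition~\ref{prop:finiteness} --- but you are working much harder than necessary because of a misreading of that proposition. Condition~(ii) there asks only that $B_E$ decompose as a direct sum of finitely generated projective Hilbert $A$-modules; it does \emph{not} require the direct sum to be finite. The isotypic decomposition you already wrote down, $\cl{B_E} = \bigoplus_{\pi\in\irr\G} (B_E)_\pi$ with each summand finitely generated (because $A^{\G} = \C$) and hence projective (by \cite[Theorem~6.21]{DC16}), is already such a decomposition. So (ii)$\Rightarrow$(i) follows immediately from Proposition~\ref{prop:finiteness}; no argument about the finiteness of the spectrum is needed at this stage. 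This is exactly what the paper does, in two sentences.

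Your attempt to prove that only finitely many $\pi$ occur is not only unnecessary but contains a genuine gap: the assertion that $cE - \id_B$ being completely positive ``produces a completely positive $\G$-equivariant splitting exhibiting $B_E$ as an orthogonally complemented submodule of $A^{\oplus n}$'' is essentially the conclusion you are after (a finite frame for $B_E$ over $A$ is a quasi-basis), and there is no direct mechanism for extracting such an embedding from the Pimsner--Popa inequality alone. The correct logical order is the reverse: first use Proposition~\ref{prop:finiteness} (with the possibly infinite isotypic decomposition) to obtain finite index, and only \emph{then} conclude that $B_E$ is finitely generated, since a quasi-basis is a finite frame. For the final assertion the paper then takes a cleaner route than yours: once $B_E$ is known to be finitely generated and $\G$-equivariant, \cite[Theorem~6.23]{DC16} supplies an equivariant isometry $V\colon B_E \to \cl{H}_\pi\tensor A$ for some finite-dimensional $\pi$; since $V$ and $V^*$ preserve algebraic cores, the family $(V^*(e_i\tensor 1))_i$, with $(e_i)_i$ an orthonormal basis of $\cl{H}_\pi$, is a quasi-basis lying in $\cl{B}$.
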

\begin{proof}
At first one should note that $B_E$ can be made into a 
$\G$-equivariant Hilbert $A$-module by the action of $\G$ on $B$. 
Then $B_E$ decomposes into a direct sum of finitely generated 
projective Hilbert $A$-modules by {\cite[Theorem 6.21.]{DC16}}, 
hence the equivalence of (i) and (ii) follows from Proposition 
\ref{prop:finiteness}.

Next we show the last statement. Since $B_E$ is a finitely generated 
$\G$-equivariant Hilbert $A$-module, it has a irreducible decomposition. 
Hence we can take a finite dimensional
unitary representation $\pi$ of $\G$ and a embedding 
$\map{V}{B_E}{\cl{H}_{\pi}\otimes A}$ of $\G$-equivariant Hilbert 
$A$-module by {\cite[Theorem 6.23.]{DC16}}.
Then $V$ and its adjoint $V^*$ preserve their algebraic core, hence
we can obtain a quasi-basis $(V^*(e_i\tensor 1))_{i = 1}^n$ in $\cl{B}$
where $(e_i)_{i = 1}^n$ is an orthonormal basis of $\cl{H}_{\pi}$.
\end{proof}

\begin{coro}
Let $A \subset B$ be a finite quantum $\G$-covering space. Then 
for any intermediate unital $\G$-C*-subalgebra $C$ of $A \subset B$, 
$A \subset C$ is also a finite quantum $\G$-covering space. 
\end{coro}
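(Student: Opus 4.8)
The plan is to restrict the given $\G$-expectation $E\colon B \to A$ to the intermediate algebra $C$, check that the result is again a $\G$-expectation, estimate its probabilistic index, and then quote Theorem \ref{thrm:characterization}.

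First I would verify that $E|_C$ is a $\G$-expectation from $C$ onto $A$. Since $E$ restricts to the identity on $A$ and enjoys the $A$-bimodule property $E(axb) = aE(x)b$ for $a,b \in A$, its restriction to the unital subalgebra $C \supset A$ is a conditional expectation $C \to A$. As $C$ carries the $\G$-action obtained by restricting that of $B$, and $E$ is $\G$-equivariant for the action on $B$, the map $E|_C$ is $\G$-equivariant as well; hence it is a $\G$-expectation in the sense of the definition.

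Next I would bound its probabilistic index. Because $A \subset B$ is a finite quantum $\G$-covering space, $E$ has finite index and therefore $\Index^p E < \infty$; pick $c > 0$ with $cE - \id_B \ge 0$ on $B$. For $x \in C$ the element $cE(x^*x) - x^*x$ belongs to $C$ and is positive in $B$, hence positive in $C$ as well, since a self-adjoint element of a C*-subalgebra has the same spectrum, and in particular the same sign, as in the ambient algebra. Thus $c\,(E|_C) - \id_C \ge 0$, so $\Index^p(E|_C) \le c < \infty$.

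Finally, since $A$ is a quantum homogeneous space of $\G$, Theorem \ref{thrm:characterization} applies to the unital inclusion $A \subset C$ with the $\G$-expectation $E|_C$, and finiteness of the probabilistic index yields finiteness of the index of $E|_C$. Therefore $A \subset C$ is a finite quantum $\G$-covering space. I do not expect any genuine obstacle here; the only step deserving a moment's attention is the descent of positivity from $B$ to the intermediate algebra $C$, which is precisely what allows the Pimsner--Popa-type inequality for $E$ to restrict to one for $E|_C$.
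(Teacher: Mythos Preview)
Your proof is correct and follows exactly the approach of the paper: restrict the given $\G$-expectation $E$ to $C$ and invoke condition (ii) of Theorem~\ref{thrm:characterization}. The paper's proof is a one-line version of what you wrote, so your additional care in checking that $E|_C$ is a $\G$-expectation and that the Pimsner--Popa inequality descends to $C$ simply fills in the details the paper leaves implicit.
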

\begin{proof}
Fix a $\G$-expectation $\map{E}{B}{A}$ with finite index.
Then we can easily check the finiteness of the restriction of $E$ on $C$
by using the condition (ii) of Theorem \ref{thrm:characterization}.
\end{proof}

If $\G$ is of Kac type, we can drop the equivariance of a conditional 
expectation by the averaging procedure. 
Let $A,\,B$ be $\G$-\cstar-algebras and
$\cl{A,\,B}$ be their algebraic cores respectively. 
If we are given a $\C$-linear map $\map{\phi}{\cl{B}}{A}$, 
then its \emph{equivariantization} is a $\C$-linear map
$\map{\tilde{\phi}}{\cl{B}}{A}$ given by the following:
\[
\tilde{\phi}(x) 
= (\id\otimes h)(\alpha(\phi(x_{(0)}))(1\tensor S(x_{(1)}))).
\] 
Here $h$ and $S$ denote the Haar state and the antipode of $\G$ 
respectively. By using the strong bi-invariance of $h$
({\cite[Proposition 5.24, Corollary 5.35]{KV00}}), 
we can see that $\tilde{\phi}$ is a $\G$-equivariant map from
$\cl{B}$ to $\cl{A}$. 
\begin{lemm}
Let $\map{\phi}{B}{A}$ be a c.p.~map. 
If $\G$ is of Kac type, the equivariantization of $\phi$ extends
to a $\G$-equivariant c.p.~map from $B$ to $A$.
\end{lemm}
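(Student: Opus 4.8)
The plan is to show that the equivariantization $\tilde\phi$ of a c.p.\ map $\phi\colon B\to A$ is again completely positive on $\cl B$, and then to invoke Proposition~\ref{prop:automatic continuity} to extend it to a $\G$-equivariant c.p.\ map from $B$ to $A$. The $\G$-equivariance of $\tilde\phi$ on $\cl B$ has already been established in the text preceding the statement, so the only new work is complete positivity.

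First I would rewrite the formula for $\tilde\phi$ in a way that exhibits it manifestly as an average of completely positive maps. Since $\G$ is of Kac type, the antipode $S$ is a $\ast$-antiautomorphism with $S^2=\id$, and the Haar state $h$ is a trace. Using the Sweedler notation $\alpha(x)=x_{(0)}\tensor x_{(1)}$ on $\cl B$, I would first check that $\tilde\phi(x)=(\id\tensor h)\bigl(\alpha(\phi(x_{(0)}))(1\tensor S(x_{(1)}))\bigr)$ can be recast, via the Kac-type identities for $h$ and $S$, as an integral of the form ``$g\mapsto (\text{unitary on }\cl H_\pi)^*\,(\phi\text{ applied coordinatewise})\,(\text{same unitary})$'' against a positive measure. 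Concretely, the key point is that for a unitary representation $\pi$, averaging a completely positive map over the action of the matrix coefficients of $\pi$ using the Haar state produces another completely positive map; this is the standard averaging trick and only uses positivity of $h$ together with the fact that $x\mapsto x_{(0)}\tensor S(x_{(1)})$ intertwines $\alpha$ with a genuine coaction (here Kac type is used to get a $\ast$-homomorphism rather than merely a linear map).

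The cleanest implementation I would try: fix $X=(x^{ij})\in M_n(\cl B)$ and show $(\tilde\phi\tensor\id_n)(X^*X)\ge 0$ in $A\tensor M_n(\C)$. Decompose the algebraic core of $B$ into its isotypic components, so that each $x^{ij}$ lies in a finite sum of spectral subspaces; then $\alpha$ on this finite-dimensional-over-$A$ piece is implemented by a unitary $U\in M(\cl K(\cl H_\pi)\tensor B)$ (or a finite direct sum thereof). Applying $\phi$ coordinatewise to $U$ and conjugating, I would express $(\tilde\phi\tensor\id_n)(X^*X)$ as $(\id\tensor\id_n\tensor h)$ of an element of the form $Y^*Y$ in $(A\tensor M_n(\C))\tensor C(\G)$, where $Y$ is built from $\phi$ applied to entries of $U$ and from $X$; positivity of $Y^*Y$ together with positivity of $\id\tensor\id_n\tensor h$ then gives the conclusion. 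The bookkeeping of Sweedler indices and the careful placement of $S$ is what makes this slightly delicate, but Kac type makes $S$ behave like a group inverse so no $\rho$-operators intrude.

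The main obstacle I anticipate is purely notational: turning the one-line Sweedler formula for $\tilde\phi$ into the averaged-conjugation form $Y^*Y$ without sign or order errors, since $S$ is an antihomomorphism and one must track which leg of $C(\G)$ each term sits in. Once that rewriting is done, complete positivity is immediate from positivity of $h$, and the passage from $\cl B$ to $B$ is handed to us by Proposition~\ref{prop:automatic continuity}. I would also note at the end that unitality (if $\phi$ is unital, so is $\tilde\phi$, using $h(1)=1$ and $S(1)=1$) and the norm bound $\nor{\tilde\phi}\le\nor\phi$ come for free from the same averaging description, which is convenient for later applications to conditional expectations.
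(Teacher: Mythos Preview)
Your plan is correct and matches the paper's approach: verify complete positivity of $\tilde\phi$ on $\cl B$, then invoke Proposition~\ref{prop:automatic continuity}. The paper's execution is simply more direct than what you outline. Rather than passing through an isotypic decomposition and implementing $\alpha$ by a unitary, the paper writes down in one line the Sweedler identity
\[
\tilde\phi(y^*x)=(\id\tensor h)\bigl((1\tensor S(y_{(1)})^*)\,\alpha(\phi(y_{(0)}^*x_{(0)}))\,(1\tensor S(x_{(1)}))\bigr),
\]
obtained from the defining formula by using that $S$ is $*$-preserving and $h$ is tracial in the Kac case (so the factor $1\tensor S(y_{(1)})^*$ can be moved from the right to the left). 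This already exhibits $(\tilde\phi\tensor\id_n)(X^*X)$ as $(\id\tensor h)$ of an element of the form $Y^*Y$ built from the c.p.\ map $\alpha\circ\phi$, which is exactly the endpoint you were aiming for; the isotypic decomposition and the ``implementing unitary $U$'' are unnecessary scaffolding.
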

\begin{proof}
Since $\G$ is of Kac type, we have
\[
 \tilde{\phi}(y^*x) = (\id\tensor h)((1\tensor S(y_{(1)})^*)\alpha(\phi(y_{(0)}^*x_{(0)}))(1\tensor S(x_{(1)}))).
\]
This equality implies the complete positivity of $\tilde{\phi}$,
hence we can apply Proposition \ref{prop:automatic continuity}.
\end{proof}

\begin{prop} \label{prop:deequivariantization}
Assume $\G$ is of Kac type. For a unital inclusion $A \subset B$ of 
unital $\G$-C*-algebras with $A^{\alpha} = \C 1_A$, the following 
conditions are equivalent:
\begin{enumerate}
 \item The inclusion $A \subset B$ is a finite quantum $\G$-covering space.
 \item There is an expectation $\map{E}{B}{A}$ with $\Index^p E < \infty$.
\end{enumerate}
\end{prop}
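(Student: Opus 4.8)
The plan is to prove the two implications separately, with the substantial direction being (ii) $\Rightarrow$ (i). The implication (i) $\Rightarrow$ (ii) is immediate: a finite quantum $\G$-covering space comes equipped with a $\G$-expectation of finite index, which in particular is an expectation with finite probabilistic index by Proposition \ref{prop:finiteness} (or simply by definition of finite index). So from now on assume we are given an expectation $\map{E}{B}{A}$ with $\Index^p E < \infty$, but with no equivariance assumed. The strategy is: average $E$ over $\G$ using the Kac-type hypothesis to produce a $\G$-equivariant conditional expectation, and then check that averaging preserves the finiteness of the probabilistic index, so that Theorem \ref{thrm:characterization} applies to upgrade it to finite (genuine) index.

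First I would form the equivariantization $\tilde{E}$ of $E$ as in the discussion preceding the previous lemma. Since $E$ is c.p., the lemma above shows $\tilde{E}$ extends to a $\G$-equivariant c.p.\ map from $B$ to $A$. I then need to check $\tilde{E}$ is in fact a conditional expectation onto $A$: this is where the hypothesis $A^{\alpha} = \C 1_A$ enters. For $a \in \cl{A}$, equivariance of the inclusion gives $\alpha(a) = a_{(0)} \otimes a_{(1)}$ with $E(a_{(0)}) = a_{(0)}$, so $\tilde{E}(a) = (\id \otimes h)(\alpha(a_{(0)})(1 \otimes S(a_{(1)}))) = (\id \otimes h)((\id \otimes m)(\id \otimes \id \otimes S)(\alpha \otimes \id)\alpha(a))$, and the counit property together with $(h \otimes \id)\Delta = h(\cdot)1$ collapses this to $h$ applied appropriately; more directly, since $\tilde{E}$ is $\G$-equivariant and $\tilde{E}(1) = 1$ (using $E(1)=1$, $S(1)=1$, $h(1)=1$), the composite $\tilde{E}|_A \colon \cl{A} \to \cl{A}$ is a $\G$-equivariant unital positive map, hence restricts to a unital positive map on $A^\alpha = \C 1_A$, and a Schwarz-type argument together with the bimodule property $\tilde{E}(axb) = a\tilde{E}(x)b$ for $a,b \in \cl{A}$ (which follows from strong invariance of $h$ exactly as in the derivation that $\tilde\phi$ is equivariant) forces $\tilde{E}$ to be the identity on $\cl{A}$, hence on $A$ by continuity. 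Thus $\tilde{E}$ is a $\G$-expectation.

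It remains to show $\Index^p \tilde{E} < \infty$, equivalently that $c\tilde{E} - \id_A$ is positive for some $c$. Here I would use the scalar index: by Proposition \ref{prop:FK00}, $\Index^p E < \infty$ implies $\Index^s E < \infty$, i.e.\ $cE - \id_A$ is completely positive for some $c > 0$. Averaging is built from slicing by the state $h$, multiplication by $1 \otimes S(x_{(1)})$, and applying $\alpha$, all of which are completely positive operations (the map $x \mapsto \alpha(\phi(x_{(0)}))(1 \otimes S(x_{(1)}))$ composed with $\id \otimes h$ is exactly the operation that produced complete positivity of $\tilde\phi$ in the lemma); applying this same averaging to the completely positive map $cE - \id_A$ yields $c\tilde{E} - \tilde{\id_A} = c\tilde{E} - \id_A$ (using that the equivariantization of $\id_A$ is $\id_A$, again by $A^\alpha = \C 1_A$ and the counit identity), which is therefore completely positive, hence positive. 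So $\Index^p \tilde{E} \le \Index^s \tilde E < \infty$. Now $\tilde{E}$ is a $\G$-expectation on the inclusion $A \subset B$ of unital $\G$-C*-algebras with $A$ a quantum homogeneous space, so Theorem \ref{thrm:characterization} applies and $\tilde{E}$ has finite index, i.e.\ $A \subset B$ is a finite quantum $\G$-covering space.

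The main obstacle I anticipate is the verification that $\tilde{E}$ is genuinely a conditional expectation onto $A$ — that is, that averaging does not move it off of $A$ or destroy the bimodule property — since this is the only place the hypothesis $A^{\alpha} = \C 1_A$ is used and it requires care with the strong bi-invariance of the Haar state on $\cl{O}(\G)$. Everything else is either cited (Proposition \ref{prop:FK00}, Theorem \ref{thrm:characterization}, the preceding lemma on equivariantization of c.p.\ maps) or a routine check that the averaging operation is completely positive and fixes $E$ on $A$.
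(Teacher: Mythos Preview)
Your approach is the same as the paper's: equivariantize $E$ to $\tilde{E}$, verify it is a $\G$-expectation, pass from $\Index^p E < \infty$ to $\Index^s E < \infty$ via Proposition~\ref{prop:FK00}, observe that equivariantization sends the c.p.\ map $cE - \id_B$ to $c\tilde{E} - \id_B$, and conclude via Theorem~\ref{thrm:characterization}.

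Two points of cleanup. First, your verification that $\tilde{E}$ is a conditional expectation is more tangled than necessary. The paper simply computes $\tilde{E}(xa) = \tilde{E}(x)a$ directly in Sweedler notation: the key step is $\alpha(a_{(0)})(1\otimes S(a_{(1)})) = a\otimes 1$ by the counit identity, no strong bi-invariance or Schwarz argument needed; the left module property then follows from $\tilde{E}(ax) = \tilde{E}(x^*a^*)^*$. Likewise $\widetilde{\id_B} = \id_B$ is just the counit identity.

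Second, you misplace the role of the hypothesis $A^{\alpha} = \C 1_A$. It is \emph{not} needed to show $\tilde{E}$ is a conditional expectation (that computation works for any $\G$-equivariant inclusion $A\subset B$); it is needed only at the very end, where you invoke Theorem~\ref{thrm:characterization} to upgrade finite probabilistic index of the $\G$-expectation $\tilde{E}$ to finite Watatani index. So your closing paragraph identifies the wrong ``main obstacle'': the bimodule check is routine, and the genuine use of $A^\alpha = \C 1_A$ is exactly the appeal to Theorem~\ref{thrm:characterization} that you already make.
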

\begin{proof}
Let $E$ be a conditional expection from $B$ to $A$.
At first we have to show
that the equivariantization $\tilde{E}$ of $E$ is again
a conditional expectation. Since $\tilde{E}$ is a u.c.p.~map 
by the previous proposition, it suffices to show $\tilde{E}$
is $\cl{A}$-bimodule map on $\cl{B}$. 
Take $a \in \cl{A}$ and $x \in \cl{B}$. Then we have
\begin{align*}
 \tilde{E}(xa)
&= (\id\tensor h)(\alpha(E(x_{(0)}a_{(0)}))(1\tensor S(x_{(1)}a_{(1)}))) \\
&= (\id\tensor h)(\alpha(E(x_{(0)}))\alpha(a_{(0)})(1\tensor S(a_{(1)}))(1\tensor S(x_{(1)}))) \\
&= (\id\tensor h)(\alpha(E(x_{(0)}))(a\tensor S(x_{(1)}))) \\
&=\tilde{E}(x)a.
\end{align*}
The equality $\tilde{E}(ax) = a\tilde{E}(x)$ can be seen as follows:
\[
\tilde{E}(ax) = \tilde{E}(x^*a^*)^* = (\tilde{E}(x)^*a^*)^* = a\tilde{E}(x).
\]
Now the probabilistic index of $E$ is finite, hence the scalar index $c$
of $E$ is also finite by Proposition \ref{prop:FK00}. Then 
the equivariantization of $cE - \id_B$ is a c.p.~map and 
coincides with $c\tilde{E} - \id_B$. This implies that $\tilde{E}$ has the 
finite scalar index, hence $A\subset B$ is a finite quantum $\G$-covering
space.
\end{proof} 

\subsection{Covering degree}

Next we consider a notion which is analogous to the covering degree 
of covering space.

\begin{defn}
Let $A \subset B$ be a finite quantum $\G$-covering space. Then the 
covering degree of $A \subset B$ is the infimum of 
the scalar indices over all $\G$-expectations from $B$ to $A$.
\end{defn}
\begin{rema}
If $\G$ is of Kac type, the covering degree coincides with the infimum of
the scalar indices of all conditional expectations of the inclusion.
This fact can be seen from the proof of Proposition 
\ref{prop:deequivariantization}.
\end{rema}

To calculate the covering degree of a given finite quantum $\G$-covering 
space, it is usuful to regard it as the dimension of a rigid object of 
a suitable \cstar-tensor category.

Let $A$ and $B$ be quantum homogeneous spaces of $\G$.
A \emph{$\G$-equivariant $(A,B)$-correspondence} 
is a pair of $\G$-equivariant Hilbert $B$-module $E$ and 
a unital $\G$-equivariant \star-homomorphism from $A$ to $\cl{L}_B( E)$. 
The category of $\G$-equivariant $(A,B)$-correspondences is denoted 
by $\G\text{-}\corr_{A,B}$, and its full subcategory consisting of right-finitely generated correspondences is denoted by 
$\G\text{-}\corr^{\rfin}_{A,B}$. 
We also use the symbols $\G\text{-}\corr_A$ and 
$\G\text{-}\corr_A^{\fin}$ when $A = B$.

Let $A \subset B$ be an inclusion of unital $\G$-\cstar-algebras with 
$A^{\alpha} = \C 1_A$. If we are given a $\G$-expectation $\map{E}{B}{A}$
with finite index, then we have a $\G$-correspondence $B_E$ over $A$, 
on which the left action is given by the left multiplication.
Moreover $B_E$ can be made into a \cstar-Frobenius algebra ({\cite[Section 3.1]{BKLR}}) in 
$\G\text{-}\corr_A^{\fin}$ with the following maps:
\begin{itemize}
 \item $\map{m}{B_E\tensor_A B_E}{B_E}$, induced by the multiplication of 
$m$, 
 \item $\map{\iota}{A}{B_E}$, induced by the inclusion $A \subset B$.
\end{itemize}
By using a quasi-basis $(v_i)_{i = 1}^n$ of $E$, 
$\map{m^*}{B_E}{B_E\tensor_A B_E}$ can be calculated as follows:
\[
 m^*(x) = \sum_{i = 1}^n v_i\tensor v_i^*x
\]
Hence we have $mm^*(1_B) = \Index E$ and 
$\iota^* mm^*\iota(a) = E(\Index E) a$.
Since $\iota^*(b) = E(b)$ for any $b \in B$, we also have
\[
 d(B_E) \le \nor{\iota^* mm^*\iota} = E(\Index E) \le \nor{\Index E},
\]
here the left-hand side is the dimension of $B_E$ as an object of 
$\G\text{-}\corr^{\fin}_A$. Moreover, if $(B_E,m,\iota)$ is a Q-system,
these inequalities turns out to be equalities. 
Conversely any \cstar-Frobenius algebra in $\G\text{-}\corr^{\fin}_A$
gives a pair of a finite quantum $\G$-covering space $A \subset B$ and
$\G$-expectation $\map{E}{B}{A}$ with finite index. 
Hence we can get the following proposition since 
any Frobenius \cstar-algebra is isomorphic to 
a Q-system, as shown in {\cite[Theorem 2.9]{NY18}}
\begin{prop} \label{prop:fundamental}
 Let $A$ be a quantum homogeneous space of $\G$.
\begin{enumerate}
 \item There is a one-to-one correspondence between finite quantum $\G$-covering spaces of $A$ and Q-systems in $\G\text{-}\corr^{\fin}_A$.
 \item Let $A \subset B$ be a finite quantum $\G$-covering space. 
Its covering degree $d$ coincides with the dimension of the corresponding 
Q-system. Moreover, there is a $\G$-expectation $\map{E}{B}{A}$ with
$\Index E = d1_B$. 
\end{enumerate}
\end{prop}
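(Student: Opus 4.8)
The plan is to establish Proposition \ref{prop:fundamental} by systematically exploiting the correspondence between finite quantum $\G$-covering spaces and \cstar-Frobenius algebras (equivalently Q-systems) in $\G\text{-}\corr^{\fin}_A$ sketched in the discussion preceding the statement. For part (i), I would first observe that given a finite quantum $\G$-covering space $A\subset B$ with a fixed $\G$-expectation $E$ of finite index, the triple $(B_E,m,\iota)$ is a \cstar-Frobenius algebra in $\G\text{-}\corr^{\fin}_A$ (all ingredients being spelled out already); by \cite[Theorem 2.9]{NY18} it is isomorphic to a Q-system. Conversely, given a Q-system $(M,m,\iota)$ in $\G\text{-}\corr^{\fin}_A$, the object $M$ carries a left $A$-action and a right $A$-action, and the multiplication $m$ together with $\iota$ turns the underlying space into a unital \cstar-algebra $B$; the $\G$-action on $M$ as an equivariant correspondence makes $B$ a $\G$-\cstar-algebra, and $\iota^*\colon B\to A$ is a $\G$-expectation. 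The map $E:=\iota^*$ has finite index: a quasi-basis is read off from $m^*(1_B)=\sum_i v_i\tensor v_i^*$ using the Q-system relations, which is exactly the computation indicated in the excerpt. The main point to check here is that these two constructions are mutually inverse up to the appropriate notion of isomorphism (isomorphism of covering spaces on one side, isomorphism of Q-systems on the other), and that the equivariant structure is carried along faithfully; this is essentially bookkeeping once one knows the non-equivariant statement, since every structure map in sight is $\G$-equivariant by construction.

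For part (ii), the strategy is to compare the two quantities---the covering degree of $A\subset B$, and the categorical dimension $d(M)$ of the corresponding Q-system---by pushing the inequality $d(B_E)\le \nor{\iota^* mm^*\iota}\le \nor{\Index E}$ already displayed in the text. Given any $\G$-expectation $E$ of finite index, the corresponding Frobenius algebra $B_E$ has $d(B_E)\le \nor{\iota^* mm^* \iota} = \nor{E(\Index E)}$, and since $E(\Index E)$ lies in $A^{\G}=\C 1_A$ and $E$ is unital, $E(\Index E) = \Index^s E\cdot 1_A$ in fact coincides with the scalar index $\Index^s E$ by Proposition \ref{prop:finiteness}. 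Hence $d(M) = d(B_E) \le \Index^s E$ for every such $E$, so $d(M)$ is a lower bound for the covering degree. For the reverse inequality I would start from a standard solution of the conjugate equations for $M$ in $\G\text{-}\corr^{\fin}_A$, use it to equip $M$ (hence $B$) with a canonical $\G$-expectation $E_0$---concretely, $E_0 = \iota^* \circ (\text{the standard Frobenius structure})$, normalized so that $mm^*(1_B) = d(M) 1_B$, i.e. so that $\Index E_0 = d(M) 1_B$. Then $\Index^s E_0 = \nor{\Index E_0} = d(M)$ by Proposition \ref{prop:finiteness}, which gives the matching upper bound and simultaneously establishes the ``moreover'' clause.

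The main obstacle I anticipate is the reverse inequality in (ii): producing a $\G$-expectation whose index is \emph{exactly} $d(M)\,1_B$, rather than merely bounded by something close to it. This requires knowing that a Q-system in a rigid \cstar-tensor category admits a \emph{standard} (or ``special'') Frobenius structure---one compatible with a standard solution of the conjugate equations for the underlying object---and that the resulting counit is a genuine conditional expectation, not just a completely positive splitting. In the non-equivariant subfactor setting this is classical (it is the statement that a Q-system can be normalized so that $mm^* = d\cdot\id$), but here I need it inside $\G\text{-}\corr^{\fin}_A$ and I need the normalizing unitary to be $\G$-equivariant. I would obtain this by invoking the rigidity of $\G\text{-}\corr^{\fin}_A$ (every right-finitely-generated equivariant correspondence over a quantum homogeneous space is rigid, since the underlying module is projective by \cite[Theorem 6.21]{DC16}), together with the uniqueness-up-to-unitary of standard solutions recalled in Subsection 2.3, which guarantees the normalizing unitary can be chosen inside the category and hence is automatically $\G$-equivariant. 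Once the standard Frobenius structure is in hand, the identities $mm^*(1_B) = d(M)1_B$ and $\iota^* = E_0$ give $\Index E_0 = d(M)1_B$ directly, and Proposition \ref{prop:finiteness} closes the argument.
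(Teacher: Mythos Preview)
Your overall strategy matches the paper's exactly: pass from a finite-index $\G$-expectation to a \cstar-Frobenius algebra in $\G\text{-}\corr^{\fin}_A$, invoke \cite[Theorem 2.9]{NY18} to upgrade it to a Q-system, and read off the covering degree from the Q-system normalization $mm^*=d\cdot\id$, which forces $\Index E_0 = d\cdot 1_B$. There are, however, two points in your writeup that need correction, even though neither ultimately derails the argument.

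First, the equality $E(\Index E)=\Index^s E$ is not what Proposition~\ref{prop:finiteness} says; that proposition gives $\Index^s E=\nor{\Index E}$, and in general one only has $E(\Index E)\le \nor{\Index E}$, exactly as displayed in the paper. Since $\Index E$ is merely central in $B$ and not a priori scalar, applying $E$ can strictly decrease the norm. Fortunately your conclusion $d(M)\le \Index^s E$ still follows from the chain $d(B_E)\le E(\Index E)\le \nor{\Index E}=\Index^s E$, so this is a slip in the justification rather than in the logic.

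Second, your proposed resolution of the ``main obstacle'' is both unnecessary and incorrectly argued. You assert that $\G\text{-}\corr^{\fin}_A$ is rigid because finitely generated equivariant modules are projective; but projectivity on the right does not produce a conjugate correspondence---rigidity of a bimodule requires finite-generation and projectivity on \emph{both} sides, and indeed the paper only establishes rigidity of $\G\text{-}\corr^{\rfin}_{\tilde A}$ later under extra hypotheses (Theorem~\ref{thrm:main1}). The good news is that you do not need the whole category to be rigid: a \cstar-Frobenius algebra is automatically self-dual via its own Frobenius structure, and \cite[Theorem 2.9]{NY18} is a statement internal to the ambient \cstar-tensor category. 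Applied inside $\G\text{-}\corr^{\fin}_A$, the isomorphism it produces between your Frobenius algebra and a Q-system is by construction a morphism in that category, hence automatically $\G$-equivariant. So the obstacle you anticipate dissolves once you observe that NY18 is categorical, and no appeal to global rigidity is required.
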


\begin{exam} \label{exam:adjoint action}
When $A = \C$, the trivial $\G$-\cstar-algebra, then 
$\G\text{-}\corr^{\fin}_{\C}$ is nothing but $\rep^{\fin}\G$.
Hence the covering degree of a finite quantum $\G$-space over $\C$
coincides with the quantum dimension as a unitary representation. In
particular, the covering degree of $\C \subset B(\cl{H}_{\pi})$ is 
$(\dim_q \pi)^2$ if we consider the adjoint action on $B(\cl{H}_{\pi})$
\end{exam}

As an application of Proposition \ref{prop:fundamental}, we show
that the covering degree of a finite quantum $\G$-spaces over 
$C(\G)$ must be a positive integer. 

Let $\rep \cl{O}(\G)$ be the category of unital \star-representations 
of $\cl{O}(\G)$. By using the coproduct on $\cl{O}(\G)$, 
we can make $\rep \cl{O}(\G)$ into a \cstar-tensor category.
For any $(\pi,\cl{H}_{\pi}) \in \rep\cl{O}(\G)$, we can construct a
$\G$-equivariant correspondence $E_{\pi}$ over $C(\G)$ as follows:
\begin{itemize}
 \item As a Hilbert $C(\G)$-module, $E_{\pi} = \cl{H}_{\pi}\tensor C(\G)$.
 \item The left action of $\G$ is given by $\id\tensor \Delta$.
 \item The left action of $C(\G)$ is given by $(\pi\tensor \lambda)\Delta$,
where $\lambda$ is the left multiplication of $C(\G)$ on $C(\G)$.
\end{itemize}
Here $(\pi\tensor \lambda)\Delta$ can be defined on $C(\G)$
by Lemma \ref{lemm:absorption}.

The following proposition is a quantum analogue of 
{\cite[Proposition 2.2]{AV16}}.
\begin{prop} \label{prop:corr over C(G)}
The functor $(\pi,\cl{H}_{\pi}) \longmapsto E_{\pi}$ gives
an equivalence of \cstar-tensor categories from $\rep\cl{O}(\G)$
to $\G\text{-}\corr_{C(\G)}$.
\end{prop}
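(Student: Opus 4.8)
The plan is to construct an explicit inverse functor and check that both composites are naturally isomorphic to the identity. First I would verify that $\pi \longmapsto E_\pi$ is a well-defined $\ast$-functor into $\G\text{-}\corr_{C(\G)}$: for a morphism $T \in \cl{C}(\pi,\rho)$ one sets $F(T) = T \tensor \id_{C(\G)}$, which is plainly adjointable, right $C(\G)$-linear, $\G$-equivariant for $\id\tensor\Delta$, and intertwines the left actions $(\pi\tensor\lambda)\Delta$ and $(\rho\tensor\lambda)\Delta$ because $T$ intertwines $\pi$ and $\rho$. The monoidal structure $\theta_{\pi,\rho}\colon E_\pi \tensor_{C(\G)} E_\rho \to E_{\pi\tensor\rho}$ should come from the canonical identification $(\cl{H}_\pi \tensor C(\G)) \tensor_{C(\G)} (\cl{H}_\rho \tensor C(\G)) \cong \cl{H}_\pi \tensor \cl{H}_\rho \tensor C(\G)$, where the interior balancing over $C(\G)$ uses the left action $(\rho\tensor\lambda)\Delta$ on the second factor; one checks this is unitary, natural, $\G$-equivariant, and compatible with the left $C(\G)$-actions, and that $E_{\mbf{1}} = C(\G)$ with the regular correspondence structure.

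Next I would build the inverse. Given $E \in \G\text{-}\corr_{C(\G)}$ with left action $\map{\ell}{C(\G)}{\cl{L}_{C(\G)}(E)}$, I would produce a Hilbert space by taking a suitable "fibre at the counit". Concretely, one expects $\cl{H}_\pi$ to be obtained from the algebraic core $\cl{E}$ by a relative tensor product over $\cl{O}(\G)$ with the counit $\eps$, i.e. $\cl{H}_\pi = \cl{E} \tensor_{\cl{O}(\G)} \C_\eps$ completed in the pre-Hilbert norm coming from $\eps \circ \ip{\tend,\tend}_{C(\G)}$; equivalently, using that $E$ is $\G$-equivariant over $C(\G)$ and that $C(\G)$ carries the regular action, one can trivialise the right module structure and write $E \cong \cl{H} \tensor C(\G)$ as equivariant Hilbert $C(\G)$-modules (this is the key rigidity of the regular action: an equivariant Hilbert $C(\G)$-module is induced from its fibre, compare the proof of Theorem~\ref{thrm:characterization} and {\cite[Theorem~6.21, 6.23]{DC16}}). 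Under such a trivialisation, the residual left action of $C(\G)$ must commute with the right $C(\G)$-action and be $\G$-equivariant, so by Lemma~\ref{lemm:absorption}-type reasoning it is forced to be of the form $(\pi\tensor\lambda)\Delta$ for a uniquely determined unital $\ast$-representation $\map{\pi}{\cl{O}(\G)}{B(\cl{H})}$; extending or not to $C(\G)$ is irrelevant since we land in $\rep\cl{O}(\G)$. This assignment $E \longmapsto \pi$ is functorial because an intertwiner of correspondences, after trivialisation, becomes an operator $T\tensor\id$ with $T$ intertwining the representations.

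Then I would check the two composites. Starting from $\pi$, forming $E_\pi = \cl{H}_\pi \tensor C(\G)$ and re-extracting the fibre returns $\cl{H}_\pi$ with the same $\pi$, naturally in $\pi$; this is essentially a tautology once the trivialisation in the previous step is set up to be the identity on $E_\pi$. Conversely, starting from $E$, choosing a trivialisation $E \cong \cl{H}\tensor C(\G)$ and then forming $E_\pi$ gives back $E$ up to the chosen unitary, and one verifies this unitary is a morphism of correspondences, natural in $E$; different choices of trivialisation differ by a unitary in $\cl{L}^{\G}_{C(\G)}(E)$ intertwining the left action, which corresponds exactly to a unitary intertwiner of $\pi$, so the natural isomorphism is well-defined independent of choices. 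Finally I would confirm that the equivalence is monoidal, i.e. compatible with $\theta$: both the relative tensor product $E\tensor_{C(\G)} E'$ and the tensor product of representations are computed on fibres by the same formula, so $\theta_{\pi,\rho}$ matches the canonical associativity/balancing isomorphism on the $\rep\cl{O}(\G)$ side.

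The main obstacle is the trivialisation step: proving that every $\G$-equivariant Hilbert $C(\G)$-module $E$ (not assumed finitely generated, since we are in $\G\text{-}\corr_{C(\G)}$ rather than $\G\text{-}\corr^{\rfin}_{C(\G)}$) is isomorphic to $\cl{H}\tensor C(\G)$ with the diagonal action, and that the isomorphism can be chosen canonically enough to make everything functorial and monoidal. For finitely generated modules this is {\cite[Theorem~6.23]{DC16}}, but here one needs the general statement; I expect the right approach is to work with the algebraic core $\cl{E}$, use the coaction of $\cl{O}(\G)$ together with the existence of the counit $\eps$ to define the fibre map $\map{\Phi}{E}{\cl{E}_\eps \tensor C(\G)}$ by $\xi \longmapsto (\ev_\eps \tensor\id)\alpha_E(\xi)$ (suitably interpreted), and then show $\Phi$ is a surjective isometry by a direct computation using the coaction identities and the density condition $\cspan{\alpha_E(E)(\C\tensor C(\G))} = E\tensor C(\G)$. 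Once this structural lemma is in place, the rest of the argument is bookkeeping with coaction and coproduct identities, parallel to (but simpler than) the proof of the main fundamental theorem.
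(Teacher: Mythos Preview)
Your proposal is correct and follows essentially the same route as the paper: define the fibre Hilbert space $\cl{H}_E$ as the completion of the algebraic core $\cl{E}$ under $\ip{\xi,\eta} = \eps(\ip{\xi,\eta}_{C(\G)})$, let the left $\cl{O}(\G)$-action descend to give $\pi_E$, and use the coaction map $\xi \mapsto \xi_{(0)}\tensor\xi_{(1)}$ to build the unitary $E \cong E_{\pi_E}$, with surjectivity coming from the density condition exactly as you say. The paper's explicit monoidal unitary $(\xi\tensor x)\tensor(\eta\tensor y)\mapsto (\xi\tensor\rho(x_{(1)})\eta)\tensor x_{(2)}y$ is precisely your ``canonical identification'' once the balancing over $C(\G)$ is unwound.

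One point you glide over: the positivity of $\eps(\ip{\xi,\xi}_{C(\G)})$ for $\xi\in\cl{E}$ is not automatic, since $\eps$ need not extend to a positive functional on the reduced algebra $C(\G)$, and $\ip{\xi,\xi}_{C(\G)}$ is positive in $C(\G)$ but not a priori of the form $x^*x$ with $x\in\cl{O}(\G)$. The paper isolates this as a separate lemma, proved by passing to a finitely generated submodule containing $\xi$, embedding it isometrically into some $\cl{H}_\pi\tensor C(\G)$, and computing there that $\ip{v\tensor x,v\tensor x} = \nor{v}^2\abs{\eps(x)}^2 \ge 0$. You should include this step explicitly.
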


Before proving this, we should prepare the following lemma.

\begin{lemm}
Let $E$ be a $\G$-equivariant Hilbert $C(\G)$-module.
The the following formula gives an semi-inner product on the
algebraic core $\cl{E}$ of $E$:
\[
 \ip{\xi,\eta} = \eps(\ip{\xi,\eta}_{C(\G)})
\]
\end{lemm}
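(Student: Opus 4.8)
The plan is to verify directly that the stated sesquilinear form is positive semidefinite, and that is essentially all that is needed since conjugate-linearity in the first variable, linearity in the second, and the symmetry $\ip{\xi,\eta}^* = \ip{\eta,\xi}$ are immediate from the corresponding properties of the $C(\G)$-valued inner product together with the facts that $\eps$ is a $\ast$-homomorphism and that $\eps(\ip{\xi,\eta}_{C(\G)})$ is a scalar, so taking adjoints is complex conjugation.

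The core of the argument is positivity: for $\xi \in \cl{E}$ we must show $\eps(\ip{\xi,\xi}_{C(\G)}) \geq 0$. First I would observe that $\ip{\xi,\xi}_{C(\G)}$ is a positive element of $C(\G)$ lying in the algebraic core $\cl{O}(\G)$ (because $\xi \in \cl{E}$), but $\eps$ is \emph{not} positive on $C(\G)$ in general, so positivity is not automatic. The key point is equivariance: the $\G$-action $\alpha_E$ on $E$ satisfies $\alpha(\ip{\xi,\eta}_{C(\G)}) = \ip{\alpha_E(\xi),\alpha_E(\eta)}_{C(\G)\tensor C(\G)}$, where $\alpha = \Delta$. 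Writing $\alpha_E(\xi) = \xi_{(0)}\tensor \xi_{(1)}$ in Sweedler notation on the algebraic core, we get $\Delta(\ip{\xi,\xi}_{C(\G)}) = \sum \ip{\xi_{(0)},\xi_{(0)}'}_{C(\G)} \tensor \xi_{(1)}^* \xi_{(1)}'$ after expanding the external-tensor-product inner product, and applying $\eps\tensor\id$ and using $(\eps\tensor\id)\Delta = \id$ yields
\[
\ip{\xi,\xi}_{C(\G)} = \sum \eps(\ip{\xi_{(0)},\xi_{(0)}'}_{C(\G)})\,\xi_{(1)}^*\xi_{(1)}'.
\]
Then I would apply $\eps$ once more to both sides, or more cleanly evaluate this in the GNS/Haar picture: the right-hand side exhibits $\ip{\xi,\xi}_{C(\G)}$ as a positive combination governed by the Gram matrix $\bigl(\eps(\ip{\xi_{(0)},\xi_{(0)}'}_{C(\G)})\bigr)$, and the finite-dimensionality of the relevant span (since everything lives inside finitely many spectral subspaces) lets one diagonalize and conclude that this Gram matrix is positive semidefinite, which is exactly the assertion that $\ip{\tend,\tend}$ is a semi-inner product on $\cl{E}$. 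Equivalently, one can argue that $\ip{\xi,\xi} = \eps(\ip{\xi,\xi}_{C(\G)})$ equals $\nor{P\xi}^2$ for a suitable projection, by identifying $\eps$ with evaluation of matrix coefficients at the trivial representation and using that $\cl{E}\tensor_{\cl{O}(\G)}\C_\eps$ is a pre-Hilbert space.

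The main obstacle is precisely that $\eps$ fails to be positive on $C(\G)$ when $\G$ is not of Kac type, so the positivity genuinely uses the module coaction and cannot be obtained by a one-line appeal to positivity of a $\ast$-homomorphism. I expect the cleanest route is the Sweedler computation above combined with Fell's-absorption-type reasoning as in Lemma \ref{lemm:absorption}: the coaction $\alpha_E$ intertwines the left $C(\G)$-multiplication with a representation built from $\Delta$, and under that intertwiner the form $\eps(\ip{\tend,\tend}_{C(\G)})$ becomes the restriction of an honest Hilbert-space inner product to a subspace, hence is positive semidefinite. Degeneracy (a nonzero $\xi$ with $\ip{\xi,\xi} = 0$) is expected and harmless, which is why the statement only claims a semi-inner product; it will be quotiented out in the subsequent construction.
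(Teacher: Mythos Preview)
You have correctly identified the genuine difficulty: since $\eps$ need not extend to a state on the reduced $C(\G)$, the positivity $\eps(\ip{\xi,\xi}_{C(\G)})\ge 0$ is not automatic. Your Sweedler identity
\[
\ip{\xi,\xi}_{C(\G)}=\sum_{k,l}\eps\bigl(\ip{\xi_k,\xi_l}_{C(\G)}\bigr)\,x_k^*x_l,
\qquad \alpha_E(\xi)=\sum_k\xi_k\tensor x_k,
\]
is also correct. The gap is in the next step. You claim that from this identity, together with finite dimensionality, one can ``diagonalize and conclude that this Gram matrix is positive semidefinite.'' But the positive semidefiniteness of the Gram matrix $\bigl(\eps(\ip{\xi_k,\xi_l}_{C(\G)})\bigr)_{k,l}$ is \emph{exactly} the statement you are trying to prove, applied to the tuple $(\xi_k)_k$. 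Knowing only that the left-hand side is positive in $C(\G)$ does not force the scalar matrix $G$ to be positive: the $x_k$ are not arbitrary, so $\mbf{x}^*G\,\mbf{x}\ge 0$ for this particular $\mbf{x}$ says nothing about $G$. The alternative you mention, that $\cl{E}\tensor_{\cl{O}(\G)}\C_\eps$ is a pre-Hilbert space, is likewise circular: it presupposes the positivity in question. The Fell-absorption suggestion is too vague as stated; there is no left $C(\G)$-action on $E$ to intertwine.

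The paper closes this gap by a reduction you do not carry out. One first replaces $E$ by the $\G$-equivariant submodule generated by $\xi$ (and~$\eta$), which is finitely generated. Then, using that $\G\text{-}\mod^{\fin}_{C(\G)}$ is connected as a $\rep^{\fin}\G$-module category, one finds a $\G$-equivariant isometry $V\colon E\to \cl{H}_\pi\tensor C(\G)$ for some finite dimensional $\pi$. Since $V$ preserves the $C(\G)$-valued inner product, it also preserves the proposed scalar form, so it suffices to check positivity on $\cl{H}_\pi\tensor C(\G)$. There it is trivial from multiplicativity of $\eps$: for $\zeta=\sum_i v_i\tensor x_i$ with $x_i\in\cl{O}(\G)$ one has
\[
\eps(\ip{\zeta,\zeta}_{C(\G)})=\sum_{i,j}\ip{v_i,v_j}\,\eps(x_i^*x_j)
=\Bigl\lVert\sum_i \eps(x_i)\,v_i\Bigr\rVert^2\ge 0.
\]
This embedding step is the missing idea in your plan.
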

\begin{proof}
The only non-trivial part is the positivity of $\ip{\xi,\xi}$ for each
$\xi \in \cl{E}$. By replacing $E$ with its $\G$-equivariant Hilbert $C(\G)$
submodule generated by $\xi$ and $\eta$, we may assume $E$ is 
finitely generated. 

If we are given another 
$F \in \G\text{-}\mod^{\fin}_{C(\G)}$ and an isometry 
$V \in \cl{L}^{\G}_{C(\G)}(E,F)$, we have
\[
 \ip{V\xi,V\eta} = \eps(\ip{V\xi,V\eta}_{C(\G)}) = \eps(\ip{\xi,\eta}_{C(\G)}) = \ip{\xi,\eta}.
\]
Since $\G\text{-}\mod^{\fin}_{C(\G)}$ is connected as a 
$\rep^{\fin}\G$-module category, the equality above implies that
it suffices to show the statement for $\cl{H}_{\pi}\tensor C(\G)$ 
with an arbitrary $\pi \in \rep^{\fin}\G$ 
 But this is trivial since we have
\[
 \ip{v\tensor x,u\tensor y} = \ip{v,u}\eps(x^*y).
\]
\end{proof}

\begin{proof}[Proof of Proposition \ref{prop:corr over C(G)}]
At first we make the functor in the statement into a \cstar-tensor functor.
This can be completed by associating the following unitary:
\begin{align*}
\xymatrix@!C@R=6pt
{
 (\cl{H}_{\pi}\tensor C(\G))\tensor_{C(\G)} (\cl{H}_{\rho}\tensor C(\G)) 
\ar[r]^-{U_{\pi,\rho}}& (\cl{H}_{\pi}\tensor\cl{H}_{\rho})\tensor C(\G). \\
 (\xi\tensor x)\tensor (\eta \tensor y) \ar@{|->}[r]& (\xi\tensor \rho(x_{(1)})\eta) \tensor x_{(2)}y.
}
\end{align*}

It can be easily seen that this unitary satisfies the conditions in the
definition of \cstar-tensor functor.

To show that this functor gives the equivalence, we construct
a quasi-inverse functor. Let $E$ be a $\G$-equivariant corrrespondence 
over $C(\G)$. Set $\cl{H}_E$ as a completion of 
the algebraic core $\cl{E}$ of $E$,
with respect the inner product in the previous lemma.
Then each element $x \in \cl{O}(\G)$ acts on $\cl{H}_E$ from the 
left, since $\cl{O}(\G)$ is a linear span of matrix coefficients of 
$\cl{O}(\G)$-valued unitary matrices. Now we have a \star-representation
$(\pi_E,\cl{H}_{\pi})$ of $\cl{O}(\G)$.

To complete the proof, we will check that these functors give a
quasi-inverse of each other.
For $(\pi,\cl{H}_{\pi}) \in \rep^{\fin}\cl{O}(\G)$, the algebraic core
of $E_{\pi}$ is precisely $\cl{H}_{\pi}\tensor \cl{O}(\G)$. Hence 
we have a unitary from $\cl{H}_{E_{\pi}}$ to $\cl{H}_{\pi}$ given by
$\xi\tensor x \longmapsto \eps(x)\xi$ and this is an intertwiner
of representations of $\cl{O}(\G)$. On the other hand, for any 
$E \in \G\text{-}\corr^{\rfin}_{C(\G)}$, 
we have a map from $\cl{E}$ to $\cl{H}_E\tensor C(\G)$  given as follows:
\[
 \xi\in \cl{E} \longmapsto \xi_{(0)}\tensor \xi_{(1)} \in \cl{H}_E\tensor C(\G).
\]
This map is a $\G$-equivariant isometry and 
intertwines with left $C(\G)$-action. To show that this is a right 
$C(\G)$-module map, one should note the following: 
For any $x \in \cl{O}(\G)$ and $\xi \in \cl{E}$, we have
\[
 \nor{\eps(x)\xi - \xi x}_{\cl{H}_E}^2 
= \eps(\ip{\eps(x)\xi - \xi x,\eps(x)\xi - \xi x}_{C(\G)}) 
= 0.
\]
Hence we have $\xi x = \eps(x)\xi$ in $\cl{H}_E$ and
\[
 (\xi x)_{(0)}\tensor (\xi x)_{(1)} 
= \eps(x_{(0)})\xi_{(0)}\tensor \xi_{(1)}x_{(1)}
= \xi_{(0)}\tensor \xi_{(1)}x.
\]
This implies that the map above is an embedding of $\G$-equivariant 
correspondence from $E$ to $\cl{H}_E\tensor C(G)$.
Hence the range of this map contains $\xi_{(0)}\tensor \xi_{(1)}x$ 
for any $\xi \in \cl{E}$ and $x \in C(\G)$. Combining this with 
$\cspan \alpha_E(E)(\C\tensor C(\G)) = E\tensor C(\G)$, 
we can see that this map is unitary. Now we complete the proof.
\end{proof}
\begin{rema}
We also have an equivalence 
$\rep^{\fin}\cl{O}(\G) \cong \G\text{-}\corr^{\rfin}_{C(\G)}$.
\end{rema}

Let us recall the quantum Bohr compactification due to So\l tan \cite{So05}.
For the dual discrete quantum group $\hat{\G}$ of $\G$,
its quantum Bohr compactification $\mathfrak{b}\hat{\G}$ is of Kac type
({\cite[Theorem 4.5]{So05}}). Moreover every finite dimensional
\star-representation of $\cl{O}(\G)$ gives rise to a finite dimensional
unitary representation of $\mathfrak{b}\hat{\G}$ 
(c.f. {\cite[Theorem 2.3]{CKS}}). These facts imply that
$\rep^{\fin}\cl{O}(\G)$ is rigid and the forgetful functor 
$\rep^{\fin}\cl{O}(\G)\longmapsto \hilb^{\fin}$ is dimension-preserving.
Now Proposition \ref{prop:corr over C(G)} yields the desired result.

\begin{coro} \label{coro:integer degree}
For a finite quantum $\G$-covering space over $C(\G)$,
its covering degree must be a positive integer. 
\end{coro}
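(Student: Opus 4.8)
The plan is to read the covering degree off as a categorical dimension inside $\rep^{\fin}\cl{O}(\G)$, and then invoke the fact that this category is ``of Kac type'' in the sense supplied by the quantum Bohr compactification. Concretely, let $A \subset B$ be a finite quantum $\G$-covering space with $A = C(\G)$. By Proposition \ref{prop:fundamental} it corresponds to a Q-system $(B_E,m,\iota)$, and its covering degree $d$ equals the categorical dimension $d(B_E)$ of the underlying object $B_E$ of that Q-system. Moreover, as in the proof of Theorem \ref{thrm:characterization}, the Hilbert $A$-module $B_E$ is finitely generated, so $B_E$ is an object of $\G\text{-}\corr^{\rfin}_{C(\G)}$.

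Next I would transport this through Proposition \ref{prop:corr over C(G)}. The equivalence $\rep \cl{O}(\G) \cong \G\text{-}\corr_{C(\G)}$ established there is an equivalence of \cstar-tensor categories, and by the remark following it, it restricts to an equivalence $\rep^{\fin}\cl{O}(\G) \cong \G\text{-}\corr^{\rfin}_{C(\G)}$. Hence $B_E \cong E_{\pi}$ for some finite dimensional unital \star-representation $(\pi,\cl{H}_{\pi})$ of $\cl{O}(\G)$. Since the categorical dimension of a rigid object is intrinsic to the \cstar-tensor category — it is defined through standard solutions of the conjugate equations, and a \cstar-tensor equivalence carries standard solutions to standard solutions — we get $d(B_E) = d(E_{\pi}) = d(\pi)$, the categorical dimension of $\pi$ computed in $\rep^{\fin}\cl{O}(\G)$.

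Finally I would use the discussion of the quantum Bohr compactification $\mathfrak{b}\hat{\G}$ preceding the corollary. Because $\mathfrak{b}\hat{\G}$ is of Kac type and $\rep^{\fin}\cl{O}(\G)$ identifies, compatibly with the forgetful functors to $\hilb^{\fin}$, with a full \cstar-tensor subcategory of $\rep^{\fin}\mathfrak{b}\hat{\G}$, the forgetful functor $\rep^{\fin}\cl{O}(\G) \to \hilb^{\fin}$ is dimension-preserving; indeed, for a compact quantum group of Kac type the categorical dimension of a representation coincides with the dimension of its Hilbert space. Therefore $d(\pi) = \dim \cl{H}_{\pi} \in \N$, and since $\cl{H}_{\pi} \neq 0$ this is a positive integer. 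Chaining the equalities gives $d = d(B_E) = d(\pi) = \dim \cl{H}_{\pi}$, which is the assertion.

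As for the main obstacle: for the corollary itself almost nothing remains to be done, since the substantial input — the rigidity of $\rep^{\fin}\cl{O}(\G)$ and the dimension-preservation of its forgetful functor — is exactly what $\mathfrak{b}\hat{\G}$ provides and is recorded in the remark above. The two small points that deserve a line of care are: (a) verifying that the Q-system object $B_E$ genuinely lies in the right-finitely generated part $\G\text{-}\corr^{\rfin}_{C(\G)}$, so that the ``$\rfin$'' form of Proposition \ref{prop:corr over C(G)} applies; and (b) the standard, but worth stating, fact that a \cstar-tensor equivalence preserves the quantity $d(-)$.
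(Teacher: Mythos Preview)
Your proposal is correct and follows essentially the same route as the paper: the paragraph immediately preceding the corollary already records, via the quantum Bohr compactification, that $\rep^{\fin}\cl{O}(\G)$ is rigid with dimension-preserving forgetful functor to $\hilb^{\fin}$, and the paper then simply says that Proposition~\ref{prop:corr over C(G)} (together with the implicit use of Proposition~\ref{prop:fundamental} to read the covering degree as a categorical dimension) yields the result. Your write-up merely spells out these steps in full, including the two small points (a) and (b), which are indeed routine.
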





\subsection{A module categorical description}

Let us recall the Tannaka-Krein type result for quantum 
homogeneous spaces {\cite[Theorem 6.4]{DY13}}, {\cite[Theorem 3.3]{Ne14}}. 
This suggests us the possibility to 
understand finite quantum $\G$-covering space by the language of 
module categories. 

Let $M$ be a right-finitely generated $\G$-equivariant 
$(A,B)$-correspondence.
Then we have a $\rep^{\fin}\G$-module functor from $\G\text{-}\mod_A^{\fin}$
to $\G\text{-}\mod_B^{\fin}$ given by the following:
\begin{itemize}
 \item As a \cstar-functor, it is given by taking the internal tensor 
product with $E$ over $A$ i.e. the functor sends $E$ to $E \tensor_A M$. 
Here $E\tensor_A M$ is considered as a $\G$-equivariant Hilbert 
$B$-module by $\xi\tensor m \longmapsto \alpha_E(\xi)_{13}\beta_M(m)_{23}$.
Since $E$ and $M$ is finitely generated, this Hilbert $B$-module is also
finitely generated.
 \item The unitary from $(\cl{H}_{\pi}\tensor E)\tensor_A M$ to 
$\cl{H}_{\pi}\tensor (E\tensor_A M)$ is given by 
$(v \tensor \xi) \tensor m \longmapsto v \tensor (\xi \tensor m)$.
\end{itemize}
This $\rep^{\fin}\G$-module functor is denoted by $\tend\tensor_A M$.

The following is a main theorem of this subsection, which is 
a generalization of {\cite[Theorem 7.1]{DY13}}
\begin{thrm} \label{thrm:fundamental theorem}
Let $A$ and $B$ be quantum homogeneous spaces of $\G$. Then 
the \cstar-functor $M \longmapsto \tend\tensor_A M$ gives 
the following equivalence of \cstar-categories:
\[
 \G\text{-}\corr_{A,B}^{\rfin} \cong 
[\G\text{-}\mod_A^{\fin},\G\text{-}\mod_B^{\fin}]_{\mathrm{b}}^{\rep^{\fin}\G}
\]
\end{thrm}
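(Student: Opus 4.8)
The plan is to construct an explicit quasi-inverse to the functor $M \longmapsto \tend \tensor_A M$ and check the two natural isomorphisms. The construction of the quasi-inverse should follow the pattern of the classical Tannaka--Krein result for quantum homogeneous spaces (\cite[Theorem 6.4]{DY13}, \cite[Theorem 3.3]{Ne14}). Given a $\rep^{\fin}\G$-module functor $(F,f)$ from $\G\text{-}\mod_A^{\fin}$ to $\G\text{-}\mod_B^{\fin}$, I would evaluate it on the ``regular'' object: concretely, $A$ itself, viewed as a $\G$-equivariant Hilbert $A$-module over itself, is a distinguished object of $\G\text{-}\mod_A^{\fin}$ (it is the image of $\mbf{1} \in \rep^{\fin}\G$ under the canonical module structure), and $M_F := F(A)$ is a $\G$-equivariant Hilbert $B$-module. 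The point is that $M_F$ carries a canonical left $A$-action: an element $a \in A$, acting by right multiplication on the module $A$, is an element of $\cl{L}_A^{\G}(A,A)$, and applying $F$ gives $F(a) \in \cl{L}_B^{\G}(M_F, M_F)$; functoriality of $F$ makes $a \longmapsto F(a)$ a unital $\G$-equivariant \star-homomorphism $A \longrightarrow \cl{L}_B(M_F)$, so $M_F \in \G\text{-}\corr^{\rfin}_{A,B}$. On morphisms, a bounded module natural transformation $\eta\colon F \Rightarrow G$ gives $\eta_A \in \cl{L}_B^{\G}(M_F, M_G)$, which one checks intertwines the left $A$-actions using naturality of $\eta$ against the maps $F(a)$.

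The next step is to produce the natural isomorphism $\tend \tensor_A M_F \cong F$ of module functors. For a general $E \in \G\text{-}\mod_A^{\fin}$, I would use the module-category structure to reduce to the case $E = \cl{H}_\pi \tensor A$: since $\G\text{-}\mod_A^{\fin}$ is connected and semisimple (\cite[Proposition 3.11]{DY13}), every object embeds as a direct summand of some $\cl{H}_\pi \tensor A$ via a $\G$-equivariant isometry, and both $\tend \tensor_A M_F$ and $F$ are \cstar-functors, so it suffices to exhibit a natural unitary on the objects $\cl{H}_\pi \tensor A$ compatible with the module structure. On $\cl{H}_\pi \tensor A$ one has $(\cl{H}_\pi \tensor A) \tensor_A M_F \cong \cl{H}_\pi \tensor M_F = \cl{H}_\pi \tensor F(A)$, which the module constraint $f_{\pi,A}$ identifies with $F(\cl{H}_\pi \tensor A)$; naturality and the module-functor hexagon guarantee these identifications are coherent and extend to all of $\G\text{-}\mod_A^{\fin}$. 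The reverse composite $M \longmapsto M_{\tend \tensor_A M} = (\tend \tensor_A M)(A) = A \tensor_A M \cong M$ is essentially the canonical isomorphism, and one checks it respects the left $A$-actions and the $\G$-actions.

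The main obstacle I anticipate is establishing that $\tend \tensor_A M_F$ and $F$ agree \emph{as module functors}, not merely as \cstar-functors — that is, that the unitary isomorphism on objects is compatible with the respective module constraints $f_{\pi,E}$. This forces a careful bookkeeping of three structures at once: the right $B$-module structure, the left $A$-module structure (which is where $F(a)$ enters and where the extension from $\cl{H}_\pi \tensor A$ to general $E$ must be checked to be $A$-linear, not just $\C$-linear), and the $\rep^{\fin}\G$-module constraint. A secondary technical point is checking that $M_F$ is genuinely right-finitely generated: this follows because $A \in \G\text{-}\mod_A^{\fin}$ and $F$ restricts to finitely generated objects by hypothesis, but one should confirm that $F(A)$ being finitely generated over $B$ is exactly the $\rfin$ condition on the correspondence. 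Finally, one must verify that the \cstar-functor $M \longmapsto \tend \tensor_A M$ is well-defined on morphisms (intertwiners of correspondences induce module natural transformations) and faithful; faithfulness is immediate by evaluating at $A$, and fullness follows from the quasi-inverse construction above, so these reduce to routine diagram chases once the object-level equivalence is in place.
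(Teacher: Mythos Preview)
Your proposal has a genuine gap in the construction of the left $A$-action on $M_F = F(A)$. You write that ``an element $a \in A$, acting by right multiplication on the module $A$, is an element of $\cl{L}_A^{\G}(A,A)$, and applying $F$ gives $F(a) \in \cl{L}_B^{\G}(M_F,M_F)$''. This fails: left multiplication $L_a\colon x \mapsto ax$ on $A$ is indeed a right $A$-module map, so $L_a \in \cl{L}_A(A)$, but it is $\G$-equivariant only when $a \in A^{\G} = \C 1_A$. (And right multiplication $x \mapsto xa$ is not even a right $A$-module map.) The functor $F$ lives on the category whose morphisms are the $\G$-\emph{equivariant} adjointable maps, so $F(L_a)$ is defined only for $a \in \C 1_A$, and your recipe produces no left action of $A$ beyond scalars.

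The paper resolves this by using the spectral decomposition $\cl{A} = \bigoplus_{\pi} \cl{A}_{\pi}$ with $\cl{A}_{\pi} = \cl{L}_A^{\G}(\cl{H}_\pi \tensor A, A) \tensor \cl{H}_\pi$: a typical element $s \tensor v \in \cl{A}_\pi$ acts on $\eta \in F(A)$ by
\[
 (s \tensor v)\cdot \eta \;=\; F(s)\,f_{\pi,A}^{*}(v \tensor \eta),
\]
so the module constraint $f$ enters essentially --- it is precisely what converts the ``non-equivariant part'' $v \in \cl{H}_\pi$ into something $F$ can digest. One must then verify that this is a $*$-homomorphism compatible with the $\G$-action and invoke the automatic-continuity result (Proposition~\ref{prop:automatic continuity}) to extend from the algebraic core $\cl{A}$ to all of $A$; this extension step is itself nontrivial and is absent from your outline. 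Once the left action is built correctly, the two natural isomorphisms go much as you sketch, though the paper writes down an explicit isometry $\cl{E} \tensor F(A) \to F(E)$ via the same spectral formula rather than first reducing to objects of the form $\cl{H}_\pi \tensor A$.
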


We would like to use an argument used in {\cite[Theorem 7.1]{DY13}} 
for the proof of the above theorem. 

At first, we fix a complete set $\irr{\G}$ of representatives of 
all equivalence classes of irreducible unitary representations of $\G$. 
For any $\pi \in \irr{\G}$, $\bar{\pi}$ denotes an element of 
$\irr{\G}$ unitary equivalent to the conjugate representation of $\pi$,
which is uniquely determined.

Let $E$ be a $\G$-equivariant Hilbert $A$-module and $\cl{E}$ be its 
algebraic core. For each $\pi \in \irr\G$, $\cl{E}_{\pi}$ denotes 
the space $\cl{L}_A^{\G}(\cl{H}_{\pi}\tensor A,E)\tensor \cl{H}_{\pi}$.
Then we can regard $\cl{E}_{\pi}$ as a subspace of $\cl{E}$ via 
$T\tensor \xi \longmapsto T(\xi\tensor 1)$ and have the spectral 
decomposition $\cl{E} = \bigoplus_{\pi} \cl{E}_{\pi}$.
By considering $A$ as a $\G$-equivariant Hilbert $A$-module, this 
decomposition also can be applied to $\cl{A}$. 

Now we can present the right $\cl{A}$-action, the $\cl{A}$-valued
inner product and the $\G$-action on $\cl{E}$ as follows: 
For $T\tensor \xi \in \cl{E}_{\pi},\,S\tensor \eta \in \cl{E}_{\rho}$ and
$s\tensor v \in \cl{A}_{\rho}$, we have
\begin{align*}
 (T\tensor \xi)(s\tensor v) 
&= \sum_{\sigma \in \irr{\G}} \sum_{V \in (\sigma,\pi\rho)}  
T(\id_{\pi}\tensor s)(V\tensor \id_A)\tensor V^*(\xi\tensor v),\\
 \ip{T\tensor\xi,S\tensor\eta}_{\cl{A}}
&= \sum_{\sigma \in \irr{\G}} \sum_{V \in (\sigma,\bar{\pi}\rho)}
(R_{\pi}^*\tensor\id_A)(\id_{\bar{\pi}}\tensor T^*S)(V\tensor \id_A)\\[-1.3em]
&\hspace{0.4\textwidth}\tensor(\xi^*\tensor V^*)(\bar{R}_{\pi}(1)\tensor \eta),\\
 \alpha_E(T\tensor \xi) &= T \tensor U_{\pi}(\xi\tensor 1).
\end{align*}
Here we use the following notations:
\begin{itemize}
 \item The quadruple $(\pi,\bar{\pi},R_{\pi},\bar{R}_{\pi})$ denotes a 
standard solution in $\rep^{\fin}\G$. 
 \item The symbol $\sum_{V\in (\sigma,\pi\rho)}$ means the summation over
all elements of a fixed orthonormal basis of 
$\mathrm{Hom}_{\G}(\sigma,\pi\tensor\rho)$, equipped with an inner product 
$\ip{V,W} = V^*W$. 
\end{itemize}
These formulae do not depend on the choices of the standard solutions and 
the orthonormal bases. We also have the following presentation of 
the involution of $\cl{A}$ under these notations:
\[
 (s\tensor v)^*
= (R_{\pi}^*\tensor\id)(\id_{\bar{\pi}}\tensor s^*)
\tensor(v^* \tensor\id)\bar{R_{\pi}}(1). 
\]
Now we finishes the preparation for the proof of Theorem 
\ref{thrm:fundamental theorem}.

\begin{proof}[Proof of Theorem \ref{thrm:fundamental theorem}]
At first we construct a functor from 
$[\G\text{-}\mod_A^{\fin},\G\text{-}\mod_B^{\fin}]
_{\mathrm{b}}^{\rep^{\fin}\G}$ to $\G\text{-}\corr_{A,B}^{\rfin}$.
Let $(F,f)$ be a $\rep^{\fin}\G$-module functor from 
$\G\text{-}\mod_A^{\fin}$ to $\G\text{-}\mod_B^{\fin}$. Then we can 
associate a left action of $\cl{A}$ on $F(A)$ as follows: For
$s \tensor v \in \cl{A}_{\pi}$ and $\eta \in F(A)$, the left multiplication
by $s\tensor v$ is given by
\[
 (s\tensor v)\eta = F(s)f_{\pi,A}^*(v\tensor \eta).
\]
Then some direct calculations show that this defines an adjointable
$B$-module map. Moreover we can see the following relations:
\begin{align*}
 \ip{\eta,(s\tensor v)\eta'}_{B}
&= \ip{(s\tensor v)^*\eta,\eta'}_{B},\\
 \beta_{F(A)}((s\tensor v)\eta) 
&= \alpha(s\tensor v)\beta_{F(A)}(\eta).
\end{align*}
Hence Proposition \ref{prop:automatic continuity} shows that this 
left $\cl{A}$-action extends to
a left $A$-action on $F(A)$ compatible with the $\G$-actions.
Now $F(A)$ is equipped with a structure of $\G$-equivariant 
$(A,B)$-correspondence. 
Moreover if we are given another 
$\rep^{\fin}\G$-module functor $(G,g)$ and a morphism 
$\map{\theta}{(F,f)}{(G,g)}$, its component 
$\map{\theta_A}{F(A)}{G(A)}$ is a morphism
of $\G$-equivariant $(A,B)$-correspondence. These constructions
give rise to a functor we want.

We will complete the proof by showing that 
these functors give a quasi-inverse of each other.
Fix an right-finitely generated $\G$-equivariant $(A,B)$-correspondence 
$M$. Then $A\tensor_A M$ is isomorphic to $M$ as a $\G$-equivariant 
Hilbert $B$-module map via $x\tensor \xi \longmapsto x\xi$. Hence it 
suffices to show the compatibility with the left $A$-actions.
For $s\tensor v \in \cl{A}_{\pi}$ and $a\tensor \xi \in A\tensor_A M$, 
the action as above is calculated as follows:
\[
 (s\tensor v)(a\tensor \xi) = (s\tensor \id_M)((v\tensor a)\tensor\xi)
=s(v\tensor a)\tensor \xi.
\]
Since $s\tensor v$ corresponds to $s(v\tensor 1) \in \cl{A}$,
this equality shows the compatibility we required.

Next we take a $\rep^{\fin}\G$-module functor $(F,f)$.  
For any finitely generated $\G$-equivariant Hilbert $A$-module $E$, 
we have a map from $\cl{E}\tensor F(A)$ to $F(E)$ which sends
$(T\tensor v)\tensor \xi$ to $F(T)f_{\pi,A}^*(v\tensor \xi)$.
Then we have
\begin{align*}
& \ip{F(T)f_{\pi,A}^*(v\tensor\xi),F(S)f_{\rho,A}^*(w\tensor \eta)}_B\\
&=\ip{v\tensor\xi,f_{\pi,A}F(T^*S)f_{\rho,A}^*(w\tensor \eta)}_B \\
&=\ip{v\tensor\xi,(\id_{\pi}\tensor (R_{\pi}^*\tensor\id_{F(A)})(\id_{\bar{\pi}}\tensor f_{\pi,A}F(T^*S)f_{\rho,A}^*))(\bar{R}_{\pi}\tensor\id_{\rho}\tensor\id_{F(A)})(w\tensor\eta)}_B
\end{align*}
By using 
$(R_{\pi}^*\tensor\id_{F(A)})f_{\pi\tensor\bar{\pi},A} = F(R_{\pi}^*\tensor\id_A)$
and $f_{\pi\tensor\bar{\pi},A} = (\id_{\bar{\pi}}\tensor f_{\pi,A})f_{\bar{\pi},\pi\tensor A}$, we also have
\begin{align*}
(R_{\pi}^*\tensor\id_{F(A)})(\id_{\bar{\pi}}\tensor f_{\pi,A}F(T^*S)f_{\rho,A}^*)
=F((R_{\pi}^*\tensor\id_A)(\id_{\bar{\pi}}\tensor T^*S))f_{\bar{\pi}\tensor\rho,A}^*.
\end{align*}
Hence the map induces an isometry from $E\tensor_A F(A)$ to $F(E)$. It 
can be easily seen that this isometry is a $\G$-equivariant Hilbert 
$B$-module map. To prove the unitarity, one should note that $F$ preserves
the range projection i.e. for any 
$T \in \cl{L}_A^{\G}(\cl{H}_{\pi}\tensor A,E)$ with the range projection
$p \in \cl{L}_A^{\G}(E)$, the range projection of 
$\map{F(T)}{F(\cl{H}_{\pi}\tensor A)}{F(E)}$ is $F(p)$. 
This fact implies that 
$F(E)$ is contained the sum space of the range of $F(T)$ over all 
$T \in \bigcup_{\pi\in\irr{\G}}\cl{L}_A^{\G}(\cl{H}_{\pi}\tensor A,E)$.  
Hence the map from $E\tensor_A F(A)$ to $F(E)$ is unitary.
Now the proof is completed.
\end{proof}

As an application of this theorem, we can see that
each Jones' value  appears as the covering degree of a finite quantum 
$\G$-covering space for some $\G$.

\begin{thrm} \label{thrm:Jones value}
 The covering degree of a finite quantum $\G$-covering space is
contained in $\{4\cos^2(\pi/n)\mid n \ge 3\}\cup[4,\infty)$. 
Conversely, for any element $d$ of this set, we have a
compact quantum group $\G$ and a finite quantum $\G$-covering space
$A\subset B$ with its covering degree $d$.
\end{thrm}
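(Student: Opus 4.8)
The first assertion is immediate from the categorical interpretation of the covering degree: by Proposition~\ref{prop:fundamental}, the covering degree of a finite quantum $\G$-covering space $A\subset B$ equals $d(Q)$ for the corresponding Q-system $Q$ in the rigid \cstar-tensor category $\G\text{-}\corr^{\fin}_A$, and the dimension of any Q-system in a rigid \cstar-tensor category lies in Jones' set $\{4\cos^2(\pi/n)\mid n\ge 3\}\cup[4,\infty)$ by Jones' theorem (the Q-system generates a full subcategory whose fusion graph is a connected graph with norm in that set, exactly as in the subfactor case; see \cite[Theorem 4.3.1]{Jo83}). I would spell this out by noting that a Q-system $Q$ with object $X$ yields, via the conjugate equations, the standard invariant of a finite-index inclusion with index $d(X)$, so Jones' restriction applies verbatim.

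For the converse, the plan is to realize each Jones value inside a category of the form $[\G\text{-}\mod^{\fin}_A,\G\text{-}\mod^{\fin}_A]^{\rep^{\fin}\G}_{\mathrm b}$ using Theorem~\ref{thrm:fundamental theorem}, and then transport a known Q-system of the desired dimension into $\G\text{-}\corr^{\fin}_A$. The cleanest route: for the continuous part $d\in[4,\infty)$ one may even take $\G$ trivial or take $A=\C$, where $\G\text{-}\corr^{\fin}_{\C}\cong\rep^{\fin}\G$ by Example~\ref{exam:adjoint action}, and use a free-group-type or $SU_q(2)$-type quantum group whose representation category contains a self-conjugate object of dimension $\sqrt d$; its associated Q-system $X\tensor\bar X$ has dimension $d$, giving a covering space $\C\subset B$ of covering degree $d$. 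For the discrete values $d=4\cos^2(\pi/n)$ the obstruction is that $\rep^{\fin}\G$ of an honest compact quantum group cannot contain an object of dimension $2\cos(\pi/n)$ with $n\ge 3$ finite unless $n=3,4,6$ (quantum dimensions of genuine representations are $\ge$ those integers or irrational), so one cannot stay with $A=\C$.

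To overcome this I would instead use a module-categorical model: take $\cl C=\rep^{\fin}\G$ for a suitable $\G$ (e.g. $SU_q(2)$ at a root of unity is unavailable in the reduced/analytic setting, so rather take $\G$ so that $\rep^{\fin}\G$ is the even part of the $A_{n-1}$ Temperley–Lieb category, which is realized by an appropriate compact quantum group — concretely a quantum subgroup/quotient construction or the categorical quantum group associated to $TL$ at the relevant parameter), and take $\cl M=\G\text{-}\mod^{\fin}_A$ to be the module category corresponding to the $A_{n-1}$ Dynkin diagram. Then $[\cl M,\cl M]^{\cl C}_{\mathrm b}\cong\G\text{-}\corr^{\fin}_A$ by Theorem~\ref{thrm:fundamental theorem} contains the dual category of $A_{n-1}$, in which the canonical Q-system coming from the $A_{n-1}$ subfactor has dimension exactly $4\cos^2(\pi/n)$; by Proposition~\ref{prop:fundamental}(i) this Q-system is a finite quantum $\G$-covering space $A\subset B$, and by Proposition~\ref{prop:fundamental}(ii) its covering degree is that dimension. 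The main obstacle is the realization step: producing, for each $n\ge 3$, a genuine reduced compact quantum group $\G$ together with a quantum homogeneous space $A$ whose module category $\G\text{-}\mod^{\fin}_A$ over $\rep^{\fin}\G$ reproduces the $A_{n-1}$ (or a suitable $D$/$E$) datum — i.e. matching the combinatorial/categorical input of De Commer–Yamashita \cite{DY15} for $SU_q(2)$-type data to an actual quantum group action. Once that realization is in hand, everything else is a direct application of the machinery already developed above.
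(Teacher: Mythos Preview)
Your first half and the $d\ge 4$ case are essentially the paper's argument (the paper makes the reduction to Jones' theorem explicit by embedding the tensor subcategory generated by the Q-system into $\mathrm{Bimod}^{\fin}_N$ via \cite{Ya03}, but your categorical sketch is adequate). The genuine gap is in the $d=4\cos^2(\pi/n)$ case: your proposed ``take $\G$ so that $\rep^{\fin}\G$ is the even part of the $A_{n-1}$ Temperley--Lieb category'' cannot work, because for a compact quantum group every object of $\rep^{\fin}\G$ has quantum dimension at least its Hilbert-space dimension, hence either $1$ or $\ge 2$; the TLJ categories (even parts included) have simples of dimension strictly between $1$ and $2$, so they admit no fiber functor and are not $\rep^{\fin}\G$ for any $\G$. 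You flag this as ``the main obstacle'' but leave it unresolved, which is exactly the step the theorem is about.

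The paper's resolution is the trick you are missing: do \emph{not} try to realize the fusion category $\cl{C}$ (say TLJ with an object $X$ of dimension $\sqrt{d}$) as $\rep^{\fin}\G$. Instead take $\G=SU_q(2)$ at a \emph{generic} $q$ and use the universal property of $\rep^{\fin}SU_q(2)$ (\cite[Theorem 2.5.3]{NT13}) to produce a \cstar-tensor functor $\Phi\colon\rep^{\fin}SU_q(2)\to\cl{C}$ with $\Phi(\pi_{1/2})=X\oplus\bar X$. This makes $\cl{C}$ a connected $\rep^{\fin}SU_q(2)$-module category, so by the De Commer--Yamashita duality there is a quantum homogeneous space $A$ with $SU_q(2)\text{-}\mod^{\fin}_A\simeq\cl{C}$. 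Now right multiplication of $\cl{C}$ on itself gives a \cstar-tensor functor $\cl{C}^{\mathrm{op}}\to[\cl{C},\cl{C}]^{\rep^{\fin}SU_q(2)}_{\mathrm b}$, which is dimension-preserving because $\cl{C}$ is fusion; the image $Y$ of $X$ has dimension $\sqrt d$, and $\bar Y\otimes Y$ is a Q-system of dimension $d$, which via Theorem~\ref{thrm:fundamental theorem} and Proposition~\ref{prop:fundamental} yields the desired covering space. The point is that the small dimensions live in the \emph{dual} category $[\cl M,\cl M]^{\rep^{\fin}\G}_{\mathrm b}$, not in $\rep^{\fin}\G$ itself.
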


In the following proof, we use the monoidal opposite $\cl{C}^{\mathrm{op}}$
of a given \cstar-tensor category $\cl{C}$ 
(c.f. {\cite[Definition 2.1.5]{EGNO}}). As a \cstar-category,
$\cl{C}^{\mathrm{op}} = \cl{C}$. But its tensor product 
$\map{\tend\tensor\tend}{\cl{C}^{\mathrm{op}}\times\cl{C}^{\mathrm{op}}}{\cl{C}^{\mathrm{op}}}$ 
is given by $(X,Y)\longmapsto Y\tensor X$.

\begin{proof}
The first half of the statement follows from a general theory on
Q-system: Take a finite quantum $\G$-covering space $A \subset B$ and
let $d$ be its covering degree.
Then we have a Q-system $X$ in $\G\text{-}\corr^{\fin}_A$ correspoinding
to $A \subset B$ with $d(X) = d$ (Proposition \ref{prop:fundamental}). Let $\cl{C}$ be a \cstar-tensor category generated by $X$. 
Then we can find a finite factor $N$ and a fully faithful embedding
$\map{i}{\cl{C}}{\mathrm{Bimod}^{\fin}_N}$ by {\cite[Theorem 3.9]{Ya03}}.
Since a Q-system in $\mathrm{Bimod}^{\fin}_N$ corresponds to
a finite extension of von Neumann algebra {\cite[Theorem 6.1]{Lo94}},
we have a finite extension $N \subset M$ whose minimum index is 
$d(i(X)) = d(X) = d$.
Now our assertion follows from the Jones' result on the value of index
({\cite[Theorem 4.3.1]{Jo83}}). 

To prove the second half, take an arbitrary $d$ from the set
in the statement. If $d \ge 4$, we have a $q \in (0,1)$ such that
the fundamental representaion $\pi_{1/2}$ of $SU_q(2)$ satisfies
$\dim_q \pi_{1/2} = \sqrt{d}$. Then Example \ref{exam:adjoint action}
gives a finite quantum $SU_q(2)$-covering space with the covering 
degree $d$. 

Now consider the case $d < 4$. Then we can take a fusion category 
$\cl{C}$ and its object $X$ with $d(X) = \sqrt{d}$ (For example, let
$\cl{C}$ be the Temperley-Lieb-Jones category {\cite[Section 2.5]{NT13}}). 
We may assume $\cl{C}$ is generated by $X$ and its conjugate object
by replacing $\cl{C}$ with its full subcateogry.

Take a $q \in [-1,1]\setminus\{0\}$ and a 
\cstar-tensor functor $\map{\Phi}{\rep^{\fin} SU_q(2)}{\cl{C}}$ with
$\Phi(\pi_{1/2}) = X\oplus \bar{X}$. The existence of those is guaranteed 
by the universal property of $\rep^{\fin} SU_q(2)$ 
({\cite[Theorem 2.5.3]{NT13}}).
Then $\cl{C}$ can be thought as a $\rep^{\fin}SU_q(2)$-module category
by $\pi\tensor Y = \Phi(\pi)\tensor Y$ for $\pi \in \rep^{\fin}SU_q(2)$ and
$Y \in \cl{C}$.
Moreover our assumption implies that this is connected. Hence we can
use the duality
theorem {\cite[Theorem 6.4]{DY13}} to find
a quantum homogeneous space $A$ of $SU_q(2)$ for which 
there is an equivalence $\cl{C}\cong SU_q(2)\text{-}\mod_A^{\fin}$ of
$\rep^{\fin} SU_q(2)$-module categories.
On the other hand, by using the right multiplication of $\cl{C}$ 
on $\cl{C}$, we have a \cstar-tensor functor from $\cl{C}^{\mathrm{op}}$ to 
$[SU_q(2)\text{-}\mod_A^{\fin},SU_q(2)\text{-}\mod_A^{\fin}]_b^{\rep^{\fin} SU_q(2)}$.
Since any \cstar-tensor functor from a fusion category is dimension-
preserving, this implies we have an object $Y$ in 
$[SU_q(2)\text{-}\mod_A^{\fin},SU_q(2)\text{-}\mod_A^{\fin}]_b^{\rep^{\fin} SU_q(2)}$ with the dimension $\sqrt{d}$. 
Hence we have a Q-system $\bar{Y}\tensor Y \in [SU_q(2)\text{-}\mod_A^{\fin},SU_q(2)\text{-}\mod_A^{\fin}]_b^{\rep^{\fin} SU_q(2)}$, and 
we have a finite quantum $SU_q(2)$-covering space with the
 covering degree $d$ by Proposition \ref{prop:fundamental}. 
\end{proof}

\section{Induction from actions of the maximal Kac quantum subgroup}
\subsection{Module categorical interpretation of induced actions}

Let $\G$ be a compact quantum group and $\H$ be its quantum subgroup. 
For an $\H$-\cstar algebra $A$, we use the symbol
$\tilde{A}$ to present its induced $\G$-\cstar-algebra. 

Take $\pi \in \rep^{\fin}\G$ and $E \in \H\text{-}\mod^{\fin}_A$ arbitrarily.
Then the unitary $U_{\pi,13}^{-1}$ on $\cl{H}_{\pi}\tensor E\tensor C(\G)$
is restricted to an unitary operator from 
$\ind_{\H}^{\G}(\cl{H}_{\pi|_{\H}}\tensor E)$ 
to $\cl{H}_{\pi}\tensor \ind_{\H}^{\G} E$.
If we regard $\H\text{-}\mod^{\fin}_A$ as a $\rep^{\fin}\G$-module
category via the restriction functor 
$\rep^{\fin}\G\longrightarrow \rep^{\fin}\H$, the collection of 
such unitaries makes the induction functor 
$\map{\ind_{\H}^{\G}}{\H\text{-}\mod^{\fin}_A}{\G\text{-}\mod^{\fin}_{\tilde{A}}}$ 
into a $\rep^{\fin}\G$-module functor. The following theorem is 
essentially proved in {\cite[Theorem 7.3.]{Va05}} 
 
\begin{thrm} \label{thrm:module categorical induction}
The functor $\ind_{\H}^{\G}$ gives an equivalence 
$\H\text{-}\mod_A \cong \G\text{-}\mod_{\tilde{A}}$ 
as $\rep^{\fin}\G$-module categories. This equivalence also
gives an equivalence 
$\H\text{-}\mod^{\fin}_A \cong \G\text{-}\mod^{\fin}_{\tilde{A}}$.
\end{thrm}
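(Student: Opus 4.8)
\emph{Strategy.} The plan is to lean on Vaes's imprimitivity theorem \cite[Theorem~7.3]{Va05}, which already produces an equivalence of \cstar-categories $\H\text{-}\mod_A\cong\G\text{-}\mod_{\tilde A}$ implemented by $\ind_{\H}^{\G}$, and then to supply the two ingredients needed for the present formulation: that this equivalence is one of $\rep^{\fin}\G$-module categories, and that it restricts to the finitely generated subcategories. Concretely I would proceed in three steps: (1)~recall the quasi-inverse $\Psi$ together with the reconstruction unitaries; (2)~verify module-functor compatibility; (3)~cut down to finite generation. For step~(1), the quasi-inverse is the ``fibre at the base point of $\H\backslash\G$'' functor: on algebraic cores it sends $E\in\G\text{-}\mod_{\tilde A}$ to the completion of the balanced tensor product $\cl E\tensor_{\tilde{\cl A}}\cl A$, where $\tilde{\cl A}$ (the algebraic core of $\tilde A=\ind_{\H}^{\G}A$) acts on $\cl A$ through the canonical $\H$-equivariant surjective \star-homomorphism $\tilde{\cl A}\to\cl A$ obtained by evaluating the counit of $\G$ on the $C(\G)$-coordinate. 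The natural $\H$-equivariant map $\Psi(\ind_{\H}^{\G}F)\to F$ --- the ``Frobenius reciprocity'' direction, induced by $x\tensor a\longmapsto(\id\tensor\eps_{\G})(x)a$ and using the cancellation $\cspan{(\C\tensor C(\G))\Delta(\cl O(\G))}=C(\G)$ --- and the natural $\G$-equivariant map $\ind_{\H}^{\G}(\Psi(E))\to E$ are shown to be unitary in \cite[Theorem~7.3]{Va05}. The step I expect to be the main obstacle, in a self-contained treatment, is exactly the unitarity of $\ind_{\H}^{\G}(\Psi(E))\cong E$ --- the genuine imprimitivity content, that a $\G$-equivariant $\tilde A$-module is reconstructed from its fibre --- because one needs an isometric identification and not merely an algebraic one of cores, so the passage to the \cstar-completions must be controlled; this is where Proposition~\ref{prop:automatic continuity} is brought in.

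\emph{Module compatibility.} The induction functor has already been equipped, just before the statement, with a $\rep^{\fin}\G$-module structure via the unitaries $\ind_{\H}^{\G}(\cl H_{\pi|_{\H}}\tensor E)\cong\cl H_{\pi}\tensor\ind_{\H}^{\G}E$, where $\H\text{-}\mod_A$ is viewed as a $\rep^{\fin}\G$-module category along the restriction functor $\rep^{\fin}\G\to\rep^{\fin}\H$; reading the same unitaries backwards turns $\Psi$ into a $\rep^{\fin}\G$-module functor. Since the unit and counit of the adjoint equivalence $(\ind_{\H}^{\G},\Psi)$ are assembled out of the coproduct and counit of $\G$, a direct check shows that they are compatible with these module-functor structures --- i.e.\ are morphisms of $\rep^{\fin}\G$-module functors --- so that $\ind_{\H}^{\G}$ is an equivalence of $\rep^{\fin}\G$-module categories $\H\text{-}\mod_A\cong\G\text{-}\mod_{\tilde A}$.

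\emph{Restriction to finite generation.} First I would observe that every $\sigma\in\rep^{\fin}\H$ embeds into $\pi|_{\H}$ for some $\pi\in\rep^{\fin}\G$: a nonzero matrix coefficient of $\sigma$ in $\cl O(\H)$ is the image, under the canonical surjection $\cl O(\G)\to\cl O(\H)$, of some element of $\cl O(\G)$, hence a finite sum of matrix coefficients of objects of $\rep^{\fin}\G$, which forces $\sigma$ to occur in the restriction of one of them. Now let $F\in\H\text{-}\mod^{\fin}_A$; by \cite[Theorem~6.23]{DC16} it is a direct summand of $\cl H_{\sigma}\tensor A$ for some $\sigma\in\rep^{\fin}\H$, hence of $\cl H_{\pi|_{\H}}\tensor A$ for a suitable $\pi\in\rep^{\fin}\G$, and therefore $\ind_{\H}^{\G}F$ is a direct summand of $\ind_{\H}^{\G}(\cl H_{\pi|_{\H}}\tensor A)\cong\cl H_{\pi}\tensor\tilde A$; thus $\ind_{\H}^{\G}F\in\G\text{-}\mod^{\fin}_{\tilde A}$. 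Conversely, any $E\in\G\text{-}\mod^{\fin}_{\tilde A}$ is a direct summand of $\cl H_{\pi}\tensor\tilde A\cong\ind_{\H}^{\G}(\cl H_{\pi|_{\H}}\tensor A)$, so $\Psi(E)$ is a direct summand of $\Psi(\ind_{\H}^{\G}(\cl H_{\pi|_{\H}}\tensor A))\cong\cl H_{\pi|_{\H}}\tensor A$ and is in particular finitely generated. Since $\ind_{\H}^{\G}$ and $\Psi$ therefore carry the finitely generated subcategories into one another, and the reconstruction unitaries restrict, the equivalence restricts to $\H\text{-}\mod^{\fin}_A\cong\G\text{-}\mod^{\fin}_{\tilde A}$, as desired.
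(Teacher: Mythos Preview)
Your strategy is sound, and the quasi-inverse you describe is the same one the paper uses (called $\mathrm{Ev}_{\G}^{\H}$ there: complete the algebraic core with respect to the $A$-valued inner product $(\id\tensor\eps)(\ip{\,\cdot\,,\,\cdot\,}_{\tilde A})$). The difference is that the paper deliberately does \emph{not} cite Vaes for the unitarity of the unit and counit; it remarks that the statement is ``essentially proved'' in \cite[Theorem~7.3]{Va05} but then says ``instead of relying on the Vaes generalization of Green imprimitivity, we will give a quasi-inverse of $\ind_{\H}^{\G}$'' and proves everything by hand. The key extra ingredient in the paper is an explicit projection $P_E$ from $E\tensor\cl O(\G)$ onto the algebraic core of $\ind_{\H}^{\G}E$ (Lemma~\ref{lemm:expectation onto ind}), which is used to show surjectivity of the natural isometry $V_E\colon(\ind_{\H}^{\G}E)_0\to E$, and then a diagram-chase with $P$ (Lemma~\ref{lemm:ind ev}) to get surjectivity of $U_{\tilde E}\colon\tilde E\to\ind_{\H}^{\G}\tilde E_0$. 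This buys two things: self-containment (no translation from Vaes's von Neumann algebraic Morita picture to the present \cstar-module setting is needed), and concrete formulas for $P_E$ and for the reconstruction unitaries that are reused downstream in Lemma~\ref{lemm:ind ev} and Theorem~\ref{thrm:comparison}. Your route is shorter if one grants that \cite[Theorem~7.3]{Va05} transfers cleanly, and your explicit handling of the restriction to finitely generated modules (via the observation that every $\sigma\in\rep^{\fin}\H$ embeds in some $\pi|_{\H}$) is a point the paper leaves implicit.
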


Instead of relying on the Vaes' generalization of Green imprimitivity,
we will give a quasi-inverse of $\ind_{\H}^{\G}$.

The algebraic core of $\tilde{A}$ is denoted by $\tilde{\cl{A}}$.
The following is the explict presentation of $\tilde{\cl{A}}$:
\[
 \tilde{\cl{A}} 
= \{x \in \cl{A}\tensor \cl{O}(\G)\mid (\alpha\tensor\id)(x) = (\id\tensor \ell_{\H})(x)\}.
\]
Here $q$ is the canonical surjection from $\cl{O}(\G)$ to $\cl{O}(\H)$.

\begin{lemm}
Let $\tilde{E}$ be a $\G$-equivariant Hilbert $\tilde{A}$-module and
$\cl{\tilde{E}}$ be its algebraic core. Then the following gives an
$A$-valued semi-inner product on $\cl{\tilde{E}}$:
\[
 \ip{\xi,\eta}_{A} = (\id\tensor \eps)(\ip{\xi,\eta}_{\tilde{A}})
\]
\end{lemm}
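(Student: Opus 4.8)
The plan is to follow the proof of the preceding lemma on $\G$-equivariant Hilbert $C(\G)$-modules essentially verbatim. For $\xi,\eta\in\cl{\tilde E}$ the inner product $\ip{\xi,\eta}_{\tilde A}$ lies in the algebraic core $\tilde{\cl A}\subseteq\cl{A}\tensor\cl{O}(\G)$, and since $\eps$ is a $\ast$-character on $\cl{O}(\G)$, the map $\id\tensor\eps$ restricts to a $\ast$-algebra homomorphism from $\cl{A}\tensor\cl{O}(\G)$ onto $\cl{A}$. This makes sesquilinearity immediate, shows that $\ip{\xi,\eta}_A=(\id\tensor\eps)(\ip{\xi,\eta}_{\tilde A})$ lies in $\cl{A}\subseteq A$, and gives conjugate symmetry from $\ip{\eta,\xi}_{\tilde A}=\ip{\xi,\eta}_{\tilde A}^{*}$. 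So the entire content of the lemma is the positivity of $\ip{\xi,\xi}_A$ in $A$ for $\xi\in\cl{\tilde E}$.

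The point to be careful about --- and the reason this is not a triviality --- is that $\eps$ is unbounded on $C(\G)$, so $\id\tensor\eps$ is \emph{not} a positive map on $A\tensor C(\G)$ and one cannot simply evaluate it at the positive element $\ip{\xi,\xi}_{\tilde A}$; one has to carry the computation out inside the Hopf $\ast$-algebra $\cl{A}\tensor\cl{O}(\G)$, where $\id\tensor\eps$ is genuinely a $\ast$-homomorphism. To do this I would first replace $\tilde E$ by the $\G$-equivariant Hilbert $\tilde A$-submodule generated by $\xi$, which is finitely generated, so that we may assume $\tilde E$ is finitely generated. Recalling that $\tilde A$ is a quantum homogeneous space of $\G$ and hence that $\G\text{-}\mod^{\fin}_{\tilde A}$ is connected, I would then pick $\pi\in\rep^{\fin}\G$ and a $\G$-equivariant isometry $\map{V}{\tilde E}{\cl{H}_\pi\tensor\tilde A}$ (alternatively, invoke \cite[Theorem 6.23]{DC16}). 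Such a $V$ maps $\cl{\tilde E}$ into the algebraic core of $\cl{H}_\pi\tensor\tilde A$ and preserves the $\tilde A$-valued inner products, so $\ip{\xi,\xi}_A=(\id\tensor\eps)(\ip{V\xi,V\xi}_{\tilde A})$ and it is enough to treat the standard module $\cl{H}_\pi\tensor\tilde A$.

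On that module the computation is immediate: writing $V\xi=\sum_{i}e_i\tensor x_i$ with $(e_i)_i$ an orthonormal basis of $\cl{H}_\pi$ and $x_i\in\tilde{\cl A}\subseteq\cl{A}\tensor\cl{O}(\G)$, one has
\[
\ip{V\xi,V\xi}_{\tilde A}=\sum_{i}x_i^{*}x_i,\qquad \ip{\xi,\xi}_A=(\id\tensor\eps)\Bigl(\sum_{i}x_i^{*}x_i\Bigr)=\sum_{i}\bigl((\id\tensor\eps)(x_i)\bigr)^{*}(\id\tensor\eps)(x_i),
\]
a finite sum of positive elements of $A$, hence nonnegative. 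The only nontrivial input here is that $\id\tensor\eps$ is a $\ast$-homomorphism on $\cl{A}\tensor\cl{O}(\G)$, which is exactly where the hypothesis that $\eps$ is the counit of a Hopf $\ast$-algebra enters. I expect the main obstacle to be packaging the reduction cleanly, in particular verifying that the equivariant isometry into $\cl{H}_\pi\tensor\tilde A$ restricts correctly to algebraic cores; but this is the same ``embed, then compute in the algebraic core'' device already used in the proof of Theorem \ref{thrm:characterization}.
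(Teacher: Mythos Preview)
Your argument follows exactly the same strategy as the paper's: embed into a free module $\cl{H}_\pi\tensor\tilde A$ via a $\G$-equivariant isometry, then compute directly using that $\id\tensor\eps$ is a $\ast$-homomorphism on $\cl{A}\tensor\cl{O}(\G)$. There is one slip in your reduction step: at this point in Section~4.1, $A$ is an \emph{arbitrary} $\H$-\cstar-algebra, so $\tilde A^{\G}\cong A^{\H}$ need not equal $\C1$ and $\tilde A$ need not be a quantum homogeneous space. Your appeal to connectedness of $\G\text{-}\mod^{\fin}_{\tilde A}$ (and likewise the cited {\cite[Theorem~6.23]{DC16}}, which in this paper is only invoked over quantum homogeneous spaces) is therefore unjustified in the stated generality. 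The paper avoids this by passing only to the countably generated submodule and using the equivariant Kasparov stabilization theorem {\cite[Th\'eor\`eme~3.2]{Ve02}} to produce the isometry $V\in\cl{L}_{\tilde A}^{\G}(\tilde E,\cl{H}_\pi\tensor\tilde A)$, with $\pi$ possibly infinite-dimensional; the explicit positivity calculation on the algebraic core $\cl{H}_\pi\tensor\tilde{\cl A}$ is then identical to yours.
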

\begin{proof}
By replacing $\tilde{E}$ with its $\G$-equivariant Hilbert 
$\tilde{A}$-submodule generated by $\xi$ and $\eta$, we may assume 
that $\tilde{E}$ is countably generated.
Then we can find a unitary representation $\pi$ of $\G$ such that 
there is an isometry 
$V \in \cl{L}_{\tilde{A}}^{\G}(\tilde{E},\cl{H}_{\pi}\tensor \tilde{A})$, 
by using the equivariant Kasparov stabilization theorem ({\cite[Th\'eor\`eme 3.2]{Ve02}}). 
Hence it suffices to show the statement for 
$\cl{H}_{\pi}\tensor \tilde{A}$. In this case we have 
$\cl{E} = \cl{H}_{\pi}\tensor \tilde{\cl{A}}$ and
\[
 \ip{v\tensor x, w\tensor y}_A 
= \ip{v,w}(\id\tensor \eps)(x)^*(\id\tensor y).
\]
for $v,w \in \cl{H}_{\pi}$ and $x,y \in \tilde{\cl{A}}$.
Now the statement follows from this.
\end{proof}

Let $\tilde{E}_0$ be a Hilbert $A$-module obtained as the completion 
of $\tilde{E}$ with respect to this $A$-valued semi-inner product. 
Then the map $\map{(\id\tensor q)\tilde{\alpha}_{\tilde{E}}}{\tilde{\cl{E}}}{\tilde{\cl{E}}\tensor \cl{O}(\H)}$ induces
a $\H$-action on $\tilde{E}_0$, making $\tilde{E}$ into an 
$\H$-equivariant Hilbert $A$-module. 

\begin{lemm} \label{lemm:expectation onto ind}
Let $E$ be an $\H$-equivariant Hilbert $A$-module. Then the following map
defined $P_E$ on $E\tensor \cl{O}(\G)$ is a surjection onto 
$\tilde{\cl{E}}$, the algebraic core of $\ind_{\H}^{\G} E$:
\[
 P_E(\xi\tensor x) 
= (\id\tensor h_{\H})((\id\tensor S)\alpha_E(\xi)(1\tensor q((x_{(1)}))))\tensor x_{(2)}.
\]
Here $h_{\H}$ is the Haar state of $\H$.
\end{lemm}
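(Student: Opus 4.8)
The plan is to show that $P_E$ is a linear idempotent on the algebraic tensor product $\cl{E}\tensor\cl{O}(\G)$ — here $\cl{E}$ denotes the algebraic core of $E$, on which the displayed formula is meaningful since $\alpha_E(\cl{E})\subset\cl{E}\tensor\cl{O}(\H)$ and $q$, $S$, $h_{\H}$ are all defined at the algebraic level — whose range is precisely $\tilde{\cl{E}}$; surjectivity onto $\tilde{\cl{E}}$ is then immediate. In Sweedler notation, writing $\alpha_E(\xi)=\xi_{(0)}\tensor\xi_{(1)}$ (with $\xi_{(0)}\in\cl{E}$, $\xi_{(1)}\in\cl{O}(\H)$) and $\Delta(x)=x_{(1)}\tensor x_{(2)}$,
\[
 P_E(\xi\tensor x)=\sum h_{\H}\bigl(S(\xi_{(1)})\,q(x_{(1)})\bigr)\,\xi_{(0)}\tensor x_{(2)}.
\]
I would verify two claims: (a) $(\alpha_E\tensor\id)\,P_E(\eta)=(\id\tensor\ell_{\H})\,P_E(\eta)$ for every $\eta$, so that $P_E$ takes values in $\tilde{\cl{E}}$; and (b) $P_E$ restricts to the identity on $\tilde{\cl{E}}$, regarded as a subspace of $\cl{E}\tensor\cl{O}(\G)$. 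Claim (b) gives $\tilde{\cl{E}}\subseteq P_E(\cl{E}\tensor\cl{O}(\G))$ and claim (a) the reverse inclusion, so $P_E$ is a projection onto $\tilde{\cl{E}}$ and in particular surjects onto it, which is the assertion.

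Claim (b) is the easy half. For $z=\sum\xi\tensor x\in\tilde{\cl{E}}$, the defining relation $(\alpha_E\tensor\id)(z)=(\id\tensor\ell_{\H})(z)$, after applying $\Delta$ to the $\cl{O}(\G)$-leg, reads $\sum\xi_{(0)}\tensor\xi_{(1)}\tensor x_{(1)}\tensor x_{(2)}=\sum\xi\tensor q(x_{(1)})\tensor x_{(2)}\tensor x_{(3)}$; applying the same legwise operations that define $P_E$ to both sides turns $P_E(z)$ into $\sum h_{\H}\bigl(S(q(x_{(1)}))q(x_{(2)})\bigr)\,\xi\tensor x_{(3)}$. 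Since $q$ is a morphism of Hopf algebras, $S(q(x_{(1)}))q(x_{(2)})=q\bigl(S(x_{(1)})x_{(2)}\bigr)=\eps(x_{(1)})1_{\H}$, and $h_{\H}(1_{\H})=1$ then collapses the sum to $\sum\xi\tensor x=z$. Hence $P_E|_{\tilde{\cl{E}}}=\id$.

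Claim (a) is the technical core. Unfolding both sides using coassociativity of the coaction $\alpha_E$, the identity to prove becomes
\[
 \sum h_{\H}\bigl(S(\xi_{(2)})q(x_{(1)})\bigr)\,\xi_{(0)}\tensor\xi_{(1)}\tensor x_{(2)}=\sum h_{\H}\bigl(S(\xi_{(1)})q(x_{(1)})\bigr)\,\xi_{(0)}\tensor q(x_{(2)})\tensor x_{(3)}
\]
in $\cl{E}\tensor\cl{O}(\H)\tensor\cl{O}(\G)$, which must match the $\cl{O}(\H)$-leg $\xi_{(1)}$ produced by the module coaction against the leg $q(x_{(2)})$ produced by $\ell_{\H}$. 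This is exactly where the strong (bi-)invariance of the Haar state $h_{\H}$ is needed — in the same form already used for the equivariantization construction earlier (cf.\ \cite[Proposition 5.24, Corollary 5.35]{KV00}) — together with the fact that $q$ is a morphism of Hopf algebras: these allow one to slide the antipode and the Haar integral across the relevant tensor legs so that the two expressions coincide. (Conceptually, $\tilde{\cl{E}}$ is the cotensor product $\cl{E}\,\square_{\cl{O}(\H)}\,\cl{O}(\G)$ of the right $\cl{O}(\H)$-comodule $\cl{E}$ with the left $\cl{O}(\H)$-comodule $\cl{O}(\G)$, and $P_E$ is the canonical averaging projection onto a cotensor product; claim (a) asserts that this averaging indeed lands there.) I expect this Haar-state bookkeeping — tracking which tensor legs lie in $\cl{O}(\H)$ and which in $\cl{O}(\G)$, and invoking invariance in exactly the right shape — to be the one genuine obstacle, the rest being formal.
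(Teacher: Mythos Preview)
Your approach is essentially identical to the paper's: both establish that $P_E$ fixes $\tilde{\cl{E}}$ pointwise and then show $P_E(\cl{E}\tensor\cl{O}(\G))\subset\tilde{\cl{E}}$ by comparing $(\alpha_E\tensor\id)P_E$ with $(\id\tensor\ell_{\H})P_E$ and invoking the strong bi-invariance of $h_{\H}$. The only difference is cosmetic---you work in Sweedler notation while the paper rewrites the two sides as $(\id\tensor h_{\H}\tensor S^{-1})((\id\tensor\Delta\circ S)\alpha_E(\xi)\,(1\tensor q(x_{(1)})\tensor 1))\tensor x_{(2)}$ and $(\id\tensor h_{\H}\tensor\id)(((\id\tensor S)\alpha_E(\xi)\tensor 1)(1\tensor\Delta q(x_{(1)})))\tensor x_{(2)}$, a form to which the KV identity $(h\tensor\id)((c\tensor 1)\Delta(b))=(h\tensor S^{-1})(\Delta(c)(b\tensor 1))$ applies directly; your displayed identity is the same equation after unwinding, so the ``Haar-state bookkeeping'' you flag is exactly this one substitution and presents no further obstacle.
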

\begin{proof}
Since we have 
\[
 \tilde{\cl{E}} = \{\xi \in \cl{E}\tensor \cl{O}(\G)\mid (\alpha_E\tensor\id)(\xi) = (\id\tensor \ell_{\H})(\xi)\},
\]
it can be easily seen that $P_E$ is identical on $\tilde{E}$. The remaining
part is to show $P_E(E\tensor \cl{O}(\G)) \subset \tilde{E}$. Take
arbitrary $\xi \in E$ and $x \in \cl{O}(\G)$. Then we have the following:
\begin{align*}
 (\alpha_E\tensor \id)(P_E(\xi\tensor x))
&=(\id\tensor \id\tensor h_{\H})((\id\tensor (\id\tensor S)\Delta)(\alpha_E(\xi))(1\tensor 1\tensor q(x_{(1)})))\tensor x_{(2)}\\
&=(\id\tensor h_{\H}\tensor S^{-1})((\id\tensor \Delta\circ S)(\alpha_E(\xi))(1\tensor q(x_{(1)})\tensor 1))\tensor x_{(2)}\\
 (\id\tensor \ell_{\H})(P_E(\xi\tensor x))
&=(\id\tensor h_{\H}\tensor \id)(((\id\tensor S)(\alpha_{E}(\xi))\tensor 1)(1\tensor \Delta(q(x_{1})))) \tensor x_{(2)}.
\end{align*}
Hence the statement follows from the strong bi-invariance of $h_{\H}$.
\end{proof}
\begin{rema}
 If $\H$ is of Kac type, $P_E$ extends to a $\G$-equivariant c.c. map
from $E\tensor C(\G)$ to $\ind_{\H}^{\G} \G$. This follows from
Proposition \ref{prop:automatic continuity} by using the corner
trick to $E \subset \cl{K}_A(A\oplus E)$ and 
$\tilde{\cl{E}} \subset \cl{K}_{\tilde{A}}(\tilde{A}\oplus\tilde{E}) \cong \ind_{\H}^{\G}\cl{K}_A(A\oplus E)$.
\end{rema}

\begin{proof}[Proof of Theorem \ref{thrm:module categorical induction}]
We have two functors: The first one is $\ind_{\H}^{\G}$, and 
the second one is 
$\map{\mathrm{Ev}_{\G}^{\H}}{\G\text{-}\mod_{\tilde{A}}}{\H\text{-}\mod_A}$
given by $\tilde{E} \longmapsto \tilde{E}_0$. The remaining 
is to show that these functors are quasi-inverses of each other.

Take $E \in \H\text{-}\mod_A$ arbitrarily. Then we have a linear map
$\map{(\id\tensor \eps)|_{\tilde{\cl{E}}}}{\tilde{\cl{E}}}{\cl{E}}$,
for which we have
\[
 \ip{(\id\tensor \eps)(\xi),(\id\tensor \eps)(\eta)}_{A} 
= \ip{\xi,\eta}_{A}.
\]
Hence this map induces an isometry 
$V_E \in \cl{L}_A^{\G}((\ind_{\H}^{\G}E)_0,E)$. To see the surjectivity of 
$V_E$, we use $P_E$ as in the previous lemma. For any $\xi \in E$ and 
$x \in \cl{O}(\G)$, we have
\[
(\id\tensor \eps)\circ P_E(\xi\tensor x) 
= (\id\tensor h_{\H})((\id\tensor S)(\alpha_E(\xi))(1\tensor q(x))).
\]
Since 
$\aspan \alpha_E(E)(\C\tensor \cl{O}(\H)) = E\tensor \cl{O}(\H)$,
the above equality implies the surjectivity of $V_E$.

Next take $\tilde{E} \in \G\text{-}\mod_{\tilde{A}}$ arbitrarily. 
Then $\tilde{\alpha}_{\tilde{E}}$ defines a linear map from $\tilde{\cl{E}}$
to $\tilde{E}_0\tensor \cl{O}(\G)$, for which we have
\[
 \ip{\tilde{\alpha}_{\tilde{E}}(\xi),\tilde{\alpha}_{\tilde{E}}(\eta)}_{\tilde{A}} = \ip{\xi,\eta}_{\tilde{A}}.
\]
Moreover we also have
\begin{align*}
 \ip{\tilde{\alpha}_{\tilde{E}}(\xi),\tilde{\alpha}_{\tilde{E}}(\eta x)}_{\tilde{A}}
&= (\id\tensor \eps \tensor \id)(\tilde{\alpha}(\ip{\xi,\eta}_{\tilde{A}}x)) \\
&= (\id\tensor \eps \tensor \id)(\tilde{\alpha}(\ip{\xi,\eta}_{\tilde{A}}))x
= \ip{\tilde{\alpha}_{\tilde{E}}(\xi),\tilde{\alpha}_{\tilde{E}}(\eta)x}_{\tilde{A}}.
\end{align*}
Hence $\tilde{\alpha}_{\tilde{E}}$ induces an isometry 
$U_{\tilde{E}} \in \cl{L}_{\tilde{A}}^{\G}(\tilde{E},\tilde{E}_0\tensor C(\G))$. It can be easily seen that the image of $U_{\tilde{E}}$ is contained
in $\ind_{\H}^{\G}\tilde{E}_0$. To see that 
$U_{\tilde{E}}(\tilde{E}) = \ind_{\H}^{\G} \tilde{E}_0$, we can
use Lemma \ref{lemm:ind ev} by considering the case of 
$B = A$ and $M = A$.
\end{proof}
\begin{rema}
 The functor $\map{\mathrm{Ev}_{\G}^{\H}}{\G\text{-}\mod_{\tilde{A}}}{\H\text{-}\mod_A}$ also has a structure of $\rep^{\fin}\G$-module functor, namely
the collection of unitaries 
$v\tensor \xi \in (\cl{H}_{\pi}\tensor \tilde{E})_0
\longmapsto v\tensor \xi \in \cl{H}_{\pi|_{\H}}\tensor \tilde{E}_0$.
\end{rema}

Now let $A,B$ be $\H$-\cstar-algebras and 
$\tilde{A},\tilde{B}$ be their induced $\G$-\cstar-algebra.
Then we also have an induction functor 
$\map{\ind_{\H}^{\G}}{\H\text{-}\corr_{(A,B)}}{\G\text{-}\corr_{\tilde{A},\tilde{B}}}$.

\begin{lemm} \label{lemm:ind ev}
 Let $\tilde{E} \in \G\text{-}\mod_{\tilde{A}}$ and 
$M \in \H\text{-}\corr_{A,B}$.
Then we have the following unitary equivalence
\[
 \tilde{E}\tensor_{\tilde{A}}\ind_{\H}^{\G}M
\cong\ind_{\H}^{\G}(\mathrm{Ev}_{\G}^{\H}\tilde{E}\tensor_A M),
\] 
induced by $\xi\tensor \eta\tensor x \longmapsto 
\tilde{\alpha}_{\tilde{E},13}(\xi)(1\tensor \eta\tensor x)$.
\end{lemm}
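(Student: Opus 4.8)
The plan is to construct the claimed unitary explicitly from the map $\xi\tensor\eta\tensor x\longmapsto \tilde\alpha_{\tilde E,13}(\xi)(1\tensor\eta\tensor x)$ and verify it is a well-defined, isometric, surjective, $\G$-equivariant Hilbert $\tilde B$-module map. First I would record that, by definition, $\mathrm{Ev}_\G^\H\tilde E = \tilde E_0$ carries an $\H$-action coming from $(\id\tensor q)\tilde\alpha_{\tilde E}$, so $\tilde\alpha_{\tilde E}$ itself realizes $\tilde E$ inside $\ind_\H^\G\tilde E_0$ (this is precisely the isometry $U_{\tilde E}$ from the proof of Theorem~\ref{thrm:module categorical induction}). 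The source of the asserted unitary is $\tilde E\tensor_{\tilde A}\ind_\H^\G M$, and the natural strategy is to transport along $U_{\tilde E}$ so that one only has to understand $\ind_\H^\G\tilde E_0\tensor_{\tilde A}\ind_\H^\G M$ and compare it with $\ind_\H^\G(\tilde E_0\tensor_A M)$. So the real content reduces to: the induction functor on equivariant Hilbert modules is compatible with internal tensor products over induced algebras, i.e. there is a canonical unitary $\ind_\H^\G N\tensor_{\tilde A}\ind_\H^\G M\cong \ind_\H^\G(N\tensor_A M)$ for $N\in\H\text{-}\mod_A$ (here with the extra left $\H$-correspondence structure on $M$), natural in both variables.

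To prove that reduction statement I would work on algebraic cores. An element of $\ind_\H^\G N\tensor_{\tilde A}\ind_\H^\G M$ is a combination of $\zeta\tensor\theta$ with $\zeta\in\tilde{\cl N}\subset\cl N\tensor\cl O(\G)$ and $\theta\in\tilde{\cl M}\subset\cl M\tensor\cl O(\G)$, and the internal tensor product over $\tilde A$ identifies $\zeta a\tensor\theta$ with $\zeta\tensor a\theta$ for $a\in\tilde{\cl A}$. I would define the candidate map by $\zeta\tensor\theta\longmapsto$ (apply the left $\tilde A$-action of the $\cl O(\G)$-leg of $\zeta$ to the $\cl M$-leg of $\theta$, then multiply $\cl O(\G)$-legs), which is the natural formula; on pure tensors of the shape $n\tensor y$ and $m\tensor z$ this is $n\tensor y\tensor m\tensor z\mapsto (n\tensor m\cdot ??)$—more precisely one pushes the $C(\G)$-leg of the first factor through the left action $\beta_M$-type structure. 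The key computations are: (1) the map descends to the balanced tensor product over $\tilde A$, using that elements of $\tilde{\cl A}$ satisfy $(\alpha\tensor\id)(a)=(\id\tensor\ell_\H)(a)$; (2) it preserves the $\tilde B$-valued inner product, which after applying $\id\tensor\eps$ reduces to the corresponding statement over $A$, i.e. that $\mathrm{Ev}_\G^\H(\ind_\H^\G N\tensor_{\tilde A}\ind_\H^\G M)\cong N\tensor_A M$ as $\H$-modules; (3) surjectivity, for which I would invoke the projector $P_{\,\cdot}$ of Lemma~\ref{lemm:expectation onto ind} applied to $N\tensor_A M$ (or to $\tilde E_0\tensor_A M$) to hit the whole algebraic core $\ind_\H^\G(N\tensor_A M)^{\mathrm{alg}}$, exactly as in the surjectivity argument for $V_E$ and $U_{\tilde E}$ in Theorem~\ref{thrm:module categorical induction}; (4) $\G$-equivariance, which is immediate since every map in sight is built from $\id\tensor\Delta$ on the common $C(\G)$-legs.

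I expect the main obstacle to be bookkeeping: getting the formula for the candidate unitary exactly right so that it simultaneously (a) is a genuine right $\tilde B$-module map, (b) respects balancing over $\tilde A$, and (c) has the stated "leading term" $\xi\tensor\eta\tensor x\mapsto\tilde\alpha_{\tilde E,13}(\xi)(1\tensor\eta\tensor x)$ once one unwinds the identification of $\tilde E$ with a submodule of $\ind_\H^\G\tilde E_0$ via $U_{\tilde E}$. Concretely, one must track which $C(\G)$-legs get multiplied and in what order, and check the cocycle-type identities using $(\alpha_N\tensor\id)=(\id\tensor\ell_\H)$ on $\tilde{\cl N}$ together with the corresponding identity on $\tilde{\cl M}$; but each such check is a short computation with Sweedler notation and the relations $\ell_\H=(q\tensor\id)\Delta$, $(\id\tensor q)\circ(\text{coaction})$, and strong bi-invariance of $h_\H$. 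Once the algebraic-core map is shown to be a well-defined surjective isometry, it extends by continuity to the desired unitary of $\G$-equivariant Hilbert $\tilde B$-modules, and naturality in $\tilde E$ (hence functoriality of the isomorphism, which is what is used back in the proof of Theorem~\ref{thrm:module categorical induction}) follows by inspection of the formula.
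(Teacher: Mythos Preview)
Your surjectivity idea---use the projector $P$ from Lemma~\ref{lemm:expectation onto ind}---is exactly what the paper does, and it is the only nontrivial part. The paper declares well-definedness, isometry and $\G$-equivariance to be routine and proves surjectivity by a commutative diagram built from $\id\tensor P_M$ on the left and $P_{\ev_\G^\H\tilde E\tensor_A M}$ on the right, observing that the top and right-hand maps have dense range, hence so does the bottom map.

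Your detour through $U_{\tilde E}$, however, is circular. At this point $U_{\tilde E}$ is only known to be an isometry; its \emph{surjectivity} is precisely what the paper deduces \emph{from} the present lemma (set $B=A$, $M=A$; see the last sentence of the proof of Theorem~\ref{thrm:module categorical induction}). Transporting along $U_{\tilde E}$ therefore yields only an isometric embedding $\tilde E\tensor_{\tilde A}\ind_\H^\G M\hookrightarrow\ind_\H^\G\tilde E_0\tensor_{\tilde A}\ind_\H^\G M$, and even after you establish the unitary $\ind_\H^\G\tilde E_0\tensor_{\tilde A}\ind_\H^\G M\cong\ind_\H^\G(\tilde E_0\tensor_A M)$, the composite is surjective if and only if $U_{\tilde E}\tensor\id$ is, i.e.\ if and only if $U_{\tilde E}$ is---which is what you are ultimately trying to show. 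In other words, your reduction proves only the special case $\tilde E=\ind_\H^\G N$ of the lemma, and that case is strictly weaker (it does not recover the surjectivity of $U_{\tilde E}$ when specialized to $M=A$). Drop the reduction and argue surjectivity directly for the map in the statement; your invocation of $P$ then matches the paper's proof.
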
 
\begin{proof}
The only non-trivial part is the surjectivity of the map in the statement.
Let $\tilde{E},\cl{M}$ and $(\ev_{\G}^{\H}\tilde{E}\tensor_A M)_{\mathrm{alg}}$ be the algebraic cores of $\tilde{E},M$ and $\ev_{\G}^{\H}\tilde{E}\tensor_A M$ respectively.
Let us consider the following diagram:
\[
 \xymatrix
{
\tilde{\cl{E}}\tensor \cl{M}\tensor \cl{O}(\G) \ar[r] \ar[dd]_-{\id\tensor P_M} & \tilde{\cl{E}}\tensor \cl{M} \tensor \cl{O}(\G) \ar[d] \\
 & (\ev_{\G}^{\H}\tilde{E}\tensor_A M)_{\mathrm{alg}} \tensor \cl{O}(\G) \ar[d]^-{P_{\ev_{\G}^{\H}\tilde{E}\tensor_A M}} \\
\tilde{E}\tensor_{\tilde{A}}\ind_{\H}^{\G}M \ar[r] & \ind_{\H}^{\G}(\ev_{\G}^{\H}\tilde{E}\tensor_A M).
}
\]
Here the morphism at the top is given by
$\xi\tensor\eta\tensor x \longmapsto \tilde{\alpha}_{\tilde{E},13}(\xi)(1\tensor \eta\tensor x)$, and the morphism at the bottom is as in the statement.
Then we can see that this diagram is commutative and that
the morphisms at the top and at the right have dense ranges.
By using these observation, we can see
the surjectivity of the morphism at the bottom. 
\end{proof}
\begin{rema}
One should note that the collection of the unitary equivalences over all
$\tilde{E}$ gives rise to a unitary equivalence of $\rep^{\fin}\G$-module
functor from $\tend\tensor_{\tilde{A}}\ind_{\H}^{\G}M$ to 
$\ind_{\H}^{\G}(\ev_{\G}^{\H}(\tend)\tensor_A M)$
\end{rema}

Let $C$ be another $\H$-\cstar-algebra. For
$M \in \H\text{-}\corr_{A,B}$ and $N \in \H\text{-}\corr_{B,C}$,
we have a unitary equivalence 
$\ind_{\H}^{\G}(M\tensor_B N)\cong (\ind_{\H}^{\G}M)\tensor_{\tilde{B}} (\ind_{\H}^{\G}N)$, 
namely the restriction of
$(\xi\tensor \eta)\tensor x \longmapsto (\xi\tensor 1)\tensor (\eta\tensor x)$. 
Moreover the collection of these unitaries makes 
$\map{\ind_{\H}^{\G}}{\H\text{-}\corr_A}{\G\text{-}\corr_{\tilde{A}}}$ 
into a \cstar-tensor functor. 

\begin{thrm} \label{thrm:comparison}
Let $A,B$ be quantum homogeneous spaces of $\H$ and $\tilde{A},\tilde{B}$
be their induced quantum homogeneous spaces of $\G$. Then the following
diagram of \cstar-functors commutes up to a canonical unitary natural
transformation:
\[
 \xymatrix
{
\H\text{-}\corr^{\rfin}_{A,B} \ar[dd]_-{\ind_{\H}^{\G}} \ar[r]^-{\cong} & [\H\text{-}\mod^{\fin}_A,\H\text{-}\mod^{\fin}_B]^{\rep^{\fin}\H} \ar[d] \\
& [\H\text{-}\mod^{\fin}_A,\H\text{-}\mod^{\fin}_B]^{\rep^{\fin}\G} \ar[d]^-{\ind_{\H}^{\G}\circ\tend\circ \ev_{\G}^{\H}}_-{\cong} \\
\G\text{-}\corr^{\rfin}_{\tilde{A},\tilde{B}} \ar[r]^-{\cong} & [\G\text{-}\mod^{\fin}_{\tilde{A}}, \G\text{-}\mod^{\fin}_{\tilde{B}}]^{\rep^{\fin}\G}.
}
\]
Moreover, the canonical unitary natural transformation
is monoidal if $A = B$.
\end{thrm}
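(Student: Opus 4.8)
The plan is to obtain the square directly from Lemma~\ref{lemm:ind ev}. Fix $M \in \H\text{-}\corr^{\rfin}_{A,B}$ and trace it around both paths. Along the top and right edges, $M$ is first sent to the $\rep^{\fin}\H$-module functor $\tend\tensor_A M$ via the equivalence of Theorem~\ref{thrm:fundamental theorem}; the middle vertical arrow merely reinterprets it along the restriction tensor functor $\rep^{\fin}\G\to\rep^{\fin}\H$, and then $\ind_{\H}^{\G}\circ(\tend)\circ\ev_{\G}^{\H}$ produces the $\rep^{\fin}\G$-module functor $\tilde E\mapsto\ind_{\H}^{\G}(\ev_{\G}^{\H}(\tilde E)\tensor_A M)=\ind_{\H}^{\G}(\tilde E_0\tensor_A M)$. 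Along the left and bottom edges, $M\mapsto\ind_{\H}^{\G}M\mapsto(\tilde E\mapsto\tilde E\tensor_{\tilde A}\ind_{\H}^{\G}M)$ via Theorem~\ref{thrm:fundamental theorem} over $\G$. Lemma~\ref{lemm:ind ev}, together with the remark following it, provides precisely a unitary equivalence of $\rep^{\fin}\G$-module functors between these two, implemented on algebraic cores by $\xi\tensor\eta\tensor x\mapsto\tilde\alpha_{\tilde E,13}(\xi)(1\tensor\eta\tensor x)$; call its underlying natural transformation $\Xi_M$. Naturality of $M\mapsto\Xi_M$ is routine: for a morphism $T\colon M\to M'$ one pushes $\id\tensor T\tensor\id$ through this formula on the dense set of elementary tensors.

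For the monoidality assertion with $A=B$, first note that both composite functors $\H\text{-}\corr^{\rfin}_A\to[\G\text{-}\mod^{\fin}_{\tilde A},\G\text{-}\mod^{\fin}_{\tilde A}]^{\rep^{\fin}\G}$ are \cstar-tensor functors: the left-then-bottom composite because $\ind_{\H}^{\G}$ is a \cstar-tensor functor (via the unitaries $\ind_{\H}^{\G}(M\tensor_A N)\cong\ind_{\H}^{\G}M\tensor_{\tilde A}\ind_{\H}^{\G}N$ constructed just before the theorem) followed by the monoidal assignment $M\mapsto\tend\tensor_{\tilde A}M$; the top-then-right composite because $M\mapsto\tend\tensor_A M$ and restriction along $\rep^{\fin}\G\to\rep^{\fin}\H$ are monoidal and $\ind_{\H}^{\G}\circ(\tend)\circ\ev_{\G}^{\H}$ is monoidal by means of the equivalence $\ev_{\G}^{\H}\circ\ind_{\H}^{\G}\cong\id$ of Theorem~\ref{thrm:module categorical induction}. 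To say $\Xi$ is monoidal is then the single coherence identity expressing that $\Xi_{M\tensor_A N}$ agrees with the composite of $\Xi_M$, $\Xi_N$ and the tensor-structure unitaries of the two composite functors. I would verify this by unwinding both sides on the dense subspace of iterated elementary tensors $\xi\tensor\eta\tensor\zeta\tensor x$ of algebraic cores, using the formula of Lemma~\ref{lemm:ind ev} and the explicit induction tensor unitary $(\xi\tensor\eta)\tensor x\mapsto(\xi\tensor 1)\tensor(\eta\tensor x)$.

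I expect this last coherence check to be the main obstacle: no individual step is deep, but one is simultaneously composing the module-functor structure unitaries, the tensor-structure unitaries of $\ind_{\H}^{\G}$, and the coherence isomorphism of the quasi-inverse pair $(\ind_{\H}^{\G},\ev_{\G}^{\H})$, so one must be careful about exactly where the canonical isomorphism $\ev_{\G}^{\H}\circ\ind_{\H}^{\G}\cong\id$ is inserted, this being only an isomorphism rather than an equality. In the end everything reduces to equalities of maps that are densely defined by explicit formulas in the coactions and, through the projections $P_E$, the Haar state of $\H$, so the verification is finite and relies only on the strong bi-invariance of the Haar state and the cocycle identities already used in the proofs of Lemma~\ref{lemm:expectation onto ind} and Lemma~\ref{lemm:ind ev}.
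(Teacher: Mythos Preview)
Your proposal is correct and follows the same approach as the paper: the paper's proof simply says that the canonical unitary natural transformation is the collection of unitaries from Lemma~\ref{lemm:ind ev} and that the remaining statements follow by direct calculation. Your write-up is more explicit about what those calculations entail (naturality in $M$, and the monoidality coherence on elementary tensors), but the underlying idea is identical; note that for the monoidality check the explicit formula $\xi\tensor\eta\tensor x\mapsto\tilde\alpha_{\tilde E,13}(\xi)(1\tensor\eta\tensor x)$ already suffices, so the projections $P_E$ and the Haar-state identities are not strictly needed at this stage.
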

\begin{proof}
The canonical unitary natural transformation is given by the collection
of unitaries as in Lemma \ref{lemm:ind ev}. Then the statement can
be checked by direct calculations.
\end{proof}

\subsection{Tracial module category and its Plancherel weight}

Theorem \ref{thrm:comparison} enables us to show an imprimitivity-type 
result by a comparison of $\rep^{\fin}\G$-module 
functors and $\rep^{\fin}\H$-module functors. The purpose of 
this subsection is to build a general theory of such a comparison
for general module categories with traces.

At first we introduce the notion of a trace on a module category,
which already appears in 
{\cite[Definition 3.7]{Sc13}} for purely algebraic cases.
 
Let $\cl{C}$ be a rigid strict \cstar-tensor category and 
$\cl{M}$ be a $\cl{C}$-module category. The endomorphism algebra of 
$X \in \cl{M}$ is denoted by $\cl{M}(X)$. For $U \in \cl{C}$
and $X \in \cl{M}$, we can define the partial trace 
$\map{\tr_U\tensor \id}{\cl{M}(U\tensor X)}{\cl{M}(X)}$ as follows:
\[
 (\tr_U\tensor\id)(T) 
= (R_U^*\tensor\id_X)(\id_{\bar{U}}\tensor X)(R_U\tensor\id_X)
\]
Here $(U,\bar{U},R_U,\bar{R_U})$ is a standard solution in $\cl{C}$.
\begin{defn} \label{defn:module trace}
Let $\cl{M}$ be a $\cl{C}$-module category. A 
$\cl{C}$-module trace on $\cl{M}$ is a family 
$\{\map{\tr_X}{\cl{M}(X)}{\C}\}_{X \in \cl{M}}$ of positive 
linear maps satisfying the following conditions:
\begin{enumerate}
 \item For any $f,\,g \in \cl{M}(X,Y)$, $\tr_X(g^*f) = \tr_Y(fg^*)$.
 \item $\tr_{U\otimes X} = \tr_X\circ(\tr_U\otimes\id)$.
\end{enumerate}
We call a pair of $\cl{C}$-module category and $\cl{C}$-module trace on it 
as tracial $\cl{C}$-module category. In this case we define the 
dimension of $X \in \cl{M}$ as $d(X) = \tr_X(\id_X)$. 
\end{defn}
By (ii),
we have $d(U\tensor X) = d(U)d(X)$ for $U \in \cl{C}$ and $X \in \cl{M}$.
\begin{rema}
If $\cl{M}$ is indecomposable, there exists at most one
 $\cl{C}$-module trace on $\cl{M}$ up to scalar multiplication.
This can be seen by using the compatibility with the left $\cl{C}$-action. 
\end{rema}

\begin{exam}
If we consider $\cl{C}$ as a $\cl{C}$-module category in usual way,
then the family of categorical traces defines a $\cl{C}$-module trace.
Moreover, if another rigid strict \cstar-tensor category $\cl{D}$ and 
dimension-preserving \cstar-tensor functor $\map{F}{\cl{C}}{\cl{D}}$
are given, $\cl{D}$ can be considered as a tracial 
$\cl{C}$-module category by $U\tensor X = F(U)\tensor X$ and 
the family of categorical traces.  
\end{exam}

\begin{exam}
Let $A$ be a Q-system in $\cl{C}$. Then the category $\mod\text{-}A$ of 
right $A$-modules in $\cl{C}$ has a $\cl{C}$-module trace.
\end{exam}

\begin{exam}
If $\cl{C}$ is a fusion category, every $\cl{C}$-module category has 
a $\cl{C}$-module trace. This follows from the fact 
that every indecomposable $\cl{C}$-module category arises 
as a corner of a rigid \cstar-$2$-category with $\cl{C}$ in its diagonal.
\end{exam}

Let $\cl{M}$ be a tracial $\cl{C}$-module category. Fix a set of 
representatives of all equivalence classes of irreducible objects of
$\cl{M}$. It is denoted by $\irr\cl{M}$.

For $F \in [\cl{M},\cl{M}]_{\mathrm{b}}$, the algebra of bounded natural 
transformations from $F$ to $F$
is denoted by $\End_{\mathrm{b}}(F)$. Then 
$\End_{\mathrm{b}}(\id_{\cl{M}})$ is isomorphic 
to $\ell^{\infty}(\irr\cl{M})$ via a map 
$a \longmapsto (a_i)_{i \in \irr{\cl{M}}}$.
By using the dimension of objects in $\cl{M}$,
we can define a weight $\omega_{\cl{M}}$ on 
$\End_{\mathrm{b}}(\id_{\cl{M}})$ as follows:
\[
 \omega_{\cl{M}}(a) = \sum_{i \in \irr\cl{M}} d(i)^2 a_i.
\] 
We call $\omega_{\cl{M}}$ the \emph{Plancherel weight} of $\cl{M}$.
This defines a linear functional if and only if $\irr\cl{M}$ is 
a finite set. 

Recall we have a \cstar-tensor functor 
$\map{\Phi}{\cl{C}}{[\cl{M},\cl{M}]_{\mathrm{b}}}$
induced by the left action of $\cl{C}$ on $\cl{M}$.
Namely $\Phi(U)$ is a \cstar-fuctor from $\cl{M}$ to $\cl{M}$
which sends $X \in \cl{M}$ to $U\tensor X \in \cl{M}$.
We use the symbol $\phi_{U,V}$ to denote the unitary 
from $\Phi(U)\tensor \Phi(V)$ to $\Phi(U\tensor V)$.
We can show the following nice property of $\omega_{\cl{M}}$.

\begin{prop} \label{prop:trace}
Let $\cl{M}$ be a tracial $\cl{C}$-module category and $\omega_{\cl{M}}$ 
be the Plancherel weight. Then we have the following equalities for 
any $U \in \cl{C}$ and a positive natural transformation 
$\map{\eta}{\Phi(U)}{\Phi(U)}$:
\[
 \omega_{\cl{M}}(\Phi(R_U)^*\phi_{\bar{U},U}(\id\otimes\eta)
\phi_{\bar{U},U}^*\Phi(R_U)) 
= \omega_{\cl{M}}(\Phi(\bar{R_U})^*\phi_{U,\bar{U}}(\eta\otimes\id)
\phi_{U,\bar{U}}^*\Phi(\bar{R_U})).
\]
\end{prop}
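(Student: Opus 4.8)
The plan is to evaluate both arguments of $\omega_{\cl{M}}$ explicitly as sums over $\irr\cl{M}$ and to reduce the resulting equality to Frobenius reciprocity for standard solutions in the $\cl{C}$-module category $\cl{M}$. Throughout, I use that both arguments of $\omega_{\cl{M}}$ are bounded natural endotransformations of $\id_{\cl{M}}$, so that only their components at the irreducible objects $i\in\irr\cl{M}$ matter, each being a scalar multiple of $\id_i$.

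First I would handle the left-hand side. Unwinding the definition of the partial trace $\tr_U\tensor\id$ in $\cl{M}$, the component at $i$ of $\Phi(R_U)^*\phi_{\bar U,U}(\id\tensor\eta)\phi_{\bar U,U}^*\Phi(R_U)$ is exactly $(\tr_U\tensor\id)(\eta_i)\in\cl{M}(i)=\C\id_i$; applying $\tr_i$ and invoking condition (ii) of Definition \ref{defn:module trace}, the corresponding scalar is $d(i)^{-1}\tr_{U\tensor i}(\eta_i)$. Hence the left-hand side equals $\sum_{i\in\irr\cl{M}} d(i)\,\tr_{U\tensor i}(\eta_i)$.

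Next I would run the parallel, slightly longer computation for the right-hand side. The component at $i$ of $\Phi(\bar R_U)^*\phi_{U,\bar U}(\eta\tensor\id)\phi_{U,\bar U}^*\Phi(\bar R_U)$ is $(\bar R_U^*\tensor\id_i)(\phi_{U,\bar U})_i\,\eta_{\bar U\tensor i}\,(\phi_{U,\bar U})_i^*(\bar R_U\tensor\id_i)\in\C\id_i$; applying $\tr_i$, using condition (i) to bring the two copies of $\bar R_U$ together and $\bar R_U^*\bar R_U=d(U)\id$, this scalar becomes $d(U)d(i)^{-1}\tr_{U\tensor\bar U\tensor i}\bigl(\iota_i\iota_i^*\,\eta_{\bar U\tensor i}\bigr)$, where $\iota_i:=d(U)^{-1/2}(\phi_{U,\bar U})_i^*(\bar R_U\tensor\id_i)\colon i\to U\tensor(\bar U\tensor i)$ is an isometry. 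Decomposing $\bar U\tensor i\cong\bigoplus_{k}k$ over orthonormal bases of isometries $u\in\onb\cl{M}(k,\bar U\tensor i)$ and using naturality of $\eta$ to write $\eta_{\bar U\tensor i}=\sum_{k,u}(\id_U\tensor u)\,\eta_k\,(\id_U\tensor u)^*$, then setting $\Phi_u:=(\id_U\tensor u)^*\iota_i\in\cl{M}(i,U\tensor k)$, traciality turns the right-hand side into $d(U)\sum_{i,k\in\irr\cl{M}}d(i)\sum_{u\in\onb\cl{M}(k,\bar U\tensor i)}\tr_{U\tensor k}(\eta_k\,\Phi_u\Phi_u^*)$. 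Comparing with the formula for the left-hand side, it now suffices to prove that for all $i,k\in\irr\cl{M}$ one has $d(U)d(i)\sum_{u\in\onb\cl{M}(k,\bar U\tensor i)}\Phi_u\Phi_u^*=d(k)\,p_i$, where $p_i\in\cl{M}(U\tensor k)$ is the projection onto the $i$-isotypic summand: substituting this and summing over $i$ (using $\sum_i p_i=\id_{U\tensor k}$) collapses the right-hand side to $\sum_{k}d(k)\tr_{U\tensor k}(\eta_k)$, which is the left-hand side.

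The last identity is Frobenius reciprocity. By construction $d(U)^{1/2}\Phi_u^*=(\bar R_U^*\tensor\id_i)(\phi_{U,\bar U})_i(\id_U\tensor u)\colon U\tensor k\to i$ is precisely the image of $u$ under the reciprocity isomorphism $\cl{M}(k,\bar U\tensor i)\xrightarrow{\ \sim\ }\cl{M}(U\tensor k,i)$ associated with the standard solution $(U,\bar U,R_U,\bar R_U)$; since that solution is standard, this isomorphism is a scalar multiple of a unitary, the scalar being a ratio of categorical dimensions, by the same computation as in the ambient rigid tensor category carried out inside $\cl{M}$ (cf.\ \cite{NT13}). Equivalently $\{(d(U)d(i)/d(k))^{1/2}\Phi_u\}_{u}$ is an orthonormal basis of $\cl{M}(i,U\tensor k)$, which is exactly the desired identity because $\sum_{v\in\onb\cl{M}(i,U\tensor k)}vv^*=p_i$; the precise normalisation constant is in any case forced by testing against $\eta=\id_{\Phi(U)}$, where both sides reduce to $d(U)\,\omega_{\cl{M}}(\id)$. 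I expect the main obstacle to be the bookkeeping in the computation of the right-hand side: tracking the module associators $\phi_{U,\bar U}$ and the naturality decomposition of $\eta_{\bar U\tensor i}$ cleanly enough to expose the reciprocity map, and checking the dimension normalisation in the module-category setting; everything else is a routine application of the module-trace axioms together with semisimplicity.
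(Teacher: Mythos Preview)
Your proposal is correct and amounts to the same computation as the paper's proof, only organized differently. The paper first passes to the equivalent \cstar-multitensor category $\hilb^{\mathrm{cf}}_{I\times I}$ of column-finite $I\times I$-graded Hilbert spaces (with $I=\irr\cl{M}$), decomposes $\phi_{\bar U,U}^*\Phi(R_U)$ and $\phi_{U,\bar U}^*\Phi(\bar R_U)$ into families $(R_{ji})$, $(\bar R_{ji})$ in $\hilb^{\fin}$, and proves as a separate lemma that $(d(i)^{1/2}d(j)^{-1/2}R_{ji},\,d(i)^{-1/2}d(j)^{1/2}\bar R_{ji})$ are standard. Both sides then collapse symmetrically to $\sum_{i,j}d(i)d(j)\tr(\eta_{ji})$. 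You instead stay inside $\cl{M}$: the left-hand side is handled immediately via the definition of the partial trace and axiom (ii), while for the right-hand side you decompose $\bar U\tensor i$ and reduce to the identity $d(U)d(i)\sum_u\Phi_u\Phi_u^*=d(k)\,p_i$. That identity is exactly the content of the paper's lemma (the computation of $\nor{R_{ji}}^2$ and $\nor{\bar R_{ji}}^2$), proved by the same use of the module-trace axioms: one has $d(U)\Phi_u^*\Phi_{u'}=(\tr_{\bar U}\tensor\id)(uu'^*)$, whence $\tr_i$ together with (i) and (ii) gives $\Phi_u^*\Phi_{u'}=d(k)(d(U)d(i))^{-1}\delta_{u,u'}\id_i$. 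Your final paragraph defers this computation to \cite{NT13} and to a consistency check at $\eta=\id$; I would spell it out as above, since the normalization constant is forced pointwise (not just globally) by the module trace, and this is precisely where the tracial hypothesis enters. The paper's packaging via $\hilb_{I\times I}$ buys a cleaner symmetric formula and is reused later in the section; your direct argument is shorter for this single statement.
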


To prove this proposition, it is convenient to replace 
$[\cl{M},\cl{M}]_{\mathrm{b}}$ by the category of column-finite
bi-graded Hilbert spaces introduced in \cite[Notation A.3.1]{DY13}.

Set $I = \irr\cl{M}$. Then the category $\hilb_{I\times I}$
of $I\times I$-graded Hilbert spaces has a canonical structure
of a \cstar-multitensor category. Namely, for $\cl{H} = (\cl{H}_{ij})_{ij}$
and $\cl{K} = (\cl{K}_{ij})_{ij}$, their tensor product 
$\cl{H}\tensor \cl{K}$ is a $I\times I$-graded Hilbert space whose
$(i,j)$-component is given by
\[
 \bigoplus_{k \in I} \cl{H}_{ik}\tensor\cl{K}_{kj}.
\]
A $I\times I$-graded Hilbert space $\cl{H}$ is said to be  
\emph{column-finite} if, for each $j \in I$, there are finitely many
$i \in I$ such that $\cl{H}_{ij}\neq 0$. We also say that $\cl{H}$ is 
\emph{uniformly finite} if it satisfies the following:
\begin{align*}
 \sup_{i \in I} \sum_{j \in I} \dim \cl{H}_{ij} < \infty,\quad
\sup_{j \in I} \sum_{i \in I} \dim \cl{H}_{ij} < \infty.
\end{align*}
The full subcategory consisting of column-finite (resp. uniformly finite)
$I\times I$-graded Hilbert spaces is denoted by 
$\hilb^{\mathrm{cf}}_{I\times I}$ (resp. $\hilb^{\fin}_{I\times I}$).
Then the rigid part of $\hilb_{I\times I}$ is precisely 
$\hilb^{\fin}_{I\times I}$ ({\cite[Lemma A.3.2]{DY13}}).

Now we compare $[\cl{M},\cl{M}]_{\mathrm{b}}$ and
$\hilb^{\mathrm{cf}}_{I\times I}$. For 
$F \in [\cl{M},\cl{M}]_{\mathrm{b}}$, we have a column-finite
$I\times I$-graded Hilbert space $\cl{H}_F$ whose $(i,j)$-component 
is given by $\cl{M}(i,F(j))$. Here $\cl{M}(i,F(j))$ is regarded 
as a Hilbert space by the inner product $\ip{S,T} = S^*T$. Then 
this construction gives an
equivalence of \cstar-multitensor categories from 
$[\cl{M},\cl{M}]_{\mathrm{b}}$ to $\hilb^{\mathrm{cf}}_{I\times I}$
({\cite[Theorem A.2.1, Theorem A.2.2]{DY13}}). Moreover we also have
an equivalence 
$[\cl{M},\cl{M}]^{\fin}_{\mathrm{b}} \cong \hilb^{\fin}_{I\times I}$ 
({\cite[Proposition A.3.3]{DY13}}).
From now on, we identify these categories by this equivalence.

Let $(U,\bar{U},R,\bar{R})$ be a standard solution in $\cl{C}$.
Then $\phi_{\bar{U},U}^*\Phi(R)$ gives a morphism from $\mbf{1}$ to 
$\cl{H}_{\Phi(\bar{U})}\tensor \cl{H}_{\Phi(U)}$. By the definition
of a tensor product of $I\times I$-graded Hilbert spaces, this morphism 
can be presented as a direct sum of
\[
\map{R_{ji}}{\C}{\cl{M}(i,\bar{U}\tensor j)\tensor \cl{M}(j,U\tensor i)},
\]
where $i,\,j \in I$.
Similarly $\phi_{U,\bar{U}}^*\Phi(\bar{R})$ is presented as a direct sum of 
\[
 \map{\bar{R}_{ji}}{\C}{\cl{M}(j,U\tensor i)\tensor\cl{M}(i,\bar{U}\tensor j)}.
\]

In the following lemma, $U\tensor i$ is denoted by $Ui$.

\begin{lemm}
For any $i,j \in I$, 
$(\cl{M}(j,Ui),\cl{M}(i,\bar{U}j), R_{ji},\bar{R}_{ji})$ 
is a solution of a conjugate equation in $\hilb^{\fin}$.
Moreover, if we replace $R_{ji}$ and $\bar{R}_{ji}$ with 
$d(i)^{1/2}d(j)^{-1/2}R_{ji}, d(i)^{-1/2}d(j)^{1/2}\bar{R}_{ji}$
respectively, it gives a standard solution.
\end{lemm}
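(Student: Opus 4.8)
The strategy is to obtain the first assertion formally from the fact that $\Phi$ is a $C^*$-tensor functor, and to obtain the ``moreover'' part from an explicit computation with the module trace, which is where the scalars $d(i)^{1/2}d(j)^{-1/2}$ enter.

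For the first assertion: since $\map{\Phi}{\cl{C}}{[\cl{M},\cl{M}]_{\mathrm{b}}}$ is a $C^*$-tensor functor with tensor unitaries $\phi_{U,V}$ and $\Phi(\mbf 1)=\mbf 1$, it sends the standard solution $(U,\bar{U},R,\bar{R})$ to a solution of the conjugate equations for $\Phi(U)$, namely $(\Phi(U),\Phi(\bar{U}),\phi_{\bar{U},U}^*\Phi(R),\phi_{U,\bar{U}}^*\Phi(\bar{R}))$; this is the standard statement that $C^*$-tensor functors preserve conjugates and uses only the coherence diagram for $\phi$. In particular $\Phi(U)$ is rigid, so under the identification $[\cl{M},\cl{M}]_{\mathrm{b}}\cong\hilb^{\mathrm{cf}}_{I\times I}$ the object $\cl{H}_{\Phi(U)}$ lies in $\hilb^{\fin}_{I\times I}$. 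It then remains to read the two conjugate equations componentwise. Because the tensor product in $\hilb_{I\times I}$ is ``matrix composition'' of graded Hilbert spaces and the unit object $\mbf 1$ is supported on the diagonal, the morphism $\phi_{\bar{U},U}^*\Phi(R)$ (resp.\ $\phi_{U,\bar{U}}^*\Phi(\bar{R})$) is exactly the orthogonal direct sum of the $R_{ji}$ (resp.\ the $\bar{R}_{ji}$), and evaluating the zig-zag $(\id_{\cl{H}_{\Phi(U)}}\tensor R^*)(\bar{R}\tensor\id_{\cl{H}_{\Phi(U)}})$ at the $(j,i)$-component collapses all intermediate sums (the diagonal support of $\mbf 1$ forces the intermediate gradings to be $j$, then $i$), leaving precisely $(\id\tensor R_{ji}^*)(\bar{R}_{ji}\tensor\id)=\id_{\cl{M}(j,Ui)}$; the second conjugate equation is symmetric. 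Since each of $\cl{M}(j,Ui)$ and $\cl{M}(i,\bar{U}j)$ is finite dimensional, this is a solution in $\hilb^{\fin}$. (Alternatively one can check these two identities by hand from the explicit description of $R_{ji},\bar{R}_{ji}$; in either route the only real subtlety is the bookkeeping of indices and tensor-leg orders.)

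For the ``moreover'' part, observe first that replacing $(R_{ji},\bar{R}_{ji})$ by $(\lambda R_{ji},\lambda^{-1}\bar{R}_{ji})$ with $\lambda=d(i)^{1/2}d(j)^{-1/2}>0$ leaves both conjugate equations unchanged, so the rescaled quadruple is again a solution. By the definition of a standard solution recalled above it then suffices to check that $\nor{\lambda R_{ji}}^2=\nor{\lambda^{-1}\bar{R}_{ji}}^2=d(\cl{M}(j,Ui))$, and for a finite-dimensional Hilbert space the categorical dimension equals the vector-space dimension. So everything reduces to the two norm identities $\nor{R_{ji}}^2=\tfrac{d(j)}{d(i)}\dim\cl{M}(j,Ui)$ and $\nor{\bar{R}_{ji}}^2=\tfrac{d(i)}{d(j)}\dim\cl{M}(i,\bar{U}j)$, after which Frobenius reciprocity $\dim\cl{M}(j,Ui)=\dim\cl{M}(i,\bar{U}j)$ gives the desired common value $\dim\cl{M}(j,Ui)$.

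To prove the first identity I would fix an orthonormal basis $(s_\alpha)_\alpha$ of $\cl{M}(j,Ui)$ (so the $s_\alpha$ are isometries with pairwise orthogonal ranges), note that $R_{ji}(1)=\sum_\alpha t_\alpha\tensor s_\alpha$ with $t_\alpha=(\id_{\bar{U}}\tensor s_\alpha^*)\,a_{\bar{U},U,i}^{-1}\,(R\tensor\id_i)\in\cl{M}(i,\bar{U}j)$ the Frobenius mate of $s_\alpha$ relative to $R$, and hence $\nor{R_{ji}}^2=\sum_\alpha t_\alpha^* t_\alpha$. The key point is the identity $t_\alpha^* t_\beta=(\tr_U\tensor\id_i)(s_\alpha s_\beta^*)$, the partial trace of Definition~\ref{defn:module trace} applied to $s_\alpha s_\beta^*\in\cl{M}(Ui)$; applying $\tr_i$ and using the two module-trace axioms ($\tr_i\circ(\tr_U\tensor\id_i)=\tr_{Ui}$ and traciality $\tr_{Ui}(s_\alpha s_\beta^*)=\tr_j(s_\beta^* s_\alpha)$) together with $\tr_i(\id_i)=d(i)$ and $\tr_j(s_\beta^* s_\alpha)=\delta_{\alpha\beta}d(j)$ gives $t_\alpha^* t_\beta=\delta_{\alpha\beta}\,d(j)/d(i)$, whence $\nor{R_{ji}}^2=\tfrac{d(j)}{d(i)}\dim\cl{M}(j,Ui)$. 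The identity for $\bar{R}_{ji}$ is obtained verbatim with $\bar{R}$ in place of $R$. I expect the componentwise translation of the first paragraph --- keeping straight which $R_{ji}$ pairs with which $\bar{R}_{ji}$ and the ordering of the tensor legs --- to be the most delicate step; the module-trace computation, although it is the conceptual heart (it is where $d(i)^{1/2}d(j)^{-1/2}$ is produced), is short once the identity $t_\alpha^* t_\beta=(\tr_U\tensor\id_i)(s_\alpha s_\beta^*)$ is isolated.
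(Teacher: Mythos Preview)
Your proposal is correct and follows essentially the same route as the paper: the conjugate equations are obtained by reading the functorial solution $(\Phi(U),\Phi(\bar U),\phi_{\bar U,U}^*\Phi(R),\phi_{U,\bar U}^*\Phi(\bar R))$ componentwise in $\hilb^{\mathrm{cf}}_{I\times I}$, and the standardness is deduced from the norm identities $\nor{R_{ji}}^2=\tfrac{d(j)}{d(i)}\dim\cl{M}(j,Ui)$ and $\nor{\bar R_{ji}}^2=\tfrac{d(i)}{d(j)}\dim\cl{M}(j,Ui)$ via the module-trace axioms. Your organization of the norm computation through the Frobenius mates $t_\alpha$ and the identity $t_\alpha^*t_\beta=(\tr_U\tensor\id_i)(s_\alpha s_\beta^*)$ is a slight repackaging of the paper's direct computation $\sum_V(R\tensor\id_i)^*(\id\tensor VV^*)(R\tensor\id_i)=\tfrac{1}{d(i)}\sum_V\tr_{Ui}(VV^*)$, but the underlying steps (partial trace, module-trace compatibility, traciality) are identical.
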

\begin{proof}
 Since 
$(\Phi(U),\Phi(\bar{U}),\phi_{\bar{U},U}^*\Phi(R),\phi_{U,\bar{U}}\Phi(\bar{R}))$ is a solution of conjugate equation in $[\cl{M},\cl{M}]_{\mathrm{b}}$,
the following composition of maps is the identity:
\begin{align*}
\bigoplus_{j,i \in I} \cl{M}(j,Ui) &\xrightarrow{\bigoplus_{i,k} \bigoplus_{j=l} \bar{R}_{jk}\tensor\id} \bigoplus_{j,i \in I}\bigoplus_{k,l \in I} \cl{M}(j,Uk)\tensor \cl{M}(k,\bar{U}l)\tensor \cl{M}(l,Ui) \\
&\xrightarrow{\bigoplus_{j,l}\bigoplus_{i=k} \id\tensor R_{li}^*} \bigoplus_{j,i \in I} \cl{M}(j,Ui).
\end{align*}
Hence we have $(\id\tensor R_{ji}^*)(\bar{R}_{ji}\tensor \id) = \id$
for any $i,j \in I$. The same argument also shows the other
equality $(\id\tensor \bar{R}_{ji^*})(R_{ji}\tensor\id) = \id$.
Now we complete the proof of the former half of the statement.

To prove the latter half, we calculate $\nor{R_{ji}}$ and 
$\nor{\bar{R}_{ji}}$. At first one should note that 
$\cl{M}(i,\bar{U}j)\tensor\cl{M}(j,Ui)$ can be embedded in
$\cl{M}(i, \bar{U}Ui)$ by $T\tensor S \longmapsto (\id_j\tensor S)T$.
Moreove the projection $p_j$ onto its image is given by the following:
\[
 p_j(X) = \sum_{V \in (j,Ui)} (\id\tensor VV^*)X.
\]
Then $R_{ji}(1)$ corresponds to the image by $p_j$ of the component of 
$\phi_{\bar{U},U}^*\Phi(R)$ at $i \in I$.
Hence we have
\begin{align*}
 \nor{R_{ji}}^2 
&= \sum_{V \in (j,Vi)} (R\tensor\id_i)^*(\id\tensor VV^*)(R\tensor\id_i)\\
&= \frac{1}{d(i)}\sum_{V \in (j,Vi)} \tr_{Ui}(VV^*) \\
&= \frac{1}{d(i)}\sum_{V \in (j,Vi)} \tr_j(V^*V) \\
&= \frac{d(j)}{d(i)}\dim \cl{M}(j,Ui).
\end{align*}
Similarly we also have
\[
 \nor{\bar{R}_{ji}}^2 
= \frac{d(i)}{d(j)}\dim \cl{M}(i,\bar{U}j)
= \frac{d(i)}{d(j)}\dim \cl{M}(j,Ui).
\]
Hence we have
$\nor{R_{ji}}\nor{\bar{R}_{ji}} = (\dim\cl{M}(j,Ui))^2$.
By using the characterization of standard solutions, we get the conclusion.
\end{proof}

\begin{proof}[Proof of Proposition \ref{prop:trace}]
By the previous lemma, we have
\[
 R_{ji}(\id\tensor \theta)R_{ji}^* = \frac{d(j)}{d(i)}\tr (\theta)
\]
for any linear map $\map{\theta}{\cl{M}(j,Ui)}{\cl{M}(j,Ui)}$.

Let $\map{\eta}{\Phi(U)}{\Phi(U)}$ be a positive natural transformation.
Then it induces a positive operator
$\map{\eta_{ji}}{\cl{M}(j,Ui)}{\cl{M}(j,Ui)}$ for any $i,j \in I$.
Then we have 
\begin{align*}
 \omega_{\cl{M}}(\Phi(R_U)^*\phi_{\bar{U},U}(\id\otimes\eta)
\phi_{\bar{U},U}^*\Phi(R_U)) 
&= \sum_{i\in I}d(i)^2 \sum_{j \in I} R_{ji}^*(\id\tensor \eta_{ji})R_{ji} \\
&= \sum_{i,j \in I} d(i)d(j)\tr(\eta_{ji}).
\end{align*}
For the right hand side of the equality in the statement, we also can
show that
\[
 \omega_{\cl{M}}(\Phi(\bar{R}_U)^*\phi_{U,\bar{U}}(\eta\tensor\id)
\phi_{U,\bar{U}}^*\Phi(\bar{R}_U)) 
= \sum_{i,j \in I} d(i)d(j)\tr(\eta_{ji}). 
\]
Hence the statement holds.
\end{proof}

Based on Proposition \ref{prop:trace}, we introduce a notion of 
a standard solution in $[\cl{M},\cl{M}]_{\mathrm{b}}$.

\begin{defn}
Let $\map{F}{\cl{M}}{\cl{M}}$ be a \cstar-functor. We say that a solution
$(F,\bar{F},R,\bar{R})$ is \emph{standard} if it satisfies the following
identity for any positive natural transformation $\map{\eta}{F}{F}$:
\[
 \omega_{\cl{M}}(R^*(\id\tensor \eta)R) 
= \omega_{\cl{M}}(\bar{R}^*(\eta\tensor\id)\bar{R}).
\]
In this case, we define a weight $\tr_F$ on $\mathrm{End}_{\mathrm{b}}(F)$
by $\eta \longmapsto \omega_{\cl{M}}(R^*(\id\tensor \eta)R)$.
\end{defn}

Proposition \ref{prop:trace} states that 
$(\Phi(U),\Phi(\bar{U}),\phi_{\bar{U},U}^*\Phi(R_U),\phi_{U,\bar{U}}^*\Phi(\bar{R}_U))$ 
is standard when $(U,\bar{U},R,\bar{R})$ is standard.

\begin{lemm} \label{lemm:standard}
Let $F \in [\cl{M},\cl{M}]_{\mathrm{b}}$ be a rigid object.
\begin{enumerate}
 \item A standard solution for $F$ exists and is unique up 
to unitary equivalence.
 \item $\tr_F$ is tracial and independent of a choice of a standard solution.
 \item For a standard solution $(F,\bar{F},R,\bar{R})$, the following map
is an anti-\star-isomorphism from $\End_{\mathrm{b}}(F)$ to $\End_{\mathrm{b}}(\bar{F})$:
\[
 \eta \longmapsto (\id\tensor \bar{R}^*)(\id\tensor\eta\tensor\id)(R\tensor\id).
\]
\end{enumerate}
\end{lemm}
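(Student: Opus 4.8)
Lemma \ref{lemm:standard} asserts three facts about a rigid object $F$ of $[\cl{M},\cl{M}]_{\mathrm{b}}$: existence and uniqueness (up to unitary equivalence) of a standard solution, traciality and well-definedness of $\tr_F$, and the anti-$\ast$-isomorphism property of the map $\eta\mapsto(\id\tensor\bar R^*)(\id\tensor\eta\tensor\id)(R\tensor\id)$. The plan is to transport everything through the equivalence $[\cl{M},\cl{M}]_{\mathrm{b}}\cong\hilb^{\mathrm{cf}}_{I\times I}$, under which rigid objects correspond to uniformly finite bi-graded Hilbert spaces $\cl{H}=(\cl{H}_{ij})$, and then compute directly.

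First I would make precise what "standard" means in coordinates. Given a rigid $F$ with solution $(F,\bar F,R,\bar R)$, decompose $R$ and $\bar R$ into components $R_{ji}\colon\C\to\cl{H}^{\bar F}_{ij}\tensor\cl{H}^F_{ji}$ and $\bar R_{ji}\colon\C\to\cl{H}^F_{ji}\tensor\cl{H}^{\bar F}_{ij}$ exactly as was done in the paragraph preceding the previous lemma; the conjugate equations for $F$ amount, componentwise, to the statement that each $(\cl{H}^F_{ji},\cl{H}^{\bar F}_{ij},R_{ji},\bar R_{ji})$ solves the conjugate equations in $\hilb^{\fin}$. A positive natural transformation $\eta\colon F\to F$ is a family of positive operators $\eta_{ji}$ on $\cl{H}^F_{ji}$, and unwinding the definitions gives
\[
\omega_{\cl{M}}(R^*(\id\tensor\eta)R)=\sum_{i\in I}d(i)^2\sum_{j\in I}R_{ji}^*(\id\tensor\eta_{ji})R_{ji},
\]
with the analogous expression for $\bar R$. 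So $(F,\bar F,R,\bar R)$ is standard precisely when, after rescaling each $R_{ji}$ by $d(j)^{1/2}d(i)^{-1/2}$ and each $\bar R_{ji}$ by $d(j)^{-1/2}d(i)^{1/2}$, every component becomes a standard solution in $\hilb^{\fin}$ — this is the content of the preceding lemma, read in reverse. Once this reformulation is in place, (i) follows: in $\hilb^{\fin}$ a standard solution exists for every object and is unique up to a unitary in $\hilb^{\fin}(\cl{H}^{\bar F}_{ij},\cl{H}^{\bar F'}_{ij})$; assembling these unitaries over all $i,j$ yields a unitary natural transformation $\bar F\to\bar F'$ implementing the equivalence of the two standard solutions for $F$.

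For (ii), I would use the componentwise formula together with the fact that the categorical trace on $\hilb^{\fin}$ is tracial and that partial traces with respect to standard solutions are independent of the chosen solution. Concretely, $\tr_F(\eta)=\sum_{i,j}d(i)d(j)\,\tr(\eta_{ji})$ by the computation in the previous lemma (noting $R_{ji}(\id\tensor\theta)R_{ji}^*=\tfrac{d(j)}{d(i)}\tr(\theta)$), and the ordinary matrix trace on each finite-dimensional $\cl{H}^F_{ji}$ is tracial, so $\tr_F(\xi^*\eta)=\sum_{i,j}d(i)d(j)\tr(\xi_{ji}^*\eta_{ji})$ is unchanged under $\xi^*\eta\mapsto\eta\xi^*$; polarization extends this from positive $\eta$ to all of $\End_{\mathrm{b}}(F)$. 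Independence of the standard solution is immediate since two standard solutions differ by a unitary, under which $R^*(\id\tensor\eta)R$ is invariant. For (iii), the map in question is, on components, $\eta_{ji}\mapsto(\id\tensor\bar R_{ji}^*)(\id\tensor\eta_{ji}\tensor\id)(R_{ji}\tensor\id)$, which is the standard anti-$\ast$-isomorphism $B(\cl{H}^F_{ji})\to B(\cl{H}^{\bar F}_{ij})$ attached to a solution of the conjugate equations in $\hilb^{\fin}$ (it is the "transpose" map, independent of the standardness of the solution); assembling over $i,j$ gives an anti-$\ast$-isomorphism $\End_{\mathrm{b}}(F)\to\End_{\mathrm{b}}(\bar F)$. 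That it lands in bounded natural transformations and is surjective follows from applying the same construction to the solution for $\bar F$ obtained by flipping $(R,\bar R)$, which produces a two-sided inverse.

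The main obstacle I anticipate is bookkeeping with the infinite direct sums and the grading: one must check that the expressions $\sum_{i,j}d(i)d(j)\tr(\eta_{ji})$ and the assembled natural transformation in (iii) are genuinely well-defined (i.e. that each inner sum over $j$, resp. over $i$, is finite because $F$ is uniformly finite, and that boundedness of $\eta$ propagates to boundedness of its image), and that all the identifications are compatible with the monoidal structure $\phi_{U,V}$ so that Proposition \ref{prop:trace} can indeed be invoked to license the reformulation of standardness. None of this is deep, but it requires care to phrase the componentwise manipulations so that they manifestly respect column-finiteness and uniform finiteness; once the $\hilb^{\fin}_{I\times I}$ picture is set up cleanly, all three parts reduce to the corresponding well-known statements about solutions of conjugate equations in $\hilb^{\fin}$.
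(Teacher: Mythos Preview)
Your proposal is correct and follows essentially the same route as the paper: decompose a solution $(F,\bar F,R,\bar R)$ into componentwise solutions $(\cl{H}^F_{ji},\cl{H}^{\bar F}_{ij},R_{ji},\bar R_{ji})$ in $\hilb^{\fin}$, compute $\omega_{\cl{M}}(R^*(\id\tensor\eta)R)=\sum_{i,j}d(i)^2 R_{ji}^*(\id\tensor\eta_{ji})R_{ji}$ and the analogous expression for $\bar R$, and conclude that standardness is exactly componentwise standardness of the rescaled quadruples $\bigl(\cl{H}^F_{ji},\cl{H}^{\bar F}_{ij},d(j)^{1/2}d(i)^{-1/2}R_{ji},d(j)^{-1/2}d(i)^{1/2}\bar R_{ji}\bigr)$. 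The paper then simply says ``all of the statements follow from this characterization''; you have spelled out how (i)--(iii) follow in somewhat more detail, but the argument is the same.
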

\begin{proof}
If we regard $F$ as an object $\cl{H}_F \in \hilb^{\fin}_{I\times I}$,
a solution $(F,\bar{F},R,\bar{R})$ of conjugate equation is decomposed into
a family $\{(\cl{M}(j,F(i)),\cl{M}(i,\bar{F}(j)),R_{ji},\bar{R}_{ji})\}$
of solutions in $\hilb^{\fin}$. Then we have the following for any
positive natural transformation $\map{\eta}{F}{F}$: 
\begin{align*}
 \omega_{\cl{M}}(R^*(\id\tensor\eta)R) 
&= \sum_{i,j \in I} d(i)^2R_{ji}^*(\id\tensor \eta_{ji})R_{ji},\\
 \omega_{\cl{M}}(\bar{R}^*(\eta\tensor\id)\bar{R})
&= \sum_{i,j \in I} d(j)^2\bar{R}^*_{ji}(\eta_{ji}\tensor\id)\bar{R}_{ji},
\end{align*}
where $\eta_{ji}$ is a corresponding positive operator on $\cl{M}(j,F(i))$.

Hence $(F,\bar{F},R,\bar{R})$ is standard if and only if
the following quadruple is a standard solution for each $i,j \in I$:
\[
 \left(\cl{M}(j,F(i)),\cl{M}(i,\bar{F}(j)),d(j)^{1/2}d(i)^{-1/2}R_{ji},d(j)^{-1/2}d(i)^{1/2}\bar{R}_{ji}\right).
\]
Now all of the statements follow from this characterization.
\end{proof}

We would like to come back to our main interest: a comparison 
of module functors. 
Let $\cl{D}$ be another rigid strict \cstar-tensor category and 
$\map{(\Theta,\theta)}{\cl{D}}{\cl{C}}$ be a \cstar-tensor functor.

\begin{defn}
The modular natural transformation $a_{\Theta}$ of $(\Theta,\theta)$ is 
a natural transformation 
from $\Theta$ to $\Theta$ given by the following:
\[
 a_{\Theta,U} 
= (\id\tensor\tr_{\Theta(\bar{U})})(\theta_{U,\bar{U}}^*\Theta(\bar{r}_U\bar{r}_U^*)\theta_{U,\bar{U}}).
\]
Here $(U,\bar{U},r_U,\bar{r}_U)$ is a stardard solution in $\cl{D}$.
\end{defn}

Since all standard solutions are mutually unitary equivalent, 
$a_{\Theta,U}$ does not depend on the choice of $(U,\bar{U},r_U,\bar{r}_U)$.

We collect some fundamental properties of $a_{\Theta}$ 
in the following lemma.

\begin{lemm} \label{lemm:modular auto}
The modular natural transformation $a_{\Theta}$ is monoidal and invertible.
Its inverse is as follows:
\[
 a_{\Theta,U}^{-1} = (\tr_{\Theta(\bar{U})}\tensor\id)(\theta_{\bar{U},U}^*\Theta(r_{U}r_{U}^*)\theta_{\bar{U},U}).
\]
Moreover, the following quadruple is a standard
solution in $\cl{C}$ for every $U \in \cl{D}$:
\[
 (\Theta(U),\Theta(\bar{U}),(1\tensor a_{\Theta,U}^{1/2})\theta_{\bar{U},U}^*\Theta(r_U),(a_{\Theta,U}^{-1/2}\tensor\id)\theta_{U,\bar{U}}^*\Theta(\bar{r}_U)).
\]
\end{lemm}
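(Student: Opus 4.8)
The plan is to reduce the lemma, objectwise in $U\in\cl{D}$, to a purely internal statement about solutions of the conjugate equations in the rigid \cstar-tensor category $\cl{C}$, dispose there of the inverse formula and of the standardness assertion, and treat monoidality separately by a coherence computation.

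First I would fix a standard solution $(U,\bar{U},r_U,\bar{r}_U)$ in $\cl{D}$ and put $R:=\theta_{\bar{U},U}^*\Theta(r_U)$ and $\bar{R}:=\theta_{U,\bar{U}}^*\Theta(\bar{r}_U)$. The usual computation with the coherence axiom for $(\Theta,\theta)$ shows that $(\Theta(U),\Theta(\bar{U}),R,\bar{R})$ is a solution of the conjugate equations in $\cl{C}$ (\cite{NT13}). Since $\theta$ is unitary and $\Theta$ is a \cstar-functor we have $\bar{R}\bar{R}^*=\theta_{U,\bar{U}}^*\Theta(\bar{r}_U\bar{r}_U^*)\theta_{U,\bar{U}}$ and $RR^*=\theta_{\bar{U},U}^*\Theta(r_Ur_U^*)\theta_{\bar{U},U}$, so by definition $a_{\Theta,U}=(\id_{\Theta(U)}\tensor\tr_{\Theta(\bar{U})})(\bar{R}\bar{R}^*)$ and the candidate inverse appearing in the lemma is $(\tr_{\Theta(\bar{U})}\tensor\id_{\Theta(U)})(RR^*)$; in particular $a_{\Theta,U}\ge 0$, being a partial categorical trace of a positive morphism. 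Thus it suffices to prove the following statement in $\cl{C}$: for any solution $(X,\bar{X},R,\bar{R})$ of the conjugate equations, the morphisms $a:=(\id_X\tensor\tr_{\bar{X}})(\bar{R}\bar{R}^*)$ and $b:=(\tr_{\bar{X}}\tensor\id_X)(RR^*)$ in $\cl{C}(X)$ are mutually inverse, and $(X,\bar{X},(\id_{\bar{X}}\tensor a^{1/2})R,(a^{-1/2}\tensor\id_{\bar{X}})\bar{R})$ is a standard solution.

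To prove this I would fix a standard solution $(X,\bar{X},R_0,\bar{R}_0)$ with the same conjugate object $\bar{X}$; by the theory of conjugates in rigid \cstar-tensor categories (\cite{NT13}) there is a unique invertible $T\in\cl{C}(X)$ with $R=(\id_{\bar{X}}\tensor T)R_0$ and $\bar{R}=((T^*)^{-1}\tensor\id_{\bar{X}})\bar{R}_0$. The conjugate equations for $(R_0,\bar{R}_0)$ and their adjoints give the elementary identities $(\id_X\tensor\tr_{\bar{X}})(\bar{R}_0\bar{R}_0^*)=\id_X=(\tr_{\bar{X}}\tensor\id_X)(R_0R_0^*)$; since the factors $T,T^*$ only touch the leg that the partial trace retains, pulling them out yields $a=(TT^*)^{-1}$ and $b=TT^*$, proving invertibility and the inverse formula. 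A short snake computation shows that $(\id_{\bar{X}}\tensor a^{1/2})R$ and $(a^{-1/2}\tensor\id_{\bar{X}})\bar{R}$ again solve the conjugate equations (the factors $a^{\pm1/2}$ on the two legs cancel in one equation and slide through in the other), and
\[
\nor{(\id_{\bar{X}}\tensor a^{1/2})R}^2=R^*(\id_{\bar{X}}\tensor a)R=R_0^*(\id_{\bar{X}}\tensor T^*aT)R_0=R_0^*R_0=d(X),
\]
and likewise $\nor{(a^{-1/2}\tensor\id_{\bar{X}})\bar{R}}^2=\bar{R}_0^*\bar{R}_0=d(X)$; as a solution in $\cl{C}$ is standard precisely when both legs have squared norm $d(X)$, the twisted quadruple is standard. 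Taking $X=\Theta(U)$ and $\bar{X}=\Theta(\bar{U})$ now gives the inverse formula for $a_{\Theta,U}$ and the last assertion of the lemma. (One can also bypass the auxiliary $T$ and unfold $R^*(\id_{\bar{X}}\tensor a)R$ directly, via the bimodule property of the partial trace, to $\tr_{\bar{X}}(\id_{\bar{X}})=d(\bar{X})=d(X)$.)

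Finally, naturality of $a_{\Theta}$ follows from the naturality of $\theta$ together with the compatibility of conjugate morphisms with the chosen standard solutions, and its invertibility is then immediate from the componentwise invertibility just proved. For monoidality I would use that the tensor product of standard solutions is standard: substituting into the defining formula for $a_{\Theta,U\tensor V}$ the standard solution of $U\tensor V$ built from those of $U$ and $V$, rewriting $\theta_{U\tensor V,\bar{V}\tensor\bar{U}}$ and the $\Theta$-images of the relevant structure morphisms by repeated use of the coherence axiom for $(\Theta,\theta)$, and evaluating the resulting nested trace via $\tr_{Y\tensor Z}=\tr_Y\tensor\tr_Z$, one arrives at $a_{\Theta,U\tensor V}=\theta_{U,V}(a_{\Theta,U}\tensor a_{\Theta,V})\theta_{U,V}^*$, which is exactly the monoidality identity. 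I expect this last coherence-diagram chase to be the only genuinely laborious point, with the bookkeeping of the nested partial traces in the preceding paragraph a close second; neither goes beyond routine manipulation of conjugate equations and partial traces.
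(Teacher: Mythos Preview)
Your proposal is correct and is precisely the kind of argument the paper intends: the paper's own proof consists of the single sentence ``One can check the statement by direct calculations,'' and what you wrote is a careful execution of those calculations. Your reduction to the internal statement about an arbitrary solution $(X,\bar X,R,\bar R)$ in $\cl{C}$, the use of the invertible $T$ relating it to a fixed standard solution, and the coherence computation for monoidality are exactly the expected steps; nothing in your outline goes beyond routine manipulation of conjugate equations and partial traces, which is all the paper claims is needed.
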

\begin{proof}
One can check the statement by direct calculations.
\end{proof}
\begin{exam}
If $(\Theta,\theta)$ is the fiber functor from 
$\rep^{\fin}\G$ to $\hilb^{\fin}$,
each component $a_{\Theta,\pi}$ is precisely $\rho_{\pi}$ in 
{\cite[Proposition 1.4.4]{NT13}}. See also {\cite[Example 2.2.13]{NT13}}. 
\end{exam}

We regard a $\cl{D}$-module functor $(F,f)$ as a pair of a \cstar-functor 
$F$ and a collection of unitary morphisms 
$\map{f_U}{F\tensor \Phi(\Theta(U))}{\Phi(\Theta(U))\tensor F}$.
 
\begin{prop} \label{prop:abstract descent}
Let $\map{(F,f)}{\cl{M}}{\cl{M}}$ be a $\cl{D}$-module functor. If 
$F$ is rigid in $[\cl{M},\cl{M}]_{\mathrm{b}}$, 
the following are equivalent for each $U \in \cl{D}$.
\begin{enumerate}
 \item We have $f_U(\id_F\tensor \Phi(a_{\Theta,U})) = (\Phi(a_{\Theta,U})\tensor \id_F)f_U$.
 \item For a standard solution 
$(\Phi(\Theta(U)),\bar{\Phi(\Theta(U))},R_{\Phi(\Theta(U))},\bar{R}_{\Phi(\Theta(U))})$ in $[\cl{M},\cl{M}]_{\mathrm{b}}$,
the following natural transformation is unitary:
\[
 (\id\tensor\id\tensor \bar{R}_{\Phi(\Theta(U))}^*)(\id\tensor f_{U}^*\tensor\id)(R_{\Phi(\Theta(U))}\tensor\id\tensor\id).
\]
 \item For a standard solution $(F,\bar{F},R,\bar{R})$, the following natural transformation $\map{\bar{f}_U}{\bar{F}\tensor \Phi(\Theta(U))}{\Phi(\Theta(U))\tensor \bar{F}}$ is unitary:
\[
 \bar{f}_U = (R^*\tensor \id\tensor\id)(\id\tensor f_U^*\tensor \id)(\id\tensor\id\tensor \bar{R}).
\]
\end{enumerate}
\end{prop}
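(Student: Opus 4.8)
The mechanism behind all three conditions is that $(F,f)$ exhibits $F$ as an object of the relative Drinfeld centre of $[\cl{M},\cl{M}]_{\mathrm{b}}$ over the tensor subcategory $\Phi(\Theta(\cl{D}))$: the data $f$ amount to a \emph{unitary} half-braiding of $F$ against the objects $\Phi(\Theta(U))$, $U\in\cl{D}$, but \emph{not} against $F$ or $\bar F$ themselves. Hence the half-braiding of $\bar F$ obtained from $f$ by rotation — the natural transformation $\bar f_U$ of (iii) — and the half-braiding of $F$ against $\Phi(\Theta(\bar U))$ obtained by rotation — the transformation of (ii) — are not automatically unitary, and (i) is exactly the compatibility with the modular data that makes them so. The plan is to prove $(\mathrm{i})\Leftrightarrow(\mathrm{ii})$ by an explicit computation and $(\mathrm{ii})\Leftrightarrow(\mathrm{iii})$ by reflecting the defining string diagram over the remaining strand.

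For $(\mathrm{i})\Leftrightarrow(\mathrm{ii})$, I would first make the standard solution of $\Phi(\Theta(U))$ in $[\cl{M},\cl{M}]_{\mathrm{b}}$ explicit. By Lemma~\ref{lemm:modular auto}, $(\Theta(U),\Theta(\bar U),(1\tensor a_{\Theta,U}^{1/2})\theta_{\bar U,U}^*\Theta(r_U),(a_{\Theta,U}^{-1/2}\tensor 1)\theta_{U,\bar U}^*\Theta(\bar r_U))$ is a standard solution in $\cl{C}$, and by Proposition~\ref{prop:trace} its image under $\Phi$ (composed with the coherence maps $\phi$) is a standard solution in $[\cl{M},\cl{M}]_{\mathrm{b}}$; thus $R_{\Phi(\Theta(U))}=(\id\tensor\Phi(a_{\Theta,U})^{1/2})R^{0}$ and $\bar R_{\Phi(\Theta(U))}=(\Phi(a_{\Theta,U})^{-1/2}\tensor\id)\bar R^{0}$, where $R^{0},\bar R^{0}$ are the images of a standard solution of $U$ in $\cl{D}$. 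Substituting this into the expression of (ii) and pulling the positive invertible twists $\Phi(a_{\Theta,U})^{\pm1/2}$ onto the two legs of $f_U$, that expression becomes the rotation of $(\Phi(a_{\Theta,U})^{1/2}\tensor\id_F)f_U(\id_F\tensor\Phi(a_{\Theta,U})^{-1/2})$ along $(R^{0},\bar R^{0})$. The module-functor axioms (naturality together with the pentagon, applied to $r_U$ and $\bar r_U$) identify the rotation of $f_U$ along $(R^{0},\bar R^{0})$ with the given structure morphism $f_{\bar U}$, up to the canonical unitary $\Phi(\Theta(\bar U))\cong\overline{\Phi(\Theta(U))}$; in particular it is unitary. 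Finally, using the monoidality of $a_\Theta$ — which gives $(\id\tensor\Phi(a_{\Theta,U})^{1/2})R^{0}=(\Phi(a_{\Theta,\bar U})^{-1/2}\tensor\id)R^{0}$ and the analogue for $\bar R^{0}$ — one relocates the twists onto the $\Phi(\Theta(\bar U))$-leg, rewriting the transformation of (ii) as $(\Phi(a_{\Theta,\bar U})^{-1/2}\tensor\id_F)f_{\bar U}(\id_F\tensor\Phi(a_{\Theta,\bar U})^{1/2})$. Since $f_{\bar U}$ is unitary and the twists are positive invertible, this is unitary iff $f_{\bar U}$ commutes with $\Phi(a_{\Theta,\bar U})$; transporting this commutation back through $f_{\bar U}=(\text{rotation of }f_U)$ and the duality/monoidality of $a_\Theta$ shows it is equivalent to (i).

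For $(\mathrm{ii})\Leftrightarrow(\mathrm{iii})$: reading $f_U$ as a crossing of the $F$-strand over the $\Phi(\Theta(U))$-strand, the transformation of (iii) is this crossing with the $F$-strand reflected along a standard solution of $F$, while that of (ii) is the crossing with the $\Phi(\Theta(U))$-strand reflected along a standard solution of $\Phi(\Theta(U))$. Reflecting $\bar f_U$ once more over the $\Phi(\Theta(U))$-strand and straightening the resulting bubbles by the hexagon for $f$, one identifies it with the reflection of the transformation of (ii) over the $F$-strand; since the solutions involved are standard this mutual reflection is $\ast$-preserving and bijective, and — crucially using the hexagon for $f$ — it sends unitary transformations to unitary transformations and conversely, so (ii) and (iii) hold together. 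Equivalently, $(\mathrm{iii})\Leftrightarrow(\mathrm{i})$ can be obtained by repeating the computation of the previous paragraph with the standard solution of $F$ in place of that of $\Phi(\Theta(U))$, the twist $\Phi(a_{\Theta,U})$ now being produced by the Plancherel normalisation of $F$'s standard solution (Lemma~\ref{lemm:standard}) acting against the $\Phi(\Theta(U))$-strand through $f$.

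The main obstacle is the bookkeeping in the $(\mathrm{i})\Leftrightarrow(\mathrm{ii})$ step: one must be certain that the rotation of $f_U$ along $(R^{0},\bar R^{0})$ really is the given $f_{\bar U}$ (this is where the module-functor pentagon and the rigidity of $\cl{D}$ enter essentially) and that the modular twists migrate cleanly from the $\Phi(\Theta(U))$-leg to the $\Phi(\Theta(\bar U))$-leg without spurious scalar factors, using precisely the monoidality of $a_\Theta$. In the $(\mathrm{ii})\Leftrightarrow(\mathrm{iii})$ step the subtlety is that reflecting a unitary crossing over a strand does not by itself preserve unitarity — so a purely formal rigidity argument fails and the hexagon for $f$ must be used to control the bubbles; in the direct route one must instead match the Plancherel normalisation of $F$'s standard solution against the categorical traces defining $a_\Theta$.
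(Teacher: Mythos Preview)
Your treatment of $(\mathrm{i})\Leftrightarrow(\mathrm{ii})$ is essentially the paper's argument: you correctly pin down the standard solution of $\Phi(\Theta(U))$ via Lemma~\ref{lemm:modular auto} and Proposition~\ref{prop:trace}, identify the rotation of $f_U$ along the ``naive'' solution $(r^0,\bar r^0)$ with $f_{\bar U}$ using the module-functor axioms, and then use the monoidality of $a_\Theta$ to move the twists from the $\Theta(U)$-leg to the $\Theta(\bar U)$-leg. The paper closes the loop slightly differently (showing $(\mathrm{ii})$ for $U$ gives $(\mathrm{i})$ for $\bar U$, then iterating), but your direct reading of unitarity of $(\Phi(a_{\Theta,\bar U})^{-1/2}\tensor\id)f_{\bar U}(\id\tensor\Phi(a_{\Theta,\bar U})^{1/2})$ as commutation is fine.

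The gap is in $(\mathrm{ii})\Leftrightarrow(\mathrm{iii})$. You assert that the relevant reflections are $\ast$-preserving and send unitaries to unitaries, but \emph{partial} rotation over a single strand does not preserve unitarity in general --- indeed this is exactly why $(\mathrm{ii})$ and $(\mathrm{iii})$ are nontrivial conditions despite $f_U$ being unitary. Your observation that reflecting $\bar f_U$ over the $\Phi(\Theta(U))$-strand equals reflecting the morphism of $(\mathrm{ii})$ over the $F$-strand is correct, but it only tells you both equal the full conjugate $\overline{f_U^*}$, which is automatically unitary; it does not by itself compare the unitarity of the two partial rotations. The paper's device is to regard the morphism of $(\mathrm{iii})$ as the image of the morphism of $(\mathrm{ii})$ under the anti-$\ast$-isomorphism of Lemma~\ref{lemm:standard}(iii), applied not to $F$ or $\Phi(\Theta(U))$ separately but to the \emph{composite} objects $F\tensor\Phi(\Theta(U))$ and $\Phi(\Theta(U))\tensor F$ with their tensor-product standard solutions $(\id\tensor R\tensor\id)R_{\Phi(\Theta(U))}$, $(\id\tensor R_{\Phi(\Theta(U))}\tensor\id)R$, etc. That these composite solutions are again standard in the Plancherel sense is what makes the conjugation map $\ast$-preserving, and hence unitarity-preserving, between the two morphisms.

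Your alternative route $(\mathrm{iii})\Leftrightarrow(\mathrm{i})$ does not work as stated: the Plancherel normalisation of $F$'s standard solution involves only the dimension function $d_F(i)$ and has no reason to produce the twist $\Phi(a_{\Theta,U})$, which is intrinsic to the functor $\Theta$ and has nothing to do with $F$.
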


\begin{proof}
Let $(U,\bar{U},r_U,\bar{r}_U)$ be a standard solution in $\cl{D}$
and define $r_{\Phi(\Theta(U))}$ and $\bar{r}_{\Phi(\Theta(U))}$
as follows:
\begin{align*}
r_{\Phi(\Theta(U))} &= \phi^*_{\bar{\Theta(U)},\Theta(U)}\Phi(\theta_{\bar{U},U}^*\Theta(r_U)),\\
\bar{r}_{\Phi(\Theta(U))} &= \phi^*_{\Theta(U),\bar{\Theta(U)}}\Phi(\theta_{U,\bar{U}}^*\Theta(\bar{r}_{U})).
\end{align*}
By using the braiding equation, we have 
\begin{align*}
& (\id\tensor\id\tensor \bar{r}_{\Phi(\Theta(U))}^*)(\id\tensor f_U^*\tensor \id)(r_{\Phi(\Theta(U))}\tensor\id\tensor\id)f_{\bar{U}}^* \\
&= (\id\tensor\id\tensor \bar{r}_{\Phi(\Theta(U))}^*)(\id\tensor f_{U\tensor \bar{U}}^*)(r_{\Phi(\Theta(U))}\tensor\id\tensor\id) \\
&= (\id\tensor\bar{r}_{\Phi(\Theta(U))}^*\tensor\id)(r_{\Phi(\Theta(U))}\tensor\id\tensor\id) \\
&= \id\tensor\id.
\end{align*}
Since $f_{\bar{U}}$ is unitary, 
this implies that we have
\begin{align*}
f_{\bar{U}} 
&= (\id\tensor\id\tensor \bar{r}_{\Phi(\Theta(U))}^*)(\id\tensor f_U^*\tensor \id)(r_{\Phi(\Theta(U))}\tensor\id\tensor\id),\\
f_U 
&= (\id\tensor\id\tensor r_{\Phi(\Theta(U))}^*)(\id\tensor f_{\bar{U}}^*\tensor \id)(\bar{r}_{\Phi(\Theta(U))}\tensor\id\tensor\id).
\end{align*}
Now we give a proof of (i) $\iff$ (ii). 
Let $(\Theta(U),\Theta(\bar{U}),R_{\Theta(U)},\bar{R}_{\Theta(U)})$
be the standard solution as in Lemma \ref{lemm:modular auto}. Set
\begin{align*}
 R_{\Phi(\Theta(U))}
&= \phi_{\Theta(\bar{U}),\Theta(U)}^*\Phi(R_{\Theta(U)}),\\
 \bar{R}_{\Phi(\Theta(U))}
&= \phi_{\Theta(U),\Theta(\bar{U})}^*\Phi(\bar{R}_{\Theta(U)}).
\end{align*}
Then Proposition \ref{prop:trace} asserts that $(\Phi(\Theta(U)),\Phi(\Theta(\bar{U})),R_{\Phi(\Theta(U)))},\bar{R}_{\Phi(\Theta(U))})$ is a
standard solution. Under the condition (i), we have
\begin{align*}
& (\id\tensor\id\tensor \bar{R}_{\Phi(\Theta(U))}^*)(\id\tensor f_{U}^*\tensor\id)(R_{\Phi(\Theta(U))}\tensor\id\tensor\id) \\
&= (\id\tensor\id\tensor \bar{R}_{\Phi(\Theta(U))}^*)(\id\tensor (\id\tensor \Phi(a_{\Theta,U}^{1/2}))f_{U}^*(\Phi(a_{\Theta,U}^{-1/2})\tensor\id)\tensor\id)(R_{\Phi(\Theta(U))}\tensor\id\tensor\id) \\
&=(\id\tensor\id\tensor \bar{r}_{\Phi(\Theta(U))}^*)(\id\tensor f_U^*\tensor \id)(r_{\Phi(\Theta(U))}\tensor\id\tensor\id) \\
&= f_{\bar{U}}.
\end{align*}
Hence (ii) holds for the standard solution. Since all standard solutions
are mutually unitary equivalent, we also have (ii) for a general standard
solution. 
To show the converse direction, one should note that we have
\begin{align*}
 (\id\tensor a_{\Theta,U}^{1/2})\theta_{\bar{U},U}^*\Theta(r_U)
&=(a_{\Theta,\bar{U}}^{-1/2}\tensor\id)\theta_{\bar{U},U}^*a_{\Theta,\bar{U}\tensor U}^{1/2}\Theta(r_U) \\
&= (a_{\Theta,\bar{U}}^{-1/2}\tensor\id)\theta_{\bar{U},U}^*\Theta(r_U)a_{\mbf{1}}^{1/2}
=(a_{\Theta,\bar{U}}^{-1/2}\tensor\id)\theta_{\bar{U},U}^*\Theta(r_U).
\end{align*}
Similary we have
\begin{align*}
 (\id\tensor a_{\Theta,U}^{1/2})\theta_{U,\bar{U}}^*\Theta(\bar{r}_U)
=(a_{\Theta,\bar{U}}^{-1/2}\tensor\id)\theta_{U,\bar{U}}^*\Theta(\bar{r}_U).
\end{align*}
By using these formulae, we can see that
\begin{align*}
&(\Phi(a_{\Theta,\bar{U}}^{-1/2})\tensor\id)f_{\bar{U}}(\id\tensor \Phi(a_{\Theta,\bar{U}}^{1/2}))\\
&=(\id\tensor\id\tensor \bar{r}_{\Phi(\Theta(U))}^*)(\Phi(a_{\Theta,\bar{U}}^{-1/2})\tensor f_{U}^*\tensor \Phi(a_{\Theta,\bar{U}}^{1/2}))(r_{\Phi(\Theta(U))}\tensor\id\tensor\id) \\
&=(\id\tensor\id\tensor \bar{R}_{\Phi(\Theta(U))}^*)(\id\tensor f_{U}^*\tensor\id)(R_{\Phi(\Theta(U))}\tensor\id\tensor\id).
\end{align*}
Hence $(\Phi(a_{\Theta,\bar{U}}^{-1/2})\tensor\id)f_{\bar{U}}(\id\tensor \Phi(a_{\Theta,\bar{U}}^{1/2}))$ is unitary. Now we have
\[
 (\Phi(a_{\Theta,\bar{U}}^{-1/2})\tensor\id)f_{\bar{U}}(\id\tensor \Phi(a_{\Theta,\bar{U}}))f_{\bar{U}}^*(\Phi(a_{\Theta,\bar{U}}^{-1/2})\tensor\id) = 1.
\]
Then we can see that (i) holds for $\bar{U}$ since $f_{\bar{U}}$ is unitary.
We already have shown (i) $\Longrightarrow$ (ii) and (ii) for $U \Longrightarrow$ (i) for $\bar{U}$, hence (ii) $\Longrightarrow$ (i) also has been shown.

The equivalence of (ii) and (iii) can be seen from Lemma \ref{lemm:standard}
 (iii) and the standardness of the following quadruples:
\begin{align*}
 (F\tensor \Phi(\Theta(U)),\Phi(\bar{\Theta(U)})\tensor\bar{F},(\id\tensor R\tensor\id)R_{\Phi(\Theta(U))},(\id\tensor \bar{R}\tensor\id)\bar{R}_{\Phi(\Theta(U))}),\\
 (\Phi(\Theta(U))\tensor F, \bar{F}\tensor \Phi(\bar{\Theta(U)}),(\id\tensor R_{\Phi(\Theta(U))}\tensor\id)R,(\id\tensor \bar{R}_{\Phi(\Theta(U))}\tensor\id)\bar{R}).
\end{align*}
\end{proof}

By considering the case of $\map{\id_{\cl{C}}}{\cl{C}}{\cl{C}}$,
we obtain the following corollary.

\begin{coro}
For a $\cl{C}$-module functor $(F,f)$, it is rigid in 
$[\cl{M},\cl{M}]^{\cl{C}}_{\mathrm{b}}$ if and only if
$F$ is rigid in $[\cl{M},\cl{M}]_{\mathrm{b}}$. In this case, the conjugate
object $(\bar{F},\bar{f})$ of $(F,f)$ can be obtained from a standard 
solution $(F,\bar{F},R,\bar{R})$ as follows:
\begin{itemize}
 \item $\bar{F}$: a conjugate object of $F$ in 
$[\cl{M},\cl{M}]_{\mathrm{b}}$.
 \item $\bar{f}_U = (R^*\tensor\id\tensor\id)(\id\tensor f_U^*\tensor\id)(\id\tensor\id\tensor\bar{R})$. 
\end{itemize} 
\end{coro}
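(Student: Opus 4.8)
The plan is to deduce the corollary from Proposition \ref{prop:abstract descent}, applied with $\cl{D} = \cl{C}$ and $(\Theta,\theta) = (\id_{\cl{C}},\id)$. The first observation is that the modular natural transformation is then trivial: unwinding the definition, $a_{\id_{\cl{C}},U} = (\id_U\tensor\tr_{\bar{U}})(\bar{r}_U\bar{r}_U^*)$ for a standard solution $(U,\bar{U},r_U,\bar{r}_U)$ of $\cl{C}$, and a one-line computation using the conjugate equation $(\id_U\tensor r_U^*)(\bar{r}_U\tensor\id_U)=\id_U$ shows this equals $\id_U$. Hence condition (i) of Proposition \ref{prop:abstract descent} holds for every $U$, so conditions (ii) and (iii) do as well; in particular the hypothesis that $F$ be rigid in $[\cl{M},\cl{M}]_{\mathrm{b}}$ is then the only thing (ii) and (iii) require.

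For the nontrivial implication ($F$ rigid in $[\cl{M},\cl{M}]_{\mathrm{b}}$ implies $(F,f)$ rigid in $[\cl{M},\cl{M}]^{\cl{C}}_{\mathrm{b}}$), I would fix a standard solution $(F,\bar{F},R,\bar{R})$ in $[\cl{M},\cl{M}]_{\mathrm{b}}$ and set $\bar{f}_U = (R^*\tensor\id\tensor\id)(\id\tensor f_U^*\tensor\id)(\id\tensor\id\tensor\bar{R})$. By Proposition \ref{prop:abstract descent}(iii), each $\bar{f}_U$ is then a unitary natural transformation $\bar{F}\tensor\Phi(U)\to\Phi(U)\tensor\bar{F}$. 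What remains is to check that $(\bar{F},\bar{f})$ is a $\cl{C}$-module functor and that $R,\bar{R}$ become morphisms in $[\cl{M},\cl{M}]^{\cl{C}}_{\mathrm{b}}$. Naturality of $U\mapsto\bar{f}_U$ is immediate from that of $f$ and of $R,\bar{R}$; the braiding-type coherence $\bar{f}_{U\tensor V}=(\phi_{U,V}\tensor\id_{\bar{F}})(\id\tensor\bar{f}_V)(\bar{f}_U\tensor\id)(\id\tensor\phi_{U,V}^*)$ is obtained by substituting the definition of $\bar{f}$, applying the corresponding coherence for $f$, and removing the auxiliary $F$-strands with the conjugate equations for $(R,\bar{R})$ --- exactly the style of diagram chase carried out in the proof of Proposition \ref{prop:abstract descent}. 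That $R\colon\id_{\cl{M}}\to\bar{F}\circ F$ and $\bar{R}\colon\id_{\cl{M}}\to F\circ\bar{F}$ intertwine the composite module structures unwinds, through the definition of $\bar{f}$, into the braiding equation for $f$. Finally, the conjugate equations $(\id_F\tensor R^*)(\bar{R}\tensor\id_F)=\id_F$ and $(\id_{\bar{F}}\tensor\bar{R}^*)(R\tensor\id_{\bar{F}})=\id_{\bar{F}}$, read in $[\cl{M},\cl{M}]^{\cl{C}}_{\mathrm{b}}$, coincide with the ones already known to hold in $[\cl{M},\cl{M}]_{\mathrm{b}}$; so $(F,f)$ is rigid with conjugate $(\bar{F},\bar{f})$, and the displayed formula for $\bar{f}_U$ is precisely the one in the statement.

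For the converse, the forgetful functor $[\cl{M},\cl{M}]^{\cl{C}}_{\mathrm{b}}\to[\cl{M},\cl{M}]_{\mathrm{b}}$ is a strict \cstar-tensor functor (it respects composition and sends the identity module functor to $\id_{\cl{M}}$); therefore it carries any solution of the conjugate equations for $(F,f)$ to one for $F$, so rigidity of $(F,f)$ in $[\cl{M},\cl{M}]^{\cl{C}}_{\mathrm{b}}$ forces rigidity of $F$ in $[\cl{M},\cl{M}]_{\mathrm{b}}$.

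I expect the real work --- the ``main obstacle'' in the sense of effort rather than conceptual depth --- to be the bookkeeping in the second paragraph: checking that $(\bar{F},\bar{f})$ satisfies the module-functor coherence and that $R$ and $\bar{R}$ are module natural transformations. Nothing there is hard, but it is the one place where the definition of $\bar{f}$ must be manipulated carefully against the coherence of $f$ and the conjugate equations, keeping track of all the structure morphisms $\phi_{U,V}$.
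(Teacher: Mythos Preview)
Your proposal is correct and matches the paper's approach exactly: the paper obtains this corollary by specializing Proposition \ref{prop:abstract descent} to $\cl{D}=\cl{C}$ and $(\Theta,\theta)=(\id_{\cl{C}},\id)$, so that $a_{\Theta,U}=\id_U$ and condition (i) there is vacuous, whence (iii) supplies the unitary $\bar{f}_U$ in the stated form. The bookkeeping you flag (that $(\bar{F},\bar{f})$ satisfies the module-functor coherence and that $R,\bar{R}$ are $\cl{C}$-module natural transformations) is indeed treated as routine in the paper---it is used without further comment, e.g.\ in the proof of Proposition \ref{prop:locally scaling}.
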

\begin{coro}
If $\cl{M}$ is connected as a tracial $\cl{C}$-module category,
a standard solution in $[\cl{M},\cl{M}]_{\mathrm{b}}^{\cl{C}}$
is also standard in $[\cl{M},\cl{M}]_{\mathrm{b}}$.
\end{coro}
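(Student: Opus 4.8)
The plan is to play the preceding corollary off against the uniqueness of standard solutions. First, since $\cl{M}$ is connected, the unit object of $[\cl{M},\cl{M}]^{\cl{C}}_{\mathrm{b}}$ is simple: under the isomorphism $\End_{\mathrm{b}}(\id_{\cl{M}})\cong\ell^{\infty}(\irr\cl{M})$, a $\cl{C}$-module natural endomorphism of $\id_{\cl{M}}$ corresponds to a bounded family $(a_i)_{i}$, and the compatibility $a_{U\tensor X}=\id_{U}\tensor a_{X}$, evaluated on the isometries $X\hookrightarrow U\tensor Y$ furnished by connectedness, forces all the $a_i$ to coincide. Hence the rigid part of $[\cl{M},\cl{M}]^{\cl{C}}_{\mathrm{b}}$ is a rigid \cstar-tensor category with simple unit. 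In particular a solution $(F,\bar{F},R,\bar{R})$ in it is standard precisely when $R^*(\id\tensor x)R=\bar{R}^*(x\tensor\id)\bar{R}$ for every endomorphism $x$ of $F$ in $[\cl{M},\cl{M}]^{\cl{C}}_{\mathrm{b}}$, and such standard solutions are unique up to unitary equivalence in $[\cl{M},\cl{M}]^{\cl{C}}_{\mathrm{b}}$.

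The heart of the argument is to produce one solution that is standard in both senses at once. Fix a rigid object $(F,f)$ of $[\cl{M},\cl{M}]^{\cl{C}}_{\mathrm{b}}$; by the preceding corollary $F$ is rigid in $[\cl{M},\cl{M}]_{\mathrm{b}}$, and there is a solution $(F,\bar{F},R,\bar{R})$ which is standard in $[\cl{M},\cl{M}]_{\mathrm{b}}$ and which, once $\bar{F}$ is equipped with the module structure $\bar{f}$ exhibited there, is also a solution of the conjugate equations in $[\cl{M},\cl{M}]^{\cl{C}}_{\mathrm{b}}$ — in particular $R$ and $\bar{R}$ are module natural. I claim it is standard in $[\cl{M},\cl{M}]^{\cl{C}}_{\mathrm{b}}$. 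Given a $\cl{C}$-module endomorphism $x$ of $F$, both $R^*(\id\tensor x)R$ and $\bar{R}^*(x\tensor\id)\bar{R}$ are $\cl{C}$-module endomorphisms of $\id_{\cl{M}}$, hence scalars by the first step, and applying $\omega_{\cl{M}}$ together with the defining identity of standardness in $[\cl{M},\cl{M}]_{\mathrm{b}}$ (valid for all positive $x$, hence all $x$) shows that the two scalars have the same image under $\omega_{\cl{M}}$. When $\irr\cl{M}$ is finite, $\omega_{\cl{M}}(\id_{\cl{M}})$ is a finite nonzero number and the two scalars coincide at once. In general one descends to the picture of Lemma \ref{lemm:standard}: standardness of $(F,\bar{F},R,\bar{R})$ in $[\cl{M},\cl{M}]_{\mathrm{b}}$ means that each rescaled component $(\cl{M}(j,F(i)),\cl{M}(i,\bar{F}(j)),d(j)^{1/2}d(i)^{-1/2}R_{ji},d(j)^{-1/2}d(i)^{1/2}\bar{R}_{ji})$ is standard in $\hilb^{\fin}$, so that $R_{ji}^*(\id\tensor\theta)R_{ji}=(d(i)/d(j))^{2}\,\bar{R}_{ji}^*(\theta\tensor\id)\bar{R}_{ji}$ for all $\theta$; inserting the components of $x$ and using the module-functor structure $f$ to relate those components along the two gradings yields $R^*(\id\tensor x)R=\bar{R}^*(x\tensor\id)\bar{R}$, as required.

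With this the corollary follows: if $(F,\bar{F}',R',\bar{R}')$ is any standard solution in $[\cl{M},\cl{M}]^{\cl{C}}_{\mathrm{b}}$, then by uniqueness it is unitarily equivalent in $[\cl{M},\cl{M}]^{\cl{C}}_{\mathrm{b}}$ — hence a fortiori in $[\cl{M},\cl{M}]_{\mathrm{b}}$ — to the solution $(F,\bar{F},R,\bar{R})$ of the second paragraph, and since being standard in $[\cl{M},\cl{M}]_{\mathrm{b}}$ is preserved under unitary equivalence (one simply conjugates inside $\omega_{\cl{M}}(\,\cdot\,)$), $(F,\bar{F}',R',\bar{R}')$ is standard in $[\cl{M},\cl{M}]_{\mathrm{b}}$. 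I expect the genuinely delicate point to be the general case of the middle paragraph: when $\irr\cl{M}$ is infinite, $\omega_{\cl{M}}$ is only a weight, so one cannot cancel the (infinite) value $\omega_{\cl{M}}(\id_{\cl{M}})$, and one must instead work componentwise and exploit the module structure of $F$ to match the two expressions directly.
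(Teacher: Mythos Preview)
Your overall strategy---exhibit one solution that is simultaneously standard in $[\cl{M},\cl{M}]_{\mathrm{b}}$ and in $[\cl{M},\cl{M}]_{\mathrm{b}}^{\cl{C}}$, then transport via uniqueness of standard solutions---is the natural reading of this unproved corollary, and your treatment of the simplicity of the unit and of the unitary-equivalence step is correct.

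The genuine gap is precisely where you flag it. Unwinding Lemma~\ref{lemm:standard} (and correcting your exponent: the ratio is $(d(j)/d(i))^{2}$, not its reciprocal) gives, for a $\cl{C}$-module endomorphism $x$ of $F$,
\[
(R^{*}(\id\tensor x)R)_{i}=\frac{1}{d(i)}\sum_{j}d(j)\,\tr(x_{ji}),
\qquad
(\bar{R}^{*}(x\tensor\id)\bar{R})_{j}=\frac{1}{d(j)}\sum_{i}d(i)\,\tr(x_{ji}).
\]
Connectedness makes each of these a constant $c$, resp.\ $c'$; the problem is to show $c=c'$. Already for $x=\id_{F}$ this asks that $d_{F}(i)=d(F(i))/d(i)$ and $d_{\bar{F}}(j)=d(\bar{F}(j))/d(j)$ coincide as constants. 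When $\lvert\irr\cl{M}\rvert<\infty$ one multiplies by $d(i)^{2}$ (resp.\ $d(j)^{2}$), sums, and cancels $\sum_{k}d(k)^{2}<\infty$---this is exactly your $\omega_{\cl{M}}$ argument. In the infinite case that cancellation is unavailable, and your phrase ``using the module-functor structure $f$ to relate those components along the two gradings'' does not supply a mechanism: the module structure tells you each side is constant in its own index, but gives no bridge between the $i$-summation and the $j$-summation. So as written, the general case is not proved. Since the paper only invokes this corollary (implicitly) through Theorem~\ref{thrm:finite descent}, which carries the hypothesis $\lvert\irr\cl{M}\rvert<\infty$, your finite-case argument is enough for all downstream applications; but the infinite case needs an additional idea beyond what you have sketched.
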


For a $\cl{C}$-module category, we can consider an equivalence relation
$\sim_{\cl{C}}$ on $\irr\cl{M}$ as follows:
\[
 i \sim_{\cl{C}} j \overset{\mathrm{def}}{\iff} \text{there exists }U \in \cl{C}\text{ such that }\cl{M}(i,U\tensor j) \neq 0. 
\]

For a \cstar-functor $\map{F}{\cl{M}}{\cl{M}}$, we define a
function $\map{d_F}{\irr\cl{M}}{\R}$ as follows:
\[
 d_F(i) = \frac{d(F(i))}{d(i)}.
\]

\begin{prop} \label{prop:locally scaling}
If $(F,f) \in [\cl{M},\cl{M}]_{\mathrm{b}}^{\cl{D}}$ satisfies 
the conditions in Proposition \ref{prop:abstract descent} for all 
$U \in \cl{D}$, the function $d_F$ is constant on each equivalence class of 
$\sim_{\cl{D}}$.

If $F$ is an equivalence of categories, the converse direction also holds.
\end{prop}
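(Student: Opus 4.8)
The plan is to realise $d_F$ as a canonical element of $\End_{\mathrm{b}}(\id_{\cl{M}})\cong\ell^{\infty}(\irr\cl{M})$ attached to a standard solution, and to observe that the symmetry imposed by Proposition~\ref{prop:abstract descent} promotes this element to a morphism of $\cl{D}$-module functors, which is exactly the claimed constancy. I would first record two elementary facts. (a)~For a rigid object $F\in[\cl{M},\cl{M}]_{\mathrm{b}}$ with standard solution $(F,\bar{F},R,\bar{R})$, decomposing $R\colon\id_{\cl{M}}\to\bar{F}\tensor F$ along $\irr\cl{M}$ exactly as in the lemma preceding Proposition~\ref{prop:trace} (the computation there, carried out for $\Phi(U)$, applies verbatim to an arbitrary rigid $F$) gives $\nor{R_{ji}}^2=\frac{d(j)}{d(i)}\dim\cl{M}(j,F(i))$, whence the element $R^{*}R\in\End_{\mathrm{b}}(\id_{\cl{M}})\cong\ell^{\infty}(\irr\cl{M})$ is precisely the function $i\mapsto d(F(i))/d(i)=d_F(i)$. (b)~Directly from the definition of a morphism in $[\cl{M},\cl{M}]^{\cl{D}}_{\mathrm{b}}$, a natural transformation $(a_i)_{i}$ of $\id_{\cl{M}}$ belongs to $\End^{\cl{D}}_{\mathrm{b}}(\id_{\cl{M}})$ (for the trivial module structures) if and only if $a_i=a_j$ whenever $\cl{M}(i,\Theta(U)\tensor j)\neq 0$ for some $U\in\cl{D}$; that is, $\End^{\cl{D}}_{\mathrm{b}}(\id_{\cl{M}})$ is the algebra of functions on $\irr\cl{M}$ that are constant on the classes of $\sim_{\cl{D}}$.

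For the forward implication: if $(F,f)$ satisfies the conditions of Proposition~\ref{prop:abstract descent} for every $U\in\cl{D}$, then, by the argument of the corollary following that proposition applied to $(\Theta,\theta)$ in place of $\id_{\cl{C}}$, the transformation $\bar{f}$ furnished by condition~(iii) makes a conjugate $\bar{F}$ of $F$ in $[\cl{M},\cl{M}]_{\mathrm{b}}$ into a $\cl{D}$-module functor and makes $(F,\bar{F},R,\bar{R})$ a solution of the conjugate equations inside $[\cl{M},\cl{M}]^{\cl{D}}_{\mathrm{b}}$. In particular $R$, and hence its adjoint $R^{*}$ (the coherence isomorphisms being unitary), is a morphism of $\cl{D}$-module functors, so $R^{*}R\in\End^{\cl{D}}_{\mathrm{b}}(\id_{\cl{M}})$; by (a) and (b) this says exactly that $d_F$ is constant on each $\sim_{\cl{D}}$-class.

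For the converse, assume $F$ is an equivalence and $d_F$ is constant on $\sim_{\cl{D}}$-classes. Since a module equivalence admits a module quasi-inverse, I would start from a $\cl{D}$-module functor $(G,g)$ together with module-natural morphisms $R^{\mathrm{o}}\colon\id_{\cl{M}}\to G\tensor F$ and $\bar{R}^{\mathrm{o}}\colon\id_{\cl{M}}\to F\tensor G$ satisfying the conjugate equations; then $G$ is a conjugate of $F$ in $[\cl{M},\cl{M}]_{\mathrm{b}}$ and $(R^{\mathrm{o}})^{*}R^{\mathrm{o}}\in\End^{\cl{D}}_{\mathrm{b}}(\id_{\cl{M}})$ is constant on $\sim_{\cl{D}}$-classes. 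This solution need not be standard; but because $F$ is an equivalence, $\End_{\mathrm{b}}(\bar{F})$ is abelian and indexed by $\irr\cl{M}$, and replacing $R^{\mathrm{o}},\bar{R}^{\mathrm{o}}$ by $(s\tensor\id_F)R^{\mathrm{o}},(\id_F\tensor(s^{*})^{-1})\bar{R}^{\mathrm{o}}$ for the unique positive $s\in\End_{\mathrm{b}}(\bar{F})$ that repairs standardness yields a standard solution $(F,\bar{F},R,\bar{R})$. Computing $s$ from (a) together with the form of standard solutions when $F$ is an equivalence, its component on $G(i)$ has square $d_F(i)/\nor{R^{\mathrm{o}}_i}^2$; since $d_F$ and $i\mapsto\nor{R^{\mathrm{o}}_i}^2=\bigl((R^{\mathrm{o}})^{*}R^{\mathrm{o}}\bigr)_i$ are both constant on $\sim_{\cl{D}}$-classes, $s\in\End^{\cl{D}}_{\mathrm{b}}(\bar{F})$. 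Hence $R$ and $\bar{R}$ are again module-natural, and since $g$ commutes with $s$ in the module sense, the rescaling by $s$ cancels in the formula of Proposition~\ref{prop:abstract descent}(iii): the transformation $\bar{f}_U$ built from the standard solution equals the module-structure map $g_U$, which is unitary. Thus $(F,f)$ satisfies condition~(iii) of Proposition~\ref{prop:abstract descent} for every $U$, as required.

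The delicate point is the last one: reducing the a priori non-standard module-natural solution $(R^{\mathrm{o}},\bar{R}^{\mathrm{o}})$ to a standard one without leaving $[\cl{M},\cl{M}]^{\cl{D}}_{\mathrm{b}}$ hinges on the rescaling element $s$ being a module endomorphism, and this is exactly where the hypothesis enters—$s$ is built out of $d_F$ and of $(R^{\mathrm{o}})^{*}R^{\mathrm{o}}$, both constant on $\sim_{\cl{D}}$-classes, and the assumption that $F$ is an equivalence keeps $\End_{\mathrm{b}}(\bar{F})$ small enough that this constancy forces $s\in\End^{\cl{D}}_{\mathrm{b}}(\bar{F})$. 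The forward direction, by contrast, is essentially formal once the identity $R^{*}R=d_F$ is in place.
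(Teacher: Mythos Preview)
Your forward direction is essentially the paper's argument: identify $R^*R\in\End_{\mathrm{b}}(\id_{\cl{M}})$ with the function $d_F$ via the computation in Lemma~\ref{lemm:standard}, observe that condition~(iii) promotes $(\bar{F},\bar{f})$ to a $\cl{D}$-module functor with $R,\bar{R}$ module-natural, and conclude that $R^*R$ lies in $\End_{\mathrm{b}}^{\cl{D}}(\id_{\cl{M}})$, i.e.\ is constant on $\sim_{\cl{D}}$-classes.

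For the converse you take a genuinely different route. The paper argues directly: since $F$ is an equivalence each $F(i)$ is irreducible, so for the standard solution $d_F(i)^{-1/2}R_i$ is a morphism of norm one between irreducibles, hence unitary; the constancy of $d_F$ on $\sim_{\cl{D}}$-classes then makes $d_F(i)^{-1/2}R_{\Theta(U)\tensor i}$ unitary as well (decompose $\Theta(U)\tensor i$ into irreducibles, all in the class of $i$), and likewise for $\bar{R}$; finally one writes $\bar{f}_{U,i}$ explicitly as a composite of these rescaled $R$'s and $\bar{R}$'s with $\bar{F}(f_{U,\bar{F}(i)}^*)$, each factor unitary. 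Your approach instead produces a module quasi-inverse $(G,g)$ with module-natural duality data $(R^{\mathrm{o}},\bar{R}^{\mathrm{o}})$, rescales by a positive $s\in\End_{\mathrm{b}}(G)$ to reach a standard solution, and checks that $s$ is itself module-natural so that the resulting $\bar{f}_U$ coincides with the unitary $g_U$. This is correct, but two small points deserve a word: you should note that $F$, being a $\cl{D}$-module functor, preserves $\sim_{\cl{D}}$-classes (so constancy of $i\mapsto s_{F(i)}$ on classes of $i$ transfers to constancy of $j\mapsto s_j$ on classes of $j$), and the identity $\bar{f}^{\mathrm{o}}_U=g_U$ you invoke follows from module-naturality of $R^{\mathrm{o}},\bar{R}^{\mathrm{o}}$ together with one zigzag identity. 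The paper's computation is shorter and avoids invoking the existence of a module quasi-inverse; your argument is more structural and makes transparent \emph{why} the hypothesis on $d_F$ is exactly what is needed, namely to keep the standardising element inside $\End_{\mathrm{b}}^{\cl{D}}(G)$.
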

\begin{proof}
 Let $(F,\bar{F},R,\bar{R})$ be a standard solution in 
$[\cl{M},\cl{M}]_{\mathrm{b}}$. Then the condition (iii) of Proposition
\ref{prop:abstract descent} implies that we have a $\cl{D}$-module functor
$\map{(\bar{F},\bar{f})}{\cl{M}}{\cl{M}}$ given by
\[
 \bar{f}_U = (R^*\tensor\id\tensor\id)(\id\tensor f_U^*\tensor\id)(\id\tensor\id\tensor \bar{R}).
\]
Then $R$ and $\bar{R}$ are morphisms in $[\cl{M},\cl{M}]_{\mathrm{b}}^{\cl{D}}$, hence $R^*R$ is also a morphism from $\id_{\cl{M}}$ to $\id_{\cl{M}}$ 
in $[\cl{M},\cl{M}]_{\mathrm{b}}^{\cl{D}}$. This implies that we have
\[
 (R^*R)_{\Theta(U)\tensor i} 
= \id_{\Theta(U)}\tensor (R^*R)_i
\]
for any $U \in \cl{D}$ and $i \in \irr\cl{M}$. In particular
$i \in \irr\cl{M} \longmapsto (R^*R)_i$ is constant on 
each equivalence class of $\sim_{\cl{D}}$. On the other hand, by the 
proof of Lemma \ref{lemm:standard}, we have
\[
 (R^*R)_i 
= \sum_{j \in \irr\cl{M}} \frac{d(j)\dim\cl{M}(j,F(i))}{d(i)}
= \frac{d(F(i))}{d(i)}.
\]
This completes a proof of the first statement.

To show the second statement, we will check the condition (iii) of 
Proposition \ref{prop:abstract descent}. Let $(F,\bar{F},R,\bar{R})$
be a standard solution. Our assumption implies that $F(i)$ is 
irreducible for any $i \in \irr\cl{M}$, hence 
$d(F(i))^{-1/2}d(i)^{1/2}R_i = d_F(i)^{-1/2}R_i$ is unitary.
Moreover $d_F(i)^{-1/2}R_{\Theta(U)\tensor i}$ is also unitary for 
any $U \in \cl{D}$ by the assumption on $d_F$. Since we have
\[
 d_F(\bar{F}(i)) = d(F(\bar{F}(i)))/d(\bar{F}(i)) = d_{\bar{F}}(i)^{-1},
\]
the similar argument shows that
$d_{\bar{F}}(i)^{-1/2}\bar{R}_{\Theta(U)\tensor i}$ is also unitary.

Then we have
\begin{align*}
 (R\tensor\id\tensor\id)&(\id\tensor f_U^*\tensor\id)(\id\tensor\id\tensor \bar{R})_i\\
&=d_F(\bar{F}(i))^{-1/2}R_{U\tensor \bar{F}(i)}\bar{F}(f_{U,\bar{F}(i)}^*)\bar{F}(\id_{\Theta(U)}\tensor d_{\bar{F}}(i)^{-1/2}\bar{R}_i).
\end{align*}
This implies that $(F,f)$ fulfills the condition (iii).
\end{proof}

We end this subsection by showing that
the condition (iii) is automatically satisfied under 
a finiteness condition.

\begin{thrm} \label{thrm:finite descent}
Let $\cl{M}$ be a tracial $\cl{C}$-module category with 
$\abs{\irr\cl{M}} < \infty$.
Then $[\cl{M},\cl{M}]^{\cl{D}}_{\mathrm{b}}$ is rigid.
Moreover a standard solution in $[\cl{M},\cl{M}]^{\cl{D}}_{\mathrm{b}}$
is standard again in $[\cl{M},\cl{M}]_{\mathrm{b}}$.
\end{thrm}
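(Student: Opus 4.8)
The plan is to reduce the statement to a verification, for every $\cl{D}$-module functor $(F,f)$ and every $U\in\cl{D}$, of the conditions of Proposition~\ref{prop:abstract descent} applied to the given functor $(\Theta,\theta)\colon\cl{D}\to\cl{C}$, and to use the hypothesis $\abs{\irr\cl{M}}<\infty$ in two ways. First, under the identification $[\cl{M},\cl{M}]_{\mathrm{b}}\cong\hilb^{\mathrm{cf}}_{I\times I}$ with $I=\irr\cl{M}$, finiteness of $I$ forces every column-finite $I\times I$-graded Hilbert space to be uniformly finite, so $[\cl{M},\cl{M}]_{\mathrm{b}}=\hilb^{\fin}_{I\times I}$ is rigid; in particular the underlying functor $F$ of any $(F,f)\in[\cl{M},\cl{M}]^{\cl{D}}_{\mathrm{b}}$ is rigid in $[\cl{M},\cl{M}]_{\mathrm{b}}$, so Proposition~\ref{prop:abstract descent} is available. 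If condition (iii) holds for all $U$, then, writing $(F,\bar F,R,\bar R)$ for a standard solution in $[\cl{M},\cl{M}]_{\mathrm{b}}$ and $\bar f_U=(R^*\tensor\id\tensor\id)(\id\tensor f_U^*\tensor\id)(\id\tensor\id\tensor\bar R)$, the pair $(\bar F,\bar f)$ is an honest object of $[\cl{M},\cl{M}]^{\cl{D}}_{\mathrm{b}}$ and is a conjugate of $(F,f)$, which gives rigidity.

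Next I would establish that the function $d_F$ on $\irr\cl{M}$ is constant on every $\sim_{\cl{D}}$-class. Decomposing $\Theta(U)\tensor j$ into irreducibles and combining the module-functor isomorphism $F(\Theta(U)\tensor j)\cong\Theta(U)\tensor F(j)$ with the relations $d(\Theta(U)\tensor X)=d(\Theta(U))d(X)$, one obtains, for every $j\in\irr\cl{M}$ and $U\in\cl{D}$,
\[
\sum_{k\in\irr\cl{M}}\dim\cl{M}(k,\Theta(U)\tensor j)\,\bigl(d_F(k)-d_F(j)\bigr)\,d(k)=0.
\]
Thus $d_F(j)$ is a weighted average of the values $d_F(k)$ over the finitely many $k$ appearing in some $\Theta(U)\tensor j$, i.e.\ $d_F$ restricted to a $\sim_{\cl{D}}$-class is a harmonic function for an irreducible Markov chain on a finite set, hence constant (choosing a point where the maximum is attained and iterating reachability). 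When $F$ is moreover a $\cl{D}$-module equivalence, the converse half of Proposition~\ref{prop:locally scaling} now yields condition (iii) for all $U$, so such $(F,f)$ are rigid.

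The main obstacle is to pass from ``$d_F$ constant on $\sim_{\cl{D}}$-classes'' to the operator-level statement of condition (iii) for a \emph{general} $(F,f)$, not only for equivalences. I would proceed as follows: the natural transformation $\bar f_U$ above is always invertible, being a partial transpose of the unitary $f_U$, so $\bar f_U^*\bar f_U$ is a positive invertible element of the finite-dimensional C*-algebra $\End_{\mathrm{b}}(\bar F\tensor\Phi(\Theta(U)))$, on which the categorical trace of $[\cl{M},\cl{M}]_{\mathrm{b}}$ is a \emph{faithful finite} trace — this is exactly where finiteness of $\omega_{\cl{M}}$ is used. A diagrammatic computation, using the module-functor hexagon identities for the pair $U,\bar U$ (which force $f_{\bar U}$ to be unitary), the standardness in Lemma~\ref{lemm:modular auto} of the modular-corrected solutions of $\Phi(\Theta(U))$, the traciality of the categorical trace, and the constancy of $d_F$ just established, shows that $\bar f_U^*\bar f_U$ and its inverse both have categorical trace equal to $\tr(\id)$; since $t+t^{-1}\ge 2$ for positive invertible $t$ and the trace is faithful, $\bar f_U^*\bar f_U=\id$, i.e.\ $\bar f_U$ is unitary. (An essentially equivalent, more hands-on route decomposes $\cl{M}=\bigoplus_O\cl{M}_O$ over the $\sim_{\cl{D}}$-classes $O$, each $\cl{M}_O$ a connected $\cl{D}$-module subcategory on which $d_F\equiv\lambda_O$; after normalizing the standard solution blockwise by $\lambda_O^{\pm1/2}$ the projections $p_O$ become morphisms of $\cl{D}$-module functors, and rerunning the computation in the proof of Proposition~\ref{prop:abstract descent} gives condition (iii).) I expect this upgrade to be the genuinely delicate point: the numerical content is disposed of by the harmonic-function argument, while the operator identity requires the faithful finite trace.

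Finally, for the ``moreover'' assertion, note that by the previous step a conjugate of $(F,f)$ is $(\bar F,\bar f)$ built from a standard solution $(F,\bar F,R,\bar R)$ of $[\cl{M},\cl{M}]_{\mathrm{b}}$, and that the categorical dimension of $(F,f)$ in $[\cl{M},\cl{M}]^{\cl{D}}_{\mathrm{b}}$ is computed by the same Plancherel-weight expression $\omega_{\cl{M}}(R^*R)$ that defines the trace $\tr_F$ on $\End_{\mathrm{b}}(F)$ in the definition preceding Lemma~\ref{lemm:standard}. Hence any standard solution in $[\cl{M},\cl{M}]^{\cl{D}}_{\mathrm{b}}$ is, up to unitary equivalence inside that category, of the form $(R,\bar R)$ for a standard solution of $[\cl{M},\cl{M}]_{\mathrm{b}}$, and is therefore standard in $[\cl{M},\cl{M}]_{\mathrm{b}}$ as well.
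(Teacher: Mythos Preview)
Your strategy coincides with the paper's: first establish that $d_F$ is constant on each $\sim_{\cl{D}}$-class, then exploit the faithful \emph{finite} Plancherel trace to force $\bar f_U$ to be unitary. The paper obtains constancy via Perron--Frobenius (both $(d(i))_{i\in I}$ and $(d(F(i)))_{i\in I}$ are Perron--Frobenius eigenvectors of the same irreducible nonnegative matrix on a finite $\sim_{\cl{D}}$-class), which is equivalent to your harmonic-function/averaging argument. For the trace step the paper computes, with a suitable faithful functional $\omega_l$, that $\omega_l(\id)=\omega_l(\bar f_U^*\bar f_U)=\omega_l((\bar f_U^*\bar f_U)^2)$ and concludes $\omega_l\bigl((1-\bar f_U^*\bar f_U)^2\bigr)=0$; this variance-zero argument is a close cousin of your $t+t^{-1}\ge 2$ inequality and avoids having to compute the trace of $(\bar f_U^*\bar f_U)^{-1}$ separately. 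The ``moreover'' clause is handled exactly as you indicate.

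One point you should not skip: the trace computation also needs $\bar R^*\bar R$ to commute with $\Phi(\Theta(U))$, i.e.\ that $d_{\bar F}$ (not only $d_F$) is constant on $\sim_{\cl{D}}$-classes. Your harmonic-function argument for $d_F$ uses the isomorphism $F(\Theta(U)\tensor j)\cong\Theta(U)\tensor F(j)$ coming from the $\cl{D}$-module structure on $F$, which is precisely what is \emph{not yet} available for $\bar F$. The paper closes this by noting that $\bar F$ is an adjoint of $F$, so
\[
\cl{M}\bigl(i,\bar F(\Theta(U)\tensor j)\bigr)\cong\cl{M}\bigl(F(i),\Theta(U)\tensor j\bigr)\cong\cl{M}\bigl(F(\Theta(\bar U)\tensor i),j\bigr)\cong\cl{M}\bigl(i,\Theta(U)\tensor\bar F(j)\bigr),
\]
whence $\bar F(\Theta(U)\tensor j)\cong\Theta(U)\tensor\bar F(j)$ at the level of objects; after that your averaging argument applies verbatim to $d_{\bar F}$. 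With this addition your outline is complete and matches the paper's proof.
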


We need the following lemma to show the theorem.

\begin{lemm}
Let $(F,f)$ be a $\cl{D}$-module functor from $\cl{M}$ to $\cl{M}$ and
$\bar{F}$ be a conjugate of $F$ in $[\cl{M},\cl{M}]_{\mathrm{b}}$.
Then the function $d_F$ and $d_{\bar{F}}$ are constant on each 
equivalence class of $\sim_{\cl{D}}$.
\end{lemm}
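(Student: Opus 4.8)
Write $I=\irr\cl{M}$ and regard $\cl{M}$ as a $\cl{D}$-module category through $\Theta$, so that $\Phi(\Theta(U))(X)=\Theta(U)\tensor X$. For $U\in\cl{D}$ and $i,j\in I$ put $N^U_{ij}=\dim\cl{M}(i,\Theta(U)\tensor j)$; since every object of $\cl{M}$ is a finite direct sum of irreducibles, each $N^U$ is a row- and column-finite nonnegative matrix indexed by $I$. The plan is to read off the lemma from a Perron--Frobenius fact about these matrices. First I would record the bookkeeping: $N^{\mbf 1}=I$, $N^{U\oplus V}=N^U+N^V$, $N^{U\tensor V}=N^UN^V$, and $N^{\bar U}=(N^U)^{T}$ (the last using that the \cstar-tensor functor $\Theta$ sends a conjugate of $U$ to a conjugate of $\Theta(U)$, plus Frobenius reciprocity in $\cl{M}$). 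Consequently $\sim_{\cl{D}}$ is an equivalence relation, each $N^U$ is block diagonal for the partition of $I$ into $\sim_{\cl{D}}$-classes, and on a finite class $C$ one can pick $U_0\in\cl{D}$ with $N^{U_0}|_C$ strictly positive — hence irreducible — by taking $U_0$ to be the direct sum of $\mbf 1$ and of finitely many objects witnessing $i\sim_{\cl{D}}j$ over the pairs $i,j\in C$.

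Next I would isolate one eigenvalue identity. Decomposing $\Theta(U)\tensor X$ into irreducibles, applying a \cstar-functor $G\colon\cl{M}\to\cl{M}$ that commutes up to isomorphism with every $\Phi(\Theta(U))$, and using property (ii) of a module trace (Definition~\ref{defn:module trace}), i.e. $d(\Theta(U)\tensor X)=d(\Theta(U))\,d(X)$, one gets
\[
\sum_{k\in I} N^U_{kj}\,d(G(k))\;=\;d\bigl(G(\Theta(U)\tensor j)\bigr)\;=\;d\bigl(\Theta(U)\tensor G(j)\bigr)\;=\;d(\Theta(U))\,d(G(j))
\]
for all $U\in\cl{D}$, $j\in I$; for $G=\id_{\cl{M}}$ this is $\sum_k N^U_{kj}d(k)=d(\Theta(U))d(j)$. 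Now $G=F$ qualifies because $(F,f)$ is a $\cl{D}$-module functor, so $f$ supplies the isomorphisms $F(\Theta(U)\tensor X)\cong\Theta(U)\tensor F(X)$; and $G=\bar F$ qualifies as well, since applying conjugation in $[\cl{M},\cl{M}]_{\mathrm b}$ to these isomorphisms — and using that $\Phi$, $\Theta$, and $\cl{D}$ are all compatible with duals — gives $\bar F(\Theta(U)\tensor X)\cong\Theta(U)\tensor\bar F(X)$.

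Finally I would compare on one class. Fix a (finite) $\sim_{\cl{D}}$-class $C$ and restrict to it. For $G\in\{F,\bar F\}$, subtracting $d_G(j)$ times the $\id$-identity from the $G$-identity and using $d(G(k))-d_G(j)d(k)=d(k)\bigl(d_G(k)-d_G(j)\bigr)$ yields
\[
\sum_{k\in C} N^U_{kj}\,d(k)\,\bigl(d_G(k)-d_G(j)\bigr)=0\qquad (U\in\cl{D},\ j\in C).
\]
Choosing $j\in C$ that minimises $d_G$ makes every summand nonnegative, so each vanishes; since $d(k)>0$ this forces $d_G(k)=d_G(j)$ whenever $N^U_{kj}>0$, and letting $U$ range over $\cl{D}$ we conclude $d_G$ is constant on $C$. (Equivalently: $d(G(\cdot))|_C$ and the strictly positive $d(\cdot)|_C$ are both eigenvectors of the irreducible nonnegative matrix $(N^{U_0}|_C)^{T}$ with eigenvalue $d(\Theta(U_0))$, hence proportional by Perron--Frobenius.) Taking $G=F$ and $G=\bar F$ proves the lemma.

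The step I expect to cause the most friction is the bookkeeping around duals: checking $N^{\bar U}=(N^U)^{T}$ and, more importantly, that $\bar F$ inherits the commutation with the $\Phi(\Theta(U))$, which is exactly what lets the same eigenvalue identity be applied to $d_{\bar F}$. One should also keep in mind that the reduction to a single finite class — and hence the attainment of the minimum used above — is where the finiteness of $\irr\cl{M}$ enters, consistently with the hypothesis under which this lemma is used in Theorem~\ref{thrm:finite descent}.
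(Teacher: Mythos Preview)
Your proof is correct and follows essentially the same Perron--Frobenius strategy as the paper: both show that $(d(i))_{i\in C}$ and $(d(G(i)))_{i\in C}$ are eigenvectors of the same fusion matrix with the same eigenvalue on each $\sim_{\cl{D}}$-class $C$, forcing proportionality and hence constancy of $d_G$. The one minor difference is in obtaining the commutation $\bar{F}(\Theta(U)\tensor i)\cong\Theta(U)\tensor\bar{F}(i)$: the paper runs a Frobenius-reciprocity chain using that $\bar{F}$ is adjoint to $F$, whereas you dualize the isomorphism $f_U$ inside the multitensor category $[\cl{M},\cl{M}]_{\mathrm b}$ (using that $F$ and $\Phi(\Theta(U))$ are rigid there and that $\Phi,\Theta$ preserve duals) --- both routes are valid and yield the same conclusion.
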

\begin{proof}
Let $I$ be an equivalence class of $\sim_{\cl{D}}$. Since $\irr\cl{M}$
is finite, $I$ is also finite. Hence we can take an object $U \in \cl{D}$
such that $\cl{M}(i,\Theta(U)\tensor j) \neq 0$ for any $i,j \in I$.
This means the matrix $M = (\dim \cl{M}(j,\Theta(U)\tensor i))_{ij \in I}$
is an irreducible matrix with positive entries. Now $d = (d(i))_{i \in I}$
satisfies $Md = d(\Theta(U))d$ i.e. $d$ is a Perron-Frobenius eigenvector
of $M$. On the other hand, we have
\begin{align*}
d(\Theta(U))d(F(i))
&= d(\Theta(U)\tensor F(i))\\
&= d(F(\Theta(U)\tensor i))
= \sum_{j \in I} d(F(j))\dim \cl{M}(j,\Theta(U)\tensor i).
\end{align*}
Hence $(d(F(i)))_{i \in I}$ is also a Perron-Frobenius eigenvector of $M$,
which must be a scalar multiple of $d$. Now we get the statement for $d_F$.
For $\bar{F}$, one should note that $\bar{F}$ is an adjoint functor of $F$.
Hence we have
\begin{align*}
 \cl{M}(i,\bar{F}(\Theta(U)\tensor j)) 
&\cong \cl{M}(F(i),\Theta(U)\tensor j)\\
&\cong \cl{M}(\Theta(\bar{U})\tensor F(i), j)\\
&\cong \cl{M}(F(\Theta(\bar{U})\tensor i),j)\\
&\cong \cl{M}(\Theta(\bar{U})\tensor i,\bar{F}(j))
\cong \cl{M}(i,\Theta(U)\tensor \bar{F}(j)).
\end{align*}
for any $i,j \in \irr\cl{M}$. This implies 
$\bar{F}(\Theta(U)\tensor i)\cong \Theta(U)\tensor \bar{F}(i)$ and we 
can use the argument above to show the statement for $\bar{F}$.
\end{proof}

\begin{proof}[Proof of Theorem \ref{thrm:finite descent}]
Take $(F,f) \in [\cl{M},\cl{M}]^{\cl{D}}_{\mathrm{b}}$ and a standard
solution $(F,\bar{F},R,\bar{R})$ in $[\cl{M},\cl{M}]_{\mathrm{b}}$.
Since $(R^*R)_i = d_F(i)^{-1}$ and 
$(\bar{R}^*\bar{R})_i = d_{\bar{F}}(i)^{-1}$, the previous lemma
implies that
\[
R^*R\tensor \id_{\Phi(\Theta(U))} 
= \id_{\Phi(\Theta(U))}\tensor R^*R,\quad
\bar{R}^*\bar{R}\tensor\id_{\Phi(\Theta(U))} 
= \id_{\Phi(\Theta(U))}\tensor \bar{R}^*\bar{R}
\]
for any $U \in \cl{D}$.

Set $\bar{f}_U$ as in Proposition \ref{prop:abstract descent}:
\[
 \bar{f}_U 
= (R^*\tensor\id\tensor\id)(\id\tensor f_U^*\tensor\id)(\id\tensor\id\tensor\bar{R}).
\]
It suffices to show that $\bar{f}_U$ is unitary. Take
a standard solution $(U,\bar{U},r_U,\bar{r}_U)$ in $\cl{D}$
and set $r_{\Phi(\Theta(U))}$ and $\bar{r}_{\Phi(\Theta(U))}$
as in the proof of Proposition \ref{prop:abstract descent}.
Now consider 
$\omega_l \in \End_{\mathrm{b}}(\bar{F}\tensor \Phi(\Theta(U)))^*$
given by
\[
 \omega_l(\eta) = \omega_{\cl{M}}(r_{\Phi(\Theta(U))}^*(\id\tensor \bar{R}^*\tensor\id)(\id\tensor\id\tensor \eta)(\id\tensor \bar{R}\tensor\id)r_{\Phi(\Theta(U))}).
\]
Then we have
\begin{align*}
 \omega_l(\id) 
&= \omega_{\cl{M}}(r_{\Phi(\Theta(U))}^*(\id\tensor \bar{R}^*\bar{R}\tensor\id)r_{\Phi(\Theta(U))})\\
&= \omega_{\cl{M}}(r_{\Phi(\Theta(U))}^*r_{\Phi(\Theta(U))}\tensor \bar{R}^*\bar{R})\\
&= d(U)\tr_F(\id_F),\\
& \\
\omega_l(\bar{f}_U^*\bar{f}_U)
&= \omega_{\cl{M}}((r_{\Phi(\Theta(U))}^*\tensor\bar{R}^*)(\id\tensor f_Uf_U^*\tensor\id)(r_{\Phi(\Theta(U))}\tensor\bar{R}))\\
&= d(U)\tr_F(\id_F),\\
&\\
\omega_l(\bar{f}_U^*\bar{f}_U\bar{f}_U^*\bar{f}_U)
&= \omega_{\cl{M}}((r_{\Phi(\Theta(U))}^*\tensor\bar{R}^*)(\id\tensor f_U\tensor\id)(\id\tensor \id\tensor \bar{f}_U\bar{f}_U^*)\\
&\hspace{5cm}(\id\tensor f_U^*\tensor\id)(r_{\Phi(\Theta(U))}\tensor\bar{R}))\\
&= \omega_{\cl{M}}((r_{\Phi(\Theta(U))}^*\tensor\bar{R}^*)(\id\tensor f_U\tensor\id)(f_{\bar{U}}f_{\bar{U}}^*\tensor \bar{f}_U\bar{f}_U^*)\\
&\hspace{5cm}(\id\tensor f_U^*\tensor\id)(r_{\Phi(\Theta(U))}\tensor\bar{R}))\\
&= \omega_{\cl{M}}(\bar{R}^*(\id\tensor r_{\Phi(\Theta(U))}^*\tensor\id)(\id\tensor\id\tensor \bar{f}_U\bar{f}_U^*)(\id\tensor r_{\Phi(\Theta(U))}\tensor\id)\bar{R}) \\
&= \omega_{\cl{M}}((r_{\Phi(\Theta(U))}^*\tensor R^*)(\id\tensor \bar{f}_U\bar{f}_U^*\tensor\id)(r_{\Phi(\Theta(U))}\tensor R))\\
&= \omega_{\cl{M}}(r_{\Phi(\Theta(U))}^*(\id\tensor R^*\tensor\id)(\id\tensor\id\tensor f_U^*f_U)(\id\tensor R\tensor \id)r_{\Phi(\Theta(U))}) \\
&= \omega_l (\id)\\
&= d(U)\tr_F(\id_F).
\end{align*}
Hence $\omega_l((1 - \bar{f}_U^*\bar{f}_U)^2) = 0$ 
and $\bar{f}_U^*\bar{f}_U = 1$ since $\omega_l$ is faithful.

To show $\bar{f}_U\bar{f}_U^* = 1$, one can use the following
$\omega_r \in \End_{\mathrm{b}}(\Phi(\Theta(U))\tensor \bar{F})^*$:
\[
 \omega_r(\eta) = \omega_{\cl{M}}(\bar{r}_{\Phi(\Theta(U))}^*(\id\tensor R^*\tensor\id)(\eta\tensor\id\tensor\id)(\id\tensor R\tensor \id)\bar{r}_{\Phi(\Theta(U))}).
\]
Then a similar argument shows $\bar{f}_U\bar{f}_U^* = 1$.
\end{proof} 
 
\subsection{Imprimitivity-type result for the maximal Kac quantum subgroup}

In this subsection, we apply results in the previous subsection
to module categories arising from quantum homogeneous spaces.

At first, we give a characterization of a quantum homogeneous space
whose associated module category has a module trace.

\begin{prop}
Let $\G$ be a compact quantum group of Kac type and $A$ be 
a quantum homogeneous space of $\G$. Then the following conditions are 
equivalent:
\begin{enumerate}
 \item $\G\text{-}\mod^{\fin}_A$ has a $\rep^{\fin}\G$-module trace.
 \item There is a tracial state on $A$.
 \item The $\G$-invariant state on $A$ is tracial. 
\end{enumerate}
\end{prop}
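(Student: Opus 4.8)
The three conditions are to be proved equivalent by showing (iii) $\Rightarrow$ (ii) (trivial, since a tracial state exists), (ii) $\Rightarrow$ (iii), and (iii) $\Leftrightarrow$ (i). The heart of the matter is producing, from a tracial state, the canonical $\G$-invariant tracial state, and relating the module trace on $\G\text{-}\mod^{\fin}_A$ to that state evaluated on endomorphism algebras $\cl{K}_A(E) \cong \cl{L}_A(E)$.

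First I would record that since $\G$ is of Kac type, the Haar state is a trace and the averaging procedure is available (as in the discussion preceding Proposition~\ref{prop:deequivariantization}): given any state $\omega$ on $A$, its equivariantization $\tilde\omega = (\id\tensor h)\alpha(\tend)$ produces a $\G$-invariant state, and since $A^{\G} = \C1_A$ this is the \emph{unique} $\G$-invariant state, call it $\tau_0$. For (ii) $\Rightarrow$ (iii): if $\tau$ is a tracial state on $A$, I would argue that $\tilde\tau = \tau_0$ is still tracial. The key computation is that for $a,b$ in the algebraic core $\cl{A}$, using that $h$ is a trace (Kac type) and the defining property $\alpha(ab) = \alpha(a)\alpha(b)$ together with the strong bi-invariance of $h$, one rewrites $\tau_0(ab)$ and $\tau_0(ba)$ into the same expression — essentially the same bookkeeping as in the proof of Proposition~\ref{prop:deequivariantization} where $\tilde E$ was shown to be a bimodule map, only here tracially symmetric. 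So any tracial state forces the unique invariant state to be a trace.

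For (iii) $\Rightarrow$ (i): given that $\tau_0$ is tracial, I would define, for each $E \in \G\text{-}\mod^{\fin}_A$, the functional $\tr_E$ on $\cl{M}(E) = \cl{L}_A^{\G}(E) \cap \cl{K}_A(E)$ by transporting $\tau_0$ through the canonical isomorphism of the fixed-point algebra: concretely, since $E$ is finitely generated projective over a quantum homogeneous space ({\cite[Theorem 6.21]{DC16}}), choose a $\G$-equivariant embedding $V\colon E \hookrightarrow \cl{H}_\pi \tensor A$ and set $\tr_E(T) = (\tr_\pi \tensor \tau_0)(V T V^*)$, where $\tr_\pi$ is the unnormalized trace (or better, use the quantum dimension weights so that axiom (ii) of Definition~\ref{defn:module trace} holds — here Kac type makes the quantum trace equal to the ordinary trace, which is why the Kac hypothesis is essential). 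I would then verify the two axioms: traciality $\tr_E(g^*f) = \tr_F(fg^*)$ follows from traciality of $\tr_\pi \tensor \tau_0$ and from $\tau_0$ being a trace; the multiplicativity $\tr_{U\tensor X} = \tr_X\circ(\tr_U\tensor\id)$ reduces, after embedding, to the compatibility of the categorical trace on $\rep^{\fin}\G$ with the partial trace, which in the Kac case is the ordinary partial trace on $B(\cl{H}_\pi)$. Independence of the choice of $V$ comes from the fact that two embeddings differ by a $\G$-equivariant partial isometry and $\tr_\pi \tensor \tau_0$ is a trace.

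For (i) $\Rightarrow$ (iii): conversely, a $\rep^{\fin}\G$-module trace on $\G\text{-}\mod^{\fin}_A$ restricts, via the object $A$ itself with $\cl{M}(A) = \cl{L}_A^{\G}(A) = A^{\G}\cdot 1 \oplus \dots$ — more precisely $\cl{L}_A^{\G}(A) \cong A^{\alpha} = \C$, so that is too small; instead I would use $\cl{M}(\cl{H}_\pi \tensor A) \cong \cl{L}_A^{\G}(\cl{H}_\pi\tensor A)$, which for $\pi$ running over all representations recovers enough of $A$. The cleanest route: the endomorphism algebra $\cl{L}_A^{\G}(\cl{H}_\pi\tensor A)$ contains $B(\cl{H}_\pi)^{\text{fix}}\tensor$-type pieces, and restricting the module trace there and using axiom (ii) to strip off the $\cl{H}_\pi$ factor via $\tr_\pi$, one is left with a tracial positive functional on $A^{\G}\tensor(\text{multiplicity algebra})$ that, after normalization, must agree with the unique invariant state $\tau_0$ on the relevant corner — and its traciality is inherited. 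I expect \textbf{the main obstacle} to be precisely this direction: extracting a genuine tracial \emph{state on $A$} (rather than on a matrix amplification or an endomorphism algebra) from the abstract module trace, and checking that the module trace axioms pin it down to be $\tau_0$ and force traciality; this requires carefully identifying $\bigoplus_\pi \cl{L}_A^{\G}(\cl{H}_\pi\tensor A)$ with a dense subalgebra of $A$ compatibly with multiplication and trace — essentially the spectral-decomposition bookkeeping $\cl{A} = \bigoplus_\pi \cl{A}_\pi$ already set up before the proof of Theorem~\ref{thrm:fundamental theorem}, now applied to $A$ as a module over itself.
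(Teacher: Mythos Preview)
Your plan matches the paper's closely. The implications (iii)$\Rightarrow$(ii) and (iii)$\Rightarrow$(i) are handled exactly as you describe, including the embedding $V\colon E \hookrightarrow \cl{H}_\pi\tensor A$ and the observation that in the Kac case the categorical partial trace on $\rep^{\fin}\G$ is the ordinary $\tr\tensor\id$.

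For (ii)$\Rightarrow$(iii) you are overcomplicating things: no strong bi-invariance or bookkeeping from Proposition~\ref{prop:deequivariantization} is needed. Since $h$ is tracial (Kac) and $\tau$ is tracial, $\tau\tensor h$ is a tracial state on $A\tensor C(\G)$; composing with the $*$-homomorphism $\alpha$ gives a tracial state on $A$, which is $\G$-invariant and hence equals the unique invariant state. The paper compresses this into one line.

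For (i)$\Rightarrow$(iii), the obstacle you flag is real, but the paper's resolution is more direct than your sketch of assembling a state out of $\bigoplus_\pi \cl{L}_A^{\G}(\cl{H}_\pi\tensor A)$. You do not need to \emph{extract} a state from the module trace at all: the invariant state $\phi$ is already determined, and identifies with the projection $\cl{A}\to\cl{A}_{\mathbf{1}_{\G}} = \C$ in the spectral decomposition. What must be shown is that $\phi$ is tracial, i.e.\ that the $\cl{A}_{\mathbf{1}_{\G}}$-components of $x^*y$ and $yx^*$ agree for $x = T\tensor\xi$, $y = S\tensor\eta \in \cl{A}_\pi = \cl{L}_A^{\G}(\cl{H}_\pi\tensor A, A)\tensor\cl{H}_\pi$. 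Using the explicit product and involution formulas recorded before Theorem~\ref{thrm:fundamental theorem}, the $\cl{A}_{\mathbf{1}_{\G}}$-component of $x^*y$ is
\[
\frac{\ip{\xi,\eta}}{\dim\pi}\,(R_\pi^*\tensor\id_A)(\id_{\bar\pi}\tensor T^*S)(R_\pi\tensor\id_A).
\]
Axiom (ii) of Definition~\ref{defn:module trace} rewrites $\tr_A$ of this as $\frac{\ip{\xi,\eta}}{\dim\pi}\tr_{\cl{H}_\pi\tensor A}(T^*S)$; axiom (i) turns that into $\frac{\ip{\xi,\eta}}{\dim\pi}\tr_A(ST^*) = \frac{\ip{\xi,\eta}}{\dim\pi}\,ST^*$, since $ST^* \in \cl{L}_A^{\G}(A) = \C$. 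A direct check shows this is exactly the $\cl{A}_{\mathbf{1}_{\G}}$-component of $yx^*$. So the module-trace axioms are invoked only on the objects $A$ and $\cl{H}_\pi\tensor A$, one $\pi$ at a time; no global identification of $\bigoplus_\pi \cl{L}_A^{\G}(\cl{H}_\pi\tensor A)$ with a subalgebra of $A$ is needed.
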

\begin{proof}
The equivalence of (ii) and (iii) follows from the 
$\G$-invariance of $(\phi\tensor h)\alpha$ for a state $\phi$ on $A$.

We show (ii) $\Longrightarrow$ (i) by constructing a $\rep^{\fin}\G$-module
trace from a given tracial state $\tau$ on $A$. Let $E$ be 
a finitely generated $\G$-equivariant Hilbert $A$-module. 
Then there exists a finite dimensional unitary representation 
$\pi$ of $\G$ such that there is an isometry 
$V \in \cl{L}_A^{\G}(E,\cl{H}_{\pi}\tensor A)$.
Now we define a tracial state $\tr_E$ on $\cl{L}_A^{\G}(E)$ as follows:
\[
 \tr_E(T) = (\tr\tensor\tau)(VTV^*).
\]
Since $\tr\tensor \tau$ is tracial, this definition does not 
depend on the choice of $V$ and $\pi$. Moreover the corner trick shows that
$\tr_E$ satisfies (i) of Definition \ref{defn:module trace}. 
To prove the compatibility with the left action of $\rep^{\fin}\G$, 
one should note that the standard solution in $\rep^{\fin}\G$ is also 
standard in $\hilb^{\fin}$. Hence the partial trace 
$\map{\tr_{\pi}\tensor\id}{\cl{L}_A^{\G}(\cl{H}_{\pi}\tensor E)}{\cl{L}_A^{\G}(E)}$ coincides with $\tr\tensor\id$. This implies the compatibility.

Next we move to the proof of (i) $\Longrightarrow$ (ii). 
Fix a $\rep^{\fin}\G$-module trace $\{\tr_E\}_E$ 
on $\G\text{-}\mod^{\fin}_A$ with $\tr_A(1) = 1$.
By using the spectral decomposition of the algebraic core $\cl{A}$ of $A$,
the $\G$-invariant state $\phi$ on $A$ can be seen as the projection from
$\cl{A}$ to $\cl{A}_{\mbf{1}_{\G}}$. We show that this is tracial 
by using $\{\tr_E\}_E$. Since the spectral decomposition of $\cl{A}$ is 
orthogonal with respect to the inner product coming from $\phi$, 
it suffices to show that the $\cl{A}_{\mbf{1}_\G}$-components of 
$(T\tensor\xi)^*(S\tensor\eta)$ and $(S\tensor \eta)(T\tensor \xi)^*$ 
are same for $T\tensor \xi, S\tensor \eta \in \cl{A}_{\pi}$.
For the first one, its $\cl{A}_{\mbf{1}_{\G}}$-component is calculated
as follows:
\begin{align*}
&\frac{\ip{\xi,\eta}}{\dim\pi}
(R_{\pi}^*\tensor\id_A)(\id_{\bar{\pi}}\tensor T^*S)(R_{\pi}\tensor\id_A)\\
&=\frac{\ip{\xi,\eta}}{\dim\pi}
\tr_A((R_{\pi}^*\tensor\id_A)(\id_{\bar{\pi}}\tensor T^*S)(R_{\pi}\tensor\id_A))1_A\\
&= \frac{\ip{\xi,\eta}}{\dim\pi}\tr_{\cl{H}_{\pi}\tensor A}(T^*S)1_A \\
&= \frac{\ip{\xi,\eta}}{\dim\pi}\tr_A(ST^*)1_A \\
&= \frac{\ip{\xi,\eta}}{\dim\pi}ST^*.
\end{align*}
The last formula is precisely the $\cl{A}_{\mbf{1}_{\G}}$-component of 
the second one. Now we get the conclusion.
\end{proof}

Let $\G$ be 
a compact quantum group and $\K$ be a
its maximal Kac quantum subgroup. The restriction functor from
$\rep^{\fin}\G$ to $\rep^{\fin}\K$ is denoted by $\Theta$.
Fix a set $\irr\G$ of representatives of all equivalence classes of 
irreducible representations of $\G$.

For $\pi,\rho \in \rep^{\fin}\G$, we defines $L_0(\pi,\rho)$ as a subset of 
$\mathrm{Hom}_{\K}(\pi|_{\K},\rho|_{\K})$ consisting of 
intertwiners of the following form:
\[
 T^*\circ(a_{\Theta,\sigma_1}^{n_1}\tensor a_{\Theta,\sigma_2}^{n_2}\tensor\cdots\tensor a_{\Theta,\sigma_k}^{n_k})\circ S
\]
where $k \in \Z_{>0},\,n_i \in \frac{1}{2}\Z,\,\sigma_i \in \irr\G,\,T \in \mathrm{Hom}_{\G}(\rho,\sigma_1\tensor\sigma_2\tensor\cdots\tensor\sigma_k),\,
S \in \mathrm{Hom}_{\G}(\pi,\sigma_1\tensor\sigma_2\tensor\cdots\tensor\sigma_k)$. 
Then we define a subspace $L(\pi,\rho)$ of $\mathrm{Hom}_{\K}(\pi|_{\K},\rho|_{\K})$ as the linear span of intertwiners of the form 
$T_lT_{l - 1}\cdots T_1$ where $T_j \in L_0(\sigma_{j - 1},\sigma_j)$, $
\sigma_0 = \pi,\,\sigma_l = \rho,\,\sigma_j \in \irr\G$ for $1 \le j \le l - 1$.

\begin{lemm}
For any $\pi,\rho \in \rep^{\fin}\G$, we have 
$L(\pi,\rho) = \mathrm{Hom}_{\K}(\pi|_{\K},\rho|_{\K})$.
\end{lemm}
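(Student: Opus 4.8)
The inclusion $L(\pi,\rho)\subseteq\mathrm{Hom}_\K(\pi|_\K,\rho|_\K)$ is immediate, since $\G$\nobreakdash-intertwiners restrict to $\K$\nobreakdash-intertwiners and each $a_{\Theta,\sigma}$ is by construction a morphism in $\rep^{\fin}\K$. For the converse the plan is to package the spaces $L(\pi,\rho)$ into a \cstar-category and identify it with $\rep^{\fin}\K$ by Tannaka--Krein duality. Let $\cl{L}$ be the category with the same objects as $\rep^{\fin}\G$ and with $\cl{L}(\pi,\rho)=L(\pi,\rho)$. A routine first step is to check that $\cl{L}$ is closed under composition, involution and tensor products and contains all identities and all $\G$\nobreakdash-intertwiners; here it helps to note that, by decomposing into irreducibles and absorbing the resulting $\G$\nobreakdash-isometries into the $\G$\nobreakdash-intertwiners appearing in the definition of $L_0$, one may equivalently allow arbitrary intermediate objects $\sigma_j\in\rep^{\fin}\G$ in the definition of $L$, which makes closure under composition transparent. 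One also checks $a_{\Theta,\pi}^{n}\in L(\pi,\pi)$ for every $\pi$ and $n\in\tfrac12\Z$ (for reducible $\pi$ this follows from naturality of $a_\Theta$). Then Lemma~\ref{lemm:modular auto}, applied to the strict functor $\Theta$, exhibits $\Theta\bar\sigma$ as a conjugate of $\Theta\sigma$ with solutions $(1\tensor a_{\Theta,\sigma}^{1/2})r_\sigma$ and $(a_{\Theta,\sigma}^{-1/2}\tensor 1)\bar r_\sigma$, both in $L$; since all endomorphism algebras $L(\pi,\pi)\subseteq\End_\K(\Theta\pi)$ are finite dimensional \cstar-algebras, the idempotent completion $\overline{\cl{L}}$ is a rigid semisimple \cstar-tensor category.

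Next I would equip $\overline{\cl{L}}$ with the forgetful functor inherited from $\rep^{\fin}\K$ (an object $(\pi,p)$ of $\overline{\cl{L}}$ is sent to $p\cl{H}_\pi$); because $L\subseteq\mathrm{Hom}_\K$ this is a faithful unitary \cstar-tensor functor $\omega\colon\overline{\cl{L}}\to\hilb^{\fin}$, so Woronowicz's Tannaka--Krein theorem (\cite[Theorem 2.3.2]{NT13}) yields a compact quantum group $\G'$ with $\rep^{\fin}\G'\cong\overline{\cl{L}}$ over $\hilb^{\fin}$. The inclusion functor $\rep^{\fin}\G\to\overline{\cl{L}}$ (the identity on objects, $\mathrm{Hom}_\G\hookrightarrow L$ on morphisms) and the embedding $\overline{\cl{L}}\to\rep^{\fin}\K$ are \cstar-tensor functors compatible with the forgetful functors, hence induce Hopf \star-algebra maps $\cl{O}(\G)\to\cl{O}(\G')\to\cl{O}(\K)$ whose composite is the canonical surjection $q$. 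The first map is surjective, since every simple object of $\overline{\cl{L}}$ is a subobject of an object in the image of the inclusion functor. The key point is that in $\cl{O}(\G')$ the operator $a_{\Theta,\sigma}$ is, under the identification of the Example following Lemma~\ref{lemm:modular auto} (composing with the dimension-preserving forgetful functor of $\rep^{\fin}\K$), the Woronowicz character $\rho_\sigma$, and that $\rho_\sigma$ being a morphism $\Theta\sigma\to\Theta\sigma$ — i.e.\ commuting with the corepresentation unitary of $\sigma$ — forces $S^2$ to fix all matrix coefficients of $\sigma$ in $\cl{O}(\G')$. As $\sigma$ ranges over all of $\rep^{\fin}\G$ and these matrix coefficients span $\cl{O}(\G')$, we get $S^2=\id$ on $\cl{O}(\G')$, so $\cl{O}(\G)\to\cl{O}(\G')$ factors through $\cl{O}(\K)=\cl{O}(\G)/(S^2-\id)$; comparing with the factorization of $q$ shows that $\cl{O}(\G')\to\cl{O}(\K)$ is an isomorphism. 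By Tannaka--Krein the embedding $\overline{\cl{L}}\to\rep^{\fin}\K$ is then an equivalence, in particular full, and evaluating fullness on the objects coming from $\pi$ and $\rho$ gives $L(\pi,\rho)=\mathrm{Hom}_\K(\pi|_\K,\rho|_\K)$.

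The main obstacle is the technical bookkeeping needed to make the Tannaka--Krein machinery apply: verifying carefully that $\overline{\cl{L}}$ is genuinely rigid and semisimple and that both functors above are compatible with the forgetful functors, and pinning down the coend relation imposed by ``$\rho_\sigma$ is a morphism''. As a fallback, a more hands-on route is available: using that $\cl{L}$ is rigid with the conjugates described above, Frobenius reciprocity inside $\cl{L}$ reduces the claim to $L(\mathbf{1},\rho)=\mathrm{Hom}_\K(\mathbf{1},\rho|_\K)$ for all $\rho\in\rep^{\fin}\G$, i.e.\ to showing that every $\K$\nobreakdash-invariant vector of $\cl{H}_\rho$ is obtained by twisting $\G$\nobreakdash-invariant vectors of tensor products of irreducibles by powers of $a_\Theta$ and projecting back onto $\rho$; this can be established directly via the Haar state of $\K$ and the Peter--Weyl decomposition of $\rho|_\K$, at the cost of a somewhat more computational argument.
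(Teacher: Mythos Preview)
Your proposal is correct and follows essentially the same strategy as the paper: build the idempotent completion of $\cl{L}$, apply Tannaka--Krein to obtain an intermediate quantum group $\G'$ between $\K$ and $\G$, show it is of Kac type, and conclude $\G'=\K$ by maximality. The only difference is at the Kac step: the paper argues directly that the fiber functor is dimension-preserving (since the explicit solution $(1\otimes a_{\Theta,\sigma}^{1/2})R_\sigma$, $(a_{\Theta,\sigma}^{-1/2}\otimes 1)\bar R_\sigma$ in $\cl{L}$ has $\|R\|=\|\bar R\|=(\dim\cl{H}_\sigma)^{1/2}$), whereas you route through the Woronowicz character --- be careful that the $\rho_\sigma$ you need is that of $\G$, transported along the antipode-preserving surjection $\cl{O}(\G)\twoheadrightarrow\cl{O}(\G')$, not the Woronowicz character of $\G'$ itself (identifying the latter with $a_{\Theta,\sigma}$ a priori would be circular).
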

\begin{proof}
Consider a \cstar-category with $\obj \rep^{\fin}\G$ as the collection 
of object and $L(\pi,\rho)$ as the morphism set from $\pi$ to $\rho$. 
Let $\cl{L}$ be its Karoubi envelope. Since
$T \tensor S \in L(\pi\tensor\pi',\rho\tensor\rho')$ for any 
$T \in L(\pi,\rho)$ and $S \in L(\pi',\rho')$, $\cl{L}$ becomes a 
\cstar-tensor category. Moreover the restriction functor from 
$\rep^{\fin}\G$ to $\rep^{\fin}\K$ induces a \cstar-tensor
functor from $\rep^{\fin}\G$ to $\cl{L}$. In particular
$\pi \in \rep^{\fin}\G$ is rigid in $\cl{L}$, hence $\cl{L}$ is rigid.

Now, by composing the inclusion $\cl{L} \longrightarrow \rep^{\fin}\K$ 
and the fiber functor $\rep^{\fin}\K \longrightarrow \hilb^{\fin}$, 
we obtain a fiber functor $\cl{L}\longrightarrow \hilb^{\fin}$.
Then the Woronowicz's Tannaka-Krein duality ({\cite[Theorem 2.3.2]{NT13}}) 
implies that there is a compact qunatum group $\H$ and morphisms
$\K \longrightarrow \H$ and $\H \longrightarrow \G$ which correpond
to $\cl{L}$ and the functors. Since every
object of $\cl{L}$ appears as a direct summand of the image of
 $\pi \in \rep^{\fin}\K$, $\H$ can be considered as a quantum subroup of
$\G$ which contains $\K$.

On the other hand,
$(\pi,\bar{\pi},(\id\tensor a_{\Theta,\pi}^{1/2})R_{\pi}, (a_{\Theta,\pi}^{-1/2}\tensor\id)\bar{R}_{\pi})$ is a solution of the conjugate equation
in $\cl{L}$. Moreover it satisfies
\[
 \nor{(\id\tensor a_{\Theta,\pi}^{1/2})R_{\pi}}
=\nor{(a_{\Theta,\pi}^{-1/2}\tensor\id)\bar{R}_{\pi}}
=\dim \cl{H}_{\pi}.
\]
This implies that the fiber functor of $\rep^{\fin}\K = \cl{L}$ is 
dimension-preserving, hence $\H$ is of Kac type. Then $\H$ must be
contained in $\K$, so we have $\H = \K$. Hence 
the dimension of $L(\pi,\rho)$ and $\mathrm{Hom}_{\K}(\pi|_{\K},\rho|_{\K})$
is same. Now we get the conclusion.
\end{proof}

\begin{thrm} \label{thrm:main1}
Let $A$ be a quantum homogeneous space of $\K$ and $\tilde{A}$ be 
its induced $\G$-\cstar-algebra. 
If $A$ has a tracial state and
$\irr \K\text{-}\mod^{\fin}_A$ is finite,  
$\G\text{-}\corr^{\rfin}_{\tilde{A}}$ is rigid and
$\ind_{\K}^{\G}$ gives an equivalence 
$\K\text{-}\corr^{\rfin}_A\cong\G\text{-}\corr^{\rfin}_{\tilde{A}}$ 
as \cstar-tensor categories. 
\end{thrm}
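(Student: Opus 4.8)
The plan is to transport the statement, via Theorems \ref{thrm:fundamental theorem}, \ref{thrm:module categorical induction} and \ref{thrm:comparison}, into a statement about module functor categories, and then feed it into the descent machinery of this section. Throughout, write $\cl{C} = \rep^{\fin}\K$, $\cl{D} = \rep^{\fin}\G$, let $\map{\Theta}{\cl{D}}{\cl{C}}$ be the restriction functor, and set $\cl{M} = \K\text{-}\mod^{\fin}_A$. Since $A$ carries a tracial state and $\K$ is of Kac type, the preceding proposition makes $\cl{M}$ into a tracial $\cl{C}$-module category, and by hypothesis $\abs{\irr\cl{M}} < \infty$. Theorem \ref{thrm:fundamental theorem} applied to $\K$ gives a \cstar-tensor equivalence $\K\text{-}\corr^{\rfin}_A \cong [\cl{M},\cl{M}]^{\cl{C}}_{\mathrm{b}}$, while Theorem \ref{thrm:module categorical induction} combined with Theorem \ref{thrm:fundamental theorem} for $\G$ gives $\G\text{-}\corr^{\rfin}_{\tilde{A}} \cong [\cl{M},\cl{M}]^{\cl{D}}_{\mathrm{b}}$, where $\cl{D}$ acts on $\cl{M}$ through $\Theta$. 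The case $A = B$ of Theorem \ref{thrm:comparison} then says precisely that, under these identifications, $\ind_{\K}^{\G}$ becomes the restriction functor
\[
 \map{R}{[\cl{M},\cl{M}]^{\cl{C}}_{\mathrm{b}}}{[\cl{M},\cl{M}]^{\cl{D}}_{\mathrm{b}}}
\]
forgetting the part of the module structure not seen by $\Theta$, and that this identification is monoidal. Hence it suffices to show that $[\cl{M},\cl{M}]^{\cl{D}}_{\mathrm{b}}$ is rigid and that $R$ is an equivalence of \cstar-tensor categories.

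Rigidity of $[\cl{M},\cl{M}]^{\cl{D}}_{\mathrm{b}}$, hence of $\G\text{-}\corr^{\rfin}_{\tilde{A}}$, is immediate from Theorem \ref{thrm:finite descent} using $\abs{\irr\cl{M}} < \infty$; this gives the first assertion. For $R$, faithfulness is clear since a morphism in either category is the same underlying bounded natural transformation. The key structural input is that every object of $\cl{C} = \rep^{\fin}\K$ embeds isometrically into $\Theta(\pi)$ for some $\pi \in \cl{D}$ (a standard consequence, via Peter--Weyl, of $\cl{O}(\K)$ being a quotient Hopf \star-algebra of $\cl{O}(\G)$). Consequently, in the unitary half-braiding picture, a $\cl{C}$-module structure $c$ on a \cstar-functor $F$ is recovered from its values on the $\Theta(\pi)$ by $c_U = (\Phi(w)^*\tensor\id_F)c_{\Theta(\pi)}(\id_F\tensor\Phi(w))$ for any isometry $\map{w}{U}{\Theta(\pi)}$, simply by naturality. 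Granting that $\cl{D}$-module structures admit such an extension to $\cl{C}$, fullness of $R$ follows at once: a bounded natural transformation intertwining two $\cl{D}$-module structures automatically intertwines their extensions, by the displayed formula.

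It remains to produce, for each $\cl{D}$-module functor $(F,f)$, the extending $\cl{C}$-module structure, and to check it is well defined; this is where the previous lemma and the descent criterion combine. By Theorem \ref{thrm:finite descent}, $(F,f)$ is rigid in $[\cl{M},\cl{M}]^{\cl{D}}_{\mathrm{b}}$, hence $F$ is rigid in $[\cl{M},\cl{M}]_{\mathrm{b}}$ (the corollary to Proposition \ref{prop:abstract descent}); moreover the proof of Theorem \ref{thrm:finite descent} establishes condition (iii) of Proposition \ref{prop:abstract descent} for $(F,f)$, hence also condition (i): the $f_U$ commute with $\Phi(a_{\Theta,U})$ for all $U \in \cl{D}$. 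On the other hand, the previous lemma identifies $\mathrm{Hom}_{\K}(\pi|_{\K},\rho|_{\K})$ with the span of composites of $\G$-intertwiners and of the modular morphisms $a_{\Theta,\sigma}$. Since the half-braiding underlying $(F,f)$ is natural with respect to $\G$-intertwiners and compatible with the $a_{\Theta,\sigma}$, it intertwines all generators of $\mathrm{Hom}_{\K}$, which forces the candidate extension $c_U = (\Phi(w)^*\tensor\id_F)c_{\Theta(\pi)}(\id_F\tensor\Phi(w))$ to be independent of the choice of $(\pi,w)$; the resulting family $c$ is then readily checked to be natural over $\cl{C}$ and to satisfy the braiding equation. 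Thus $(F,f)$ lies in the essential image of $R$, and together with the previous paragraph we conclude that $R$ — and therefore $\ind_{\K}^{\G}$ — is a \cstar-tensor equivalence.

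The main obstacle, and the step that genuinely uses the hypotheses, is the well-definedness of this extension: one must see that compatibility of a unitary half-braiding with the modular natural transformations $a_{\Theta,\sigma}$ — which is exactly what the finiteness of $\irr\cl{M}$ delivers, through Theorem \ref{thrm:finite descent} and Proposition \ref{prop:abstract descent} — together with ordinary naturality over the image of $\Theta$, suffices to make the half-braiding consistent across all of $\rep^{\fin}\K$. Once the identification $L(\pi,\rho) = \mathrm{Hom}_{\K}(\pi|_{\K},\rho|_{\K})$ of the previous lemma is in hand, the remainder (naturality, the braiding equation, and the elementary verification that $R$ is monoidal) is a routine bookkeeping argument.
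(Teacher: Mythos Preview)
Your proposal is correct and follows essentially the same route as the paper: reduce via Theorems \ref{thrm:fundamental theorem}, \ref{thrm:module categorical induction}, \ref{thrm:comparison} to showing that the forgetful functor $[\cl{M},\cl{M}]^{\rep^{\fin}\K}_{\mathrm{b}} \to [\cl{M},\cl{M}]^{\rep^{\fin}\G}_{\mathrm{b}}$ is an equivalence, then use Theorem \ref{thrm:finite descent} and Proposition \ref{prop:abstract descent} to obtain commutation of the half-braiding with $\Phi(a_{\Theta,\pi})$, and finally invoke the preceding lemma $L(\pi,\rho)=\mathrm{Hom}_{\K}(\pi|_{\K},\rho|_{\K})$ to extend the half-braiding to all of $\rep^{\fin}\K$. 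The only cosmetic difference is that you spell out the fullness argument and the embedding $U\hookrightarrow\Theta(\pi)$ more explicitly than the paper, which simply declares full faithfulness ``easily seen.''
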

\begin{proof}
Set $\cl{M} = \K\text{-}\mod^{\fin}_A$. Then $\cl{M}$ is a tracial 
$\rep^{\fin}\K$-module category with $\abs{\irr\cl{M}} < \infty$.

Since we have Theorem \ref{thrm:comparison},
it suffices to show the canonical inclusion from
$[\cl{M},\cl{M}]_{\mathrm{b}}^{\rep^{\fin}\K}$ to 
$[\cl{M},\cl{M}]_{\mathrm{b}}^{\rep^{\fin}\G}$
is an equivalence. 
It can be easily seen that this inclusion is fully faithful, hence 
the remaining part is the essential surjectivity.

Take $(F,f)\in [\cl{M},\cl{M}]^{\rep^{\fin}\G}_{\mathrm{b}}$.
Theorem \ref{thrm:finite descent} and 
Proposition \ref{prop:abstract descent} assert that we have
\[
 (\Phi(a_{\Theta,\pi})\tensor\id_F)f_{\pi} = 
f_{\pi}(\id_F\tensor \Phi(a_{\Theta,\pi}))
\] 
for any $\pi \in \rep^{\fin}\G$, where 
$\map{(\Phi,\phi)}{\rep^{\fin}\K}{[\cl{M},\cl{M}]_{\mathrm{b}}}$
is the canonical \cstar-tensor functor.

By using the braiding equation on $u$ and the previous lemma,
we also have 
\[
 (\Phi(T)\tensor\id_F)f_{\pi} = f_{\pi}(\id_F\tensor\Phi(T))
\]
for any $\pi\in \rep^{\fin}\G$ and 
any $T \in \mathrm{Hom}_{\K}(\pi|_{\K},\pi|_{\K})$.
Hence we can define $f_{\rho}$ for $\rho \in \rep^{\fin}\K$ as follows:
\[
 \tilde{f}_{\rho} = (\Phi(V)^*\tensor\id_F)f_{\pi}(\id_F\tensor \Phi(V))
\]
where $\pi \in \rep^{\fin}\G$ and $V \in \mathrm{Hom}_{\K}(\rho,\pi|_{\K})$
is an isometry. By the above equality, the definition of $\tilde{f}_{\rho}$ 
does not depend on the choice of $\pi$ and $V$. It can be easily seen
that $(F,\tilde{f})$ defines a $\rep^{\fin}\K$-module functor,
which is $(F,f)$ as a $\rep^{\fin}\G$-module functor.
\end{proof} 
\begin{exam}
 Set $A = C(\K)$. Then the corresponding $\rep^{\fin}\K$-module category
is $\hilb^{\fin}$ with the action via the fiber functor. This $\rep^{\fin}\K$-module category
has a $\rep^{\fin}\K$-module trace and only one irreducible object,
hence we can apply Theorem \ref{thrm:main1}. Since the induction of 
$C(\K)$ is $C(\G)$, we can conclude that $\G\text{-}\corr^{\rfin}_{C(\G)}$ 
is rigid and we have an equivalence of \cstar-tensor 
categories between $\K\text{-}\corr^{\rfin}_{C(\K)}$ and
$\G\text{-}\corr^{\rfin}_{C(\G)}$. 
On the other hand, we have 
$\K\text{-}\corr^{\rfin}_{C(\K)}\cong \rep^{\fin}\cl{O}(\K)$ and 
$\G\text{-}\corr^{\rfin}_{C(\G)} \cong \rep^{\fin}\cl{O}(\G)$ 
by Proposition \ref{prop:corr over C(G)}). Combining these equivalences,
we can see that $\rep^{\fin}\cl{O}(\G)$ is rigid and
$\rep^{\fin}\cl{O}(\K) \cong \rep^{\fin}\cl{O}(\G)$.
A direct calculation shows that this equivalence is induced by the 
canonical morphism $\map{q}{\cl{O}(\G)}{\cl{O}(\K)}$.

By these obserbation,
Theorem \ref{thrm:main1} can be regarded as a generalization of 
{\cite[Theorem 2.3]{CKS}} and 
$\mathfrak{b}\hat{\K} \cong \mathfrak{b}\hat{\G}$, which is 
essentailly proved in {\cite[Section 4.3]{So05}}.
\end{exam}

For a quantum homogeneous space $A$ of $\G$, we define its 
\emph{Picard group} $\picard_{\G}(A)$
as a group of all equivalence classes of invertible objects of 
$\G\text{-}\mod^{\fin}_A$ (c.f {\cite[Corollary 2.5]{DC12}}). 
Its product is given by the tensor product.

By using Proposition \ref{prop:locally scaling} instead of Theorem
\ref{thrm:finite descent} in the proof of Theorem \ref{thrm:main1}, 
we can obtain the following theorem.

\begin{thrm} \label{thrm:picard}
Let $\hat{\Gamma}$ be a cocommutative quantum subgroup of $\K$.
Then for any quantum homogeneous space $A$ of $\hat{\Gamma}$,
the induction functor gives a group isomorphism 
$\picard_{\K}(\ind_{\hat{\Gamma}}^{\K} A) \cong \picard_{\G}(\ind_{\hat{\Gamma}}^{\G}A)$.
\end{thrm}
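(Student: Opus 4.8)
The plan is to reduce, by transitivity of induction, to a comparison of two functor categories and then to re-run the proof of Theorem~\ref{thrm:main1} with Proposition~\ref{prop:locally scaling} in place of Theorem~\ref{thrm:finite descent}. Write $B=\ind_{\hat{\Gamma}}^{\K}A$ and $\tilde{B}=\ind_{\K}^{\G}B$; since induction is transitive, $\ind_{\hat{\Gamma}}^{\G}A\cong\tilde{B}$, so the assertion becomes $\picard_{\K}(B)\cong\picard_{\G}(\tilde{B})$ with the map induced by $\ind_{\K}^{\G}$. First I would record that $\picard_{\K}(B)$ is the group of isomorphism classes of invertible objects of $\K\text{-}\corr^{\rfin}_{B}$ (an invertible object of $\K\text{-}\mod^{\fin}_{B}$ carries a canonical imprimitivity-bimodule structure, cf.\ the reference in the definition), and similarly for $\G$ and $\tilde{B}$; hence, by Theorem~\ref{thrm:fundamental theorem}, $\picard_{\K}(B)$ and $\picard_{\G}(\tilde{B})$ are the groups of invertible objects of $[\cl{M},\cl{M}]^{\rep^{\fin}\K}_{\mathrm{b}}$ and of $[\G\text{-}\mod^{\fin}_{\tilde{B}},\G\text{-}\mod^{\fin}_{\tilde{B}}]^{\rep^{\fin}\G}_{\mathrm{b}}$ respectively, where $\cl{M}=\K\text{-}\mod^{\fin}_{B}$. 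By Theorem~\ref{thrm:comparison} together with Theorem~\ref{thrm:module categorical induction}, the homomorphism induced by $\ind_{\K}^{\G}$ is identified with the composite of the (fully faithful) restriction functor $[\cl{M},\cl{M}]^{\rep^{\fin}\K}_{\mathrm{b}}\to[\cl{M},\cl{M}]^{\rep^{\fin}\G}_{\mathrm{b}}$ with the equivalence $\ind_{\K}^{\G}\circ(\tend)\circ\ev_{\G}^{\K}$. Full faithfulness of the restriction functor yields injectivity for free, so the whole statement reduces to showing that every invertible object of $[\cl{M},\cl{M}]^{\rep^{\fin}\G}_{\mathrm{b}}$ lies, up to isomorphism, in the image of the restriction functor.

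So fix an invertible $(F,f)\in[\cl{M},\cl{M}]^{\rep^{\fin}\G}_{\mathrm{b}}$; then $F$ is an autoequivalence of $\cl{M}$. Exactly as in the proof of Theorem~\ref{thrm:main1}, it is enough to establish $f_{\pi}(\id_{F}\tensor\Phi(a_{\Theta,\pi}))=(\Phi(a_{\Theta,\pi})\tensor\id_{F})f_{\pi}$ for all $\pi\in\rep^{\fin}\G$, where $\Theta\colon\rep^{\fin}\G\to\rep^{\fin}\K$ is the restriction functor and $a_{\Theta}$ its modular natural transformation; granting this, the lemma computing $L(\pi,\rho)=\mathrm{Hom}_{\K}(\pi|_{\K},\rho|_{\K})$ together with the braiding equation lets one define $\tilde{f}_{\rho}$ for all $\rho\in\rep^{\fin}\K$ and check that $(F,\tilde{f})$ is a $\rep^{\fin}\K$-module functor restricting along $\Theta$ to $(F,f)$. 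To obtain the commutation relation I would invoke Proposition~\ref{prop:locally scaling}. For this I first need $\cl{M}$ to be a tracial $\rep^{\fin}\K$-module category, i.e.\ (since $\K$ is of Kac type) a tracial state on $B$: this is where cocommutativity of $\hat{\Gamma}$ enters, since a quantum homogeneous space of a cocommutative (hence Kac) compact quantum group has a tracial invariant state, and then $\tau_{A}\tensor h_{\K}$ restricts to a tracial state on $B\subset A\tensor C(\K)$. Granting the module trace, Proposition~\ref{prop:locally scaling} (its converse part, applicable because $F$ is an equivalence) shows that $(F,f)$ satisfies the conditions of Proposition~\ref{prop:abstract descent} for all $\pi$ as soon as the ratio function $d_{F}$ is constant on each $\sim_{\rep^{\fin}\G}$-class of $\irr\cl{M}$.

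The main obstacle is exactly this last point, the constancy of $d_{F}$. Since $\cl{M}\cong\hat{\Gamma}\text{-}\mod^{\fin}_{A}$ can have infinitely many irreducibles (already when $A=\C$), one cannot argue as in Theorem~\ref{thrm:finite descent} by producing a single irreducible fusion matrix. Instead I would exploit that $F$ is an \emph{equivalence}: it induces a permutation $\sigma$ of $\irr\cl{M}$ which, as $F$ is a $\rep^{\fin}\G$-module functor, is an automorphism of the fusion graph of $\rep^{\fin}\G\curvearrowright\cl{M}$; consequently the module-dimension weight $d$ and its pull-back $d\circ\sigma^{-1}$ are both strictly positive eigenvectors of every fusion matrix $(\dim\cl{M}(j,\Theta(U)\tensor i))_{ij}$ with the same eigenvalue $d(\Theta(U))$, and $d_{F}$ is their ratio after the obvious rescaling. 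These two weights are proportional on each connected component provided the positive eigenvector is unique up to scalar there, and it is at this step that cocommutativity of $\hat{\Gamma}$ is essential: the module category $\hat{\Gamma}\text{-}\mod^{\fin}_{A}$ is controlled by the representation theory of a genuine discrete group, so the relevant harmonic-weight problem is of amenable type — equivalently, via the explicit description of quantum homogeneous spaces of cocommutative quantum groups, the invertible equivariant bimodules arise from characters and hence preserve every module dimension, so that $d_{F}$ is identically constant. Once $d_{F}$ is constant the argument is the verbatim transcription of the proof of Theorem~\ref{thrm:main1}; the resulting equivalence $\ind_{\K}^{\G}\colon\K\text{-}\corr^{\rfin}_{B}\to\G\text{-}\corr^{\rfin}_{\tilde{B}}$ restricted to invertible objects is then a group isomorphism, and, unwinding the identification of Picard groups with invertible objects of these $*$-tensor categories, this is precisely the asserted isomorphism $\picard_{\K}(\ind_{\hat{\Gamma}}^{\K}A)\cong\picard_{\G}(\ind_{\hat{\Gamma}}^{\G}A)$.
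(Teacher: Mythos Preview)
Your overall architecture is exactly the paper's: reduce via Theorem~\ref{thrm:comparison} and Theorem~\ref{thrm:module categorical induction} to showing that every invertible $(F,f)\in[\cl{M},\cl{M}]^{\rep^{\fin}\G}_{\mathrm{b}}$ lifts to a $\rep^{\fin}\K$-module functor, then repeat the argument of Theorem~\ref{thrm:main1} with Proposition~\ref{prop:locally scaling} (the converse half, available since $F$ is an equivalence) in place of Theorem~\ref{thrm:finite descent}. That is precisely what the one-line proof in the paper intends.

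The gap is in your justification of the only new ingredient, the constancy of $d_F$. Your Perron--Frobenius/amenability heuristic does not close: for infinite $\irr\cl{M}$ there is no uniqueness of positive eigenvectors without further input, and your fallback (``invertible equivariant bimodules arise from characters'') presupposes a description of the $\G$-equivariant Picard group that is essentially what you are trying to prove. The point you are missing is much simpler and is the actual reason cocommutativity is imposed. Since $\hat{\Gamma}$ is cocommutative, every irreducible of $\rep^{\fin}\hat{\Gamma}$ is one-dimensional, hence invertible. The $\rep^{\fin}\K$-action on $\cl{M}\cong\hat{\Gamma}\text{-}\mod^{\fin}_A$ factors through $\rep^{\fin}\hat{\Gamma}$, and $\cl{M}$ is connected already as a $\rep^{\fin}\hat{\Gamma}$-module category; therefore any two irreducibles $i,j\in\irr\cl{M}$ satisfy $j\cong g\otimes i$ for some invertible $g$, whence $d(j)=d(g)d(i)=d(i)$. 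All irreducibles of $\cl{M}$ thus have the same module dimension, and for any autoequivalence $F$ one gets $d_F\equiv 1$ outright. (The same one-dimensionality also gives, via the multiplicity bound for ergodic actions, that each spectral subspace $A_g$ has dimension at most one, from which the invariant state on $A$ is easily seen to be tracial; this is the clean reason for the tracial state you need, not merely ``cocommutative hence Kac''.) With $d_F\equiv 1$ the converse in Proposition~\ref{prop:locally scaling} applies and the rest of your argument goes through verbatim.
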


Moreover we can also see that the induction functor gives an 
isomorphism between automorphism groups.
For a quantum homogeneous space $A$ of $\G$, the group
of $\G$-equivariant automorphisms of $A$ is denoted by $\auto_{\G}(A)$.
If $\theta \in \auto_{\G}(A)$ is given, we can construct an invertible
$\G$-equivariant correspondence $A_{\theta}$
over $A$ defined as follows:
\begin{itemize}
 \item As a $\G$-equivariant Hilbert $A$-module, $A_{\theta} = A$.
 \item The left multiplication of $x \in A$ on $A_{\theta}$ is given
by the left multiplication of $\theta^{-1}(x)$.
\end{itemize}

Then we have $A_{\theta}\tensor_A A_{\theta'} \cong A_{\theta\theta'}$,
so we have a group homomorphism $\theta \in \auto_{\G}(A)
\longmapsto [A_{\theta}] \in \picard_{\G}(A)$. This is injective
since $A^{\G} = \C1_A$.

\begin{lemm}
Let $M$ be an invertible $\G$-equivariant correspondence over $A$.
Then there is $\theta \in \auto_{\G}(A)$ such that $M \cong A_{\theta}$
if and only if $M$ has a non-zero $\G$-invariant vector.
\end{lemm}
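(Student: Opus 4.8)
The plan is to prove the two implications directly; the substance is in the "if" direction. For "only if": if $M\cong A_\theta$ as an object of $\G\text{-}\corr_A$, then the underlying $\G$-equivariant Hilbert $A$-module of $A_\theta$ is $A$ with its canonical coaction $\alpha$, and $\alpha(1_A)=1_A\tensor 1$, so transporting $1_A$ along the isomorphism yields a non-zero $\G$-invariant vector of $M$.

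For "if", suppose $\xi\in M$ is a non-zero $\G$-invariant vector. First I would normalize it: from $\alpha(\ip{\xi,\xi}_A)=\ip{\beta_M(\xi),\beta_M(\xi)}_{A\tensor C(\G)}=\ip{\xi,\xi}_A\tensor 1$ we see $\ip{\xi,\xi}_A\in A^{\G}=\C 1_A$, so $\ip{\xi,\xi}_A=c1_A$ with $c>0$ since $\xi\neq 0$, and after rescaling we may assume $\ip{\xi,\xi}_A=1_A$. Viewing $\xi$ as an operator in $\cl{L}_A(A,M)$, the element $p:=\xi\xi^*\in\cl{K}_A(M)$ is then a $\G$-invariant projection, and hence so is $1_M-p$. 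The crucial step is to show $p=1_M$, i.e.\ that $\xi$ is cyclic for the right $A$-action. Here I would use invertibility of $M$: if $\bar M$ denotes an inverse, then the orthogonal decomposition $M=pM\oplus(1_M-p)M$ of $\G$-equivariant Hilbert $A$-modules induces a decomposition of $M\tensor_A\bar M\cong A$; but $A$ is indecomposable as a $\G$-equivariant Hilbert $A$-module, because a $\G$-invariant projection in $\cl{L}_A(A)=A$ lies in $A^{\G}=\C 1_A$, and tensoring with $\bar M$ (respectively $M$) takes non-zero modules to non-zero modules; since $pM\ni\xi\neq0$ this forces $(1_M-p)M=0$.

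Consequently the map $u\colon A\to M$, $u(a)=\xi a$, is an isometry of $\G$-equivariant Hilbert $A$-modules (equivariance is automatic because $\xi$ is $\G$-invariant), with closed range $\overline{\xi A}=pM=M$, hence a unitary; in particular $M=\xi A$. Therefore for each $a\in A$ there is a unique element $\theta^{-1}(a)\in A$ with $a\cdot\xi=\xi\,\theta^{-1}(a)$, where $a\cdot$ denotes the left action. One then checks routinely that $\theta^{-1}$ is a unital $\ast$-homomorphism --- multiplicativity from associativity of the module actions, and $\theta^{-1}(a^*)=\theta^{-1}(a)^*$ from $\ip{\xi,a\cdot\xi}_A=\ip{a^*\cdot\xi,\xi}_A$ combined with $\ip{\xi,\xi}_A=1_A$ --- and, applying $\beta_M$ to $a\cdot\xi=\xi\,\theta^{-1}(a)$, that $(\theta^{-1}\tensor\id)\alpha=\alpha\,\theta^{-1}$. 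Injectivity of $\theta^{-1}$ holds since the left action on an invertible correspondence is faithful, and for surjectivity I would invoke the standard fact that an invertible $\G$-equivariant correspondence over a unital \cstar-algebra is a $\G$-equivariant imprimitivity bimodule, so its left action maps onto $\cl{K}_A(M)$: then the rank-one operator $\xi b\,\xi^*\in\cl{K}_A(M)$ equals the left action of some $a\in A$, whence $\theta^{-1}(a)=b$. Thus $\theta:=(\theta^{-1})^{-1}\in\auto_{\G}(A)$, and $u$ intertwines the left actions since $u(\theta^{-1}(a)b)=(a\cdot\xi)b=a\cdot u(b)$, so $u$ is an isomorphism $A_\theta\cong M$ in $\G\text{-}\corr_A$.

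I expect the main obstacle to be the cyclicity step $p=1_M$ together with the surjectivity of $\theta^{-1}$: these are precisely where invertibility of $M$ (equivalently, that $M$ is a $\G$-equivariant Morita equivalence) and the homogeneity condition $A^{\G}=\C 1_A$ genuinely enter. The remaining verifications are routine manipulations of the module structures.
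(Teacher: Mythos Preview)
Your proof is correct and follows essentially the same path as the paper's. The paper is considerably terser: it observes that an invertible correspondence $M$ is irreducible as an object of $\G\text{-}\mod^{\fin}_A$ (since $\tend\tensor_A M$ is an equivalence and $A$ is irreducible), then uses Schur's lemma to conclude that the non-zero equivariant morphism $A\to M$, $a\mapsto\xi a$, is an isomorphism; transporting the left action across this isomorphism gives a $\G$-equivariant $\ast$-endomorphism of $A$, which is an automorphism because $M$ is invertible. Your decomposition argument for $p=1_M$ is just a hands-on proof of this irreducibility-plus-Schur step, and your explicit verification of injectivity and surjectivity of $\theta^{-1}$ via the imprimitivity-bimodule structure unpacks what the paper leaves as ``since $M$ is invertible, $\theta$ must be an automorphism.'' So the two arguments are the same in substance; yours simply writes out the details the paper omits.
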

\begin{proof}
 It is trivial that there exists a non-zero $\G$-invariant vector in
$A_{\theta}$. Hence it suffices to show the existence of a non-zero
$\G$-invariant vector implies the existence of $\theta$.
Since $M$ is irreducible as a $\G$-equivariant Hilbert $A$-module,
it must be isomorphic to $A$. Hence there is a left $A$-action on
$A$ by which $M$ and $A$ are isomorphic as $\G$-equivariant correspondence
over $A$. But such a correspondence must be of the form $A_{\theta}$
where $\map{\theta}{A}{A}$ is a $\G$-equivariant \star-homomorphism.
Since $M$ is invertible, this $\theta$ must be an automorphism.
Now we get the conclusion.
\end{proof}

\begin{thrm} \label{thrm:auto}
In the same setting as Theorem \ref{thrm:picard}, the induction
of equivariant automorphism gives a group isomorphism 
$\auto_{\K}(\ind_{\hat{\Gamma}}^{\K}A) \cong \auto_{\G}(\ind_{\hat{\Gamma}}^{\G}A)$.
\end{thrm}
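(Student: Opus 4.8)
The plan is to deduce the statement from Theorem \ref{thrm:picard}, from the injective homomorphism $\theta\mapsto[A_{\theta}]$ relating $\auto$ and $\picard$, and from the Lemma preceding the theorem, which describes the image of that embedding. Write $B=\ind_{\hat{\Gamma}}^{\K}A$; by transitivity of induction one has $\ind_{\K}^{\G}B\cong\ind_{\hat{\Gamma}}^{\G}A=:\tilde{B}$, so the induction functor appearing in Theorem \ref{thrm:picard} may be taken to be $\ind_{\K}^{\G}$. A routine check gives $\ind_{\K}^{\G}(B_{\theta})\cong\tilde{B}_{\ind_{\K}^{\G}\theta}$ for every $\theta\in\auto_{\K}(B)$ (the underlying $\G$-equivariant Hilbert module is $\tilde{B}$ in both cases, and the left action is twisted by $\ind_{\K}^{\G}(\theta^{-1})=(\ind_{\K}^{\G}\theta)^{-1}$), so that the square with vertices $\auto_{\K}(B)$, $\picard_{\K}(B)$, $\auto_{\G}(\tilde{B})$, $\picard_{\G}(\tilde{B})$, with horizontal arrows the embeddings $\auto_{\K}(B)\to\picard_{\K}(B)$ and $\auto_{\G}(\tilde{B})\to\picard_{\G}(\tilde{B})$ and both vertical arrows given by $\ind_{\K}^{\G}$, commutes. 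Since the right-hand vertical arrow is an isomorphism (Theorem \ref{thrm:picard}) and the horizontal arrows are injective, the left-hand arrow $\ind_{\K}^{\G}\colon\auto_{\K}(B)\to\auto_{\G}(\tilde{B})$ is automatically an injective group homomorphism, and it remains to prove that it is surjective.

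For surjectivity, fix $\phi\in\auto_{\G}(\tilde{B})$ and let $N$ be an invertible $\K$-equivariant correspondence over $B$ representing the preimage of $[\tilde{B}_{\phi}]$ under the isomorphism of Theorem \ref{thrm:picard}, so that $\ind_{\K}^{\G}N\cong\tilde{B}_{\phi}$. By the Lemma (applied with $\K$ and $B$ in place of $\G$ and $A$) it suffices to produce a non-zero $\K$-invariant vector in $N$: then $N\cong B_{\psi}$ for some $\psi\in\auto_{\K}(B)$, whence $\tilde{B}_{\ind_{\K}^{\G}\psi}\cong\ind_{\K}^{\G}N\cong\tilde{B}_{\phi}$, and injectivity of the homomorphism $\auto_{\G}(\tilde{B})\to\picard_{\G}(\tilde{B})$ forces $\ind_{\K}^{\G}\psi=\phi$. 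Now $1_{\tilde{B}}$ is a non-zero $\G$-invariant vector of $\tilde{B}_{\phi}$, since $\tilde{\alpha}(1_{\tilde{B}})=1_{\tilde{B}}\otimes 1$ and twisting the left action changes neither the module nor its $\G$-action; hence $\ind_{\K}^{\G}N$ has a non-zero $\G$-invariant vector, and it remains only to pass from $\G$-invariant vectors of $\ind_{\K}^{\G}N$ to $\K$-invariant vectors of $N$.

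This last step is the technical heart of the argument, and the plan is to show directly that $\xi\mapsto\xi\otimes 1$ identifies the space of $\K$-invariant vectors of any $N\in\K\text{-}\mod_{B}$ with the space of $\G$-invariant vectors of $\ind_{\K}^{\G}N$. If $x\in\ind_{\K}^{\G}N\subseteq N\otimes C(\G)$ is $\G$-invariant then $(\id_{N}\otimes\Delta)(x)=x\otimes 1$; slicing the last tensor leg by the Haar state of $\G$ and using its invariance forces $x=\xi\otimes 1$ for some $\xi\in N$, and the defining condition $(\beta_{N}\otimes\id)(x)=(\id_{N}\otimes\ell_{\K})(x)$ of $\ind_{\K}^{\G}N$ then collapses to $\beta_{N}(\xi)=\xi\otimes 1$, i.e.\ $\xi$ is $\K$-invariant; the converse inclusion is immediate. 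The only genuine obstacles are of a bookkeeping nature --- confirming that the equivalence of Theorem \ref{thrm:picard} is implemented by $\ind_{\K}^{\G}$ and intertwines the automorphism-to-Picard embeddings, and carrying out the invariant-vector identification above --- so that, once Theorem \ref{thrm:picard} is granted, the argument is essentially formal.
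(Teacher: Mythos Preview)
Your proof is correct and follows essentially the same route as the paper's own proof: the paper simply notes that $\ind_{\K}^{\G}E$ has a non-zero $\G$-invariant vector if and only if $E$ has a non-zero $\K$-invariant vector, and then invokes Theorem~\ref{thrm:picard} and the preceding lemma. You have spelled out in detail exactly the diagram chase and the invariant-vector identification that the paper compresses into a single sentence.
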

\begin{proof}
One should note that $\ind_{\K}^{\G} E$ has a non-zero
$\G$-invariant vector if and only if $E$ has a non-zero $\K$-invariant 
vector for any $E \in \K\text{-}\mod^{\fin}_A$.

Then the statement follows from Theorem \ref{thrm:picard} and the previous
lemma.
\end{proof}

\begin{exam}
Let $Q \in GL_n(\C)$ be a positive invertible matrix with 
multiplicity-free eigenvalues. Then the maximal Kac quantum subgroup of 
the free unitary quantum group $U_Q^+$ ({\cite[Theorem 1.3]{DW96}}) is 
isomorphic to the discrete dual of the free group 
$F_n$ ({\cite[Theorem 2.6]{DFS}}). In particular
it is cocommutative, hence we have group isomorphisms 
\[
 \picard_{\hat{F_n}}(A) \cong \picard_{U_Q^+}(\ind_{\hat{F_n}}^{U_Q^+}A),
\quad \auto_{\hat{F_n}}(A) \cong \auto_{U_Q^+}(\ind_{\hat{F_n}}^{U_Q^+}A)
\]
for any quantum homogeneous space $A$ of $\hat{F_n}$.
\end{exam}

\section{Classification results for $G_q$}
\subsection{Imprimitivity-type results}

Let $G$ be a simply-connected compact Lie group and $T$ be its
maximal torus. The Weyl group of $G$ is denoted by $W$.
Consider the Drinfeld-Jimbo deformation $G_q$ for a fixed 
$q \in (-1,1)\setminus\{0\}$.
We use the symbol $\eps_t$ for the character on $C(G_q)$ defined as the
evaluation at $t \in T$.

Let us recall the representation theories of $C(G_q)$ and 
$C(T\backslash G_q)$, developed in \cite{DS99,NT12,So91}.

For each $w \in W$, we have an irreducible \star-representation 
$(\pi_w,\cl{H}_w)$ of $C(G_q)$. It coincides with the counit 
$\eps$ if $w = e$, and is infinite dimensional otherwise. 
By using these representations, $\hat{C(G_q)}$ can be identified 
with $W\times T$ as a set via 
$(w,t) \in W\times T \longmapsto (\eps_t\tensor \pi_w)\Delta$.
If we consider a Borel structure on $W\times T$ induced from $\hat{C(G_q)}$
via this identification, each $\{w\} \times T \subset W\times T$ is a 
Borel subset of $\hat{C(G_q)}$.

For $C(T\backslash G_q)$, the restriction of $\pi_w$ on 
$C(T\backslash G_q)$ is still irreducible and 
$w \in W\longmapsto \pi_w|_{C(T\backslash G_q)}$ is a bijection.
The induced Borel structure of $W$ is discrete.

The following proposition is what we would like to use in this subsection

\begin{prop}
The double dual $C(T\backslash G_q)^{**}$ is isomorphic to
$\prod_{w \in W} B(\cl{H}_w)$. Moreover, 
its center is contained in $Z(C(G_q)^{**})$.
\end{prop}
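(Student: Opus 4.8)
The plan is to read both statements off the recalled representation theory, using the canonical ideals of $C(T\backslash G_q)$ indexed by $W$ and the $T$-averaging conditional expectation $E=(h_T\tensor\id)\ell_T\colon C(G_q)\to C(T\backslash G_q)$, where $h_T$ is the Haar state of $T$. For the isomorphism I would first observe that $C(T\backslash G_q)$ is separable and type I with spectrum the \emph{finite} set $W$, so, fixing an enumeration $w_1,\dots,w_n$ of $W$ for which each $\{w_1,\dots,w_k\}$ is open in the spectrum (possible as $W$ is finite), the ideals $J_k=\{x\in C(T\backslash G_q):\pi_w(x)=0\text{ for all }w\notin\{w_1,\dots,w_k\}\}$ form a composition series $0=J_0\subset J_1\subset\cdots\subset J_n=C(T\backslash G_q)$ whose subquotients $J_k/J_{k-1}$ have one-point spectrum $\{w_k\}$ and are therefore isomorphic to $\cl{K}(\cl{H}_{w_k})$ (with $\cl{H}_e=\C$). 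Since $(\,\cdot\,)^{**}$ is exact on short exact sequences of \cstar-algebras, carries $\cl{K}(\cl{H})$ to $B(\cl{H})$, and since a weakly closed two-sided ideal of a von Neumann algebra is a direct summand, an induction up this series gives $C(T\backslash G_q)^{**}\cong\prod_{w\in W}B(\cl{H}_w)$, in which $B(\cl{H}_w)$ is the summand cut out by a minimal central projection $p_w$, $J_k^{**}$ is the summand $\bigoplus_{j\le k}B(\cl{H}_{w_j})$ with unit $\sum_{j\le k}p_{w_j}$, and $Z(C(T\backslash G_q)^{**})=\bigoplus_{w}\C p_w$.

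For the second statement, since an injective \star-homomorphism is isometric, its transpose is surjective, so $(\,\cdot\,)^{**}$ sends $C(T\backslash G_q)\subseteq C(G_q)$ to an injective normal \star-homomorphism $C(T\backslash G_q)^{**}\hookrightarrow C(G_q)^{**}$, under which I regard $C(T\backslash G_q)^{**}$ and each $J_k^{**}$ as von Neumann subalgebras. Left translation gives a genuine action $\alpha$ of the classical compact group $T$ on $C(G_q)$ by $\alpha_t=(\eps_t\tensor\id)\Delta$ with fixed-point algebra $C(T\backslash G_q)$, and $(\eps_t\tensor\pi_w)\Delta=\pi_w\circ\alpha_t$, so under $\widehat{C(G_q)}\cong W\times T$ the action $\alpha$ is translation in the $T$-coordinate. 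Accordingly I set $\widetilde{J}_k=\bigcap_{w\notin\{w_1,\dots,w_k\}}\bigcap_{t\in T}\ker\big((\eps_t\tensor\pi_w)\Delta\big)$, a $T$-invariant closed two-sided ideal of $C(G_q)$ with $\widetilde{J}_0=0$, $\widetilde{J}_n=C(G_q)$ and $\widetilde{J}_k\cap C(T\backslash G_q)=J_k$ (using $\alpha_t(x)=x$ for $x\in C(T\backslash G_q)$). The support projection $1_{\widetilde{J}_k^{**}}$ of an ideal is always central in $C(G_q)^{**}$, so once one knows $1_{J_k^{**}}=1_{\widetilde{J}_k^{**}}$ inside $C(G_q)^{**}$ it follows that $p_{w_k}=1_{\widetilde{J}_k^{**}}-1_{\widetilde{J}_{k-1}^{**}}$ lies in $Z(C(G_q)^{**})$, hence $Z(C(T\backslash G_q)^{**})=\bigoplus_w\C p_w\subseteq Z(C(G_q)^{**})$.

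The remaining equality $1_{J_k^{**}}=1_{\widetilde{J}_k^{**}}$ is where I expect the real work to be. The inequality $\le$ is immediate from $J_k\subseteq\widetilde{J}_k$. For $\ge$ I would argue by contradiction: if $e:=1_{\widetilde{J}_k^{**}}-1_{J_k^{**}}\ne0$, pick a state $\psi$ on $C(G_q)$ whose normal extension $\omega$ to $C(G_q)^{**}$ satisfies $\omega(e)=1$, so that $\omega(1_{\widetilde{J}_k^{**}})=1$ and $\omega(1_{J_k^{**}})=0$; thus $\psi$ concentrates on $\widetilde{J}_k$ while $\psi|_{J_k}=0$. Now $E$ restricts to a faithful positive contraction $\widetilde{J}_k\to J_k$ that carries an approximate unit $(v_\mu)$ of $\widetilde{J}_k$ to an approximate unit of $J_k$ (since $E$ is $C(T\backslash G_q)$-bimodular and equals the $T$-average of $\alpha$); hence $\psi(E(v_\mu))=0$ for every $\mu$, whereas $\psi(E(v_\mu))=\int_T\psi(\alpha_t(v_\mu))\,dt\to1$ because $\alpha_t$ preserves $\widetilde{J}_k$, so each $\psi\circ\alpha_t$ again concentrates on $\widetilde{J}_k$ and one may pass to the limit under the integral. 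That contradiction finishes the proof; the only external inputs are the recalled description of the spectra of $C(G_q)$ and $C(T\backslash G_q)$ and the type I property of $C(T\backslash G_q)$, and everything else is general \cstar-algebra and von Neumann algebra theory.
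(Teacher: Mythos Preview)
Your proof is correct, but it takes a genuinely different route from the paper. For the isomorphism $C(T\backslash G_q)^{**}\cong\prod_w B(\cl{H}_w)$, the paper argues directly via disintegration: since $C(T\backslash G_q)$ is separable type~I with \emph{discrete} Borel spectrum $W$, every separable representation decomposes as a direct sum of the $\pi_w$, and this extends to arbitrary representations via cyclic decomposition. Your composition-series argument instead exploits finiteness of $W$ and exactness of $(\cdot)^{**}$, which is more hands-on and avoids any appeal to direct-integral machinery. For the inclusion of centers, the paper again uses disintegration: since $C(G_q)$ is also type~I, any separable representation $\pi$ of $C(G_q)$ disintegrates over $\hat{C(G_q)}\cong W\times T$, and the characteristic function of the Borel set $\{w\}\times T$ gives a projection $P_w\in\pi(C(G_q))'$ that one checks equals $\pi(p_w)$ using $(\eps_t\tensor\pi_w)\Delta|_{C(T\backslash G_q)}=\pi_w$. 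Your argument replaces this by matching the support projections of the ideal chains $J_k$ and $\tilde{J}_k$ via the $T$-averaging expectation $E$ and an approximate-unit contradiction; this is longer but entirely elementary, relying only on the bimodularity of $E$ and dominated convergence (which is legitimate here since $\tilde{J}_k$ is separable and admits a sequential approximate unit, and $t\mapsto\alpha_t(v_\mu)$ is norm-continuous). The paper's approach is shorter because it leverages the type~I property of $C(G_q)$ and the explicit Borel picture of its spectrum; yours trades that input for more general \cstar-algebra manipulations, and would in principle adapt to situations where one knows less about the ambient spectrum but has a good averaging map.
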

\begin{proof}
One shoule note that $C(T \backslash G_q)$ is separable type I.
Hence we can disintegrate any separable \star-representation of 
$C(T\backslash G_q)$ on $\hat{C(T\backslash G_q)} \cong W$. Since
$W$ is discrete as a Borel space, this implies that a 
separable \star-representation admits an irreducible decomposition.
By using a decomposition into cyclic representations, we also have
an irreducible decomposition for a non-separable \star-representation.
This implies the former half of the statement.

Let $p_w \in C(T\backslash G_q)^{**}$ be the projection corresponding to 
$B(\cl{H}_w)$. To show the latter half of the statement,
it suffices to show $\pi(p_w) \in \pi(C(G_q))'$ for any separable
\star-representation $(\pi,\cl{H})$ of $C(G_q)$.
Since $C(G_q)$ is type I, we can disintegrate $(\pi,\cl{H})$
on $\hat{C(G_q)}$. Then each $w \in W$ defines a projection 
$P_w \in \pi(C(G_q))'$ corresponding to the characteristic function of 
$\{w\}\times T \subset W\times T \cong \hat{C(G_q)}$. 

This $P_w$ must be $\pi(p_w)$, since we have
$(\eps_t\tensor \pi_w)\Delta|_{C(T\backslash G_q)} = \pi_w$ for any
$t \in T$. In particular we have $\pi(p_w) = P_w \in \pi(C(G_q))'$.
\end{proof}
\begin{rema}
Let $\{p_w\}_{w \in W}$ as in the proof. Then the canonical map
from $C(G_q)^{**}$ to $C(T)^{**}$ factors though
$x \in C(G_q)^{**}\longmapsto p_ex \in p_eC(G_q)^{{**}}$. Moreover
we have $p_eC(G_q)^{**} \cong C(T)^{**}$.
\end{rema}

\begin{coro} \label{coro:tracial descent}
Let $A$ be a $T$-\cstar-algebra and $\tilde{A}$ be its 
induced $G_q$-\cstar-algebra. For any tracial positive linear functional
$\tau $on $\tilde{A}$, there is a tracial positive linear functional
$\tau'$ on $A$ such that 
$\tau = \tau'\circ (\id\tensor \eps)|_{\tilde{A}}$.
\end{coro}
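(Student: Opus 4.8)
We may assume $A$ is unital: replacing $A$ by its unitization $A^{+}$ with the trivial $T$-action on the new unit, additivity of induction gives $\ind_T^{G_q}(A^{+})\cong\tilde A\oplus C(T\backslash G_q)$, a tracial positive functional on $\tilde A$ extends by zero to $\ind_T^{G_q}(A^{+})$, and the statement for $A^{+}$ restricts to that for $A$. So assume $A$ unital; then $1_A\tensor C(T\backslash G_q)\subseteq\tilde A\subseteq A\tensor C(G_q)$. Put $\cl M=(A\tensor C(G_q))^{**}$ and $z_w=1_A\tensor p_w$ for the central projections $p_w\in Z(C(G_q)^{**})$ from the Proposition above, so $p_wC(T\backslash G_q)^{**}=B(\cl H_w)$. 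Each $z_w$ is a central projection of $\cl M$ lying in $\tilde A^{**}\subseteq\cl M$ (since $1_A\tensor C(T\backslash G_q)\subseteq\tilde A$), we have $\sum_{w\in W}z_w=1$, and $z_w\tilde A^{**}\supseteq 1_A\tensor B(\cl H_w)$.

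Let $\bar\tau$ be the normal (positive, tracial) extension of $\tau$ to $\tilde A^{**}$. For $w\neq e$ the space $\cl H_w$ is infinite dimensional, so the type~$I_\infty$ factor $1_A\tensor B(\cl H_w)$ admits no nonzero finite normal trace; hence $\bar\tau$ vanishes on it, so $\bar\tau(z_w)=0$, and as $z_w$ is a central projection $\bar\tau(z_w x)=0$ for all $x\in\tilde A^{**}$. Since $W$ is finite and $\sum_w z_w=1$, we get $\bar\tau(x)=\bar\tau(z_e x)$ for every $x\in\tilde A^{**}$.

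The heart of the matter is the identity $z_e a=\alpha(\psi(a))$ for $a\in\tilde A$, where $\psi=(\id\tensor\eps)|_{\tilde A}\colon\tilde A\to A$. Applying $\id_A\tensor\id_{C(T)}\tensor\eps$ to the defining relation $(\alpha\tensor\id)(a)=(\id\tensor\ell_T)(a)$ of $\ind_T^{G_q} A$ on the algebraic core and using the counit axioms gives $\alpha(\psi(a))=(\id\tensor q)(a)$; by the Remark above, cutting $C(G_q)^{**}$ by $p_e$ is the extension of $q$ along $C(G_q)^{**}\to p_eC(G_q)^{**}\cong C(T)^{**}$, so $(\id\tensor q)(a)$ is identified with $z_e a$, and the equality passes to all of $\tilde A$ by norm continuity of $\alpha\circ\psi$. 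A direct computation (using $\ell_T|_{C(T)}=\Delta_T$) shows $\alpha(A)\subseteq\tilde A$, so in fact $z_e a=\alpha(\psi(a))\in\tilde A$. Since $\alpha$ is injective, $\tau':=\tau\circ\alpha$ is a well-defined positive tracial functional on $A$, and for $a\in\tilde A$
\[
\tau'(\psi(a))=\tau(\alpha(\psi(a)))=\tau(z_e a)=\bar\tau(z_e a)=\bar\tau(a)=\tau(a),
\]
that is, $\tau=\tau'\circ(\id\tensor\eps)|_{\tilde A}$, as required.

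The main obstacle is the identity $z_e a=\alpha(\psi(a))$ together with the inclusion $\alpha(A)\subseteq\tilde A$: one has to reconcile the two ways the torus enters, the matrix-coefficient inclusion $C(T)\subseteq C(G_q)$ and the ``restriction to $T$'' corner $C(G_q)^{**}\to p_eC(G_q)^{**}\cong C(T)^{**}$, keeping track of which copy of $\cl O(T)$ each tensor factor of an element of $\tilde A$ lives in. Once the structural Proposition on $C(T\backslash G_q)^{**}$ is granted, the localization of $\bar\tau$ at $w=e$ is essentially automatic, so it is this identification that carries the real content.
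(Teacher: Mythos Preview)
Your strategy is essentially the bidual version of the paper's GNS argument (the paper extends $\tau$ to a state on $A\tensor C(G_q)$, shows its GNS representation kills $1\tensor p_w$ for $w\neq e$, and hence factors through $A\tensor C(T)$). The localization step $\bar\tau(a)=\bar\tau(z_e a)$ is fine and matches the paper's reasoning.

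The gap is in the last paragraph. The identity ``$z_e a=\alpha(\psi(a))\in\tilde A$'' and the definition $\tau':=\tau\circ\alpha$ do not make sense as written. The coaction $\alpha$ takes values in $A\tensor C(T)$, while $\tilde A\subseteq A\tensor C(G_q)$; since $C(T)$ is a \emph{quotient} of $C(G_q)$ and not a subalgebra, there is no inclusion $A\tensor C(T)\hookrightarrow A\tensor C(G_q)$, and your claim ``$\ell_T|_{C(T)}=\Delta_T$'' presupposes exactly such an inclusion. Likewise $z_e a$ lives only in $\tilde A^{**}$, not in $\tilde A$ (the projection $p_e$ is not in $C(G_q)$), so $\tau(z_e a)$ is undefined. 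What is correct is that under $(\id\tensor q)^{**}$ the corner $z_e(A\tensor C(G_q))^{**}$ is sent isomorphically onto $(A\tensor C(T))^{**}$ and $z_e a$ is carried to $(\id\tensor q)(a)=\alpha(\psi(a))$; but this is an identification across a quotient map, not an inclusion into $\tilde A$.

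The repair is immediate once you drop the spurious inclusion: show instead that $\ker\psi=\tilde A\cap\ker(\id\tensor q)\subseteq\tilde A\cap\overline{A\tensor\ker q}$ is annihilated by $z_e$ (because $p_e\cdot\ker q=0$ by the Remark), so the assignment $\psi(a)\mapsto\bar\tau(z_e a)=\tau(a)$ is well defined on $\psi(\tilde A)$. Since $\psi\colon\tilde A\to A$ is a surjective \star-homomorphism (it equals $(\id\tensor\eps_T\circ q)|_{\tilde A}$, and surjectivity follows from Lemma~\ref{lemm:expectation onto ind}), this yields the desired tracial positive $\tau'$ on $A$ with $\tau=\tau'\circ(\id\tensor\eps)|_{\tilde A}$. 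This is exactly what the paper does, phrased in bidual language.
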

\begin{proof}
Take a tracial positive linear functional $\tau$ on $\tilde{A}$.
If $\tau = 0$, there is nothing to prove. Hence we may assume $\tau \neq 0$,
moreover $\tau$ is a tracial state by taking a normalization.

Now consider the GNS triple $(\pi_{\tau},\cl{H}_{\tau},\xi_{\tau})$
with respect to $\tau$. Then $\pi_{\tau}(\tilde{A})''$ is a
finite von Neumann algebra. In particular 
$\pi_{\tau}(C(T\backslash G_q))''$ is finite, 
hence $\pi_{\tau}(p_e) = 1$ and $\pi_{\tau}(p_w) = 0$ for $w \neq e$.

Take a state $\phi$ on $A\tensor C(G_q)^*$ such that 
$\phi|_{\tilde{A}} = \tau$ and its GNS triple 
$(\pi_{\phi},\cl{H}_{\phi},\xi_{\phi})$. Then we have an
isometry $\map{V}{\cl{H}_{\tau}}{\cl{H}_{\phi}}$ such that
$V\pi_{\tau}(x)\xi_{\tau} = \pi_{\phi}(x)\tau$ for every $x \in \tilde{A}$.
Hence, for any $w \in W\setminus\{e\}$, we have
\[
 \pi_{\phi}(1\tensor p_w)\xi_{\phi} = V\pi_{\tau}(p_w)\xi_{\tau} = 0.
\]
Since $1\tensor p_w$ is central in $(A\tensor C(G_q))^{**}$ and $\xi_{\phi}$
is cyclic, this implies $\pi_{\phi}(p_w) = 0$ for $w \neq e$.
Then the remark after Proposition 5.1 implies that $\pi_{\phi}$ factors 
though $A\tensor C(G_q)\longrightarrow A\tensor C(T)$, in particular
$\pi_{\tau}$ factors through $\tilde{A}\longrightarrow \ind_T^TA$.
Since $(\id\tensor\eps)|_{\ind_T^TA}$ gives an isomorphism 
$\ind_T^T A \cong A$, our statement follows. 
\end{proof}


The following is a main theorem of this subsection.

\begin{thrm} \label{thrm:hom descent}
Let $A$ and $B$ be $T$-\cstar-algebras. If $B$ has a faithful family
of tracial states,
any $G_q$-equivariant \star-homomorphism from $\ind_T^{G_q}A$ to 
$\ind_T^{G_q}B$ is induced from a $T$-equivariant \star-homomorphism
from $A$ to $B$.
\end{thrm}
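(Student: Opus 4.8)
The plan is to construct $\psi$ by showing that $\ev_B\circ\phi$ factors through a canonical quotient map, and then to identify $\phi$ with $\ind_T^{G_q}\psi$ by an equivariance argument. Write $\tilde A=\ind_T^{G_q}A$ and $\tilde B=\ind_T^{G_q}B$, and let $\ev_A=(\id\tensor\eps)|_{\tilde A}\colon\tilde A\to A$ and $\ev_B=(\id\tensor\eps)|_{\tilde B}\colon\tilde B\to B$ be the canonical $*$-homomorphisms already used in Corollary \ref{coro:tracial descent}. I would first record two elementary facts about $\ev_A$, and likewise $\ev_B$, each verified on algebraic cores. (a) $\ev_A$ is surjective: taking $P_A$ as in Lemma \ref{lemm:expectation onto ind} for $A$ regarded as an equivariant Hilbert module over itself, the identity $\ev_A(P_A(\xi\tensor x))=(\id\tensor h_T)\big((\id\tensor S)(\alpha_A(\xi))(1\tensor q(x))\big)$ from the proof of Theorem \ref{thrm:module categorical induction}, together with the density $\aspan\alpha_A(\cl A)(\C\tensor\cl{O}(T))=\cl A\tensor\cl{O}(T)$ and the fact that $h_T$ annihilates every nontrivial character of $T$, yields $\ev_A(\tilde{\cl A})=\cl A$. (b) $\ev_A$ is $T$-equivariant; indeed $\alpha_A\circ\ev_A=(\id\tensor q)|_{\tilde{\cl A}}$, which one reads off by applying $\id\tensor\id\tensor\eps$ to the defining relation $(\alpha_A\tensor\id)(x)=(\id\tensor\ell_T)(x)$ of an element $x\in\tilde{\cl A}$.

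The crucial step is that $\phi$ carries $\ker\ev_A$ into $\ker\ev_B$. Fix a faithful family $\{\tau_i\}$ of tracial states on $B$. For each $i$ the functional $\sigma_i:=\tau_i\circ\ev_B\circ\phi$ is a tracial positive linear functional on $\tilde A$, so Corollary \ref{coro:tracial descent} provides a tracial positive functional $\sigma_i'$ on $A$ with $\sigma_i=\sigma_i'\circ\ev_A$. Hence for any positive $y\in\ker\ev_A$ we get $\tau_i(\ev_B(\phi(y)))=\sigma_i(y)=\sigma_i'(\ev_A(y))=0$; since $\ev_B\circ\phi$ is positive and $\{\tau_i\}$ is faithful this forces $\ev_B(\phi(y))=0$, and as $\ker\ev_A$ is spanned by its positive elements we conclude $\phi(\ker\ev_A)\subseteq\ker\ev_B$. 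Because $\ev_A$ is a surjective $*$-homomorphism, $A\cong\tilde A/\ker\ev_A$, so $\ev_B\circ\phi$ factors as $\psi\circ\ev_A$ for a unique $*$-homomorphism $\psi\colon A\to B$. A short diagram chase using the $T$-equivariance of $\ev_A$, $\ev_B$ and $\phi$ and the surjectivity of $\ev_A$ then shows that $\psi$ is $T$-equivariant, so $\ind_T^{G_q}\psi=(\psi\tensor\id_{C(G_q)})|_{\tilde A}$ is defined.

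It remains to show $\phi=\ind_T^{G_q}\psi$. Being $G_q$-equivariant, $\phi$ restricts to a map $\tilde{\cl A}\to\tilde{\cl B}$ of algebraic cores, and so does $(\psi\tensor\id_{C(G_q)})|_{\tilde A}$; both intertwine the coactions given by the restriction of $\id\tensor\Delta$, and both satisfy $(\id\tensor\eps)\circ(\cdot)=\psi\circ\ev_A$ — for the latter because $(\id\tensor\eps)(\psi\tensor\id)(x)=\psi((\id\tensor\eps)(x))$. So it suffices to prove that a $G_q$-equivariant linear map $g\colon\tilde{\cl A}\to\tilde{\cl B}$ with $(\id\tensor\eps)\circ g=0$ is zero. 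Writing $\tilde\alpha_{\tilde A}(x)=\sum_j u_j\tensor d_j$ with $u_j\in\tilde{\cl A}$ and $d_j\in\cl{O}(G_q)$, equivariance gives $\sum_j g(u_j)\tensor d_j=(\id_B\tensor\Delta)(g(x))$; applying $\id_B\tensor\eps\tensor\id_{C(G_q)}$, the right-hand side collapses to $g(x)$ by the counit axiom, while on the left the $\eps$ acts on the internal $C(G_q)$-leg of $g(u_j)\in\tilde{\cl B}\subseteq B\tensor C(G_q)$ and so produces $\sum_j\ev_B(g(u_j))\tensor d_j=0$. Hence $g=0$, whence $\phi$ and $\ind_T^{G_q}\psi$ agree on $\tilde{\cl A}$, and by continuity on all of $\tilde A$.

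The conceptual heart of the argument — and the only place where the hypothesis on $B$ is used — is the observation in the second paragraph that Corollary \ref{coro:tracial descent}, applied to the tracial functionals $\tau_i\circ\ev_B\circ\phi$ on $\tilde A$, is precisely what is needed to push $\ker\ev_A$ inside $\ker(\ev_B\circ\phi)$. The rest is routine, although one has to be somewhat careful in setting up $\ev_A$ as a surjective $T$-equivariant $*$-homomorphism and in tracking the three tensor legs in the final counit computation.
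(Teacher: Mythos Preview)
Your proof is correct and follows essentially the same route as the paper's: apply Corollary~\ref{coro:tracial descent} to the traces $\tau_i\circ\ev_B\circ\phi$ to see that $\ev_B\circ\phi$ factors through $\ev_A$, and then recover $\phi$ from $\psi$ via $G_q$-equivariance and the counit identity. The only differences are organisational: the paper packages the last step as the single chain $\phi(a)=(\id\tensor\eps\tensor\id)\tilde\beta(\phi(a))=((\id\tensor\eps)\circ\phi\tensor\id)\tilde\alpha(a)=(\psi\tensor\id)(a)$, whereas you phrase it as ``the difference $g$ is $G_q$-equivariant with $\ev_B\circ g=0$, hence $g=0$''; and you spell out the surjectivity and $T$-equivariance of $\ev_A$ that the paper leaves implicit.
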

\begin{proof}
 Let $\map{\phi}{\ind_T^{G_q}A}{\ind_T^{G_q}B}$ be 
an $G_q$-equivariant \star-homomorphism. For any tracial state $\tau$ on 
$B$, $\tau\circ(\id\tensor\eps)\circ\phi$ is also a tracial state on 
$\ind_T^{G_q}B$. Hence it factors through $A$. Since $B$ has
a faithful family of tracial states, the map $(\id\tensor \eps)\circ\phi$
also factors through $A$, i.e. we have a \star-homomorphism 
$\map{\phi_0}{A}{B}$ such that 
$(\id\tensor\eps)\circ \phi = \phi_0\circ(\id\tensor\eps)$.
The $T$-equivariance of $\phi_0$ follows from the $G_q$-equivariance of 
$\phi$.

We would like to show that $\phi_0$ is the required homomorphism.
For an element $a \in \ind_T^{G_q}A$, we have
\begin{align*}
 \phi(a) 
&= (\id\tensor\eps\tensor\id)(\tilde{\beta}(\phi(a)))\\
&= ((\id\tensor\eps)\circ\phi\tensor\id)(\tilde{\alpha}(a))\\
&= (\phi_0\circ (\id\tensor\eps)\tensor\id)(\tilde{\alpha}(a))\\
&= (\phi_0\tensor\id)(a).
\end{align*}
Hence we have $\phi = \ind_T^{G_q}\phi_0$.
\end{proof}
\begin{coro} \label{coro:descent for G_q}
 Let $A,B$ be quantum homogeneous spaces of $T$ and
$\tilde{A},\tilde{B}$ be its induced $G_q$-\cstar-algebras.
Then $\map{\ind_T^{G_q}}{T\text{-}\corr^{\rfin}_{A,B}}{G_q\text{-}\corr^{\rfin}_{\tilde{A},\tilde{B}}}$ is an equivalence of \cstar-categories.
\end{coro}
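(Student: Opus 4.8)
The plan is to reduce the statement to two facts already established: Theorem~\ref{thrm:module categorical induction}, which realises $\ind_T^{G_q}$ as an equivalence between finitely generated $T$-equivariant Hilbert $B$-modules and finitely generated $G_q$-equivariant Hilbert $\tilde{B}$-modules, with quasi-inverse $\ev_{G_q}^{T}$; and Theorem~\ref{thrm:hom descent}, which descends $G_q$-equivariant $\ast$-homomorphisms between induced algebras. Two preliminary remarks will be used throughout. First, $\tilde{A}$ and $\tilde{B}$ are quantum homogeneous spaces of $G_q$, so all Hilbert modules in sight are finitely generated projective and $\cl{L}=\cl{K}$ on them. Second, for a finitely generated $T$-equivariant Hilbert $B$-module $E$ there is a canonical $G_q$-equivariant $\ast$-isomorphism $\cl{K}_{\tilde{B}}(\ind_T^{G_q}E)\cong\ind_T^{G_q}\cl{K}_B(E)$, obtained by compressing the $G_q$-equivariant $\ast$-isomorphism $\cl{K}_{\tilde{B}}(\tilde{B}\oplus\ind_T^{G_q}E)\cong\ind_T^{G_q}\cl{K}_B(B\oplus E)$ of the remark after Lemma~\ref{lemm:expectation onto ind} (valid since $T$ is of Kac type) by the projection onto the $\ind_T^{G_q}E$-summand.

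For essential surjectivity I would take $M\in G_q\text{-}\corr^{\rfin}_{\tilde{A},\tilde{B}}$ and set $M_0:=\ev_{G_q}^{T}(M)\in T\text{-}\mod^{\fin}_B$, so that $M\cong\ind_T^{G_q}M_0$ as a $G_q$-equivariant Hilbert $\tilde{B}$-module by Theorem~\ref{thrm:module categorical induction}. The left action of $\tilde{A}$ on $M$ is then a unital $G_q$-equivariant $\ast$-homomorphism $\ind_T^{G_q}A\to\cl{L}_{\tilde{B}}(M)=\cl{K}_{\tilde{B}}(M)\cong\ind_T^{G_q}\cl{K}_B(M_0)$, and I want to apply Theorem~\ref{thrm:hom descent} to it. For that I need $\cl{K}_B(M_0)$ to have a faithful family of tracial states, which reduces to the same property of $B$: since $B$ is a quantum homogeneous space of the classical torus $T$, its $T$-invariant state $\phi$ is a faithful tracial state --- writing $\cl{B}=\bigoplus_{\lambda}\cl{B}_\lambda$ for the spectral decomposition of its algebraic core, for $b\in\cl{B}_\lambda$ and $c\in\cl{B}_{-\lambda}$ both $bc$ and $cb$ lie in $\cl{B}_0=\C1_B$, whence $b(cb)=(bc)b$ forces $bc=cb$, so $\phi$ is tracial, while $\phi(x^*x)=\sum_\lambda\phi(x_\lambda^*x_\lambda)$ with each summand vanishing only if $x_\lambda=0$ shows $\phi$ is faithful (on the reduced algebra $B$). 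Hence $\cl{K}_B(M_0)$, being a corner $pM_n(B)p$ of $M_n(B)$, has a faithful family of tracial states, so Theorem~\ref{thrm:hom descent} gives a unital $T$-equivariant $\ast$-homomorphism $\phi_0\colon A\to\cl{K}_B(M_0)$ with $\ind_T^{G_q}\phi_0$ equal to the above left action; equipping $M_0$ with $\phi_0$ makes it an object of $T\text{-}\corr^{\rfin}_{A,B}$ with $\ind_T^{G_q}M_0\cong M$.

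For full faithfulness, a morphism of $G_q\text{-}\corr^{\rfin}_{\tilde{A},\tilde{B}}$ between $\ind_T^{G_q}M_0$ and $\ind_T^{G_q}N_0$ is a $G_q$-equivariant adjointable right $\tilde{B}$-module map intertwining the induced left $\tilde{A}$-actions; applying $\ev_{G_q}^{T}$ and using its compatibility with the identifications $\cl{L}_{\tilde{B}}(\ind_T^{G_q}(-))\cong\ind_T^{G_q}\cl{L}_B(-)$, such a map corresponds bijectively to a $T$-equivariant adjointable right $B$-module map $M_0\to N_0$ intertwining the left $A$-actions, i.e.\ a morphism of $T\text{-}\corr^{\rfin}_{A,B}$, with $\ind_T^{G_q}$ the inverse correspondence; so $\ind_T^{G_q}$ is fully faithful, hence an equivalence of \cstar-categories. (Equivalently, Theorems~\ref{thrm:fundamental theorem}, \ref{thrm:module categorical induction} and \ref{thrm:comparison} identify $\ind_T^{G_q}$ with the canonical inclusion $[\cl{M}_A,\cl{M}_B]^{\rep^{\fin}T}\hookrightarrow[\cl{M}_A,\cl{M}_B]^{\rep^{\fin}G_q}$, with $\cl{M}_A=T\text{-}\mod^{\fin}_A$ and $\cl{M}_B=T\text{-}\mod^{\fin}_B$, and this inclusion is fully faithful because every character of $T$ is a summand of the restriction of some finite-dimensional representation of $G_q$, $G$ being simply connected.)

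The substantive content is not in the assembly above but in Theorem~\ref{thrm:hom descent} itself, whose proof is where the representation theory of $C(T\backslash G_q)$ does the real work: that $C(T\backslash G_q)^{**}=\prod_{w\in W}B(\cl{H}_w)$ has centre contained in $Z(C(G_q)^{**})$ forces a tracial representation of an induced algebra to factor through its $T$-part (Corollary~\ref{coro:tracial descent}). Within the present derivation the only non-formal points are the $G_q$-equivariant identification $\cl{L}_{\tilde{B}}(\ind_T^{G_q}M_0)\cong\ind_T^{G_q}\cl{L}_B(M_0)$ and the inheritance of a faithful family of tracial states by the coefficient algebra $\cl{K}_B(M_0)$; everything else is bookkeeping with Theorem~\ref{thrm:module categorical induction}.
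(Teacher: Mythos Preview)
Your proof is correct and follows essentially the same route as the paper's: reduce to Hilbert modules via Theorem~\ref{thrm:module categorical induction}, identify $\cl{L}_{\tilde{B}}(\ind_T^{G_q}M_0)\cong\ind_T^{G_q}\cl{L}_B(M_0)$, and descend the left $\tilde{A}$-action using Theorem~\ref{thrm:hom descent}. The only notable difference is that for the faithful trace on $B$ the paper invokes the H{\o}egh-Krohn--Landstad--St{\o}rmer finiteness theorem, whereas you give a direct and elementary argument specific to the torus (using that $\cl{B}_0=\C1_B$ and the identity $b(cb)=(bc)b$ to force $\phi(bc)=\phi(cb)$); you also spell out full faithfulness, which the paper leaves implicit.
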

\begin{proof}
What we have to show is the essential surjectivity of $\ind_T^{G_q}$.
Take $\tilde{M} \in G_q\text{-}\corr^{\rfin}_{\tilde{A},\tilde{B}}$.
By Theorem \ref{thrm:module categorical induction}, we can take $M \in T\text{-}\mod^{\fin}_B$ such that $\tilde{M} \cong \ind_T^{G_q}M$ as a 
$G_q$-equivariant Hilbert $\tilde{B}$-module. Then the right action of
$\tilde{A}$ on $\tilde{M}$ induces a $G_q$-equivariant \star-homomorphism
from $\tilde{A}$ to 
$\cl{L}_{\tilde{B}}(\tilde{M})\cong \ind_T^{G_q}\cl{L}_B(M)$.
If we can show that $\cl{L}_B(M)$ has a faithful trace, the previous
theorem implies that this \star-homomorphism is induced by a $T$-equivariant
\star-homomorphism from $A$ to $\cl{L}_B(M)$. This defines a 
$T$-equivariant $(A,B)$-correspondence $M$ and we have
$\ind_T^{G_q}M \cong \tilde{M}$.

Now we show the existence of a faithful trace on $\cl{L}_B(M)$. By taking
an isometry from $M$ to $\cl{H}_{\pi}\tensor B$ for some 
$\pi \in \rep^{\fin}T$, we may assume $M = \cl{H}_{\pi}\tensor B$.
Then $\cl{L}_B(\cl{H}_{\pi}\tensor B) \cong B(H_{\pi})\tensor B$,
and the existence follows from the finiteness theorem 
{\cite[Theorem 4.1]{HKLS}}, which asserts that there is 
a faithful trace on $B$.
\end{proof}
\begin{coro} \label{coro:automatically induced}
 Let $\tilde{A}$ be a quantum homogeneous space of $G_q$.
If $\tilde{A}$ contains $C(T\backslash G_q)$ as a unital 
$G_q$-\cstar-subalgebra and has a tracial state, it must be 
induced from a quantum homogeneous space of $T$.
\end{coro}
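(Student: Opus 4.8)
The plan is to reconstruct $\tilde A$ as $\ind_T^{G_q}A$, where $A$ is the ``fibre of $\tilde A$ over $T$'' extracted from the central projections of Proposition~5.1, and to use the tracial state precisely to guarantee that $\tilde A$ is concentrated on that fibre. First I would reduce the statement to producing a $G_q$-equivariant conditional expectation $E\colon\tilde A\to C(T\backslash G_q)$. Granting such an $E$, the module $\tilde A_E$ becomes a $G_q$-equivariant $(C(T\backslash G_q),C(T\backslash G_q))$-correspondence carrying the multiplication and unit of $\tilde A$; since $C(T\backslash G_q)=\ind_T^{G_q}\C$, Theorem \ref{thrm:module categorical induction} identifies $G_q\text{-}\mod_{C(T\backslash G_q)}$ with $T\text{-}\mod_{\C}$ as $\rep^{\fin}G_q$-module categories via $\ind_T^{G_q}$ and its quasi-inverse $\ev_{G_q}^{T}$, and because $\ind_T^{G_q}$ is a \cstar-tensor functor, transporting this algebra object along $\ev_{G_q}^{T}$ yields a $T$-\cstar-algebra $A$ together with a $G_q$-equivariant $\ast$-isomorphism $\tilde A\cong\ind_T^{G_q}A$; concretely $A=\overline{p\tilde A}$ with the $T$-action coming from $pC(G_q)^{**}\cong C(T)^{**}$ (notation below), and the isomorphism is $x\mapsto(p\tensor 1)\tilde\alpha(x)$. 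Finally $\tilde A^{G_q}=\C$ forces $A^{T}=\C$, so $A$ is a quantum homogeneous space of $T$.

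The heart of the argument is the construction of $E$, and this is where the tracial hypothesis enters. Write $p:=p_e\in C(T\backslash G_q)^{**}\subseteq\tilde A^{**}$ for the central projection of Proposition~5.1, so that $p\in Z(C(G_q)^{**})$, $pC(T\backslash G_q)^{**}=\C p$, and $pC(G_q)^{**}\cong C(T)^{**}$ with the canonical surjection given by $x\mapsto px$. A tracial state $\tau$ on $\tilde A$ restricts, by the argument in the proof of Corollary \ref{coro:tracial descent}, to the unique character of $C(T\backslash G_q)$ and satisfies $\pi_\tau(p_w)=0$ for $w\neq e$; averaging $\tau$ over the genuine torus translation action $t\mapsto(\id\tensor\eps_t)\tilde\alpha$ of $T$ on $\tilde A$ produces a $T$-invariant tracial state with the same property. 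Combining this with the $G_q$-invariant (hence faithful) state $\psi$ on $\tilde A$, whose modular automorphism group preserves the coideal $C(T\backslash G_q)$ (because the modular group of the Haar state of $G_q$ becomes trivial under $q\colon C(G_q)\to C(T)$) and restricts there to that of the Haar state of $C(T\backslash G_q)$, one obtains through Takesaki's theorem a $\psi$-preserving conditional expectation $\tilde A\to C(T\backslash G_q)$; its uniqueness together with the $G_q$-invariance of $\psi$ and of $C(T\backslash G_q)$ makes it $G_q$-equivariant. Equivalently, one checks directly that $p$ is central in $\tilde A^{**}$ and takes $E$ to be the induced projection onto the $C(T\backslash G_q)$-summand.

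The step I expect to be the main obstacle is exactly ``$\tilde A$ is concentrated on the $e$-cell'': that $A$ has the correct size and that the map $\tilde A\to\ind_T^{G_q}A$, $x\mapsto(p\tensor 1)\tilde\alpha(x)$, is an isomorphism. Surjectivity can be handled by the density arguments already used in Lemma \ref{lemm:ind ev} and the proof of Corollary \ref{coro:descent for G_q}. For injectivity, the point is that $\ker(x\mapsto px)$ is a $G_q$-invariant ideal of $\tilde A$, and a quantum homogeneous space is $G_q$-simple: any $G_q$-invariant ideal $J$ has $J^{G_q}\subseteq\tilde A^{G_q}=\C$, and faithfulness of $\psi$ on $\tilde A$ forces $J=0$ as soon as $J^{G_q}=0$, so the kernel vanishes; a priori $\tilde A$ could contain a nonzero summand supported on the Weyl cells $w\neq e$, and verifying—via the averaged trace together with Proposition~5.1—that no such summand survives (equivalently, that $p$ really is central and $G_q$-compatible) is the delicate part. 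Once this is in place, $\tilde A\cong\ind_T^{G_q}A$ follows, completing the proof.
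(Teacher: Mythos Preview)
Your overall picture—define $A$ as the fibre of $\tilde A$ over $e\in W$ and show the map $x\mapsto(q\otimes\id)\tilde\alpha(x)$ is an isomorphism onto $\ind_T^{G_q}A$, with injectivity coming from the fact that a quantum homogeneous space has no proper $G_q$-invariant ideals—matches the paper's strategy, and that injectivity argument is exactly the one used there. The genuine gap is in how you \emph{produce} $A$.

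Both of your routes to the expectation $E$ (equivalently, to the definition $A=\overline{p\tilde A}$) are unjustified. For the Takesaki argument you assert that the modular group of the $G_q$-invariant state $\psi$ on $\tilde A$ preserves $C(T\backslash G_q)$, citing a fact about the Haar state of $C(G_q)$; but $\tilde A$ is \emph{not} assumed to be a coideal of $C(G_q)$, so $\psi$ is not the restriction of the Haar state and nothing is known about its modular group. For the alternative you need $p=p_e$ to be central in $\tilde A^{**}$; Proposition~5.1 gives centrality only in $C(G_q)^{**}$, and since $\tilde A$ need not embed in $C(G_q)$ this says nothing about commutation with $\tilde A$. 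In fact centrality of $p$ in $\tilde A^{**}$ is essentially a reformulation of the conclusion, so defining $A$ via $p\tilde A$ is circular—and if $p$ is not central, $p\tilde A$ is not even an algebra. (A smaller issue: even granting $E$, your tensor-categorical transport invokes the equivalence of correspondence categories, which in this paper is established only for \emph{right-finitely generated} correspondences, and $\tilde A_E$ is not known to be finitely generated over $C(T\backslash G_q)$.)

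The paper sidesteps all of this by setting
\[
J=\{x\in\tilde A\mid \tau(x^*x)=0\text{ for every tracial state }\tau\text{ on }\tilde A\}
\]
and $A:=\tilde A/J$. The point is that $J$ is \emph{automatically} a closed two-sided ideal—no centrality of $p$ required—and is $T$-invariant because $T$-translates of traces are traces. The map $(q\otimes\id)\tilde\alpha\colon\tilde A\to\ind_T^{G_q}A$ is then well-defined; injectivity is your $G_q$-simplicity argument, and surjectivity is obtained by applying Theorem~\ref{thrm:hom descent} to the restriction $C(T\backslash G_q)\to\ind_T^{G_q}A$ (legitimate because $A$ has a separating family of traces \emph{by construction}), which identifies it with the canonical inclusion, and then using the projection $P_A$ of Lemma~\ref{lemm:expectation onto ind}.
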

\begin{proof}
Let $J$ be an ideal of $\tilde{A}$ defined as follows:
\[
 J = \{x \in \tilde{A}\mid \tau(x^*x) = 0\text{ for any tracial state }\tau \in \tilde{A}^*\}.
\]
By our assumption, this is a $T$-invariant 
proper closed ideal of $\tilde{A}$.

Set $A = \tilde{A}/J$. The quotient map from $\tilde{A}$ to $A$ is
denoted by $q$. The $T$-invariance of $J$ allows us to induce
a $T$-action on $A$. We will show that $(q\tensor\id)\tilde{\alpha}$
gives a $G_q$-equivariant \star-isomorphism from $\tilde{A}$ 
to $\ind_T^{G_q}A$. The well-definedness and $G_q$-equivariance
of this map are easy to see. Since $\tilde{A}^{G_q} = \C1_{\tilde{A}}$,
the injectivity is automatic. Hence it suffices to show the surjectivity.
By Theorem \ref{thrm:hom descent}, the restriction 
$(q\tensor\id)\tilde{\alpha}|_{C(T\backslash G_q)}$ is induced from 
$\C \subset A$ i.e. coincides with the canonical inclusion 
$C(T\backslash G_q) \subset \ind_T^{G_q}A$. Hence the image of this 
inclusion is contained in the image of $(q\tensor\id)\tilde{\alpha}$. 
On the other hand, for any $a \in \tilde{A}$ and $x \in C(G_q)$, we have
\begin{align*}
 P_A((q\tensor\id)\tilde{\alpha}(a)(1\tensor x))
&= (q\tensor\id)\tilde{\alpha}(a)P_A(1\tensor x) \\
&= (q\tensor\id)\tilde{\alpha}(a)(1\tensor (h_T\circ q_T\tensor \id)\Delta(x)),
\end{align*}
where $P_A$ is the expectation as in Lemma \ref{lemm:expectation onto ind},
$h_T$ is the Haar state on $C(T)$ and $q_T$ is the canonical
quotient map from $C(G_q)$ to $C(T)$.
Since $(h_T\circ q_T \tensor \id)\Delta(x)$ is an element of 
$C(T\backslash G_q)$,
we can conclude the surjectivity of $(q\tensor\id)\tilde{\alpha}$ from
this equality.
\end{proof}
\begin{coro}
 Let $\tilde{A}\subset \tilde{B}$ be a finite quantum 
$G_q$-covering space over a quantum homogeneous space. 
If $\tilde{A}$ contains $C(T\backslash G_q)$ and has
a tracial state, $\tilde{A} \subset \tilde{B}$ is isomorphic to
$\ind_T^{G_q}A \subset \ind_T^{G_q} B$,
where $A \subset B$ is a finite quantum $T$-covering space.
\end{coro}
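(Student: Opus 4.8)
The plan is to reduce the statement to the correspondence between finite quantum covering spaces and Q-systems (Proposition~\ref{prop:fundamental}) and then to transport Q-systems along the induction functor. By Corollary~\ref{coro:automatically induced}, since $\tilde{A}$ contains $C(T\backslash G_q)$ and carries a tracial state, there is a quantum homogeneous space $A$ of $T$ and a $G_q$-equivariant isomorphism $\tilde{A}\cong\ind_T^{G_q}A$, which I would use to identify $\tilde{A}=\ind_T^{G_q}A$. By Proposition~\ref{prop:fundamental}(i) the inclusion $\tilde{A}\subset\tilde{B}$ is encoded by a Q-system $\tilde{X}$ in $G_q\text{-}\corr^{\fin}_{\tilde{A}}$; concretely $\tilde{X}=\tilde{B}_{\tilde{E}}$ for some finite-index $G_q$-expectation $\map{\tilde{E}}{\tilde{B}}{\tilde{A}}$.

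Next I would invoke Corollary~\ref{coro:descent for G_q} with $B=A$: the functor $\map{\ind_T^{G_q}}{T\text{-}\corr^{\rfin}_A}{G_q\text{-}\corr^{\rfin}_{\tilde{A}}}$ is an equivalence of \cstar-categories, and since it is also a \cstar-tensor functor (as observed after Lemma~\ref{lemm:ind ev}) it is an equivalence of \cstar-tensor categories. Such an equivalence sends Q-systems to Q-systems, and every Q-system in the target is, up to isomorphism of Q-systems, the induction of one in the source. Hence there is a Q-system $X$ in $T\text{-}\corr^{\fin}_A$ with $\ind_T^{G_q}X\cong\tilde{X}$ as Q-systems, and by Proposition~\ref{prop:fundamental}(i) applied to the compact group $T$ this $X$ corresponds to a finite quantum $T$-covering space $A\subset B$, with $X\cong B_E$ for a finite-index $T$-expectation $\map{E}{B}{A}$.

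The remaining, and I expect main, task is to check that induction is compatible with the passage from a covering space to its Q-system: that $\ind_T^{G_q}A\subset\ind_T^{G_q}B$ is again a finite quantum $G_q$-covering space and that its associated Q-system is isomorphic to $\ind_T^{G_q}(B_E)$, hence to $\tilde{X}$. Here I would first note that $E\tensor\id$ restricts to a map $\map{\ind_T^{G_q}E}{\ind_T^{G_q}B}{\ind_T^{G_q}A}$: the $T$-equivariance of $E$ makes the restriction land in $\ind_T^{G_q}A$, and idempotency together with the bimodule property survive; it is a $G_q$-expectation of finite index, since applying $\ind_T^{G_q}$ to the completely positive map witnessing the finiteness of $\Index^{s}(E)$ yields $\Index^{s}(\ind_T^{G_q}E)\le\Index^{s}(E)<\infty$, and finiteness of the index then follows from Theorem~\ref{thrm:characterization} because $\ind_T^{G_q}A$ is a quantum homogeneous space. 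Then the composition $\ind_T^{G_q}B\hookrightarrow B\tensor C(G_q)\to B_E\tensor C(G_q)$ is isometric for the $\ind_T^{G_q}E$-inner product and, after matching the algebraic cores, identifies $(\ind_T^{G_q}B)_{\ind_T^{G_q}E}$ with $\ind_T^{G_q}(B_E)$ compatibly with the multiplication and unit maps, giving an isomorphism of Q-systems $(\ind_T^{G_q}B)_{\ind_T^{G_q}E}\cong\ind_T^{G_q}(B_E)\cong\tilde{X}$. Since Proposition~\ref{prop:fundamental}(i) is a bijection, the two finite quantum $G_q$-covering spaces of $\tilde{A}$ with isomorphic Q-systems, namely $\tilde{A}\subset\tilde{B}$ and $\ind_T^{G_q}A\subset\ind_T^{G_q}B$, are isomorphic, which is the assertion. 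The delicate point is precisely this last identification, that is, verifying that the algebraic cores of $(\ind_T^{G_q}B)_{\ind_T^{G_q}E}$ and $\ind_T^{G_q}(B_E)$ coincide; everything else is formal from Proposition~\ref{prop:fundamental} and Corollary~\ref{coro:descent for G_q}.
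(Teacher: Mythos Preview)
Your proposal is correct and is essentially the argument the paper intends: the corollary is stated without proof, and the evident route is precisely the one you take---use Corollary~\ref{coro:automatically induced} to write $\tilde{A}\cong\ind_T^{G_q}A$, then transport the Q-system of $\tilde{A}\subset\tilde{B}$ through the tensor equivalence $T\text{-}\corr^{\rfin}_A\simeq G_q\text{-}\corr^{\rfin}_{\tilde{A}}$ of Corollary~\ref{coro:descent for G_q}, and invoke Proposition~\ref{prop:fundamental}. The ``delicate point'' you flag---that the Q-system associated to $\ind_T^{G_q}A\subset\ind_T^{G_q}B$ is $\ind_T^{G_q}(B_E)$---is a routine check from the explicit formula for the monoidal structure on $\ind_T^{G_q}$ given after Lemma~\ref{lemm:ind ev}, so there is no hidden difficulty.
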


\subsection{Discrete quantum subgroups}

In this subsection, we develop some applications to discrete quantum groups
and give a classification of finite index discrete quantum subgroup 
of $\hat{G_q}$.

Let $\G$ be a compact quantum group and $\H$ be its quotient.
Then we have an inclusion $C(\H) \subset C(\G)$. In this case we have
a unique $\G$-expectation $\map{E}{C(\G)}{C(\H)}$,
namely the Haar state-preserving conditional expectation.
  
\begin{defn}
Let $\hat{\G}$ be a discrete quantum group and $\hat{\H}$ be a quantum 
subgroup of $\G$. The scalar index of $\map{E}{C(\G)}{C(\H)}$
is called the quantum group index and denoted by $[\hat{\G}\colon\hat{\H}]$.
The quantum subgroup $\hat{\H}$ is said to be 
\emph{finite index} if $[\hat{\G}\colon\hat{\H}]$ is finite.
\end{defn}
\begin{rema}
The index $[\hat{\G}\colon\hat{\H}]$ must be either 
a positive integer or $\infty$. This can be seen from
Corollary \ref{coro:integer degree} if one consider
the basic construction of $C(\H) \subset C(\G)$.
\end{rema}

Beford giving a classification result of finite index quantum subgroup
of $\hat{G_q}$, we prepare the following lemma.
Let $\G$ be a compact quantum group and $\K$ be its maximal 
Kac quantum subgroup. The canonical map from $\cl{O}(\G)$ to $\cl{O}(\K)$
is denoted by $q$.

\begin{lemm}
Let $A$ be a right coideal of $\G$ and $\cl{A}$ be its algebraic core.
If the canonical inclusion $A \subset C(\G)$ is a finite quantum 
$\G$-covering space, we have $q^{-1}(q(\cl{A})) = \cl{A}$.
\end{lemm}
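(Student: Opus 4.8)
The inclusion $\cl{A}\subseteq q^{-1}(q(\cl{A}))$ is immediate, so everything is in the reverse inclusion, and the plan is to make the finiteness of the covering do the work through the rigid category $\rep^{\fin}\cl{O}(\G)$, which by the Example following Theorem~\ref{thrm:main1} is equivalent, via $q$, to $\rep^{\fin}\cl{O}(\K)$.

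First I would unpack the hypothesis module-theoretically: by Theorem~\ref{thrm:characterization} the $\G$-expectation has a quasi-basis inside $\cl{O}(\G)$, so $\cl{O}(\G)=\sum_{i}v_{i}\cl{A}=\sum_{i}\cl{A}w_{i}$ is finitely generated as a left and as a right $\cl{A}$-module. Hence $\cl{O}(\K)=q(\cl{O}(\G))=\sum_{i}q(v_{i})\,q(\cl{A})$, so $q(\cl{A})$ is a right coideal $*$-subalgebra of $\cl{O}(\K)$ over which $\cl{O}(\K)$ is module-finite; since $\K$ is of Kac type the norm closure $B$ of $q(\cl{A})$ in $C(\K)$ is a right coideal of $\K$ and $B\subset C(\K)$ is a finite quantum $\K$-covering space (a $\K$-expectation with finite probabilistic index is produced by averaging, and the projectivity of the relevant Hilbert module over the quantum homogeneous space $B$ is automatic by \cite{DC16}).

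Next I would transport $B$ back along $\K\text{-}\corr^{\rfin}_{C(\K)}\cong\rep^{\fin}\cl{O}(\K)\cong\rep^{\fin}\cl{O}(\G)\cong\G\text{-}\corr^{\rfin}_{C(\G)}$ (Proposition~\ref{prop:corr over C(G)} and the Example after Theorem~\ref{thrm:main1}): the Q-system attached to the basic construction of $B\subset C(\K)$ goes over to a Q-system in $\G\text{-}\corr^{\rfin}_{C(\G)}$, i.e.\ to a finite $\G$-covering $C(\G)\subset\widetilde{B}$, and unwinding the correspondence one should recognise the corresponding intermediate algebra as the norm closure of $q^{-1}(q(\cl{A}))$; in particular this would show that $q^{-1}(q(\cl{A}))$ is closed and is the algebraic core of a right coideal $A'\subseteq C(\G)$ with $A'\subset C(\G)$ again a finite covering. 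I expect this to be the main obstacle: without finiteness $q^{-1}(q(\cl{A}))$ need not even be a coideal, so the whole difficulty is to see, through the above dictionary, that no spurious elements are created and that the intermediate algebra is exactly $\overline{q^{-1}(q(\cl{A}))}$.

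Finally, with $\cl{A}\subseteq\cl{A}'=q^{-1}(q(\cl{A}))$ and $q(\cl{A})=q(\cl{A}')$ in hand, I would conclude by a dimension count: the chain of equivalences above is dimension preserving and the covering degree of a right coideal of $\G$ inside $C(\G)$ is computed by the corresponding object on the $\rep^{\fin}\cl{O}(\K)$ side, which for $A$ and $A'$ is the same; so $A\subset C(\G)$ and $A'\subset C(\G)$ have equal covering degree, and an inclusion of finite $\G$-coverings of $C(\G)$ of equal degree is an equality (the inclusion of the associated Q-systems has equal dimension, hence is an isomorphism). Therefore $q^{-1}(q(\cl{A}))=\cl{A}$.
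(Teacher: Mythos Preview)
Your plan has the right architecture---use the equivalence $\G\text{-}\corr^{\rfin}_{C(\G)}\cong\K\text{-}\corr^{\rfin}_{C(\K)}$ to compare the two sides---but the step you yourself flag as ``the main obstacle'' is a genuine gap, and the dimension count after it inherits the same gap.

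Concretely: after transporting the basic construction of $B=\overline{q(\cl{A})}\subset C(\K)$ through the equivalence you get a finite covering $C(\G)\subset\tilde{B}$ and hence, by the Galois correspondence, some right coideal $A''\subset C(\G)$ with $C(\G)\subset\tilde{B}$ its basic construction. You assert $A''=\overline{q^{-1}(q(\cl{A}))}$, but nothing in the transport says this. Under the identification $\ind_{\K}^{\G}C(\K)\cong C(\G)$ given by $(q\otimes\id)\Delta$, the subalgebra $\ind_{\K}^{\G}B$ corresponds to $\{y\in C(\G):(q\otimes\id)\Delta(y)\in B\otimes C(\G)\}$; applying $\id\otimes\eps$ shows this is \emph{contained in} $q^{-1}(B)$, but the reverse inclusion is exactly what is at stake and does not follow from $B$ being a right coideal of $\K$ alone. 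Likewise, your dimension argument needs that the Q-system attached to $A\subset C(\G)$ on the $\K$ side is the basic construction of $q(A)\subset C(\K)$---again the unproven point.

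The paper avoids this circularity by working on the module-category side rather than by transporting Q-systems directly. It first uses the basic construction $C(\G)\subset\cl{L}_A(C(\G)_E)$ together with Theorem~\ref{thrm:main1} to put a $\rep^{\fin}\K$-module structure on $\G\text{-}\mod^{\fin}_A$ (which is legitimate because $\irr\G\text{-}\mod^{\fin}_A$ is finite). Then Theorem~\ref{thrm:finite descent} shows that the $\rep^{\fin}\G$-module functor $\text{--}\otimes_A C(\G)$ is in fact a $\rep^{\fin}\K$-module functor; via Theorem~\ref{thrm:fundamental theorem} this yields a concrete $\K$-equivariant $*$-homomorphism $\phi\colon A_0\to C(\K)$, with $A\cong\ind_{\K}^{\G}A_0$, and a commutative square relating the inclusion $A\subset C(\G)$ to $\ind_{\K}^{\G}\phi$. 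It is this square---not an abstract dimension count---that gives $q(A)=\phi(A_0)$ and $q^{-1}(\phi(A_0))=A$. The descent theorem for module functors is doing the work that your ``unwinding the correspondence'' leaves unspecified.
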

\begin{proof}
Let $\map{E}{C(\G)}{A}$ be a $\G$-expectation. Such a expectation is
unique and finite index by our assumption. 
Now consider the $\G$-equivariant Hilbert $A$-module $C(\G)_E$.
This gives a $\G$-equivariant imprimitivity bimodule betwee $A$ and 
$B = \cl{L}_A(C(\G)_E)$. In particular we have an equivalence 
$\G\text{-}\mod^{\fin}_A\cong \G\text{-}\mod^{\fin}_B$ of 
$\rep^{\fin}\G$-module category. Moreover, $\irr\G\text{-}\mod^{\fin}_A$ 
is finite since 
$\map{\tend\tensor_{C(\G)} C(\G)_E}{\G\text{-}\mod^{\fin}_{C(\G)}}{\G\text{-}\mod^{\fin}_A}$ is adjointable and $\irr\G\text{-}\mod^{\fin}_{C(\G)}$
consists of only one element.
 
On the other hand, since $C(\G) \subset B$ is a finite quantum $\G$-covering
space, Theorem \ref{thrm:main1} implies that there is a finite quantum
$\K$-covering space $C(\K) \subset B_0$ such that $C(\G) \subset B$
is isomorphic to $\ind_{\K}^{\G} C(\K) \subset \ind_{\K}^{\G} B_0$.
In particular we have an equivalence 
$\G\text{-}\mod^{\fin}_B\cong \K\text{-}\mod^{\fin}_{B_0}$ as 
$\rep^{\fin}\G$-module categories. Hence we have a structure of 
$\rep^{\fin}\K$-module category on $\G\text{-}\mod^{\fin}_A$.
Since $\irr\G\text{-}\mod^{\fin}_A$ is finite, Theorem 
\ref{thrm:finite descent} implies that $\tend\tensor_A \cl{O}(\G)$
is a $\rep^{\fin}\K$-module functor from $\G\text{-}\mod^{\fin}_A$
to $\G\text{-}\mod^{\fin}_{C(\G)}\cong \K\text{-}\mod^{\fin}_{C(\K)}$.

Take a quantum homogeneous space $A_0$ of $\K$ corresponding to
an irreducible object $A$ of a $\rep^{\fin}\K$-module category 
$\G\text{-}\mod^{\fin}_A$. If we fix an identification 
$\G\text{-}\mod^{\fin}_A \cong \K\text{-}\mod^{\fin}_{A_0}$,
we have a corresponding $\G$-equivariant isomorphism 
$A \cong \ind_{\K}^{\G}A_0$. Moreover $\tend\tensor_A \cl{O}(\G)$
induces a $\rep^{\fin}\K$-module functor from $\K\text{-}\mod^{\fin}_{A_0}$
to $\K\text{-}\mod^{\fin}_{C(\K)}$. By Theorem 
\ref{thrm:fundamental theorem}, we have a corresponding 
$M \in \K\text{-}\corr^{\rfin}_{A_0,C(\K)}$, which satisfies 
$A_0\tensor_{A_0}M \cong C(\K)$. Hence $M$ can be thought as $C(\K)$
as a $\K$-equivariant Hilbert $C(\K)$-module. Then we have a 
$\K$-equivariant \star-homomorphism $\map{\phi}{A_0}{C(\K)}$. 
By our construction, this map makes the following diagram commutative:
\[
 \xymatrix
{
\ind_{\K}^{\G}A_0 \ar[r]^-{\ind_{\K}^{\G}\phi} \ar[d]_-{\cong} & \ind_{\K}^{\G}C(\K) \ar[d]^-{\eps\tensor \id} \\
A \ar[r] & C(\G).
}
\]
This implies that $q(A) = \phi(A_0)$ and $q^{-1}(\phi(A_0)) = A$.
\end{proof}

Now we give a classification theorem of finite index discrete quantum subgroup of $\hat{G_q}$. Let $P$ (resp. $Q$) be the weight (resp. root) lattice.
We naturally identifies $P$ with the Pontrjagin dual of $T$. 

\begin{thrm} \label{thrm:classification of subgroup}
There is a one-to-one correspondence between the set of 
finite index discrete quantum subgroups of $\hat{G_q}$ and the set of 
subgroups of $P/Q$. 
\end{thrm}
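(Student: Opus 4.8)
The plan is to attach to a discrete quantum subgroup $\hat{\H}\le\hat{G_q}$ the right coideal $A=C(\H)\subseteq C(G_q)$, to show that finiteness of the index forces $A$ to contain $C(T\backslash G_q)$, and then to classify the sub\nobreakdash-bialgebras of $C(G_q)$ containing $C(T\backslash G_q)$ using the representation theory of $G_q$; matching these with subgroups of $P/Q$ will give the correspondence.

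First I would set up the dictionary. A discrete quantum subgroup $\hat{\H}\le\hat{G_q}$ is the same datum as a coproduct\nobreakdash-preserving unital inclusion $C(\H)\subseteq C(G_q)$ (equivalently, by Woronowicz duality, a \cstar-tensor full subcategory of $\rep^{\fin}G_q$), and in particular $C(\H)$ is a sub\nobreakdash-bialgebra and $C(\H)^{G_q}=\C1$, so it is a quantum homogeneous space. For the Haar state preserving (hence $G_q$-equivariant) conditional expectation $\map{E}{C(G_q)}{C(\H)}$ one has $[\hat{G_q}:\hat{\H}]=\Index^s E$, and this is finite exactly when $C(\H)\subseteq C(G_q)$ is a finite quantum $G_q$-covering space over $C(\H)$; by Corollary~\ref{coro:integer degree} it is then a positive integer. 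So the task is to classify finite quantum $G_q$-covering sub\nobreakdash-bialgebras of $C(G_q)$.

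The key step is to show that every finite index $\hat{\H}$ satisfies $C(T\backslash G_q)\subseteq C(\H)$. Here I would invoke that $T$ is the maximal Kac quantum subgroup of $G_q$, so the lemma preceding the theorem applies with $q\colon\cl{O}(G_q)\to\cl{O}(T)$ the canonical surjection: the algebraic core $\cl{A}$ of $C(\H)$ satisfies $q^{-1}(q(\cl{A}))=\cl{A}$. A one\nobreakdash-line computation shows $q$ sends $\cl{O}(T\backslash G_q)$ into $\C1$ (if $\ell_T(x)=1\tensor x$ then applying $\id\tensor\eps$ gives $q(x)=\eps(x)1$), and since $1\in\cl{A}$ we have $\C1\subseteq q(\cl{A})$; hence $\cl{O}(T\backslash G_q)\subseteq q^{-1}(q(\cl{A}))=\cl{A}$, and after closure $C(T\backslash G_q)\subseteq C(\H)$. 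Next, let $\cl{D}\subseteq\rep^{\fin}G_q$ be the full tensor subcategory corresponding to $C(\H)$. The inclusion $C(T\backslash G_q)\subseteq C(\H)$ together with $\Delta$-closedness of $C(\H)$ forces every irreducible $V_\lambda$ with $\lambda$ in the root lattice $Q$ to lie in $\cl{D}$: such $V_\lambda$ are precisely those with a zero weight vector, and their zero\nobreakdash-weight matrix coefficients already appear in $\cl{O}(T\backslash G_q)$, whence by applying $\Delta$ and using linear independence of matrix coefficients one gets all coefficients of $V_\lambda$ inside $C(\H)$. Using the fusion rules of $G_q$ (which coincide with those of $G$), if $V_\mu\in\cl{D}$ then $V_\rho\in\cl{D}$ for every $\rho\equiv\mu\pmod{Q}$, because such $V_\rho$ occur inside $V_\mu\tensor V_\nu$ for a suitable $V_\nu$ of trivial central character. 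Hence $\cl{D}=\{V_\rho:\bar{\rho}\in H\}$ for the subgroup $H=\{\bar{\lambda}:V_\lambda\in\cl{D}\}\le P/Q$, i.e.\ $C(\H)$ is the sum of the central\nobreakdash-character\nobreakdash-$c$ components of $C(G_q)$ over $c\in H$. Conversely each such $C(G_q)^{[H]}$ is a sub\nobreakdash-bialgebra; its index is $[P/Q:H]$, which is finite because the $P/Q$-grading on $C(G_q)$ is the eigenspace decomposition for the action of the finite central subgroup $Z(G)\le T$ (with $\widehat{Z(G)}\cong P/Q$), so $C(G_q)$ is a finite direct sum of finitely generated projective Hilbert $C(G_q)^{[H]}$-modules. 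This yields the bijection $H\leftrightarrow\hat{\H}$.

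I expect the main obstacle to be the central step, extracting from the coideal lemma (and the identification of $T$ as the maximal Kac subgroup) that $C(T\backslash G_q)$ lies inside every finite index $C(\H)$, and on the converse side verifying that the complementary graded pieces of $C(G_q)$ over $C(G_q)^{[H]}$ are genuinely finitely generated projective, which is what makes the inclusion a covering of finite integer index rather than merely an inclusion of algebras; the intermediate \cstar-tensor-categorical bookkeeping (passing between $C(\H)$, its algebraic core, and the subcategory $\cl{D}$) is routine but must be carried out carefully.
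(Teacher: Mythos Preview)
Your proposal is correct and follows the same overall strategy as the paper: both arguments hinge on the preceding lemma that for a finite-index right coideal $A\subset C(G_q)$ one has $q^{-1}(q(\cl A))=\cl A$, where $q\colon\cl O(G_q)\to\cl O(T)$ is the canonical surjection. The execution differs slightly. The paper applies the lemma in full strength, writing $C(\H)=q^{-1}(C(H))$ for the quotient $H$ of $T$ with $C(H)=q(C(\H))$; this immediately identifies $\rep^{\fin}\H$ with the irreducibles whose weights lie in the sublattice $\hat H\supseteq Q$, and injectivity of $\hat\H\mapsto \hat H/Q$ follows. You instead extract only the consequence $C(T\backslash G_q)\subseteq C(\H)$ (via $q(\cl O(T\backslash G_q))=\C1$) and then rebuild the same conclusion by a fusion-rule argument. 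Both routes are valid; the paper's is shorter. For surjectivity the paper observes that the candidate inclusion is induced from the finite-index inclusion $C(\hat\Lambda)\subset C(T)$, whereas you use the $P/Q$-grading coming from the finite central subgroup $Z(G)$; again both work, yours giving the index more explicitly. One minor remark: in your step ``applying $\Delta$ and using linear independence'' the coproduct is not needed---Peter--Weyl orthogonality alone shows that a single nonzero matrix coefficient of an irreducible $V_\lambda$ lying in $\cl O(\H)$ forces $V_\lambda\in\cl D$.
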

\begin{proof}
Let $\map{q}{C(G_q)}{C(T)}$ be the canonical map.
Take a finite index discrete quantum subgroup $\hat{\H} \subset \hat{G_q}$.
The representation category $\rep^{\fin}\H$ can be thought as
a full subcategory of $\rep^{\fin} G_q$.

Consider the image $q(C(\H)) \subset C(T)$. This defines a compact 
quantum group $H$ if it is equipped with the restriction of 
the coproduct on $C(T)$. Then $H$ is a quotient compact group of 
$T$ and gives a full subcategory 
$\rep^{\fin}H$ of $\rep^{\fin}T$. The property $q(C(\H)) = C(H)$ implies 
that $\rep^{\fin} H$ consists of $\pi \in \rep^{\fin}T$ which appears
as a direct summand of $\rho|_T$ for some 
$\rho \in \rep^{\fin}\H \subset \rep^{\fin}G_q$. Combining this with
the highest weight theory, we can see that $\hat{H} \subset \hat{T} = P$
must contains the root lattice $Q$.

On the other hand, we have $q^{-1}(C(H)) = C(\H)$. This implies that
$\rep^{\fin}\H$ consists of $\pi \in \rep^{\fin}G_q$ such that 
$\pi|_T \in \rep^{\fin} H \subset \rep^{\fin} T$. Hence $\rep^{\fin}\H$
is the full subcategory of $\rep^{\fin}G_q$ consisting of 
$\pi \in \rep^{\fin}G_q$ whose weights are elements of $\hat{H}$.
These obserbations imply that $\hat{\H}\longmapsto \hat{\H}/Q$ is an injective map
from the set of finite index discrete quantum subgroups of $\hat{G_q}$
to the set of subgroups of $P/Q$. 

To show the surjectivity, take a subgroup $\Lambda \subset P$ containing
$Q$. Let $\cl{C}$ be the full subcategory of $\rep^{\fin}G_q$ consisting
of $\pi \in \rep^{\fin}G_q$ whose weights are elements of $\Lambda$.
Then this is a rigid \cstar-tensor category, hence we have a 
discrete quantum group $\hat{\H}$ of $\hat{G_q}$ such that 
$\rep^{\fin}\H \cong \cl{C}$. Moreover the inclusion 
$C(\H) \subset C(G_q)$ is induced from $C(\hat{\Lambda}) \subset C(T)$.
Since $Q \subset P$ is finite index, this inclusion has a
$T$-expectation with finite index. Hence $C(\H) \subset C(G_q)$ has
a $G_q$-expectation with finite index i.e. $\hat{\H}$ is finite index 
$\hat{G_q}$. Since the image of $\hat{\H}$ under the map defined in
the previous paragraph is $\Lambda$, we have the surjectivity.
\end{proof}

\appendix
\section{}
\subsection{Proof of Propostion \ref{prop:automatic continuity}.}
\label{subsection:A1}

\begin{lemm} \label{lemm:fixed points are non-degenerate}
Let $A$ be a $\G$-\cstar-algebra. Then its fixed point subalgebra
$A^{\G}$ is non-degenerate in $A$. Moreover, for any $a \in \cl{A}$,
we have $x \in A^{\G}$ and $b \in \cl{A}$ such that $a = xb$.
\end{lemm}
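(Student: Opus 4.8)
The plan is to prove the stronger ``moreover'' statement first — that every $a\in\cl{A}$ factors as $a=xb$ with $x\in A^{\G}$, $b\in\cl{A}$ — and then read off non-degeneracy: since $A^{\G}\cl{A}\subseteq\cl{A}$ (because $\alpha(xb)=(x\tensor 1)\alpha(b)\in A\tensor\cl{O}(\G)$ for $x\in A^{\G}$, $b\in\cl{A}$) and $\cl{A}$ is dense in $A$, the factorization gives $A=\overline{\cl{A}}\subseteq\overline{A^{\G}A}$.

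Before the factorization I would record a bit of finite-dimensionality. For a finite set $F\subseteq\irr\G$ let $C^{*}_{F}\subseteq\cl{O}(\G)$ be the finite-dimensional span of the matrix coefficients of the $\pi\in F$, and put $\cl{A}_{F}=\alpha^{-1}(A\tensor C^{*}_{F})$. As $C^{*}_{F}$ is finite-dimensional, $A\tensor C^{*}_{F}$ is closed in $A\tensor C(\G)$, so $\cl{A}_{F}$ is a \emph{closed} subspace of $A$; moreover $\cl{A}_{F}\subseteq\cl{A}$, it is stable under left multiplication by $A^{\G}$, and every $a\in\cl{A}$ lies in some $\cl{A}_{F}$ (decompose the second legs of $\alpha(a)$ into matrix coefficients). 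So it is enough to factor a fixed $a\in\cl{A}_{F}$ as $a=xb$ with $x\in A^{\G}$ and $b\in\cl{A}_{F}$.

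The hard part, and the key to the whole argument, is a Pimsner--Popa type inequality for the canonical $\G$-expectation $\map{E_{0}=(\id\tensor h)\alpha}{A}{A^{\G}}$: for each $a\in\cl{A}$ there is $\mu=\mu(a)<\infty$ with $aa^{*}\le\mu\,E_{0}(aa^{*})$. To prove it, write $\alpha(a)=\sum_{k=1}^{N}a_{k}\tensor x_{k}$ with $a_{k}\in A$ and $x_{1},\dots,x_{N}\in\cl{O}(\G)$ linearly independent; since $\alpha(aa^{*})=\alpha(a)\alpha(a)^{*}=\sum_{k,l}a_{k}a_{l}^{*}\tensor x_{k}x_{l}^{*}$, applying $\id\tensor\eps$ and $\id\tensor h$ (using $(\id\tensor\eps)\alpha=\id$ on $\cl{A}$) gives $aa^{*}=\sum_{k,l}a_{k}a_{l}^{*}\,\eps(x_{k})\overline{\eps(x_{l})}$ and $E_{0}(aa^{*})=\sum_{k,l}a_{k}a_{l}^{*}\,h(x_{k}x_{l}^{*})$. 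Now $H=(h(x_{k}x_{l}^{*}))_{k,l}$ is the Gram matrix of the linearly independent family $(x_{k}^{*})_{k}$ in $L^{2}(\G,h)$, hence positive \emph{definite} because $h$ is faithful, while $P=(\eps(x_{k})\overline{\eps(x_{l})})_{k,l}$ is positive semidefinite of rank at most $1$; so $\mu H-P\ge 0$ once $\mu$ is large enough. Since $\sum_{k,l}a_{k}a_{l}^{*}M_{kl}\ge 0$ in $A$ for every positive semidefinite $M\in M_{N}(\C)$ (write $M=\sum_{j}v_{j}v_{j}^{*}$), applying this to $M=\mu H-P$ yields the inequality. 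I expect locating and packaging this observation — that on the finite-dimensional space of matrix coefficients occurring in $\alpha(a)$ the $L^{2}(h)$-norm dominates $\abs{\eps(\cdot)}$, with a constant depending on $a$ — to be the main obstacle.

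To finish, I would convert the inequality into an exact factorization. Set $x_{0}=E_{0}(aa^{*})\in A^{\G}$, so $aa^{*}\le\mu x_{0}$. The functional-calculus estimate $\nor{\delta(x_{0}+\delta)^{-1}a}^{2}=\nor{\delta^{2}(x_{0}+\delta)^{-1}aa^{*}(x_{0}+\delta)^{-1}}\le\mu\nor{\delta^{2}(x_{0}+\delta)^{-2}x_{0}}\le\mu\delta/4$ shows that $x_{0}(x_{0}+\delta)^{-1}a\to a$ in norm as $\delta\downarrow 0$; as $x_{0}(x_{0}+\delta)^{-1}\in A^{\G}$, this proves $a\in\overline{A^{\G}a}$, and running it over all $a\in\cl{A}_{F}$ shows that $\cl{A}_{F}$ is a non-degenerate Banach left module over the C*-algebra $A^{\G}$. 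Cohen's factorization theorem then supplies $x\in A^{\G}$ and $b\in\overline{A^{\G}a}\subseteq\cl{A}_{F}\subseteq\cl{A}$ with $a=xb$, completing the proof.
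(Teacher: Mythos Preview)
Your proof is correct and shares the overall architecture with the paper's argument --- both isolate the closed $A^{\G}$-stable subspace $\cl{A}_F$, show it is an essential left $A^{\G}$-module, and then invoke Cohen--Hewitt --- but the crucial essentiality step is handled by a genuinely different mechanism. The paper first proves non-degeneracy of $A^{\G}\subset A$ via a Hopf-algebraic identity: writing $\alpha(a)=\sum_i a_{0}^{i}\tensor a_{1}^{i}$ and using $a\tensor 1=\sum_i\alpha(a_{0}^{i})(1\tensor S(a_{1}^{i}))$, it shows that for any $e\in A$ close to a unit the averaged element $(\id\tensor h)(\alpha(e))\in A^{\G}$ satisfies $(\id\tensor h)(\alpha(e))\,a\approx a$; once $A^{\G}$ is non-degenerate in $A$, its approximate unit automatically acts as one on the closed submodule $A_F$, and Cohen--Hewitt gives the factorization. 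You reverse the order and replace the antipode trick by a quantitative Pimsner--Popa bound $aa^{*}\le\mu\,E_{0}(aa^{*})$, obtained from the positive-definiteness of the $L^{2}(h)$-Gram matrix of the (linearly independent) matrix coefficients appearing in $\alpha(a)$; functional calculus on $x_{0}=E_{0}(aa^{*})$ then yields $a\in\overline{A^{\G}a}$ directly, after which Cohen--Hewitt and non-degeneracy follow. The paper's route is shorter and leans on the antipode; yours avoids the antipode entirely, uses only faithfulness of $h$, and produces an inequality that may be of independent use.
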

\begin{proof}
Let $\cl{A}$ be the algebraic core of $A$. For
the non-degeneracy of $A^{\G} \subset A$, it suffices to show that 
any $a \in \cl{A}$ can be approximated by an element of 
the form $xb$ with $x \in A^{\G}$ and $b \in \cl{A}$. 
Take an arbitrary $\eps > 0$ and set 
$\alpha(a) = \sum_{i = 1}^n a_{0}^i\tensor a_1^{i}$. Then we can find
$e \in A$ such that $\nor{ea_0^i - a_0^i} < \eps(n\nor{S(a_1^i)})^{-1}$ 
for $1 \le i \le n$. Now we have
\begin{align*}
 \alpha(e)(a\tensor 1) - a\tensor 1 
= \sum_{i = 1}^n \alpha(ea_0^i - a_0^i)(1\tensor S(a_1^i)).
\end{align*}
Hence we also have
\begin{align*}
 \nor{(\id\tensor h)(\alpha(e))a - a} 
 \le \nor{\alpha(e)(a\tensor 1) - a\tensor 1}
 \le \sum_{i = 1}^n \nor{ea_0^i - a_0^i}\nor{S(a_1^i)}
 < \eps.
\end{align*}
Since $(\id\tensor h)(\alpha(e))$ is in $A^{\G}$, this inequality
completes the proof of the first half.

Next we show tha latter half of our statement. Take an irreducible
representation $\pi$ of $\G$ and let $C(\G)_{\pi}$ be the linear
span of matrix coefficients of $\pi$. Moreover set $A_{\pi}$ as follows:
\[
 A_{\pi} = \{x \in A\mid \alpha(x) \in A\tensor C(\G)_{\pi}\}.
\]
Then the following properties hold as shown in {\cite[Section 3]{DC16}}:
\begin{enumerate}
 \item $\cl{A} = \bigoplus_{\pi \in \irr\G} A_{\pi}$.
 \item Each $A_{\pi}$ is a closed $A^{\G}$-subbimodule of $A$.
 \item The projection onto $A_{\pi}$ with respect to the decomposition in 
(i) extends to a continuous projection $\map{E_{\pi}}{A}{A_{\pi}}$.
\end{enumerate} 

By using (i), it can be seen that (ii) and (iii) hold for
$A_F = \sum_{\pi \in F} A_{\pi}$, where $F$ is a finite subset of $\irr\G$. 

Take an arbitrary $a \in \cl{A}$ and a finite subset 
$F \subset \irr\G$ such that $a \in A_F$. Since $A^{\G}$ is 
non-degenerate in $A$, an approximate unit
of $A^{\G}$ acts as an approximate unit of the left Banach $A^{\G}$-module
$A_F$. Then we can take $x \in A^{\G}$ and $b \in A_F$ such that
$a = xb$ by using the Cohen-Hewitt factorization theorem ({\cite[Chapter I, Section 11, Theorem 10]{BD73}}). Since
$A_F \subset \cl{A}$, this completes the proof. 
\end{proof}

\begin{prop}[Proposition \ref{prop:automatic continuity}]
Let $A$ and $B$ be $\G$-\cstar-algebras with the algebraic cores
$\cl{A}$ and $\cl{B}$ respectively. If $\map{\phi}{\cl{A}}{\cl{B}}$ is 
$\G$-equivariant and completely positive as a map from $\cl{A}$ to $B$, 
it extends to a $\G$-equivariant c.p.~map from $A$ to $B$.
\end{prop}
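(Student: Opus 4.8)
The plan is to realise $\phi$ as the compression of an equivariant dilation constructed directly out of the dense subalgebra $\cl{A}$. After adjoining a unit (the standard unitisation of a $\G$-\cstar-algebra is again one, with $\G$-fixed unit, complete positivity of $\phi$ is preserved, and a $\G$-equivariant c.p.\ extension of the unitised map restricts to the one we want) we may assume $1_{A}\in\cl{A}$. Using only complete positivity, I would form a Hilbert $B$-module $E_{\phi}$ by equipping the algebraic tensor product $\cl{A}\otimes B$ with the $B$-valued semi-inner product $\ip{a\otimes b,a'\otimes b'}=b^{*}\phi(a^{*}a')b'$ --- which is positive semidefinite exactly because $\phi$ is c.p.\ --- and then passing to the separated completion. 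The equivariance identity $(\phi\otimes\id)\alpha_{A}=\beta_{B}\phi$ shows that $a\otimes b\longmapsto\alpha_{A}(a)_{13}\beta_{B}(b)_{23}$ is isometric for the relevant inner products, so it descends to a coaction $\alpha_{E_{\phi}}\colon E_{\phi}\to E_{\phi}\otimes C(\G)$ turning $E_{\phi}$ into a $\G$-equivariant Hilbert $B$-module; and $V\colon B\to E_{\phi}$, $Vb=1_{A}\otimes b$, is a $\G$-equivariant adjointable map with $V^{*}(a\otimes b)=\phi(a)b$.

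The heart of the matter is to show that the algebraic left action $L_{a}(a'\otimes b)=(aa')\otimes b$ of $\cl{A}$ on the dense subspace is by bounded operators. On the fixed-point subalgebra this is immediate: for $x\in A^{\G}$ the element $y:=(\nor{x}^{2}1-x^{*}x)^{1/2}$ lies in the \cstar-algebra $A^{\G}\subseteq\cl{A}$, so $\bigl[\nor{x}^{2}\phi(a_{i}^{*}a_{j})-\phi(a_{i}^{*}x^{*}xa_{j})\bigr]_{ij}=\bigl[\phi((ya_{i})^{*}(ya_{j}))\bigr]_{ij}\geq0$ in $M_{n}(B)$ by complete positivity, which gives $\nor{L_{x}}\leq\nor{x}$ and hence a nondegenerate \star-representation $\rho_{0}\colon A^{\G}\to\cl{L}_{B}(E_{\phi})$. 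To promote this to a bounded representation of all of $A$ I would use Lemma \ref{lemm:fixed points are non-degenerate}, which writes any $a\in\cl{A}$ as $a=xb$ with $x\in A^{\G}$, $b\in\cl{A}$, and (via Cohen--Hewitt) with $\nor{x}\leq1$, $b$ in the same finite isotypic slab as $a$, and $\nor{b}$ arbitrarily close to $\nor{a}$. Combining the factorisation $\alpha_{A}(a)=(x\otimes1)\alpha_{A}(b)$ with the intertwining $\alpha_{E_{\phi}}\circ L_{a}=\lambda(a)\circ\alpha_{E_{\phi}}$ (where $\lambda(a)$ is left multiplication by $\alpha_{A}(a)$ on $E_{\phi}\otimes C(\G)$), the boundedness of $\rho_{0}(x)\otimes\id$, and the faithfulness of the Haar state of $\G$ in the spirit of the proof of Lemma \ref{lemm:absorption}, one should conclude that $L$ extends to a bounded $\G$-equivariant \star-representation $\rho\colon A\to\cl{L}_{B}(E_{\phi})$ agreeing with $\rho_{0}$ on $A^{\G}$.

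Granting this, I would set $\bar{\phi}(a):=V^{*}\rho(a)V$ for $a\in A$: it is completely positive, it agrees with $V^{*}L_{a}V=\phi(a)$ on $\cl{A}$ so it extends $\phi$, its range lies in $B$ because $\bar{\phi}$ is continuous with dense range inside the closed set $B$, and its $\G$-equivariance follows from that of $V$ and $\rho$ through the relations $\tilde{\alpha}_{E_{\phi}}\rho(a)=(\rho\otimes\id)\alpha_{A}(a)$ and $\alpha_{E_{\phi}}V=(V\otimes\id)\beta_{B}$. The step I expect to be the real obstacle is the boundedness claim in the previous paragraph: the recursion obtained by naively iterating $a=xb$ does not terminate, since the isotypic slab containing $b$ need not shrink, so one genuinely has to exploit the equivariant Hilbert-module structure of $E_{\phi}$ together with a Fell-absorption-type argument using the regular representation of $\G$, and making this bookkeeping precise is the technical core of the proof.
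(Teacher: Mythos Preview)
Your overall framework---the KSGNS construction on $\cl{A}\otimes B$, reducing the problem to boundedness of the left action $L$ of $\cl{A}$ on $E_{\phi}$, and handling $A^{\G}$ first---is exactly the paper's strategy. You have also correctly located the real obstacle. The Fell-absorption manoeuvre you sketch is, however, circular as stated: the operator you call $\lambda(a)=(L\otimes\text{mult})(\alpha_{A}(a))$ on $E_{\phi}\otimes C(\G)$ is only well defined once every $L_{a'}$ is already known to be bounded, so the factorisation $\alpha_{A}(a)=(x\otimes1)\alpha_{A}(b)$ merely trades one unbounded-looking operator for another.

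The paper closes this gap by a different device drawn from the spectral decomposition of $\cl{A}$ (this is the content of \cite[Proposition~4.1]{DC16}): for any $a\in\cl{A}$ there exist $v_{1}=a,\,v_{2},\dots,v_{n}\in\cl{A}$ with $\sum_{i}v_{i}^{*}v_{i}\in A^{\G}$. Granting this, one has
\[
\ip{L_{a}\xi,L_{a}\xi}_{B}\le\sum_{i}\ip{L_{v_{i}}\xi,L_{v_{i}}\xi}_{B}=\ip{\xi,L_{\sum_{i}v_{i}^{*}v_{i}}\xi}_{B}\le\Bigl\lVert\sum_{i}v_{i}^{*}v_{i}\Bigr\rVert\,\ip{\xi,\xi}_{B},
\]
the last inequality being precisely your bound on $A^{\G}$. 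This gives boundedness of each $L_{a}$ with no control over the constant. Only now does the absorption unitary become legitimately available, and the paper uses it to show that $a\mapsto\nor{L_{a}}$ is a \cstar-norm on $\cl{A}$ for which $\alpha$ is isometric; uniqueness of such a norm (\cite[Proposition~4.4]{DY13}) forces $\nor{L_{a}}=\nor{a}$, so $L$ extends to $A$. The paper never unitises (it replaces $\phi$ by the faithful map $\phi\oplus\id$ instead) and does not finish via a compression $V^{*}\rho V$; it deduces continuity of $x\mapsto\phi(axa')$ from that of $L$, and then upgrades this to continuity of $\phi$ itself by a second Cohen--Hewitt factorisation, applied to null sequences lying in finite sums of isotypic components viewed as an $A^{\G}$-bimodule.
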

\begin{proof}
 By replacing $\phi$ by $\map{\phi\oplus\id}{\cl{A}}{\cl{B}\oplus\cl{A}}$,
we may assume $\phi$ is faithful in the sense that $\phi(x^*x) = 0$ if
and only if $x = 0$. 

By our assumption, we can define a $B$-valued 
semi-inner product on $\cl{A}\tensor B$ as follows:
\[
 \ip{a\tensor b, a'\tensor b'}_{B} = b^*\phi(a^*a')b'.
\]
Let $E$ be a Hilbert $B$-module obtained by taking a completion of 
$\cl{A}\tensor B$. We show that $a \in \cl{A}$ defines a adjointable
right $B$-module map $L_a$ on $E$ by $L_a(a'\tensor b') = aa'\tensor b'$.
Take an arbitrary element 
$\sum_{i = 1}^n a_i\tensor b_i \in \cl{A}\tensor B$.
Then Lemma \ref{lemm:fixed points are non-degenerate} implies that
we have a presentation $a_i = x_ia_i'$ with $x_i \in A^{\G}$ and 
$a_i' \in \cl{A}$. Since $A^{\G}$ is a \cstar-algebra,
for any $a \in A^{\G}$ we can take 
$X \in A^{\G}\tensor M_n(\C)$ such that
\[
 \mbf{x}^*a^*a\mbf{x} + X^*X = \nor{a}^2\mbf{x}^*\mbf{x},
\]
where $\mbf{x} = (x_1\,x_2\,\dots\,x_n)$. Then the complete positivity
of $\phi$ implies that
\begin{align*}
 \ip{\sum_{i = 1}^n aa_i\tensor b_i,\sum_{j = 1}^n aa_j\tensor b_j}_{B}
&= \sum_{i,j = 1}^n b_i^*\phi(a_i'^*x_i^*a^*ax_ja_j')b_j \\
&\le \nor{a}^2 \sum_{i,j = 1}^n b_i\phi(a_i'^*x_i^*x_ja_j')b_j \\
&= \nor{a}^2 \ip{\sum_{i = 1}^n a_i\tensor b_i, \sum_{j = 1}^n a_j\tensor b_j}_{B}. 
\end{align*}
Hence $L_a$ is well-defined and bounded. 

For general $a \in \cl{A}$, we use the argument in the proof of 
{\cite[Proposition 4.1]{DC16}}. Then we can take a family
$(v_i)_{i = 1}^n \subset \cl{A}$ such that $a = v_1$ and 
$\sum_i v_i^*v_i \in A^{\G}$, 
which gives an estimate from above as follows:
\[
 \ip{a\xi,a\xi}_{B} 
\le \ip{\xi,\sum_{i = 1}^nv_i^*v_i\xi}_{B}
\le \nor{\sum_{i = 1}^n v_i^*v_i}\ip{\xi,\xi}_{B}.
\]
This proves the boundedness of $L_a$. The adjointability can be
seen from the following:
\[
 \ip{aa_1\tensor b_1, a_2\tensor b_2}_{B}
= \ip{a_1\tensor b_1,a^*a_2\tensor b_2}_{B}.
\]
In particular $L_a^* = L_{a^*}$ holds, hence $a \longmapsto L_a$
defines a \star-homomorphism $\map{L}{\cl{A}}{\cl{L}_{B}(E)}$. 
This map is faithful since $\phi$ is faithful.

Next we show that there is a unitary 
$\map{V}{E\tensor_{\beta}(B\tensor C(\G))}{E\tensor C(\G)}$ which
satisfies the following for any $a \in \cl{A},\,b,c \in B$ and $x \in C(\G)$.
\[
 V((a\tensor b)\tensor (c\tensor x)) 
= \alpha(a)_{13}\beta(b)_{23}(1\tensor c\tensor x).
\]
Tha well-definedness and the boundedness of this map can be seen as 
follows:
\begin{align*}
& \ip{V((a\tensor b)\tensor (c\tensor x)), V((a'\tensor b')\tensor (c'\tensor x'))}_{B\tensor C(\G)}\\
&= (c^*\tensor x^*)\beta(b^*)(\phi\tensor\id)(\alpha(a^*a'))\beta(b')(c'\tensor x')\\
&= (c^*\tensor x^*)\beta(b^*\phi(a^*a')b')(c'\tensor x')\\
&= \ip{(a\tensor b)\tensor (c\tensor x), (a'\tensor b')\tensor (c'\tensor x')}_{B\tensor C(\G)}.
\end{align*}
In particular $V$ is an isometry, hence it suffices to show that
the range of $V$ is dense in $E\tensor C(\G)$. 
This follows from 
$\aspan \alpha(\cl{A})(\C\tensor \cl{O}(\G)) = \cl{A}\tensor\cl{O}(\G)$
and $\aspan \beta(\cl{B})(\C\tensor \cl{O}(\G)) = \cl{B}\tensor \cl{O}(\G)$.

Let $\map{\lambda}{C(\G)}{\cl{L}_{C(\G)}(C(\G))}$ be a \star-homomorphism
defined by the left multiplication. Then we have 
$V(L_a\tensor_{\beta} 1)V^* = (L\tensor \lambda)\alpha(a)$ 
for any $a \in \cl{A}$. This implies the following: 
\[
 \nor{L_a} 
= \nor{L_a\tensor_{\beta} 1} 
= \nor{(L\tensor \lambda)(\alpha(a))} 
= \nor{(L\tensor\id)(\alpha(a))}.
\]
We use the faithfullness of $\beta$ and $\lambda$ at the first and third
equality respectively. Since
$L$ is faithful, $a \in \cl{A} \longmapsto \nor{L_a}$ defines 
a \cstar-norm on $\cl{A}$. The above equality implies the completion
of $\cl{A}$ with respect to this norm has an action of $\G$ induced by
$\alpha$. Then {\cite[Proposition 4.4]{DY13}} implies that
this norm coincides with the original norm of $A$. Hence $L$ extends
to a \star-homomorphism $\map{L}{A}{\cl{L}_{B^{\G}}(E)}$.

Then we have the following inequality for $x,a \in \cl{A},\,b \in B$:
\begin{align*}
 \nor{b^*\phi(a^*xa)b} 
&= \nor{\ip{a\tensor b,L_x(a\tensor b)}_B} \\
&\le\nor{x}\nor{a\tensor b}^2 
\le\nor{x}\nor{b}^2\nor{\phi(a^*a)}.
\end{align*}
By using an approximate unit of $B$, we also have the following inequality
for $x,a \in \cl{A}$:
\[
 \nor{\phi(a^*xa)}\le \nor{x}\nor{\phi(a^*a)}.
\]
Hence $x \longmapsto \phi(a^*xa)$ is continuous for any $a \in \cl{A}$.
By using the polarization identity, we also see that
$\phi_{a,a'}\colon x \longmapsto \phi(axa')$ is continuous for 
any $a,a' \in \cl{A}$.

To show the continuity of $\phi$, take a null sequence 
$(x_n)_{n}$ in $\cl{A}$. Then we can find a sequence 
$(F_n)_{n}$ of finite subsets of $\irr\G$ such that
$x_n \in A_{F_n}$, here $A_{F_n}$ is the subspace of $A$ as in the proof
of Lemma \ref{lemm:fixed points are non-degenerate}. Set 
$M = c_0\text{-}\bigoplus_n A_{F_n}$. Then $A^{\G}$ acts on $M$ from 
the left and the right. Moreover an approximate unit of $A^{\G}$ acts 
as a bounded approximate unit on the both sides of $M$, since 
$A^{\G}$ is non-degenerate in $A$. Hence we can apply the 
Cohen-Hewitt factorization theorem ({\cite[Chapter I, Section 11, Theorem 10]{BD73}}) to $M$ as a $A^{\G}$-bimodule.

Now consider $(x_n)_n$ as an element $x \in M$. Then we can find
$a,\,a'$ and $y \in M$ such that $x = aya'$ i.e. we can find 
a null sequence $(y_n)_n \subset \cl{A}$ such that $x_n = ay_na'$ 
for all $n$. The continuity of $\phi_{a,a'}$ implies that
$\phi(x_n) = \phi(ay_na') = \phi_{a,a'}(y_n)$ converges to $0$ in norm.
Hence $\phi$ is continuous with respect to the norm-topology and 
extends to $A$. The complete positivity and $\G$-equivariance
follows from the corresponding property for $\phi$.
\end{proof}

\subsection{Proof of Proposition \ref{prop:finiteness}}
\label{subsection:A2}

\begin{prop}[Proposition \ref{prop:finiteness}]
Let $\map{E}{B}{A}$ be a conditional expectation. 
Then $E$ is with finite index if and only if 
it satisfies both of the following two conditions:
\begin{enumerate}
 \item The probabilistic index of $E$ is finite. 
 \item The Hilbert $A$-module $B_E$ can be decomposed into a direct sum of 
finitely generated projective Hilbert $A$-modules. 
\end{enumerate}
In this case the scalar index of $E$ coincides with $\nor{\Index{E}}$.
\end{prop}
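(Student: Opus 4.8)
The plan is to prove both implications, and along the way extract the index formula. For the easy direction, suppose $E$ admits a quasi-basis $(u_i)_{i=1}^n$. Then, from the defining relation $a = \sum_i u_i E(u_i^* a)$, one immediately sees that $(u_i)_{i=1}^n$ generates $B_E$ as a Hilbert $A$-module: writing $e_i = u_i \in B_E$, the rank-one operators $\theta_{e_i,e_i}$ satisfy $\sum_i \theta_{e_i,e_i} = \id_{B_E}$ (this is exactly the quasi-basis identity read in $B_E$), so $B_E$ is a finitely generated projective Hilbert $A$-module, giving (ii). For (i), one verifies the Pimsner--Popa inequality: with $c = \nor{\Index E}$ and $\Index E = \sum_i u_i u_i^*$ one checks $c\,E(b^*b) \ge b^*b$ for all $b\in B$ by expanding $b = \sum_i u_i E(u_i^* b)$ and using positivity of $E$ together with $\sum_i u_i u_i^* \le c\,1_B$; hence $\Index^p E \le c < \infty$.

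For the converse, assume (i) and (ii). By (ii) write $B_E = \bigoplus_{k} E_k$ with each $E_k$ finitely generated projective. The plan is to show that in fact finitely many summands suffice, using (i). The Pimsner--Popa inequality $\Index^p E \cdot E(b^*b)\ge b^*b$ translates into a bound on $B_E$: the canonical map $B\to B_E$, $b\mapsto \hat b$, satisfies $\nor{\hat b}_{B_E}^2 = \nor{E(b^*b)}\ge (\Index^p E)^{-1}\nor{b^*b}=(\Index^p E)^{-1}\nor b^2$, i.e.\ the natural inclusion-with-inner-product is bounded below; combined with the fact that $B\to B_E$ has dense range and that $1_B$ gives a cyclic (in the module sense) vector $\hat 1$ whose ``$A$-linear span'' (more precisely $\cspan(\hat 1 \cdot A) = \cspan(\widehat{A})$) already generates, one concludes that $B_E$ is \emph{singly generated} as a Hilbert $A$-module. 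Hence $B_E$ is a finitely generated projective $A$-module (a singly generated one), so there is a finite set of generators $(e_i)$, and $\id_{B_E} = \sum_i \theta_{e_i,e_i}$ after a standard partition-of-unity / frame argument on a finitely generated projective module. Lifting the $e_i$ to $u_i\in B$ (using density of $B$ in $B_E$ and an approximation that preserves the frame relation, absorbing the error via the lower bound from (i)) produces a quasi-basis: unwinding $\sum_i \theta_{e_i,e_i}=\id$ in terms of the $A$-valued inner product $\ip{x,y}_B = E(x^*y)$ gives precisely $a = \sum_i u_i E(u_i^* a)$ for $a\in A\subset B$, and then for general $b\in B$ by applying it to $E(b)$ and using the module structure. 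Thus $E$ has finite index.

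For the last assertion $\Index^s E = \nor{\Index E}$: the inequality $\Index^s E \le \nor{\Index E}$ follows by checking that $\nor{\Index E}\cdot E - \id_A$ is completely positive, which one does by exhibiting it as a Stinespring-type compression built from the quasi-basis (the map $a\mapsto \sum_i u_i a u_i^* - $ something, or more cleanly: $c\,E - \id$ is the compression of $b\mapsto cE(b)$ against the projection in $\cl L_A(B_E)$ cutting down to $A$, and $c\ge \nor{\Index E}$ makes the relevant operator positive in the $n\times n$ amplification). For the reverse inequality $\Index^s E \ge \nor{\Index E}$, evaluate complete positivity of $cE-\id_B$ against the element $X = \sum_i u_i\otimes e_i \in B\otimes \cl H$ (with $\cl H$ of dimension $n$, $(e_i)$ an ONB): $(cE-\id)\otimes\id$ applied to $X^*X$ being positive forces $c\,\tr\text{-type bound}$ which reads off as $c \ge E(\Index E)$ evaluated suitably, and since $\Index E \in Z(A)^\times$ one gets $c\ge\nor{\Index E}$.

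The main obstacle I expect is the converse direction, specifically passing from ``$B_E$ is a direct sum of finitely generated projectives'' plus the Pimsner--Popa bound to ``$B_E$ is genuinely finitely generated'' and then \emph{lifting} a Hilbert-module frame on $B_E$ back to an honest quasi-basis inside $B$; the density of $B$ in $B_E$ is only with respect to the $\ip{\,\cdot\,,\cdot\,}_B$-norm, so the lift requires the lower bound from (i) to control the perturbation of the frame relation, and care is needed to ensure the perturbed frame is still a frame (e.g.\ by a Neumann-series argument on $\sum_i\theta_{u_i,u_i}$, which is invertible and close to $\id$). The comparison of $B_E$-generation with finite generation of $A$-modules is where the two hypotheses genuinely interact, and it is the conceptual heart of the statement.
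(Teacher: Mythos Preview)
Your forward direction is correct and matches the paper: a quasi-basis yields an isometric embedding $B_E\hookrightarrow A^n$, giving (ii), and the Pimsner--Popa inequality $\nor{\Index E}\,E(b^*b)\ge b^*b$ (this is {\cite[Lemma 2.6.2]{Wa90}}) gives (i) together with $\Index^s E\le\nor{\Index E}$.

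The converse, however, has a genuine gap. You claim $B_E$ is \emph{singly generated} as a right Hilbert $A$-module by $\hat 1$, on the grounds that $\cspan(\hat 1\cdot A)=\cspan(\widehat A)$ ``already generates''. But $\hat 1\cdot A$ is exactly the copy of $A$ inside $B$, and its closure (in either norm, since as you correctly observe they are equivalent under (i)) is just $A$, not $B$. The vector $\hat 1$ is cyclic for the \emph{left} $B$-action, not for the right $A$-module structure; so single generation fails, and with it your route to a finite frame and the subsequent lifting argument. Nothing in (i)+(ii) forces $B_E$ to be finitely generated over $A$ a priori.

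The paper's argument for the converse is genuinely different and does not go through finite generation of $B_E$. After noting $B_E=B$ as sets, it considers the multiplication map $T\colon B_E\tensor_A B\to B$, $x\tensor y\mapsto xy$, and shows from (i) (via Proposition~\ref{prop:FK00}) that $T$ is a bounded $B$-module map with $\nor{T}^2\le c:=\Index^s E$. Condition (ii) is used to ensure that $B_E\tensor_A B$ is a direct sum of finitely generated projective Hilbert $B$-modules, so that $T$ is representable: $T\xi=\ip{\eta,\xi}_B$ for some $\eta\in B_E\tensor_A B$. Approximating $\eta$ by finite sums $\sum_{i\le n}x_i\tensor y_i$ produces finite-rank operators $S_n(z)=\sum_i y_i^*E(x_i^*z)$ on $B$ converging to $\id_B$ in operator norm; picking $n$ with $S_n$ invertible gives finitely many $u_i,v_i\in B$ with $b=\sum_i u_iE(v_i^*b)$, and {\cite[Lemma 2.1.6]{Wa90}} upgrades this to a genuine quasi-basis. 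The same map $T$ also delivers the remaining inequality cleanly: $TT^*(1_B)=\Index E$, whence $\nor{\Index E}\le\nor{TT^*}=\nor{T}^2\le c$.
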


\begin{proof}
Assume $E$ is with finite index and take a quasi-basis 
$(v_i)_{i = 1}^n \subset B$. Then we have a linear map 
$\map{V}{B_E}{A^n}$ defined by $V(x) = (E(v_i^*x))_{i = 1}^n$. Then
we have
\[
 \ip{V(y),V(x)}_A
= \sum_{i = 1}^n E(v_i^*y)^*E(v_i^*x)
= E\left(y^*\sum_{i = 1}^n v_iE(v_i^*x)\right)
=E(y^*x) = \ip{y,x}_A.
\]
Hence $V$ ia an isometrical $A$-module map, so $B_E$ is finitely
generated projective Hilbert $A$-module. The finiteness of 
the probabilistic index of $E$ follows from the inequality 
$\nor{\Index E}E(x) - x \ge 0$, which was shown in 
{\cite[Lemma 2.6.2.]{Wa90}}.
Moreover we can also see that $\nor{\Index E}$ is equal to or greater than
the scalar index of $E$ by replacing $E$ by 
$\map{E\otimes\id}{B\tensor M_n(\C)}{A\tensor M_n(\C)}$, where $n$ is 
an arbitrary positive integer.

Next we prove the converse direction. We firstly remark that the original
C*-norm on $B$ is equivalent to the $L^2$-norm with respect to $E$, hence
$B_E = B$ as $\C$-vector space. 

By the previous proposition, the scalar index $c$ of $E$ is finite. 
Then the product map on $B$ induces a bounded $B$-module map 
$\map{T}{B_E\tensor_A B}{B}$ with $\nor{T}^2 \le c$, since we have
\begin{align*}
 \ip{\sum_{i = 1}^n x_iy_i,\sum_{i = 1}^n x_iy_i}_B
&= {}^t\mbf{y}^*\mbf{x}^*\mbf{x}{}^t\mbf{y} \\
&\le c {}^t\mbf{y}^*(E\tensor\id )(\mbf{x}^*\mbf{x}){}^t\mbf{y}
=c \ip{\sum_{i = 1}^nx_i\tensor y_i,\sum_{i = 1}^n x_i\tensor y_i}_B,
\end{align*}
where $\mbf{x} = (x_1\,x_2\,\dots\,x_n),\,\mbf{y} = (y_1\,y_2\,\dots\,y_n)$.
Now we can find $\eta \in B_E\tensor_A B$ with $T\xi = \ip{\eta,\xi}_B$
for any $\xi \in B_E\tensor_A B$ since $B_E$ is a direc sum of 
finitely generated projective Hilbert $A$-module. Moreover we can find
sequences $\{x_i\}_{i = 1}^{\infty} \subset B_E$ and 
$\{y_i\}_{i = 1}^{\infty} \subset B$ such that 
$\sum_{i = 1}^n x_i\otimes y_i$ converges to $\eta$. Let $S_n$ be an 
$A$-module map on $B_E$ defined by 
\[
S_n(z) = \sum_{i = 1}^n y_i^*\ip{x_i,z}_A 
= \ip{\sum_{i = 1}^n x_i\tensor y_i, z\tensor 1}. 
\]
Then $S_k$ converges to $\id_B$ in the operator norm, hence 
$S_k$ is invertible for some $k$. This fact enables us to 
find $(u_i)_{i = 1}^n$ and $(v_i)_{i = 1}^n$ in $B$ such that 
$x = \sum_{i = 1}^n u_iE(v_i^*x)$ for any $x \in B$. Then we can find 
a quasi-basis of $E$ by {\cite[Lemma 2.1.6.]{Wa90}}.

For tha last statement, it suffices to show $\nor{\Index E} \le c$. But
this can be easily seen since $T^*(1_B) = \sum_{i = 1}^n v_i \tensor v_i^*$
 and $TT^*(1_B) = \Index E$.
\end{proof}

\vspace{10pt}
\noindent
{\bf Acknowlegements.}
The author appreciates to Yasuyuki Kawahigashi for helpful comments 
and pointing out mistakes and typos on this paper. He is also
grateful to Reiji Tomatsu for doing seminars with the author. 
This work was supported by Forefront Physics and Mathematics Program.

\end{document}